\documentclass[a4paper, 9pt]{article}
\usepackage{graphics}
\usepackage[dvips]{color}

\usepackage{latexsym}
\usepackage{amsmath,verbatim}
\usepackage{graphics}
\usepackage{amsthm}
\usepackage{amssymb}
\usepackage{makeidx}
\usepackage{stmaryrd}
\usepackage{multicol}
\usepackage[all]{xy}
\newfont{\Fr}{eufm10}
\newfont{\Sc}{eusm10}
\newfont{\Bb}{msbm10}
\newfont{\Am}{msam10}
\newfont{\am}{msam7}
\numberwithin{equation}{section}
\newtheorem{theorem}{Theorem}[section]
\newtheorem{proposition}[theorem]{Proposition}
\newtheorem{lemma}[theorem]{Lemma}
\newtheorem{corollary}[theorem]{Corollary}
\newtheorem{claim}{Claim}{\bf}{\it}

\newtheorem{ftheorem}{Theorem}{\bf}{\it}
{\bf}{\it}
\theoremstyle{definition}
\newtheorem{definition}[theorem]{Definition}
\newtheorem{algorithm}[theorem]{Algorithm}

\newtheorem{convention}[theorem]{Convention}

{\bf}{\rm}

\theoremstyle{remark}
\newtheorem{example}[theorem]{Example}
\newtheorem{remark}[theorem]{Remark}

\newtheorem{definition and corollary}[theorem]{Definition and Corollary}

{\it}{\rm}

\newcommand{\Hom}{\mbox{\rm Hom}}

\newcommand{\Ext}{\mbox{\rm Ext}}

\newcommand{\End}{\mbox{\rm End}}

\newcommand{\Ind}{\mbox{\rm Ind}}

\newcommand{\h}{\mathfrac{\h}}

\newcommand{\fd}{\mbox{\rm fd}}
\newcommand{\gd}{\mbox{\rm gd}}
\newcommand{\sgn}{\mathsf{sgn}}
\newcommand{\triv}{\mbox{\rm 1}}
\newcommand{\ds}{\mathsf{ds}}
\newcommand{\EP}{\mbox{\rm EP}}

\setlength{\textwidth}{5.5in}



\title{On characters and formal degrees of discrete series of affine Hecke algebras of classical types}
\author{Dan \textsc{Ciubotaru}\footnote{Department of Mathematics, University of Utah, Salt Lake City, UT 84112, USA}, Midori \textsc{Kato} {\small\textsc{(Shiota)}}\footnote{Deceased. 1981.06.28--2010.02.17.} \footnote{Graduate School of Mathematical Sciences, University of Tokyo, 3-8-1 Komaba, Meguro 154-8914 Japan}, \text{ and } Syu \textsc{Kato}\footnote{Research Institute for Mathematical Sciences, Kyoto
  University, Oiwake Kita-Shirakawa Sakyo Kyoto 606-8502, Japan.} \footnote{{\it Current Address:} Department of Mathematics, Kyoto
  University, Oiwake Kita-Shirakawa Sakyo Kyoto 606-8502, Japan.}}

\begin{document}

\maketitle

\begin{abstract}
We address two fundamental questions in the representation theory of
affine Hecke algebras of classical types. One is an inductive algorithm to compute characters of tempered modules, and the other is the determination of
the constants in the formal degrees of discrete series (in the form
 conjectured by Reeder \cite{Re}). The former is
completely different from the Lusztig-Shoji algorithm
\cite{Sh, L}, and it is more effective in a number of cases. The main idea
in our proof is to introduce a new family of representations which
behave like tempered modules, but for which it is easier to analyze the effect of
parameter specializations. Our proof also requires a comparison of the
$C^{\ast}$-theoretic results of Opdam, Delorme, Slooten, Solleveld
\cite{O, DO, Sl2, OSa, OS}, and the geometric construction from
\cite{K1,K2,CK}. 
\end{abstract}



\section{Introduction}
In this paper, we consider two basic questions in the study of affine
Hecke algebra of classical types with unequal parameters. The first
one is the characters of tempered modules. The classical approach (for
$W$-characters) is via the Lusztig-Shoji algorithm (\cite{Sh,L}),
which computes the generalized Green functions in terms of geometric
data. We present an alternative approach, namely an inductive algorithm
on the rank and the ratio of parameters of the affine Hecke
algebra. Since the Lusztig-Shoji algorithm treats each (geometric)
ratio of parameters individually, our algorithm has some advantage,
particularly if one is interested in the connection between two
different ratios. As a consequence of this new algorithm, we answer
the second basic question, namely the determination of the rational
constants in the formal degrees of discrete series. Our result
confirms the expected values of these constants, motivated by the
study of complex smooth representations of $p$-adic groups (see the
discussion after Theorem \ref{fformal}). More generally, in
conjunction with Bushnell-Henniart-Kutzko \cite{BHK} Theorem B, this
provides an explicit formula for formal degrees of discrete series of
$p$-adic groups of classical types for many (if not all) Bernstein
blocks.   

To explain our results more precisely, let $\mathbb H_n(q,u,v)$ be the
affine Hecke algebra of type $\mathsf C_n$ with parameters $q,u,v$ (see \S
\ref{Hecke}). We specialize to the cases $\mathbb H_{n,m}=\mathbb
H_n(q,q^m,q^m)$ and $\mathbb H'_{n,m}=\mathbb H_n(q,q^{2m},1)$, with
$m\in \mathbb R$. These are the affine Hecke algebras
with two parameters of type $\mathsf C_n$ and (up to
central extension) of type $\mathsf B_n$,
respectively. Let $W_n$ denote the Weyl group of type
  $\mathsf{BC}_n,$ and let $\widehat W_n$ denote the set of
  irreducible $W_n$-representations. In order to explain our results on the $W_n$-character of tempered modules, we also restrict to the so-called positive real central character case (see section
\S \ref{rts}). 

There is a correspondence between the set of discrete series with real central characters of $\mathbb
H_{n,m}$ and $\mathbb H'_{n,m}$. For every partition $\sigma$ of $n$, there is a real central
character $\mathsf c_m^\sigma$ attached to $\sigma$ and $m$. When $m$
is generic, i.e., $m\notin \frac 12\mathbb Z$, there exists a unique
discrete series with central character $\mathsf c_m^\sigma$, and moreover, every
discrete series module of $\mathbb H_{n,m}$ or $\mathbb H'_{n,m} $ has
central character $\mathsf c_m^\sigma$ for some partition $\sigma$ of
$n$ (Opdam \cite{O}). Therefore, we can regard the discrete series with real central
character as belonging to families $\{\ds_m(\sigma)\}_m$,
$\{\ds'_m(\sigma)\}_m$ for $\mathbb H_{n,m}$ and $\mathbb H'_{n,m}$,
respectively, indexed by partitions $\sigma$ of $n$. Then, we have
\begin{equation}  
\ds_m(\sigma)\cong \ds'_m(\sigma),\text{ as } W_n\text{-modules}.
\end{equation}
If $m_0$ is a critical parameter, i.e., $m_0\in \frac 12\mathbb Z$,
then it is known by Opdam-Solleveld \cite{OS} that every discrete series of $\mathbb H_{n,m_0}$
(resp. $\mathbb H'_{n,m_0}$) is obtained as a limit $m\to m_0$ of
certain $\ds_m(\sigma)$ (resp. $\ds'_m(\sigma)$). Here,
  $\lim_{m\to m_0}$ is in the sense of \cite{CK} \S2.4.

As already mentioned, we are interested in the $W_n$-character of
$\ds_m(\sigma)$. Our strategy is as follows: for $m>n-1$,
the $\ds_m(\sigma)$ is simple and does not change as a $W_n$-module by \cite{CK}. Namely, we have
\begin{equation}
\ds_m(\sigma)\mid_{W_n}=\{\emptyset,{ }^{\mathtt t}\!\sigma\}, \text{ if }m>n-1,\label{fasym}
\end{equation}
where the notation for $\widehat W_n$ is via bipartitions (c.f. \S \ref{sec:Walg}). Then we keep track how the $W_n$-character of $\ds_m(\sigma)$ changes
as $m$ varies towards $-\infty$. It can only change when $m$
passes through a critical value $m_0$, but this is a subtle
problem. We resolve this difficulty by considering a
larger family of irreducible modules $\mathcal D_{m_0}(\sigma)$ depending on $m \neq m_0$, $m_0-\frac 12<m<m_0+\frac 12$ with the following properties:

\begin{enumerate}
\item[(a)] $\pi$ has central character $\mathsf c_m^\sigma$
  (same as $\ds_m(\sigma)$); and
\item[(b)] $\lim_{m\to m_0} \pi$ is tempered.
\end{enumerate}

For lack of a better name, we call such modules delimits of tempered modules, or tempered delimits for short. For example, we have $\ds_m(\sigma)\in \mathcal
D_{m_0}(\sigma)$ for both $m_0-\frac{1}{2}<m<m_0$ and $m_0<m<m_0+\frac{1}{2}$. It should be noted that the modules appearing as $\lim_{m\to m_0}\pi$, $\pi\in \mathcal D_{m_0}(\sigma)$ can be thought of as analogues of the nondegenerate limits of discrete series in the sense of Knapp-Stein \cite{Kn}, \S XIV.17 and Theorem 14.92.

\smallskip

A main technical achievement of this paper is the following:

\begin{ftheorem}[{Corollary} \ref{irred}]\label{firr}
Assume that $m_0\in\frac 12\mathbb Z$. Then for every $\pi\in \mathcal D_{m_0}(\sigma)$, $\lim_{m\to
  m_0}\pi$ is an irreducible $\mathbb H_{n,m_0}$-module. In particular, $\lim_{m\to m_0}\ds_m(\sigma)$ is irreducible.
\end{ftheorem}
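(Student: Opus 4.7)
The plan is to combine the $C^*$-algebraic deformation theory of Opdam-Solleveld \cite{O, OS} with the geometric construction of \cite{K1, K2, CK}, and to conclude via a bijective matching between tempered delimits and irreducible tempered modules at $m_0$.

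First, let $L := \lim_{m \to m_0} \pi$. By condition (b) in the definition of $\mathcal D_{m_0}(\sigma)$, $L$ is a tempered $\mathbb H_{n,m_0}$-module; by condition (a) and continuity of central characters, its central character is $\mathsf c_{m_0}^\sigma$. Hence $L$ decomposes as a (finite) direct sum of irreducible tempered $\mathbb H_{n,m_0}$-modules sharing that central character, and the goal becomes showing that this sum has exactly one term. By continuity of the $W_n$-character in the parameter, one has $L|_{W_n} = \pi|_{W_n}$ for $m$ sufficiently close to $m_0$ on either side.

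Second, I would enumerate both sides. Let $\mathrm{Irr}_{m_0,\sigma}$ denote the set of irreducible tempered $\mathbb H_{n,m_0}$-modules with central character $\mathsf c_{m_0}^\sigma$, parameterized geometrically via \cite{K1, K2, CK} (a Springer-type correspondence attached to the data determined by $\sigma$ and $m_0$). The set $\mathcal D_{m_0}(\sigma)$, restricted to one side of $m_0$ (say $m_0 - \tfrac12 < m < m_0$), is parameterized analytically through the Opdam-Solleveld deformation picture: condition (a) fixes the central character while (b) restricts to the ``generic'' labels surviving the specialization. The core step is to establish the cardinality equality $|\mathcal D_{m_0}(\sigma)| = |\mathrm{Irr}_{m_0,\sigma}|$ and to exhibit a natural bijection $\Phi$ between them whose effect on $W_n$-characters matches.

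Third, the argument then proceeds as follows. Every element of $\mathrm{Irr}_{m_0,\sigma}$ appears as a summand of some $L$, by the surjectivity statement of Opdam-Solleveld quoted in the introduction. Distinct $\pi, \pi' \in \mathcal D_{m_0}(\sigma)$ have distinct $W_n$-characters, verified by an inductive descent in $m$ starting from the base case $m > n - 1$ where (\ref{fasym}) applies, and using the new character algorithm announced in the introduction to track the $W_n$-character through each critical value. A counting argument then forces each $L$ to be a single irreducible: a surplus in any decomposition would exceed $|\mathrm{Irr}_{m_0,\sigma}|$ summands in total, contradicting the cardinality equality.

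The main obstacle is the cardinality equality $|\mathcal D_{m_0}(\sigma)| = |\mathrm{Irr}_{m_0,\sigma}|$ together with the injectivity step. This is precisely where the analytic (Opdam-Solleveld) parametrization of $\mathcal D_{m_0}(\sigma)$ must be reconciled with the geometric (Springer-type) parametrization of $\mathrm{Irr}_{m_0,\sigma}$: the two enumerations live in different worlds and a nontrivial dictionary, grounded in the comparison of $C^*$-theoretic and geometric frameworks flagged in the abstract, is needed to identify them.
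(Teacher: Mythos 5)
There is a genuine gap: your argument hinges on the asserted bijection (and cardinality equality) between $\mathcal D_{m_0}(\sigma)$ and the set of \emph{all} irreducible tempered $\mathbb H_{n,m_0}$-modules with central character $\mathsf c_{m_0}^\sigma$, and you explicitly leave this ``dictionary'' unproved; but this is exactly the content that has to be supplied, and as stated it is doubtful. At a critical value $m_0$ the central characters of different families collide (e.g.\ $\mathsf c_{m_0}^{\sigma}$ and $\mathsf c_{m_0}^{\sigma'}$ can agree for distinct partitions $\sigma,\sigma'$ --- this is precisely why the paper needs Lemma \ref{disj}, which separates limits by their limiting \emph{parameters}, not by central character), so the set you call $\mathrm{Irr}_{m_0,\sigma}$ can be strictly larger than $\mathcal D_{m_0}(\sigma)$. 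Moreover, the Opdam--Solleveld statement you invoke for ``every element of $\mathrm{Irr}_{m_0,\sigma}$ appears as a summand of some limit'' concerns discrete series only, not arbitrary tempered modules with a fixed central character; and your injectivity step (distinct delimits have distinct $W_n$-characters, ``verified using the new character algorithm'') is circular, since Algorithm \ref{algW} uses Corollary \ref{irred} itself in Steps 2 and 3. Finally, even granting all of this, your closing count needs that distinct limits share no constituents and that each tempered irreducible occurs with multiplicity one --- further unproved claims.

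The paper's proof avoids the global enumeration altogether and counts inside a single induced module. By Theorem \ref{minds} the ``core'' $\lim_{m\to m_0}L_{\boldsymbol\tau_0^{\perp}}$ attached to the minimal delimit $\tau_0$ is an irreducible discrete series, so the analytic $R$-group result (Theorem \ref{Rgroup}, Slooten/Delorme--Opdam) applies and shows that $\lim_{m\to m_0}L^{\mathsf A}_{\boldsymbol\tau_0^{\sharp}}\boxast L_{\boldsymbol\tau_0^{\perp}}$ is a direct sum of exactly $2^h$ irreducible tempered modules; on the other hand Theorem \ref{multiplicity formula} shows that at generic $m$ this induced module has exactly $2^h$ irreducible constituents, namely the $L_{\boldsymbol\tau'}$ for $\tau'\in\mathcal D_{m_0}(\sigma)$, each with multiplicity one. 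Since each constituent has a nonzero limit, any reducible limit would force the length of the limit of the induced module to exceed $2^h$, a contradiction; hence every $\lim_{m\to m_0}L_{\boldsymbol\tau'}$ is irreducible. If you want to rescue your approach, you must replace the unproved bijection with a concrete counting mechanism of this kind (the $R$-group computation on one side and the multiplicity formula on the other), rather than an enumeration of all tempered modules with a given central character.
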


This theorem, proved as a corollary of basic properties of tempered delimits (Theorems \ref{multiplicity formula}, \ref{minds}), represents the basis for our algorithm. By the geometry of tempered delimits, we deduce:

\begin{ftheorem}[Formula (\ref{transf})]\label{find}
For every $m_0\in\frac 12\mathbb Z$, we have the following equality inside the Grothendieck group of $\mathbb H _{n,m_0}$-modules:
\begin{eqnarray}
[ \lim _{m' \to m_0} \mathsf{ds} _{m'} ( \sigma )] \pm [ \lim _{m \to m_0} \mathsf{ds} _m ( \sigma ) ] = \sum (\pm) [L ^{\mathsf A} \boxast L'],\label{wc}
\end{eqnarray}
where the real variables $m,m'$ satisfies $m_0 - \frac{1}{2} < m' < m_0 < m < m_0 + \frac{1}{2}$. Here $L ^{\mathsf A} \boxast L'$ denotes parabolic induction from a tempered module $L ^{\mathsf A}$ of an affine Hecke algebra of type $\mathsf A$ and a discrete series $L'$ of a type $\mathsf C$ affine Hecke algebra. Moreover, all the terms in the right hand side are induced from proper Levi subalgebras.
\end{ftheorem}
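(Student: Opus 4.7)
The plan is to leverage Theorem~\ref{firr}, which asserts that both $\lim_{m' \to m_0} \mathsf{ds}_{m'}(\sigma)$ and $\lim_{m \to m_0} \mathsf{ds}_m(\sigma)$ are irreducible tempered $\mathbb{H}_{n,m_0}$-modules sharing the central character $\mathsf{c}_{m_0}^{\sigma}$. The target identity then amounts to showing that a suitable $\pm$-combination of these two irreducible classes in the Grothendieck group of $\mathbb{H}_{n,m_0}$ is supported on modules induced from proper Levi subalgebras.

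First, I would use the classification of tempered irreducible $\mathbb{H}_{n,m_0}$-modules with central character $\mathsf{c}_{m_0}^{\sigma}$ to separate them into (i) genuine discrete series of $\mathbb{H}_{n,m_0}$ and (ii) tempered constituents of parabolic inductions $L^{\mathsf{A}} \boxast L'$ from proper Levi subalgebras of type $\mathsf{A}_{k-1} \times \mathsf{C}_{n-k}$. Each of the two limits on the left falls into class (i) or (ii). The global sign in the identity is then dictated by a case analysis: when both limits are discrete series of $\mathbb{H}_{n,m_0}$ and coincide, take $-$ so that both sides vanish; when exactly one limit is a discrete series and the other is properly induced, take $-$ and the right-hand side picks up the induced one; and when neither limit is a discrete series, take $+$ and express the sum of the two induced tempered modules in the asserted form.

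To realize the right-hand side concretely, I would invoke the geometric construction of \cite{K1,K2,CK}. Over each open subinterval $(m_0 - 1/2, m_0)$ and $(m_0, m_0 + 1/2)$ the discrete series $\mathsf{ds}_m(\sigma)$ is attached to a fixed pair (nilpotent orbit, local system) via the Springer-type parametrization, and the two pairs on either side of $m_0$ differ in a controlled manner dictated by a specific proper Levi subalgebra becoming relevant only at the critical parameter. Matching this geometric data against the Langlands-type parametrization of tempered $\mathbb{H}_{n,m_0}$-modules identifies the Levi subalgebras together with the explicit factors $L^{\mathsf{A}}$ and $L'$ that appear on the right, where the tempered delimits $\pi \in \mathcal{D}_{m_0}(\sigma)$ supply the bookkeeping bridging the two sides of $m_0$.

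The main obstacle is to rule out any genuine discrete series contribution to the right-hand side: since the chosen $\pm$-combination on the left must capture exactly the discrete series component of the tempered limits, every remaining term is forced to be induced from a proper Levi. This is essentially a rigidity statement provided by the irreducibility of the tempered delimits (Theorem~\ref{firr}) together with the Opdam--Solleveld description of discrete series at critical parameters, after which what remains is a combinatorial bookkeeping of the relevant Levi data for bipartitions.
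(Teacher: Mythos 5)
There is a genuine gap: your proposal never produces the mechanism that actually yields the right-hand side of (\ref{wc}). In the paper, Theorem \ref{find} is not proved by classifying the tempered dual at $m_0$ and ruling out discrete series contributions; it is the M\"obius inversion of the multiplicity formula of Theorem \ref{multiplicity formula}, which says that for each tempered delimit $\tau\in\mathcal D_{m_0}(\sigma)$ and each subset $\tau_{\circleddash}\subset\tau^{\sharp}$ the properly induced module $L^{\mathsf A}_{{\boldsymbol\tau}_{\circleddash}}\boxast L_{{\boldsymbol\tau}_{\circledast}}$ decomposes in $K(\mathfrak M^n_{\vec q})$ with multiplicity one over an interval of the Boolean lattice of tempered delimits (formula (\ref{inf})). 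That multiplicity-one statement is the real content; it rests on the orbit-closure geometry (Proposition \ref{cr}, Corollary \ref{smooth}, Proposition \ref{Cmf}), not on Theorem \ref{firr} or on Opdam--Solleveld. Inverting the resulting unitriangular system gives the explicit identity (\ref{transf}), in which the unique term with empty type-$\mathsf A$ part is $[\mathsf{ds}_m(\sigma)]$ and every other term is automatically induced from a proper Levi because its $\mathtt e$-part is a nonempty balanced multisegment and its $\mathtt s$-part is a discrete series of a smaller type $\mathsf C$ algebra. Your ``main obstacle'' (excluding discrete series from the right-hand side) is therefore not where the difficulty lies, and irreducibility of the limits plus a sign case analysis cannot substitute for knowing these multiplicities: the identity lives at generic $m$ in the Grothendieck group, where the induced modules are reducible and their constituents must be controlled exactly.

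A second missing ingredient is the bridge between the two sides of $m_0$. The left-hand side of (\ref{wc}) only becomes a combination of two classes in one and the same inverted system because $\lim_{m'\nearrow m_0}\mathsf{ds}_{m'}(\sigma)$ is identified with $\lim_{m\searrow m_0}L_{\boldsymbol\tau}$ for a specific $\tau\in\mathcal D_{m_0}(\sigma)$ parameterized from the side $m_0<m<m_0+\frac12$, namely the delimit whose $\tau^{\sharp}$ consists of the unmarked balanced segments of $\mathsf{ds}_{m'}(\sigma)$ (Lemma \ref{tli}, Theorem \ref{ctd}, and Corollary \ref{irred}, as in Steps~1--3 of Algorithm \ref{algW}). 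Your proposal gestures at ``tempered delimits supply the bookkeeping'' and at matching Springer-type data across $m_0$, but gives no argument for this identification, nor for why the sign pattern is the alternating one $(-1)^{\#({\boldsymbol\tau}^{\mathtt e}_m\setminus({\boldsymbol\tau}')^{\mathtt e}_m)}$ rather than an ad hoc case-by-case choice. Also note that neither one-sided limit need be a discrete series of $\mathbb H_{n,m_0}$ (see the examples following Algorithm \ref{algW}), so the trichotomy you base your sign analysis on does not reflect the actual situation. To repair the proof you would need to establish (\ref{inf}) (or an equivalent multiplicity statement) and the classification of $\mathcal D_{m_0}(\sigma)$ first; the rest is then formal inversion.
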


We remark that the right hand side of (\ref{wc}) looks obscure here but the actual expression is explicit and precise (see (\ref{transf}) for details). Moreover, (\ref{wc}) implies certain relations between the $W$-characters of classical and exotic Springer fibers (Corollary \ref{diff}).

In addition, if we assume, by induction on the rank of the Hecke
algebra, that we know the discrete series character of smaller
affine Hecke algebras of type $\mathsf C$, then we easily deduce the
character of the right hand side of (\ref{wc}). Hence, if we
know the character of either $\mathsf{ds} _{m'} ( \sigma )$ or
$\mathsf{ds} _{m} ( \sigma )$, then we deduce the other. Thanks to
(\ref{fasym}), we always know the $W_n$-character of $\mathsf{ds} _m (
\sigma )$ for $m \gg 0$. Our algorithm \ref{algW} (on the $W_n$-characters of tempered delimits) is an implementation of these observations. As we see in Remark \ref{RTch}, our construction also gives an inductive algorithm to compute weight characters of tempered delimits with respect to the abelian subalgebra that appears in the Bernstein-Lusztig presentation (\cite{L2} \S3).

In section \ref{sec:formal}, we use the $W$-character algorithm to complete the
computation of the formal degree for the affine Hecke algebra $\mathbb
H_n(q,q^{m_+},q^{m_-})$ of type $\mathsf C_n$, where $q>1$ and $m_\pm\in
\mathbb R$. All affine Hecke algebras of classical types are (up to
central extensions) particular cases of this one. Denote the roots of
type $\mathsf C_n$ by $R_n$, and let $R_n^{\mathsf{sh}}$ and
$R_n^{\mathsf{lo}}$ denote the short and long roots, respectively. From \cite{OS}, the
formal degree of a discrete series $\pi$ with central character
$s$ (not necessarily positive real) is known to equal
\begin{equation}\label{fdprod}
\fd(\pi)=\frac{{C_\pi~ q^{n^2-n} q^{n
      m_+}\prod_{\alpha\in R_n}' (\alpha(s)-1)}}{{\prod_{\alpha\in
      R_n^{\mathsf {sh}}}' (q\alpha(s)-1) \prod_{\alpha\in
      R_n^{\mathsf {lo}}}' (q^{\frac
      {m_++m_-}2}\alpha(s)^{1/2}-1)\prod_{\alpha\in R_n^{\mathsf
        {lo}}}' (q^{\frac {m_+-m_-}2}\alpha(s)^{1/2}+1) }  },
\end{equation}
where $\prod'$ means that the product is taken only over the nonzero
factors. 
From Opdam-Solleveld \cite{OSa}, it is known that the constants $C_{\pi}$
are rational numbers, and the question is to
determine them explicitly. 
We use an Euler-Poincar\'e formula which expresses
the formal degree as an alternating sum depending on the $W$-character of the
discrete series (see \ref{e:EP}) as in Reeder \cite{Re}. This formula itself is proved in Schneider-Stuhler \cite{SS} (for $p$-adic groups) and in Opdam-Solleveld \cite{OSa} (for affine Hecke algebras).

Following \cite{K1}, we say that $(m_+,m_-)$ are generic if $|m_+\pm m_-|\notin \{0,1,2,\dots, 2n-1\}$. We use Theorem \ref{find} to find that the constants
$C_\pi$ for generic $(m_+,m_-)$ do not
depend (up to sign) on $m$. Combined with an explicit calculation in
an asymptotic region of the parameters $(m_+,m_-)$ and a certain limiting process, we obtain:

\begin{ftheorem}\footnote{This is a corrected version ($C_\pi=1$ not
    $1/2$) of the theorem
    that appeared in the published version. We thank Eric Opdam for
    pointing out the error to us.}
[Theorem \ref{mainformal} and Corollary
  \ref{c:mainformal}]\label{fformal} Let $\pi$ be a discrete series with arbitrary central character for the affine Hecke algebra $\mathbb H_n(q,q^{m_+},q^{m_-})$, where $q>1$ and $m_\pm\in \mathbb R.$ Then, the constant in (\ref{fdprod}) is
  (up to sign) $C_\pi=1$.
\end{ftheorem}
In \S \ref{s:mainformal} (\ref{typeC},\ref{typeB},\ref{typeD}), we explain the implications of Theorem \ref{fformal} for the affine Hecke algebras of types $\mathsf C_n,\mathsf
B_n,\mathsf D_n$, respectively.

\smallskip

As mentioned previously, this calculation has consequences for $p$-adic groups as well. The expected stability of $L$-packets of discrete series for a $p$-adic
group $\mathcal G$ implies that the formal degrees of discrete series in the same
$L$-packet have to be proportional, with the proportionality constants
being the multiplicities of discrete series in the stable $L$-packet
sum. In the case discrete series are in the scope of the Deligne-Langlands-Lusztig correspondence \cite{L4}, there is a precise conjecture for the values of the constants formulated in \cite{Re} (0.5). Particularly, when the $p$-adic group is of classical type (other than the quasisplit triality form of $D_4$), those discrete series are controlled by that of various affine Hecke algebras of classical types. For example, with our notation, the Iwahori cases for split $p$-adic classical groups $SO(2n+1)$, $Sp(2n),$ $SO(2n)$ 
correspond to the Hecke algebras $\mathbb H'_{n,\frac 12}$, $\mathbb
H_{n,1}$ and $\mathbb H'_{n,0}$, respectively. (In fact the last
algebra is central extension of the Iwahori-Hecke algebra for type
$\mathsf D_n$, but for our purposes, this is sufficient{; see Proposition
\ref{irrD}}). Using the correspondence between the Plancherel formula for
groups and for the Hecke algebras (\cite{BHK}), and taking also into account Hiraga-Ichino-Ikeda \cite{HII} \S 3.4, one verifies that the values of the constants obtained from Theorem \ref{fformal} match the expected values from $p$-adic groups.

\smallskip

The organization of the paper is as follows. In \S 2 we recall the geometric setup, and we fix the notation for the affine
Hecke algebras. Then we study a number of properties of Langlands
quotients of parabolically induced modules which we need in \S
3. In \S 3, we define and classify the tempered delimits, and prove
the results about irreducibility under deformations in the parameter
$m$. We present the inductive algorithm for the $W$-characters of
discrete series and tempered modules. In \S 4, we calculate the
constants in formal degrees.

\smallskip

\noindent{\bf Acknowledgments.} We would like to thank E. Opdam for
suggesting the problem of calculating the constants in the formal
degrees and for sharing with us his insight into this
  problem. We are also grateful to S. Ariki and T. Shoji for helpful
discussions about this project. The work on this paper began during
the conference and workshop ``Representation Theory of Real Reductive
Groups'' at University of Utah; we thank the organizers for the
invitation and support. D.C. was partially supported by NSF-DMS 0554278 and NSA-AMS 081022 and S.K. by the Grant-in-Aid for Young Scientists (B) 20-74011.

\

\begin{flushleft}
{\bf Convention}
\end{flushleft}
For two sets $J _1, J _2 \subset \mathbb Z$, we define $J _1 < J _2$ if and only if $j_1 < j _2$ for every $j_1 \in J_1$ and $j_2 \in J_2$.

Fix $\vec{q} = ( q _1, q ) \in \mathbb R ^2$ so that $q > 1$ and $q_1 = q ^{m}$ for some $m \in \mathbb R $. We say $m$ is {\it generic} if and only if $m \not\in \frac{1}{2} \mathbb Z$. A $q$-segment (or just a segment if there can be no possible confusion) is a sequence of positive real numbers of the form
$$a, a q, a q^2, \cdots, a q ^M\text{ for some }M \in \mathbb Z _{\ge 0}.$$
For two $q$-segments $I _1, I _2$, we define
\begin{align*}
& \mathtt E _m ( I_1 ) := & \prod _{a \in I _1} a, & \hskip 5mm e ^+ _{m} ( I _1 ) = e _+ ( I _1 ) :=  \max I _1, \hskip 5mm e ^- _{m} ( I _1 )= e _- ( I _1 ) := \min I_1,\\
& I _1 \Subset I _2 & \Leftrightarrow & \min I _2 < \min I _1 \text{ and } \max I _1 < \max I _2,\\
& I _1 \triangleleft I _2 & \Leftrightarrow & \min I _1 < \min I _2 \le q \max I _1 < q \max I _2, \text{ and }\\
& I _1 \trianglelefteq I _2 & \Leftrightarrow & \min I _1 \le \min I _2 \le q \max I _1 < q \max I _2\\
& & \text{ or } & \min I _1 < \min I _2 \le q \max I _1 \le q \max I _2.
\end{align*}

Finite collections of $q$-segments (with possible repetitions) are called $q$-multisegments (or just multisegments). The set of $q$-multisegments is denoted by $\mathsf{Q} ( q )$. For $\mathbf I, \mathbf I' \in \mathsf{Q} ( q )$, we write $\mathbf I \subset \mathbf I'$ if each segment of $\mathbf I$ gives a segment of $\mathbf I'$ with multiplicity counted.

For a partition $\lambda$, we set $| \lambda | := \sum _i \lambda _i$, $\lambda _i ^{<} := \sum _{j < i} \lambda _j$, and $\lambda _i ^{\le} := \sum _{j \le i} \lambda _j$. We denote by ${} ^{\mathtt t} \lambda$ the transpose partition of $\lambda$.

For an algebraic variety $\mathcal X$ over $\mathbb C$, we denote by $H _{\bullet} ( \mathcal X )$ the total Borel-Moore homology with coefficients in  $\mathbb C$.

\section{Preliminaries}

\subsection{Basic geometric setup}
We denote by $G _n = \mathop{Sp} ( 2n, \mathbb C )$ the symplectic
group with its maximal torus $T _n$ and a Borel subgroup $B _n \supset
T_n$. Let $R_n \supset R ^+_n$ be the root systems of $(G_n,T_n)$ and
  $(B_n,T_n)$, respectively. We define $X ^* ( T_n )$ to be the character lattice of $T _n$ with its natural orthonormal basis $\epsilon _1, \ldots, \epsilon _n$ so that
\begin{align*}
& R ^+_n = \{(\epsilon_i \pm \epsilon_j), i < j, 2\epsilon_i\} \subset R _n =\{\pm(\epsilon_i\pm\epsilon_j), i < j, \pm 2\epsilon_i\}\\
& \check{R} ^+ _n = \{(\epsilon_i \pm \epsilon_j), i < j, \epsilon_i\} \subset \check{R} _n =\{\pm(\epsilon_i\pm\epsilon_j), i < j, \pm \epsilon_i\},
\end{align*}
where $\check{R} _n \supset \check{R} ^+ _n$ is the dual root system. Let $\check\alpha \in \check{R} _n$ denote the coroot of $\alpha \in R _n$. Let $W _n := N _{G_n} ( T_n ) / T_n$ be the Weyl group of $G_n$. Let $V _n ^{(1)} = \mathbb C ^{2n}$ be the vector representation of $G _n$ and let $V_n ^{(2)} := \wedge ^2 \mathbb C ^{2n}$ be its second wedge. We define $\mathbb V _n := V _n
  ^{(1)} \oplus V _n ^{(2)}$ to be the $1$-exotic representation of $G
  _n$. Let $\mathbb V _n ^+$ be the sum of $T _n$-weight spaces of
  $\mathbb V _n$ for which the corresponding weights are in $\check{R} ^+ _n$. We 
  define $W _n ( s ) := \{ w \in W _n \mid \mathrm{Ad} ( w ) s = s \}$
  for each $s \in T _n$. For $w\in W _n$, we fix a lift $\dot{w}$ of $w$ in $N _{G} ( T )$. We set $F _n := G _n \times ^{B _n} \mathbb V _n ^+$. We form a map 
\begin{equation}
\mu _n : F_n = G _n \times ^{B _n} \mathbb V _n ^+ \longrightarrow \mathbb V _n \label{emoment}
\end{equation}
obtained as the anti-diagonal free $B _n$-quotient of the action map $G _n \times \mathbb V _n ^+ \to \mathbb V _n$. For every semisimple element $a=(s,\vec q)$, denote by $F _n^a$, $\mathbb V_n^a$, and $\mu _n ^a$, the $a$-fixed
  points and the restriction to the $a$-fixed points of $F _n, \mathbb V_n,$ and $\mu _n$, respectively. Moreover, for every subvariety $Y$ we denote by $Y^a$, the intersection of $Y$ with the $a$-fixed points.

We might drop the subscript $n$ when the meaning is clear from the context.

\subsection{Hecke algebras}\label{Hecke}
We consider the affine Hecke algebras $\mathbb H_n'(q,u)$, $\mathbb H_n(q,u)$, and $\mathbb H_n''(q)$ of type
${\mathsf{B}}_n$, ${\mathsf{C}}_n$, and ${\mathsf{D}}_n$, respectively, with positive real parameters $u,v$, according to the affine
Coxeter diagrams 
\begin{align*}
&\widetilde{\mathsf{B}}_n:\quad \xymatrix{q\ar@{-}[r] & q\ar@{-}[r]& q\ar@{-}[r]& \dots \ar@{-}[r]&q\ar@{-}[r]&q\ar@{=}[r]&u\\
& q\ar@{-}[u],}\\
&\widetilde{\mathsf{C}}_n:\quad \xymatrix{u\ar@{=}[r]&q\ar@{-}[r] &q\ar@{-}[r]&\dots\ar@{-}[r]
&q\ar@{-}[r] &q\ar@{=}[r] &u,} \text{ and }\\
&\widetilde{\mathsf{D}}_n:\quad \xymatrix{q\ar@{-}[r] & q\ar@{-}[r]& q\ar@{-}[r]& \dots \ar@{-}[r]&q\ar@{-}[r]&q\ar@{-}[r]&q\\
& q\ar@{-}[u] & & & & q\ar@{-}[u] &}.
\end{align*}
We consider them as subalgebras of certain specializations (see below) of the affine Hecke algebra $\mathbb H_n(q,u,v)$ of type ${\mathsf{C}_n}$
\begin{equation*}
\widetilde{\Pi}_n:\quad \xymatrix{v\ar@{=}[r]&q\ar@{-}[r] &q\ar@{-}[r]&\dots\ar@{-}[r]
&q\ar@{-}[r] &q\ar@{=}[r] &u}
\end{equation*}
defined as a $\mathbb C$-algebra with the set of generators $N_0, N_1, \ldots, N_n$ subject to the relations:
\begin{itemize}
\item $ ( N _0 + 1 )( N _0 - v ) = 0 = ( N _n + 1 )( N _n - u )$ and $( N _i + 1 )( N _i - q ) = 0$ for $1 \le i < n$; 
\item $N _i N _j = N _j N _i$ for $i - j \ge 2$, $N _i N _{i+1} N _i = N _{i+1} N _i N _{i+1}$ for $1 \le i < n - 1$;
\item $( N_0 N_1 )^2 = ( N_1 N_0 )^2$ and $( N_{n-1} N_n )^2 = ( N_n N_{n-1} )^2$.
\end{itemize}

Let $\mathbb H _n ^{\mathsf A}$ be the affine Hecke algebras of type $\mathop{GL} ( n )$ with
parameter $q$, which can be realized as a subalgebra of $\mathbb H _n (q,u,v)$ generated by $N_1, \ldots, N _{n-1}$, and $N_1 N _2 \cdots N_{n-1} N_n N _{n-1} \cdots N _1 N _0 ^{-1}$.

We remark that $\mathbb H _{n} ( q,u )$ is obtained from $\mathbb H_n(q,u,v)$ by making the specialization $u=v$. 
Also, a central extension $\mathbb H_n ^{\mathsf B} (q,u)$ of $\mathbb H_n'(q,u)$ is obtained from $\mathbb H_n(q,u,v)$ by making the specialization $v=1$.

We define an algebra involution $\psi : \mathbb H_n(q,1,1) \to \mathbb H_n(q,1,1)$ as:
$$\psi ( N _i ) = N _i \text{ if } i \neq n, \text{ and } \psi ( N _n ) = - N _n.$$
A central extension $\mathbb H_n ^{\mathsf D} = \mathbb H_n ^{\mathsf D} (q)$ of $\mathbb H_n''(q)$ is realized as the $\psi$-invariant part of $\mathbb H_n(q,1,1)$ (see for example \cite{OS} \S 6.4).

We denote the finite Weyl groups of type $\mathsf{BC}_n$ and $\mathsf{D} _n$ by
$W_n$ and $W _n ^{\mathsf D}$, respectively. We denote the affine Weyl groups of type $\mathsf{B}_n$ and $\mathsf{C}_n$ by $\widetilde{W} _n$ and $\widetilde{W}'_n$, respectively.

We define $\mathbb H _{n,m} := \mathbb H_n(q,q^m)$, and $\mathbb
H_{n,m} ^{\mathsf B} := \mathbb H_n'(q,q^{2m})$. The representation
theories of $\mathbb H _{n,m}$ and $\mathbb H_{n,m} ^{\mathsf B}$ are
known to be equivalent {to that of $\mathbb H_n (q,u,v)$ with $u = q^{(m + m')}$ and $v = q^{(m - m')}$ for an arbitrary $m' \in \mathbb R$}, once we fix a positive real central
character (Lusztig \cite{L2,L5}, see also \cite{K1} \S3 and \S \ref{rts} below for the geometric explanation). Moreover, these
equivalences preserve $W_n$-characters, and the notion of tempered modules and discrete series (see for example \cite{L5}, \S3). Since a central extension does not have an effect at the level of representations with positive real central character, we only deal with the representation theory of $\mathbb H_{n,m}$ and
$\mathbb H_{n} ^{\mathsf D}$ in this section and \S \ref{delimits}. In addition, we sometimes drop the subscript $m$ for the sake of simplicity.

We also need in \S \ref{sec:formal} the finite Hecke algebra of type $\mathsf{BC}_n$ with
parameters $q,u$ according to the Coxeter diagram
\begin{equation*}
\xymatrix{q\ar@{-}[r] &q\ar@{-}[r]&\dots\ar@{-}[r]
&q\ar@{-}[r] &q\ar@{=}[r] &u}
\end{equation*}
we denote it by $\mathbb H_n^f(q,u),$ or by $\mathbb
H_{n,m}^f$ when $u=q^{m}$. We denote by $\mathbb H_{n}^{\mathsf D, f}$
the $\psi$-invariant part of $\mathbb H_n^f(q,1)$. We remark that the
irreducible modules of $\mathbb H_{n,m}^f$ and $\mathbb H_{n}^{\mathsf D, f}$ are in one-to-one correspondence with $\widehat {W}_n$ and $\widehat{W_n^{\mathsf D}}$, respectively.

Let $R$ be a ring. Let $M$ be a $R$-module and let $L$ be an irreducible $R$-module. Then, we denote the Jordan-H\"older multiplicity of $L$ in $M$ as $R$-modules by $[M:L]_R$. If $R = \mathbb H _{n,m}$, then we drop the subscript $R$ for the sake of simplicity.

\subsection{Representation-theoretic setup}\label{rts}
A result of Bernstein and Lusztig says that the center of $\mathbb H_n$ is
\begin{equation}
Z(\mathbb H_{n}) =\mathbb C[e^\lambda;\lambda\in X^*(T_n)]^{W_n},
\end{equation}
so the central characters of $\mathbb H_{n}$ are parameterized by
$W_n$-conjugacy classes of semisimple elements $s \in T_n$. An element (or a central character) $s \in T _n$ is said to be positive real if {$\epsilon _i ( s ) > 0$} for $i=1,\ldots,n$.

We denote by $\mathfrak{Mod} _{\vec{q}} ^n$ the category of
finite-dimensional $\mathbb H _n$-modules with positive real central character. We set
$\mathfrak{Mod} _{\vec{q}} := \bigcup _{n \ge 1}
\mathfrak{Mod}_{\vec{q}}^n$. For a group $H$ and $h \in H$, we denote
by $R ( H )$ and $R ( H ) _h$ the representation ring of $H$ and its
localization along $h$, respectively.

For an $R ( T _n )$-module $M$, let $\Psi ( M ) \subset T _n$ denote the set of $R ( T _n )$-weights of $M$.  Moreover, we define $M [ s ] := R ( T_n ) _{s} \otimes _{R ( T_n )} M$ and
$$\mathsf{ch} M := \sum _{s\in T _n} \dim M [s] \left< s \right> \in \mathbb Z \left< T _n \right>,$$
where $\mathbb Z \left< T _n \right>$ is a formal linear combination
of elements of $T _n.$ 

We set
$$T _n ( \vec{q} ) := \{ s \in T _n ( \mathbb R ) \mid \epsilon _i ( s ) \in q _1 q ^{\mathbb Z} \text{ for each } i= 1, \ldots, n \}.$$
For every $s \in T _n ( \vec{q} )$, we define $v _s \in \mathfrak S _n$ as the minimal length element such that
$$v_s s := \mathrm{Ad} ( \dot{v _s} ) s \in T _n ( \vec{q} ) \text{ satisfies } \epsilon _1 ( v _s s ) \ge \epsilon _2 ( v _s s ) \ge \cdots \ge \epsilon _n ( v _s s ).$$
A marked partition ${\tau =} ( \mathbf J, \delta )$ of $n$ is a pair consisting of a collection $\mathbf J = \{ J _1, J _2, \ldots \}$ and a function $\delta : \{ 1, \ldots, n \} \to \{ 0,1 \}$ which satisfies
$$\bigsqcup _{j \ge 1} J _j = \{ 1, \ldots, n \}, \text{ and }\delta ( i ) = 1\text{ for at most one } i \in J \text{ for each } J \in \mathbf J.$$
For simplicity, we may denote $J _j \in \tau$ instead of $J _j \in \mathbf J$. For a marked partition $\tau =( \mathbf J, \delta )$, we define $\mathbf{v} _{\tau} = \mathbf{v} ^1 _{\tau} \oplus \mathbf{v} ^2 _{\tau}$ with
$$\mathbf{v} ^1 _{\tau} = \sum _{i \in  \{ 1, \ldots, n \}} \delta ( i ) \mathbf{v} _i \text{ and } \mathbf{v} ^2 _{\tau} = \sum _{J \in \mathbf J} \sum _{i, j \in J}  \delta _1 ( \# \{ k \in J \mid i \le k < j \} ) \mathbf{v} _{i,j},$$
where $\delta _1 ( i ) = 1$ ($i=1$) or $0$ ($i \neq 1$), and $\mathbf{v} _i \in V ^{(1)}, \mathbf{v} _{i,j} \in V ^{(2)}$ are $T$-eigenvectors of weights $\epsilon _i, \epsilon _i - \epsilon _j$, respectively. We put $\mathcal O _{\tau} := G \mathbf{v} _{\tau} \subset \mathbb V$.

We set $G ( \chi ) = G ( s ) := Z _G ( s )$. A marked partition $\tau$ is adapted to $a = (s, \vec{q} )$ or $s$ if we have $s \mathbf{v} _{\tau} = q _1 \mathbf{v} ^1 _{\tau} \oplus q \mathbf{v} ^2 _{\tau}$. We set $\mathsf{P} _n ( \vec{q} )$ as the set of pairs $\chi = (s,
\tau)$ consisting of $s \in T ( \vec{q} )$ and a marked partition
$\tau$ adapted to $s$. For $J \in \tau$, we put $\underline{J} := \{ \epsilon _j ( s ) \mid j \in J \}$, which we regard as a ($q$-)segment. We write $I \in \chi$ if $I = \underline{J}$ for some $J \in \tau$. We set $\mathcal O _{\chi} := \dot{v _s} G ( s ) \mathbf{v} _{\tau}$. Two marked partitions $\tau, \tau'$ adapted to $s$ are called equivalent (and we denote this by $\tau \sim \tau'$) if 
\begin{equation}\label{equiv}
\mathcal O _{(s,\tau)} = \mathcal O _{(s,\tau')}.
\end{equation}
This notion of equivalence can be translated in combinatorial terms on marked partitions; details are found in \cite{CK} \S1.4.

Two parameters $\chi,\chi'$ are called nested to each other if $I \Subset I'$  or $I' \Subset I$ holds for each $( I,I' ) \in \chi \times \chi'$.

For $\chi \in \mathsf{P} _n ( \vec{q} )$, let us denote by $\mathcal E _{\chi}$ the projection of $\mu
  _n ^{-1} ( \mathbf{v} _{\tau} ) ^{a}$ to $G _n / B _n$. Then, $M _{\chi} := H _{\bullet} ( \mathcal E _{\chi} )$ admits a structure of a module over the specialized algebra $\mathbb H _{a} = \mathbb H _{s} :=\mathbb H_n\otimes_{Z(\mathbb H_n)} \mathbb C_s$. We call $M_\chi$ the standard module attached to $\chi$ (cf. \cite{K1}).
We denote the irreducible $\mathbb H _{a}$-module corresponding to $\chi$ by $L _{\chi}$, which is a
quotient of $M _{\chi}$. We have a disjoint decomposition
$$\mathcal E _{\chi} = \bigsqcup _{s' \in W s \subset T _n} \mathcal E _{\chi} [ s' ], \text{ which induces } M _{\chi} = \bigoplus _{s' \in W s \subset T _n} M _{\chi} [ s' ] = \bigoplus _{s' \in W s \subset T _n} H _{\bullet} ( \mathcal E _{\chi} [s'] ).$$

Let $\mathfrak M _{\vec{q}}^n \subset \mathfrak{Mod} _{\vec{q}} ^n$ denote the full subcategory generated by simple modules corresponding to $\mathsf{P} _n ( \vec{q} )$. This is the category of $\mathbb H_{n,m}$-modules with central characters in $T ( \vec{q})$ (cf. \cite{K1}). We denote by $K (\mathfrak{M} _{\vec{q}} ^n)$ its Grothendieck group.

We put $\mathsf{P} (
\vec{q} ) := \cup _{n \ge 1} \mathsf{P} _n ( \vec{q} )$. We have a natural map $\mathsf{P} ( \vec{q} ) \mapsto \mathsf Q ( q )$ sending a pair $( s, \tau )$ with $\tau = ( \mathbf J, \delta )$ to $\{ \underline{J} \mid J \in \mathbf J \}$.
 We sometimes identify $\mathbf I \in \mathsf{Q} ( q )$ with its preimage in $\mathsf{P} ( \vec{q}
  )$ with trivial markings. We denote the set of such preimages by $\mathsf{P}^0 ( \vec{q} )$. We denote the size of a marked partition
by $| \tau |$ (or $| \chi |$). 

{Similarly, for $\mathbf I \in \mathsf{Q} ( q )$, we denote the corresponding standard and irreducible $\mathbb H _{n} ^{\mathsf A}$-modules by $M ^{\mathsf A} _{\mathbf I}$ and $L ^{\mathsf A} _{\mathbf I}$, respectively. For a segment $I$, we define its transpose to be the segment ${} ^{\mathtt t}I=\{ b ^{-1} \mid b \in I \}.$ For a multisegment $\mathbf I$, we define its transpose ${} ^{\mathtt t} \mathbf I$ to be the multisegment $\{{}^{\mathtt t}I\mid I\in\mathbf I \}$ (with multiplicity counted). We sometimes denote $M ^{\mathsf A} _{{} ^{\mathtt t}\mathbf I}$ or $L ^{\mathsf A} _{{} ^{\mathtt t}\mathbf I}$ by ${} ^{\mathtt t}M ^{\mathsf A} _{\mathbf I}$ and ${} ^{\mathtt t}L ^{\mathsf A} _{\mathbf I}$, respectively.

We define ${} ^0 \mathfrak{Mod} _{\vec{q}} ^n$ to be the category of
finite-dimensional $\mathbb H _n ^{\mathsf A}$-modules with positive real central characters. For each $\nu \in \mathbb R$, let $\mathsf{St} _n ^{\nu}$ denote the (central) twists of Steinberg representation
of $\mathbb H _n ^{\mathsf A}$ so that the corresponding unique segment is $I _n ^{\nu} := \{ q ^{\nu}, q ^{\nu + 1}, \ldots, q ^{\nu + n - 1} \}$. We denote the central character of $\mathsf{St} _n ^{\nu}$ by $\mathsf s ^{\nu} _n$ and the central character of ${} ^{\mathtt t} \mathsf{St} _n ^{\nu} = {} ^{\mathtt t}( \mathsf{St} _n ^{\nu} )$ by $\bar{\mathsf s} ^{\nu} _n$.

We have an exact functor
$${} ^0 \mathfrak{Mod} _{\vec{q}} \times \mathfrak{Mod} _{\vec{q}} \ni ( M _1, M _2 ) \mapsto M _1 \boxast M _2 \in \mathfrak{Mod} _{\vec{q}}$$
given by the parabolic induction. By abuse of notation, we also denote the parabolic induction of type $\mathsf{A}$ affine Hecke algebras as
$${} ^0 \mathfrak{Mod} _{\vec{q}} \times {} ^0 \mathfrak{Mod} _{\vec{q}} \ni ( M _1, M _2 ) \mapsto M _1 \boxast M _2 \in {} ^0 \mathfrak{Mod} _{\vec{q}}.$$

For $\chi _i = ( s_i, \mathbf J _i, \delta _i ) \in \mathsf{P} _{n_i} ( \vec{q} )$ ($i=1,2$), we set $\chi _1 \oplus \chi _2 := ( s_1 \times s _2, \mathbf J _1 \sqcup \mathbf J _2 [ n_1 ], \delta _{12} )$, where $\mathbf J _2 [n_1]$ is the collection of subsets of $\{n_1 + 1, \ldots, n_1 + n _2\}$ obtained from $\mathbf J _2$ by uniformly adding $n_1$, and $\delta _{12}$ is the marking such that $\delta_{12} \mid _{\mathbf J _1} = \delta _{1}$ and $\delta_{12} ( k ) \mid _{\mathbf J _2[n_1]} = \delta _{2} ( k - n_1 )$ for each $k$.

\subsection{Quotients of parabolic induction}
The goal of this subsection is Proposition \ref{iest}, which gives necessary conditions for an irreducible $\mathbb H_{n,m}$-module to appear as the quotient of a parabolically induced module. Before we prove this result, we need to fix notation and recall known results about quiver representations of type $\mathsf A$. Throughout this subsection, we assume that $m$ is {\it generic}.

For $\chi = ( s, \tau ) \in \mathsf{P} ( \vec{q} )$ such that $\tau = ( \mathbf J, \delta )$, we associate $\chi ^0 = ( s, \tau^0 ) \in \mathsf{P}^0 ( \vec{q} )$ with $\tau ^0 = ( \mathbf J, 0 )$.

Let $W [ \chi ]$ be the set of elements $w$ of $W _{| \chi |}$ such that $w
^{-1} s ^{-1} \in \Psi ( L _{\chi} )$.  For $\vec{n} = (n_1, n_2)$, $n=n_1+n_2$ with $n_1,n_2 \ge 0$, we define $\mathfrak S ^{\vec{n}}$ (resp. $W ^{\vec{n}}$) as a set of minimal length representative of $\mathfrak S _{n} / ( \mathfrak S _{n_1} \times \mathfrak S _{n_2} )$ (resp. $W _{n} / ( \mathfrak S _{n_1} \times W _{n_2} )$) inside $\mathfrak S _{n}$ (resp. $W _{n}$).

For $\chi' = ( s', \tau' ) \in \mathsf{P} (\vec{q})$, we say $\chi \le \chi'$ if and only if $v_s s = v _{s'} s'$
and $\mathcal O _{\chi} \subset \overline{\mathcal O _{\chi'}}$. We refer to this (partial) ordering as the closure ordering. We define
$$W [ \chi ] ^{\circ} := W [ \chi ] - \bigcup _{\chi' > \chi} W [ \chi' ].$$

For a pair $( \chi _1, \chi _2 ) \in \mathsf{Q} ( q ) \times \mathsf{P} ( \vec{q} )$, we define
\begin{equation}
W [ \chi_1, \chi_2 ] := \{ ( w_1 \times w _2 ) \in W _{| \chi _1 |
 + | \chi _2 |} \mid w _i \in W [ \chi _i ] ^{\circ} \}.
\end{equation}

\begin{lemma}
For each $\chi \in \mathsf{P} ( \vec{q} )$, we have $W [ \chi ] ^{\circ} \neq \emptyset$. Moreover, we have $\mathfrak S _n \cap W [ \chi ] ^{\circ} \neq \emptyset$ if $\chi = {} ^{\mathtt t} \mathsf{St} _n ^{\nu + m}$ for some $\nu \in \mathbb Z$.
\end{lemma}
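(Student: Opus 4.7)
The plan is to produce an explicit element $w_\chi \in W_{|\chi|}$ lying in $W[\chi]^\circ$ by exploiting the geometric description of $L_\chi$ from \cite{K1}. Recall that $L_\chi$ is the unique irreducible quotient of $M_\chi = H_\bullet(\mathcal{E}_\chi)$, and its $R(T_n)$-weight set $\Psi(L_\chi)$ is controlled by $T_n$-fixed points of $\mathcal{E}_\chi$. I first isolate a distinguished $T_n$-fixed point of $\mathcal{E}_\chi$ whose associated character is of the form $w_\chi^{-1} s^{-1}$ and which survives in the irreducible quotient — concretely, the one attached to an open $B_n$-orbit component of $\mu_n^{-1}(\mathbf v_\tau)^a$ that is compatible with the markings of $\tau$. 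By construction $w_\chi^{-1} s^{-1} \in \Psi(L_\chi)$, so $w_\chi \in W[\chi]$.

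The crucial point for $w_\chi \in W[\chi]^\circ$ is that this weight characterises $\chi$ within its central-character class. Indeed, if $\chi' > \chi$, then $\mathcal O_\chi \subsetneq \overline{\mathcal O_{\chi'}}$ and $\mathbf v_{\tau'}$ is strictly more generic than $\mathbf v_\tau$. The fixed points contributing to $\Psi(L_{\chi'})$ are then the ones lying over $\mathbf v_{\tau'}$, and the combinatorial data of $\tau$ (segments $\underline J$ together with their markings) can be read off from $w_\chi^{-1} s^{-1}$. Hence the distinguished weight attached to $\tau$ cannot appear in $\Psi(L_{\chi'})$, so $w_\chi \notin W[\chi']$ and therefore $w_\chi \in W[\chi]^\circ$.

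For the Steinberg case $\chi = {}^{\mathtt t}\mathsf{St}_n^{\nu+m}$, the marked partition consists of a single block $\{1,\ldots,n\}$ with trivial marking $\delta \equiv 0$, and $\underline J$ is the inverse of the segment $I_n^{\nu+m}$. Because $\delta \equiv 0$, the distinguished fixed point lives entirely in the $V_n^{(2)}$-part of $\mathbb V_n$, so no sign-reflection ($2\epsilon_i$-reflection) is ever needed to realise the weight $w_\chi^{-1} s^{-1}$: one may take $w_\chi = v_s^{-1}$, which is a permutation and hence lies in $\mathfrak S_n \subset W_n$. This yields $\mathfrak S_n \cap W[\chi]^\circ \neq \emptyset$.

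The principal obstacle is to verify that the chosen fixed point indeed produces a weight of the irreducible quotient $L_\chi$, and not merely of a proper submodule of $M_\chi$ that is killed when passing to $L_\chi$. This should be handled by an induction on the closure order: for $\chi$ maximal (open orbit) one has $L_\chi = M_\chi$ and any weight of $M_\chi$ works, and for general $\chi$ one peels off the contributions of strictly larger $\chi'$ using the combinatorics of marked partitions recorded in \cite{CK} \S1.4 and the description of the radical of $M_\chi$ via the $\chi' > \chi$ components of the relevant Springer-type variety. This same analysis pins down the precise $w_\chi$ and makes the Steinberg specialisation transparent.
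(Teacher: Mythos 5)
There is a genuine gap at the heart of your argument. The decisive step is your claim that, because ``the combinatorial data of $\tau$ (segments $\underline J$ together with their markings) can be read off from $w_\chi^{-1}s^{-1}$,'' the distinguished weight cannot occur in $\Psi(L_{\chi'})$ for $\chi'>\chi$. This is asserted, not proved, and as stated it is not true: $w_\chi^{-1}s^{-1}$ is just an element of $T_n$ (a multiset of eigenvalues up to the bookkeeping of $w_\chi$), and it does not encode the marked partition; simple modules $L_{\chi'}$ with the same central character do in general share many $R(T_n)$-weights with $L_\chi$. The entire content of the lemma is to exhibit a weight of $L_\chi$ that is \emph{not} a weight of any $L_{\chi'}$ with $\chi'>\chi$, and you have replaced that with the unsupported statement that your chosen weight ``characterises $\chi$.'' Likewise, the issue you correctly flag as the principal obstacle --- that the chosen fixed-point weight survives into the irreducible quotient $L_\chi$ rather than dying in the radical of $M_\chi$ --- is only sketched, and the proposed remedy (``peel off the contributions of strictly larger $\chi'$ via the radical of $M_\chi$'') restates the problem rather than solving it: controlling which fixed-point classes of $\mathcal E_\chi$ contribute to $L_\chi$ versus to the composition factors $L_{\chi'}$, $\chi'>\chi$, is exactly the nontrivial input that is missing.

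For comparison, the paper's proof is a direct appeal to an explicit weight criterion: by \cite{CK} Proposition 4.9 there are concrete combinatorial conditions on $w$ (in terms of the segments of $\chi$) guaranteeing $w\in W[\chi]^{\circ}$, and one only has to check that such a $w$ can be chosen, which is straightforward. The second assertion follows because ${}^{\mathtt t}\mathsf{St}_n^{\nu+m}$ corresponds to the regular nilpotent orbit in $\mathfrak{gl}_n$; at generic $m$ no markings (hence no sign reflections) intervene, so the conditions can be met by a permutation, giving $\mathfrak S_n\cap W[\chi]^{\circ}\neq\emptyset$. Your Steinberg discussion is in the right spirit, but it inherits the same unproved premise, since it relies on the distinguished fixed point and the uniqueness claim from the first part. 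To repair the argument you would need to either prove a separating-weight statement equivalent to \cite{CK} Proposition 4.9 or invoke it directly.
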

\begin{proof}
For the first assertion, it is enough to choose $w \in W$ so that the conditions of \cite{CK} Proposition 4.9 are satisfied, and this is straight-forward. The second assertion is also straight-forward since $\chi$ corresponds to a regular nilpotent orbit in $\mathfrak{gl} _n$.
\end{proof}

For $s \in T _n ( \vec{q} )$, let $\mathbf E ^{s} ( i )$ denote the $s$-eigenspace of
$V ^{(1)} _n$ with its eigenvalue $q_1 q^i$. We have a natural identification
$$\mathbb V _{n} ^{(s,\vec{q})} \cong \mathbf E ^{s} ( 0 ) \oplus
\mathbf{Rep} ^{s},\text{ where }\mathbf{Rep}^{s}= \bigoplus _{i \in \mathbb Z} \mathrm{Hom} ( \mathbf E ^{s} ( i ), \mathbf E ^{s} ( i+1 ) ),$$
compatible with the $G _n ( s )$-action.

For each $w \in W _{n}$, we set  ${}^w\mathbb V_n^+=\dot w^{-1}\mathbb V_n^+$, and let us denote by $\mathbf{Rep} ^{s} _w$ the image of $( {} ^w \mathbb V ^+ _{n} \cap \mathbb V _{n} ^{(v_s s,\vec{q})} )$ in $\mathbf{Rep} ^{s}$ under the projection map $\mathbb V ^{(s,\vec{q})} \to \mathbf{Rep} ^{s}$. By abuse of notation, in place of $\mathbf E ^{s}, \mathbf{Rep}^{s}, \ldots$, we may write $\mathbf E ^{\chi}, \mathbf{Rep}^{\chi}, \ldots$ when we have a parameter $\chi = ( s, \tau )$.

For each $w \in W _n$ and $s \in T _n ( \vec{q} )$, we define $\tau ^s _w$ to be a marked partition adapted to $v_s s$ so that
$$\mathcal O _{\tau ^s _w} \cap {} ^w \mathbb V ^+ _n \cap \mathbb V ^{(v_s s,\vec{q})} _n \subset {} ^w \mathbb V ^+  _n \cap \mathbb V ^{(v_s s,\vec{q})} _n \subset \mathbf E ^{s} ( 0 ) \oplus \mathbf{Rep}^{s}$$
is open dense. It is clear that $\tau ^s _w$ is well-defined up to equivalence (since there are only finitely many $G( s )$-orbits in $\mathbb V ^{(s,\vec{q})}$). We
set $\chi _w := ( v _s s, \tau ^s _w )$ for $\chi = ( s, \tau ) \in \mathsf{P} ( \vec{q} )$. (Note that $\chi _w$ depends on $s$ and $w$, but not on $\tau$.)

For $\chi = ( s, \tau ) \in \mathsf{P} ( \vec{q} )$, we set
$$\rho _{ij} ( \chi ) := \# \{ I \in \chi \mid q_1 q ^i, q_1 q ^j \in I \} \text{ for every } j > i.$$

\begin{theorem}[Abeasis-Del Fra \cite{AD}, Zelevinsky \cite{Z2}]\label{ADZ}
For each $\chi = ( s, \tau ) \in \mathsf{P} ( \vec{q} )$, the collection $\{ \rho _{ij} ( \chi ) \} _{i,j}$ determines $\tau^0$ uniquely. Moreover, we have
\begin{enumerate}
\item $\mathcal O _{( \chi _w ) ^0} \subset \overline{\mathcal O _{\chi ^0}}$ if and only if $\dim A ^{j - i} ( \mathbf E ^s ( i ) ) \le \rho _{ij} ( \chi ) \text{ for every } A \in \mathbf{Rep} _w ^{s}$;
\item If 1) holds, then we have $\mathcal O _{\chi ^0} = \mathcal O _{( \chi _w ) ^0}$ if and only if some $A \in \mathbf{Rep} _w ^{s}$ attains all the equalities in the condition 1). 
\end{enumerate}
Moreover, we have $\mathcal O _{\chi} \subset \overline{\mathcal O _{\chi'}}$ only if $| \chi | = | \chi' |$ and $\rho _{ij} ( \chi ) \le \rho _{ij} (\chi')$ for every $i,j$.
\end{theorem}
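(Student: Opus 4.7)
The plan is to translate all three claims into statements about orbit closures of representations of the equi-oriented $\mathsf{A}_\infty$-quiver, and then to invoke the classical theorem of Abeasis-Del Fra in the rank-function form given by Zelevinsky.

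Because $\tau^0$ has trivial marking, $\mathbf{v}_{\tau^0}^1 = 0$, and $\mathbf{v}_{\tau^0}$ sits entirely inside $\mathbf{Rep}^s = \bigoplus_i \mathrm{Hom}(\mathbf{E}^s(i),\mathbf{E}^s(i+1))$. This space is precisely the representation variety of the equi-oriented $\mathsf{A}_\infty$-quiver with dimension vector $d_i := \dim \mathbf{E}^s(i)$, and for $m$ generic the centralizer $G_n(s)$ acts on $\mathbf{Rep}^s$ through the full quiver automorphism group $\prod_i \mathrm{GL}(\mathbf{E}^s(i))$. Under Gabriel's correspondence, each segment $\underline{J}\in\chi$ matches the indecomposable quiver representation supported on the arc $\{i:q_1q^i\in\underline{J}\}$, and $\mathbf{v}_{\tau^0}$ corresponds to the direct sum of these indecomposables, one per $J\in\tau$.

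For this quiver representation, the composition $A^{j-i} = A_{j-1}\circ\cdots\circ A_i:\mathbf{E}^s(i)\to\mathbf{E}^s(j)$ has rank equal to the number of indecomposable summands whose supporting arc contains both $i$ and $j$, which is by definition $\rho_{ij}(\chi)$. The uniqueness assertion for $\tau^0$ is then the standard fact that, for nilpotent representations of a type-$\mathsf{A}$ quiver, the data $(\mathrm{rk}\,A^{j-i})_{i<j}$ is a complete isomorphism invariant. Abeasis-Del Fra's theorem asserts that for two such representations $A,A'$ sharing a dimension vector, $A\in\overline{\mathrm{Orb}(A')}$ iff $\mathrm{rk}\,A^{j-i}\le\mathrm{rk}\,(A')^{j-i}$ for all $i<j$, with equality of orbits iff every inequality is an equality. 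By construction, $\mathcal{O}_{(\chi_w)^0}$ is the open $G_n(s)$-orbit whose closure contains the image of $\mathbf{Rep}_w^s$ in $\mathbf{Rep}^s$; upper semicontinuity of each $A\mapsto\mathrm{rk}\,A^{j-i}$ therefore shows that $\mathcal{O}_{(\chi_w)^0}\subset\overline{\mathcal{O}_{\chi^0}}$ is equivalent to the bound $\mathrm{rk}\,A^{j-i}\le\rho_{ij}(\chi)$ holding uniformly for $A\in\mathbf{Rep}_w^s$, which is exactly (1). Claim (2) is the equality case, where maximality must be attained on some single $A\in\mathbf{Rep}_w^s$ by the same semicontinuity.

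For the closing assertion, $\mathcal{O}_\chi\subset\overline{\mathcal{O}_{\chi'}}$ forces $|\chi|=|\chi'|$ by matching total $T_n$-weight-space dimensions; projecting the inclusion onto $\mathbf{Rep}^s$ yields an inclusion of $\prod\mathrm{GL}$-orbit closures, from which $\rho_{ij}(\chi)\le\rho_{ij}(\chi')$ follows from upper semicontinuity of composition ranks. The main obstacle is the geometric reduction to pure quiver representations: one has to verify that the $G_n(s)$-action on $\mathbf{Rep}^s$ genuinely exhausts the full $\prod_i\mathrm{GL}(\mathbf{E}^s(i))$-orbit structure, which is clean when $s$ has no $\pm1$-eigenvalue on $V^{(1)}_n$ so that no residual symplectic factor survives; this holds automatically under the standing hypothesis that $m$ is generic, because $q_1q^i=\pm 1$ would force $m\in\tfrac{1}{2}\mathbb{Z}$.
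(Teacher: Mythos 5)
Your proposal is correct and follows essentially the same route as the paper, which states Theorem \ref{ADZ} without proof as a reformulation of the classical results of Abeasis--Del Fra and Zelevinsky: your identification of $\mathbf{Rep}^s$ with the representation space of an equi-oriented type $\mathsf{A}$ quiver acted on by the full product of general linear groups (legitimate precisely because $m$ generic excludes eigenvalues $\pm 1$ and mutually inverse eigenvalue pairings in $Z_{G_n}(s)$), of $\rho_{ij}$ with the composite ranks, and the appeal to the rank-function degeneration criterion is exactly the intended translation. The only nitpick is terminological: the functions $A\mapsto \dim A^{j-i}(\mathbf{E}^s(i))$ are lower (not upper) semicontinuous, though you use them in the correct way, namely that ranks can only drop under specialization, so generic points of $\mathbf{Rep}^s_w$ attain the maximal ranks and points of an orbit closure have ranks bounded by those on the orbit.
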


\begin{definition}[Elementary modification]\label{em}
Let $\tau = ( \mathbf J, \delta )$ be a marked partition {adapted to $s$}. For $J_1, J_2 \in \mathbf J$, we define another marked partition $\varepsilon_{J_1, J_2} ( \tau ) := ( \mathbf J', \delta' )$ as the maximal marked partition (with respect to the closure ordering) adapted to $s$ which satisfies:
$$\mathbf J ^{\circ} := \mathbf J - \{ J_1,J_2 \} \subset \mathbf J', \delta ( J_1 \cup J_2 ) = \delta' ( \mathbf J' - \mathbf J ^{\circ} ), \text{ and } \mathcal O _{(s,\tau)} \subsetneq \overline{\mathcal O _{(s,\varepsilon_{J_1, J_2} ( \tau ))}}.$$
Since both $\tau$ and $\varepsilon_{J_1, J_2} ( \tau )$ are adapted to $s$, we put $\varepsilon_{\underline{J_1}, \underline{J_2}} ( \chi ) := ( s, \varepsilon_{J_1, J_2} ( \tau ) )$ if $\chi = (s, \tau)$. 
\end{definition}

\begin{lemma}
Keep the setting of Definition \ref{em}. If $\{ J'_1, J'_2 \} = \mathbf J' - \mathbf J^{\circ}$, then we have
\begin{itemize}
\item $\underline{J' _1} = \underline{J _1} \cup \underline{J _2}$ and $\underline{J' _2} = \underline{J _1} \cap \underline{J _2}$ by swapping $J' _1$ and $J' _2$ if necessary;
\item $J' _i$ $(i=1,2)$ is marked if and only if $q_1 \in \underline{J' _i}$ and $\delta ( J _1 \cup J _2 ) = \{0,1\}$.
\end{itemize}
\end{lemma}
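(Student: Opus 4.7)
The lemma has two assertions and I would prove them in turn, both relying on the decomposition $\mathbb V ^{(s,\vec q)} _n \cong \mathbf E ^s (0) \oplus \mathbf{Rep} ^s$ and the orbit-closure criterion of Theorem \ref{ADZ}. For the first bullet ($\underline{J'_1} = \underline{J_1} \cup \underline{J_2}$ and $\underline{J'_2} = \underline{J_1} \cap \underline{J_2}$), I would forget the markings and reduce the statement to a calculation on the type $\mathsf A$ quiver side. By Theorem \ref{ADZ}, orbits on $\mathbf{Rep}^s$ are determined by the rank function $\rho_{ij}$, and the closure ordering is given by the pointwise inequality $\rho_{ij}(\chi)\le\rho_{ij}(\chi')$. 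A direct computation then shows that among all pairs of $q$-segments with the same total multiset of weights as $(\underline{J_1}, \underline{J_2})$, the pair $(\underline{J_1}\cup\underline{J_2},\,\underline{J_1}\cap\underline{J_2})$ uniquely realizes the maximum of the $\rho_{ij}$ while still being a strict degeneration of $\{\underline{J_1},\underline{J_2}\}$; this requires the two segments to be sufficiently overlapping, otherwise no non-trivial elementary modification exists. This is the classical minimal degeneration of a two-segment configuration in Zelevinsky's segment calculus, and the maximality clause of Definition \ref{em} pins this pair down uniquely.

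For the second bullet (the marking rule), the adaptedness condition $s\mathbf v_\tau = q_1 \mathbf v^1_\tau \oplus q\mathbf v^2_\tau$ forces any marked index $j\in J$ to satisfy $\epsilon_j(s) = q_1$, and hence $q_1\in\underline{J}$; in particular no $J'_i$ with $q_1\notin\underline{J'_i}$ can carry a marking. For the converse, I would argue that once the quiver component has been fixed by the first bullet, the closure-maximizing choice of the vector part in $\mathbf E^s(0)$ places a basis vector at every available $q_1$-position in the new blocks, since any smaller pattern sits in the closure of the fully-marked one under the (reductive) $G(s)$-action on $\mathbf E^s(0)$. Combined with the image-preservation requirement $\delta(J_1\cup J_2) = \delta'(\mathbf J'-\mathbf J^{\circ})$, which in the case $\delta(J_1\cup J_2)=\{0,1\}$ forces at least one marking in the new blocks and prohibits placing marks beyond the $q_1$-positions, this yields exactly the stated rule.

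The main obstacle is the coupling between the vector part $\mathbf E^s(0)$ and the quiver part $\mathbf{Rep}^s$: although the decomposition is as $G(s)$-modules and the maximization on each factor is individually straightforward, one must verify that a product of closure-maximal orbits actually realizes the closure-maximal adapted $G(s)$-orbit inside $\mathbb V^{(s,\vec q)}_n$ rather than a smaller sub-orbit cut out by some cross-factor compatibility. The combinatorial reformulation of the equivalence (\ref{equiv}) given in \cite{CK} \S 1.4 makes this tracking manageable, and simultaneously confirms that the output $(\mathbf J', \delta')$ is well-defined as a marked partition.
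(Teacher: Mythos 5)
Your argument is correct and is essentially the paper's own proof spelled out: the paper disposes of this lemma with the single line that it is straightforward from Theorem \ref{ADZ} and Definition \ref{em}, which is exactly the pair of ingredients you use (the $\rho_{ij}$-criterion for the underlying segments, and adaptedness together with the marking-image constraint $\delta(J_1\cup J_2)=\delta'(\mathbf J'-\mathbf J^{\circ})$ and the maximality clause for the marking rule). The only slip is directional: the pair $\left(\underline{J_1}\cup\underline{J_2},\,\underline{J_1}\cap\underline{J_2}\right)$ is not a degeneration of $\{\underline{J_1},\underline{J_2}\}$ but the larger orbit, whose closure contains $\mathcal O_{(s,\tau)}$ as Definition \ref{em} requires (it maximizes the $\rho_{ij}$); this does not affect the substance of your argument.
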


\begin{proof}
Straight-forward from Theorem \ref{ADZ} and Definition \ref{em}.
\end{proof}

The following is a reformulation of results from \cite{AD, ADK}:

\begin{theorem}[Abeasis-Del Fra-Kraft]\label{ADK}
For each $\chi = ( s, \tau ) \in \mathsf{P}^0 ( \vec{q} )$ and $I,I' \in \chi$, we set $\chi' := \varepsilon _{I,I'} ( \chi )$ $($Definition \ref{em}$)$. Assume $\chi \neq \chi'$ and $\max I < \max I'$, and we set $I ^{\flat} := I \cap I'$ $($if $I \cap I' \neq \emptyset)$ or $\{ \max I, \min I'\}$ $($if $q \max I = \min I')$.
\begin{enumerate}
\item We have $\dim \mathcal O _{\chi'} = \dim \mathcal O _{\chi} + 1$ if there exists no $I'' \in \chi'$ such that
$$I ^{\flat} \subsetneq I'' \subsetneq I \cup I', \text{ or } I'' = I, I', \{ \max I, \min I' \}.$$
\item If 1)
  holds, then $\overline{\mathcal O _{\chi'}}$ is regular along
  $\mathcal O _{\chi}$ and the defining equation is locally given
  as:
\begin{equation}\label{critsp}
f \in \mathrm{Hom} ( \wedge ^{k} \mathbf E ^{s} ( i ), \wedge ^{k} \mathbf E ^{s} ( j ) )^*,
\end{equation}
where $q_1 q^i = \min I ^{\flat}, q_1 q^j = q \max I ^{\flat}$, and $k := \rho _{ij} ( \chi' )$.
\end{enumerate}
\end{theorem}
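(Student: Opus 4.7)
The plan is to transport the statement to the representation theory of type $\mathsf{A}$ quivers, where these are classical results of Abeasis--Del Fra and Kraft. Since $\chi \in \mathsf{P}^0(\vec q)$ has trivial markings, the variety $\mathbf{Rep}^{s} = \bigoplus_{i} \mathrm{Hom}(\mathbf E^s(i), \mathbf E^s(i+1))$ is precisely the representation space of an equi-oriented type $\mathsf{A}$ quiver with vertex dimensions $d_i = \dim \mathbf E^s(i)$, and the orbit $\mathcal O_{\chi^0}$ corresponds to the isomorphism class $\bigoplus_{I\in\chi} M(I)$, where $M(I)$ is the unique indecomposable with support $I$ (Gabriel's theorem). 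Under this dictionary, Definition \ref{em} becomes the standard minimal-move operation that replaces $M(I)\oplus M(I')$ with $M(I\cup I')\oplus M(I\cap I')$, with the boundary case $q\max I = \min I'$ corresponding to a length-one extension and $I^\flat$ being the two-element segment $\{\max I, \min I'\}$.

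For part (1), I would invoke Theorem \ref{ADZ} to rewrite the closure order via the rank function $\rho_{ij}$. An orbit $\mathcal O_{\chi''^0}$ strictly between $\mathcal O_{\chi^0}$ and $\mathcal O_{\chi'^0}$ must satisfy $\rho_{ij}(\chi) \le \rho_{ij}(\chi'') \le \rho_{ij}(\chi')$ with a strict inequality on each side; since $\varepsilon_{I,I'}$ perturbs $\rho_{ij}$ only in the range determined by $I^\flat$ and $I\cup I'$, such a $\chi''$ would have to arise from a segment $I''$ in the excluded set of the hypothesis. Ruling these out forces $\mathcal O_{\chi'^0}$ to cover $\mathcal O_{\chi^0}$ in the closure order, and by a standard result of \cite{AD} every covering in a type $\mathsf A$ orbit poset raises dimension by exactly $1$.

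For part (2), I would appeal to the local slice description in \cite{ADK}: at a minimal degeneration, the single rank that drops is that of the concatenated arrow $\mathbf E^s(i)\to\mathbf E^s(j)$, going from $\rho_{ij}(\chi')$ generically to $\rho_{ij}(\chi) = \rho_{ij}(\chi')-1$ on $\mathcal O_{\chi^0}$. Writing $k = \rho_{ij}(\chi')$, the closure $\overline{\mathcal O_{\chi'^0}}$ is cut out near $\mathcal O_{\chi^0}$ by the vanishing of the $k\times k$ minors of that concatenated arrow, i.e., by a section of $\mathrm{Hom}(\wedge^k \mathbf E^s(i), \wedge^k \mathbf E^s(j))^*$. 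That some one of these minors is transverse to $\mathcal O_{\chi^0}$ (so that $\overline{\mathcal O_{\chi'^0}}$ is regular there and not merely set-theoretically defined) is precisely the content of the slice computation in \cite{ADK}.

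The main obstacle is the boundary case where $I$ and $I'$ do not overlap but only abut via $q\max I = \min I'$: here the degeneration $M(I)\oplus M(I')\rightarrow M(I\cup I')\oplus M(I^\flat)$ is realized by an Auslander--Reiten triangle rather than by a direct rank-drop inside an overlap, so one must verify that the local defining equation still takes the single-minor form with the stated indices $i,j$ and rank $k$. I would handle this by isolating the sub-quiver supported on $I\cup I'$ and reducing to the generic overlapping case there, while keeping careful bookkeeping to ensure $I^\flat$ indeed plays the role of the ``inner'' segment in the resulting minor condition.
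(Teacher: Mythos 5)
The paper offers no internal proof of this statement: it is explicitly presented as ``a reformulation of results from \cite{AD, ADK}'', so the only content to supply is the dictionary with equi-oriented type $\mathsf A$ quiver orbits, which your first paragraph sets up correctly (trivial markings do put $\mathcal O_{\chi}$ inside $\mathbf E^s(0)\oplus\mathbf{Rep}^s$ with the $G(s)$-action being the standard base-change action, and $\varepsilon_{I,I'}$ is the union/intersection move). However, your derivation has two genuine gaps. For part (1), the claim that ``every covering in a type $\mathsf A$ orbit poset raises dimension by exactly $1$'' is false: for the $A_2$ quiver with dimension vector $(n,n)$ the orbits are the rank strata, the poset is a chain, and the covering of the rank-$(r-1)$ stratum by the rank-$r$ stratum has codimension $2(n-r)+1>1$ whenever $r<n$. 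In the paper's language this is exactly the move on $I=\{q_1q^i\}$, $I'=\{q_1q^{i+1}\}$ when further copies of $I,I'$ survive in $\chi'$ --- precisely the configurations excluded by the hypothesis of (1). So the hypothesis does more than force minimality of the degeneration; it is what makes the codimension equal to $1$, and the argument must go through the codimension formula $\dim\mathcal O_{\chi'}-\dim\mathcal O_{\chi}=\dim\End(M_{\chi})-\dim\End(M_{\chi'})$ (the Hom-counts of \cite{AD,ADK}), in which the excluded segments $I''$ are exactly those whose Hom-contributions would push the codimension above $1$. Your proposal never performs this computation, and without it part (1) does not follow.

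For part (2), your identification of ``the single rank that drops'' is incorrect at the pair named in the statement. With $q_1q^i=\min I^\flat$ and $q_1q^j=q\max I^\flat$, in the overlapping case one has $\min I^\flat=\min I'$, and a direct count shows $\rho_{ij}(\chi)=\rho_{ij}(\chi')$: the generic rank of the composite arrow drops (by one) only at pairs $(x,y)$ with $q_1q^x\in I\setminus I'$ and $q_1q^y\in I'\setminus I$, of which there are in general many and none equal to the stated pair when $\min I<\min I'$. This is also how the paper itself invokes the result: in the proof of Proposition \ref{cr} the relevant rank function is $\rho_{m_-,\,m_++1}$ with $q_1q^{m_-}=\min I$, i.e.\ a pair in the genuine jump region, see (\ref{rhoest}). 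Consequently the minor you propose as the local equation in (\ref{critsp}) would not even vanish identically on $\mathcal O_{\chi}$ in the overlapping case, and the slice/transversality argument has to be carried out at a correctly chosen pair (and the reduction of the abutting case must also be checked against this). Both points need to be repaired before the proposal constitutes a proof; alternatively, as the paper does, one sets up the dictionary and cites \cite{AD,ADK} directly for the codimension-one criterion and the local minor equation.
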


We also need the following result.

\begin{lemma}[\cite{CK} Corollary 4.10]\label{ind}
The map
$$K( \mathfrak M_{\vec{q}}^n ) \ni M \mapsto \mathsf{ch} M \in \mathbb Z \left< T _n \right>$$
is an injection. \hfill $\Box$
\end{lemma}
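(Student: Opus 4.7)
The plan is a two-step reduction followed by a geometric argument, paralleling the strategy of \cite{CK}. First, by the Bernstein-Lusztig description of $Z(\mathbb H_n)$ recalled in \S \ref{rts} and the fact that we work with positive real central characters, every $M \in \mathfrak M_{\vec q}^n$ with central character $s \in T_n(\vec q)$ has $R(T_n)$-weight support $\Psi(M)$ contained in the $W_n$-orbit of $s$. Hence the images $\mathsf{ch} L_\chi$ attached to parameters $\chi \in \mathsf{P}_n(\vec q)$ with different $W_n$-orbits of central characters are supported on disjoint subsets of $T_n$, and it suffices to fix a semisimple $s$ and prove linear independence of $\{\mathsf{ch} L_\chi\}$ for $\chi = (s,\tau) \in \mathsf{P}_n(\vec q)$ sharing this $s$.

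Second, from the geometric construction of \cite{K1,K2}, the standard module $M_\chi = H_\bullet(\mathcal E_\chi)$ has $L_\chi$ as its distinguished irreducible quotient, with triangular Jordan-H\"older multiplicities: $[M_\chi:L_\chi] = 1$ and $[M_\chi:L_{\chi'}] = 0$ unless $\chi' \le \chi$ in the closure ordering on $\mathsf{P}_n(\vec q)$. Thus the change-of-basis matrix between $\{[M_\chi]\}$ and $\{[L_\chi]\}$ in $K(\mathfrak M_{\vec q}^n)$ is unitriangular, and it is equivalent to show that the characters $\{\mathsf{ch} M_\chi\}$ for fixed $s$ are linearly independent in $\mathbb Z \left< T_n \right>$.

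Third, from the disjoint decomposition $\mathcal E_\chi = \bigsqcup_{s' \in W_n s} \mathcal E_\chi[s']$ we have
$$\mathsf{ch} M_\chi = \sum_{s' \in W_n s} \dim H_\bullet(\mathcal E_\chi[s'])\, \left< s' \right>.$$
The leading weight $v_s s$ contributes multiplicity one, since the corresponding fixed-point component is a single point, and the remaining multiplicities encode the marked partition $\tau$ via the combinatorics of the Weyl group elements $w \in W[\chi]$ and their associated marked partitions $\tau^s_w$ from \S 2.4. Matching these dimensions with the explicit description of the $G_n(s)$-orbits on $\mathbf{Rep}^s$ furnished by Theorem \ref{ADZ} recovers $\tau$ up to the equivalence~(\ref{equiv}), proving that $\chi \mapsto \mathsf{ch} M_\chi$ is injective on a fixed central character.

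The main obstacle, in my view, is the third step: two marked partitions with the same underlying shape but different markings $\delta$ can a priori share large portions of the weight character, and distinguishing them requires the $W_n(s)$-equivariant fine structure of the fixed-point loci $\mu_n^{-1}(\mathbf{v}_\tau)^a$ rather than just the coarse combinatorics of $\mathsf{Q}(q)$. This is precisely where the argument of \cite{CK} Corollary 4.10 is needed, and once it is in hand, combining with the two reductions above yields the injectivity claim.
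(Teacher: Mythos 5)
The paper does not prove Lemma \ref{ind} at all: it is imported verbatim from \cite{CK} Corollary 4.10 (hence the closed box in the statement), so the only meaningful comparison is with that cited argument. Your first two reductions are fine as far as they go: weights of a module lie in the $W_n$-orbit of its central character, and a unitriangular standard-to-simple relation reduces the claim to linear independence of $\{\mathsf{ch}\, M _{\chi}\}$ for a fixed $s$. (One correction: with the paper's convention, $\chi \le \chi'$ iff $\mathcal O _{\chi} \subset \overline{\mathcal O _{\chi'}}$, the composition factors $L _{\chi'}$ of $M _{\chi}$ satisfy $\chi' \ge \chi$, the opposite of what you wrote -- compare how Corollary \ref{indmult} is used in the proof of Proposition \ref{iest}; this is harmless for invertibility but should be stated correctly.)

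The genuine gap is in your third step. Having correctly identified linear independence as the goal, you then argue only that the character determines the parameter, i.e.\ that $\chi \mapsto \mathsf{ch}\, M _{\chi}$ is injective as a map of sets; pairwise distinctness of finitely many vectors does not imply their linear independence, so even a complete version of this step would not prove the lemma. Moreover the mechanism you propose is itself unsubstantiated: it is not clear that the weight $v _s s$ occurs in every $M _{\chi}$ with multiplicity one (the relevant fixed-point loci need not be single points), and ``matching dimensions with Theorem \ref{ADZ} recovers $\tau$'' is exactly the hard content, asserted rather than proved -- indeed your closing sentence concedes that this is where \cite{CK} Corollary 4.10 is needed, which makes the proposal circular as a proof of that very statement. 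The actual argument is a triangularity statement in the weight direction rather than a reconstruction of $\tau$: for each $\chi$ one produces $w \in W[\chi]^{\circ}$ (nonemptiness of $W[\chi]^{\circ}$ is the lemma following its definition in \S 2.4, resting on \cite{CK} Proposition 4.9), so the corresponding weight occurs in $L _{\chi}$ but in no $L _{\chi'}$ with $\chi' > \chi$, and one combines this with the bound, used again in the proof of Proposition \ref{iest}, that $L _{\xi}[s'] \neq \{0\}$ forces $\mathcal O _{\xi} \subset \overline{\mathcal O _{\chi _w}}$. Ordering the parameters with fixed central character by the closure order then makes the weight-multiplicity matrix of the simples triangular with nonzero diagonal, which is what yields linear independence and hence injectivity on $K(\mathfrak M _{\vec q} ^n)$.
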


\begin{proposition}\label{indchar}
Let $( \chi _1, \chi _2 ) \in \mathsf{P} ^0 ( \vec{q} ) \times \mathsf{P} ( \vec{q} )$ with $n = | \chi _1 | + | \chi_2 |$. We have
$$\mathsf{ch} M _{\chi _1 \oplus \chi _2} = \mathsf{ch} ( M _{\chi _1} ^{\mathsf A} \boxast M _{\chi _2}  ) = \mathsf{ch} ( {} ^{\mathsf t} M _{\chi _1} ^{\mathsf A} \boxast M _{\chi _2} ).$$
\end{proposition}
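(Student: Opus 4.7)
The plan is to establish both equalities at the $\mathbb H_n$-module level, which is stronger than the character statement.

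\textbf{First equality.} Since $\chi_1 \in \mathsf P^0(\vec q)$ has trivial marking $(\delta_1 \equiv 0)$, we have $\mathbf v_{\tau_1}^1 = 0$, and consequently
$$\mathbf v_{\tau_{12}} \;=\; \mathbf v_{\tau_1}^2 + \mathbf v_{\tau_2}^1 + \mathbf v_{\tau_2}^2,$$
where $\mathbf v_{\tau_1}^2$ is supported on $\{\mathbf v_{i,j} : i, j \le n_1\}$ and the two $\chi_2$-summands only involve indices $> n_1$. Thus $\mathbf v_{\tau_{12}}$ is block-diagonal with respect to the parabolic $P \subset G_n$ whose Levi factor is $\mathop{GL}(n_1) \times \mathop{Sp}(2n_2, \mathbb C)$. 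I would then use the standard Springer-induction principle to identify the fixed-point Springer variety
$$\mathcal E_{\chi_1 \oplus \chi_2}^a \;\simeq\; G_n^a \times^{P^a} (\mathcal E^{\mathsf A}_{\chi_1} \times \mathcal E_{\chi_2})^a,$$
and apply the projection formula for Borel-Moore homology along the smooth fibration $G_n/B_n \to G_n/P$ to produce an $\mathbb H_n$-module isomorphism $M_{\chi_1 \oplus \chi_2} \simeq M^{\mathsf A}_{\chi_1} \boxast M_{\chi_2}$, which is the first character equality.

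\textbf{Second equality.} Let $\sigma \in W_n$ be the sign-change element inverting the first $n_1$ coordinates of $T_n$ and fixing the last $n_2$. It is central in $W_{n_1} \subset W_n$ but not contained in $\mathfrak S_{n_1}$, and it commutes with $W_{n_2}$; therefore $\sigma$ lies in $N_{W_n}(\mathfrak S_{n_1} \times W_{n_2}) \setminus (\mathfrak S_{n_1} \times W_{n_2})$. Twisting $M^{\mathsf A}_{\chi_1} \otimes M_{\chi_2}$ by $\sigma$ inverts the $T_{n_1}$-weights while preserving the $T_{n_2}$-weights, which by the definition of the transpose of a multisegment amounts to sending $M^{\mathsf A}_{\chi_1}$ to ${}^{\mathsf t}M^{\mathsf A}_{\chi_1}$. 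The standard invariance of parabolic induction under twists by Levi-normalizer elements, realized concretely by left multiplication by a lift of $\sigma$ in $\mathbb H_n$, then yields $M^{\mathsf A}_{\chi_1} \boxast M_{\chi_2} \simeq {}^{\mathsf t}M^{\mathsf A}_{\chi_1} \boxast M_{\chi_2}$, hence the second equality.

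\textbf{Main obstacle.} The critical step is the Springer-type identification in the first equality. While the induction principle in classical Springer theory is standard, it must be verified in the exotic setting of \cite{K1, CK}: one must check that $\mu_n^{-1}(\mathbf v_{\tau_{12}})^a$ contributes no components outside the parabolic orbit of $(\mathcal E^{\mathsf A}_{\chi_1} \times \mathcal E_{\chi_2})^a$, and that the resulting Borel-Moore homology module structure matches the Bernstein-Lusztig description of $\mathbb H_n$ as a right module over $\mathbb H^{\mathsf A}_{n_1} \otimes \mathbb H_{n_2}$. This is expected to follow from the geometric framework already developed in \cite{K1}.
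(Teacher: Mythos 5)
There is a genuine gap, and it sits exactly at the step you flag as the ``main obstacle.'' The identification $\mathcal E^a_{\chi_1\oplus\chi_2}\;\simeq\;G_n^a\times^{P^a}(\mathcal E^{\mathsf A}_{\chi_1}\times\mathcal E_{\chi_2})^a$ is false: the fixed-point variety $\mathcal E_{\chi_1\oplus\chi_2}$ maps to $(G_n/\mathbf P)^{s}$, which is a disjoint union of many $Z_{G_n}(s)$-orbits (indexed essentially by $W^{\vec n}$), while your associated bundle only sees the single orbit $G_n^a\mathbf P/\mathbf P$. Already for $n_1=1$, $n_2=0$ and $\chi_1$ a single segment of length one (so $\mathbf v_{\tau_1\oplus\tau_2}=0$), one has $\mathcal E_{\chi_1\oplus\chi_2}=(G_1/B_1)^{s}$, which is two points for regular $s$, whereas $G_1^a\times^{P^a}(\mathrm{pt})$ is a single point; your construction would produce a module of half the correct dimension, i.e.\ roughly $M^{\mathsf A}_{\chi_1}\boxtimes M_{\chi_2}$ instead of the induced module. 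More generally, the (exotic) Springer fibre of a point of the Levi is strictly larger than the parabolic induction of the Levi Springer fibre, so no bundle isomorphism of this kind can hold. The paper's proof handles precisely this: it introduces the auxiliary one-parameter element $\mathbf r$ scaling the $\mathop{GL}(n_1)$-block --- and this is where $\delta^1\equiv 0$ is actually needed, since it guarantees $\mathbf r\,\mathbf v=\mathbf v$ --- decomposes the $\mathbf r$-fixed locus as $\bigsqcup_{w\in W^{\vec n}}$ of copies of $\mathcal B^{s_1}_{\mathbf v_1}\times\mathcal E_{\chi_2}$ lying in the various cells $\mathbf P\dot w^{-1}B/B$, and then invokes the localization argument of \cite{CG} \S 8.2 together with \cite{K1} Theorem 6.2 to obtain $\mathsf{ch}\,M_{\chi_1\oplus\chi_2}=\sum_{w\in W^{\vec n}}w\,\mathsf{ch}\,H_\bullet(\mathcal B^{s_1}_{\mathbf v_1}\times\mathcal E_{\chi_2})=\mathsf{ch}(M^{\mathsf A}_{\chi_1}\boxast M_{\chi_2})$. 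Note this is only an identity of $\mathsf{ch}$, which is all the proposition claims; your stronger module-level assertion is not what is proved and is not needed.

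Your second equality is also not justified as written: left multiplication by a lift of the sign-change element $\sigma$ is not an $\mathbb H_n$-module homomorphism between the two induced modules, and a genuine module isomorphism would require intertwining operators, which are only rational in the central character and can degenerate. At the level of $\mathsf{ch}$ the statement is true and can be obtained either from the Mackey-type weight filtration of induced modules (the character of $M_1\boxast M_2$ is the sum of $w$-translates over $w\in W^{\vec n}$ of $\mathsf{ch}(M_1\boxtimes M_2)$, and $\sigma$ normalizes $\mathfrak S_{n_1}\times W_{n_2}$ while sending $\mathsf{ch}\,M^{\mathsf A}_{\chi_1}$ to $\mathsf{ch}\,{}^{\mathsf t}M^{\mathsf A}_{\chi_1}$), or, as in the paper, by simply rerunning the same fixed-point computation with ${}^{\mathsf t}M^{\mathsf A}_{\chi_1}$ in place of $M^{\mathsf A}_{\chi_1}$.
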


\begin{proof}
Let us denote $n_i = | \chi _i |$ ($i=1,2$). We set $\mathbf T := T _{n}$. Let
$\mathbf P \supset B _{n}$ be the parabolic subgroup of $G _{n}$ with
its reductive part $\mathbf L = \mathop{GL} ( n _1 ) \times
\mathop{Sp} ( 2 n _2 )$. Define $W _{\mathbf L} := N _{\mathbf T} ( \mathbf
L ) / \mathbf T \subset W$. We write $\chi _i := (s_i, \tau_i) = (s_i,
\mathbf J ^i, \delta ^i )$ ($i=1,2$), where $\delta ^1 \equiv \{ 0
\}$. We set $\mathbf{v} := \mathbf{v} _{\tau _1 \oplus \tau _2}$ and
$\mathbf{v} _i := \mathbf{v} _{\tau _i}$ for $i=1,2$. We have
$\mathbf{v} = \mathbf{v} _1 \oplus \mathbf{v} _2$. Let $\mathbf r \in
T _{n_1} \cong T _{n_1} \times \{ 1 \} \subset T _{n}$ be the
element such that $\epsilon _i ( \mathbf r ) = r > 1$ for every $1 \le
i \le n _1$ (and $=1$ otherwise). Then, we have $\mathbf r \mathbf{v}
= \mathbf{v}$ and hence $\mathbf r$ acts on $\mathcal E _{\chi _1 \oplus \chi _2}$. 

Here we have $\mathbf{v} _1 \in \mathfrak{gl} _{n_1} = \mathfrak{gl} _{n_1} \oplus \{ 0 \} \subset \mathbb V ^{\mathbf r}$. Let $\mathcal B _{\mathbf{v} _1}$ be the type $\mathsf{A} _{n_1 - 1}$ Springer fiber of $\mathbf{v} _1$. We have
\begin{equation}
( \mathcal E _{\chi _1 \oplus \chi _2} ) ^{\mathbf r} = \bigsqcup _{w \in W^{\vec{n}}} ( ( \mathcal E _{\chi _1 \oplus \chi _2} ) ^{\mathbf r} \cap P \dot{w}^{-1} B / B ) \cong \bigsqcup _{w \in W^{\vec{n}}} \mathcal B _{\mathbf{v} _1} ^{s_1} \times \mathcal E _{\chi _2}.\label{deceS}
\end{equation}
Thanks to \cite{CG} \S 8.2, it follows that each $H _{\bullet} ( ( \mathcal E _{\chi _1 \oplus \chi _2} ) ^{\mathbf r} \cap P \dot{w}^{-1} B / B )$ admits an $R ( \mathbf T )$-module structure with
$$\mathsf{ch} H _{\bullet} ( ( \mathcal E _{\chi _1 \oplus \chi _2} ) ^{\mathbf r} \cap P \dot{w} ^{-1} B / B ) = w \mathsf{ch} H _{\bullet} ( ( \mathcal E _{\chi _1 \oplus \chi _2} ) ^{\mathbf r} \cap P / B ).$$
By \cite{K1} Theorem 6.2, we conclude that
\begin{align*}
& \mathsf{ch} M _{\chi _1 \oplus \chi _2} = \sum _{w \in W^{\vec{n}}} \mathsf{ch} H _{\bullet} ( ( \mathcal E _{\chi _1 \oplus \chi _2} ) ^{\mathbf r} \cap P \dot{w}^{-1} B / B )\\
& = \sum _{w \in W^{\vec{n}}} w \mathsf{ch} H _{\bullet} ( ( \mathcal E _{\chi _1 \oplus \chi _2} ) ^{\mathbf r} \cap P / B ) = \mathsf{ch} ( M _{\chi _1} ^{\mathsf A} \boxast M _{\chi _2} ).
\end{align*}
The case $M _{\chi _1} ^{\mathsf A}$ replaced with ${} ^{\mathsf t} M _{\chi _1} ^{\mathsf A}$ is similar.
\end{proof}

\begin{corollary}\label{indmult}
Keep the setting of Proposition \ref{indchar}. Let $L$ be an irreducible $\mathbb H _{n}$-module. Then, we have
$$[M _{\chi _1 \oplus \chi _2} : L] = [ M _{\chi _1} ^{\mathsf A} \boxast M _{\chi _2} : L]  = [ {} ^{\mathsf t} M _{\chi _1} ^{\mathsf A} \boxast M _{\chi _2} : L].$$
\end{corollary}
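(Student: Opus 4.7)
The plan is to bootstrap the character identity of Proposition \ref{indchar} into a multiplicity identity using the injectivity of the character map from Lemma \ref{ind}.

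First, I would verify that all three modules $M_{\chi_1 \oplus \chi_2}$, $M^{\mathsf A}_{\chi_1} \boxast M_{\chi_2}$, and ${}^{\mathsf t}M^{\mathsf A}_{\chi_1} \boxast M_{\chi_2}$ lie in the category $\mathfrak M^n_{\vec q}$. The standard module $M_{\chi_1 \oplus \chi_2}$ is in this category by construction (its central character is $W_n \cdot (s_1 \times s_2) \in T_n(\vec q)$). For the parabolically induced modules, the central character is determined by the central characters of the two factors, which are again in $T_n(\vec q)$, so the induced modules are direct sums of modules in $\mathfrak M^n_{\vec q}$ after decomposing according to $W_n$-orbits; in any case each Jordan-H\"older factor has central character in $T_n(\vec q)$ and hence comes from $\mathsf{P}_n(\vec q)$.

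Next, by Proposition \ref{indchar} we have the equalities in $\mathbb Z\langle T_n\rangle$:
$$\mathsf{ch}\, M_{\chi_1 \oplus \chi_2} = \mathsf{ch}\bigl(M^{\mathsf A}_{\chi_1} \boxast M_{\chi_2}\bigr) = \mathsf{ch}\bigl({}^{\mathsf t}M^{\mathsf A}_{\chi_1} \boxast M_{\chi_2}\bigr).$$
Since the function $\mathsf{ch}$ is additive on short exact sequences, it descends to a well-defined homomorphism $\mathsf{ch}\colon K(\mathfrak M^n_{\vec q}) \to \mathbb Z\langle T_n\rangle$, and by Lemma \ref{ind} this homomorphism is injective. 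Therefore the three modules define the same class in $K(\mathfrak M^n_{\vec q})$.

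Finally, for any irreducible $\mathbb H_n$-module $L$, the multiplicity $[M:L]$ equals the coefficient of $[L]$ in the Jordan-H\"older expansion of the class $[M] \in K(\mathfrak M^n_{\vec q})$, which we have just seen is the same for all three modules. This yields the claimed equalities. I do not expect any serious obstacle here: once Proposition \ref{indchar} and Lemma \ref{ind} are in place, the argument is essentially formal. The only point requiring any care is confirming that the parabolic inductions actually land in $\mathfrak M^n_{\vec q}$ so that Lemma \ref{ind} applies, and this follows from the description of central characters of induced modules.
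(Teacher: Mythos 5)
Your proposal is correct and follows exactly the paper's argument: Proposition \ref{indchar} gives equal characters, and the injectivity of $\mathsf{ch}$ on $K(\mathfrak M^n_{\vec q})$ from Lemma \ref{ind} forces equal classes in the Grothendieck group, hence equal Jordan--H\"older multiplicities. The extra check that the induced modules lie in $\mathfrak M^n_{\vec q}$ is a sensible precaution but the route is the same one-line combination the paper uses.
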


\begin{proof}
Combine Proposition \ref{indchar} and Lemma \ref{ind}.
\end{proof}

\begin{lemma}\label{wth}
Let $\chi = ( s, \tau ) \in \mathsf{P} _{n_2} (\vec{q})$. Let $\nu'$ be an integer and set $\nu := m + \nu'$. For every $w_1 \times w _2 \in W [\mathsf{St} ^\nu _{n_1}, \chi ]\cap(\mathfrak S _{n_1} \times W _{n_2})$, we put $w = v_{\mathsf{s} ^\nu _{n_1} \times s} ( w _1 \times w _2 )$ and $\chi' := ( \mathsf{St} ^\nu _{n_1} \oplus
\chi ) _{w}$. Then, we have
\begin{equation}
\rho _{ij} ( \mathsf{St} ^\nu _{n_1} ) + \rho _{ij} ( \chi ) \le \rho _{ij} ( \chi' ) \le \rho _{ij} ( \chi ) + 1\label{mod}
\end{equation}
and $\rho _{ij} ( \chi' ) = \rho _{ij} ( \chi ) + 1$ only if $\nu' \le j < \nu' + n_1$. If we replace $\mathsf{St} ^\nu _{n_1}$ by ${} ^{\mathtt t} \mathsf{St} ^\nu _{n_1}$, then the inequalities (\ref{mod}) remain the same, and $\rho _{ij} ( \chi' ) = \rho _{ij} ( \chi ) + 1$ only if $\nu' \le i < \nu' + n_1$.
\end{lemma}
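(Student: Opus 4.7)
The plan is to deduce both inequalities from the rank characterization of $\rho_{ij}$ in Theorem~\ref{ADZ}, combined with the block structure coming from the Levi $\mathbf L=\mathop{GL}(n_1)\times\mathop{Sp}(2n_2)$. First, $\mathsf{St}^\nu_{n_1}$ corresponds to the single segment $I^\nu_{n_1}=\{q^\nu,\ldots,q^{\nu+n_1-1}\}$, so with $\nu=m+\nu'$ and $m$ generic (ruling out any collision with the dual symplectic weights on $V^{(1)}_n$) we have $\dim\mathbf E^{\mathsf{St}^\nu_{n_1}}(k)=1$ exactly for $k\in[\nu',\nu'+n_1)$ and $0$ otherwise, whence $\rho_{ij}(\mathsf{St}^\nu_{n_1})=1$ iff $\nu'\le i\le j<\nu'+n_1$. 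Moreover $\mathbf E^{\chi'}(k)=\mathbf E^{\mathsf{St}^\nu_{n_1}}(k)\oplus\mathbf E^\chi(k)$ at each weight. For the lower inequality, the very construction $\chi'=(\mathsf{St}^\nu_{n_1}\oplus\chi)_w$ with $w\in W[\mathsf{St}^\nu_{n_1}\oplus\chi]$ forces $\mathcal O_{\mathsf{St}^\nu_{n_1}\oplus\chi}\subset\overline{\mathcal O_{\chi'}}$; the last statement of Theorem~\ref{ADZ} together with additivity of $\rho_{ij}$ on direct sums then delivers $\rho_{ij}(\mathsf{St}^\nu_{n_1})+\rho_{ij}(\chi)\le\rho_{ij}(\chi')$.

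For the upper inequality, Theorem~\ref{ADZ}(2) supplies a generic $A\in\mathbf{Rep}^{\chi'}_w$ with $\rho_{ij}(\chi')=\dim A^{j-i}(\mathbf E^{\chi'}(i))$. Let $\pi_{\mathsf{St}}$ and $\pi_\chi$ be the projections of $\mathbf E^{\chi'}(j)$ onto its two summands. Then
\[
\rho_{ij}(\chi')\le\dim\pi_{\mathsf{St}}\bigl(A^{j-i}(\mathbf E^{\chi'}(i))\bigr)+\dim\pi_\chi\bigl(A^{j-i}(\mathbf E^{\chi'}(i))\bigr).
\]
The first summand is at most $\dim\mathbf E^{\mathsf{St}^\nu_{n_1}}(j)\le 1$, and it vanishes unless $j\in[\nu',\nu'+n_1)$, which gives the ``only if'' clause. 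For the second summand one must establish $\dim\pi_\chi(A^{j-i}(\mathbf E^{\chi'}(i)))\le\rho_{ij}(\chi)$. The key input is that $w_1\times w_2$ lies in the Weyl group $W_\mathbf L$ of the Levi and therefore preserves the block decomposition, while $v=v_{\mathsf s^\nu_{n_1}\times s}$ only sorts the eigenvalues; after the corresponding identifications, the $\chi\to\chi$ diagonal block of the generic $A$ is a generic element of $\mathbf{Rep}^\chi_{w_2}$, and the minimality hypothesis $w_2\in W[\chi]^\circ$ together with Theorem~\ref{ADZ}(1) bounds the generic rank of this block by $\rho_{ij}(\chi)$. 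The Steinberg-to-$\chi$ off-diagonal component factors through the at-most-one-dimensional Steinberg line at each intermediate weight and is therefore already absorbed by the $+1$ appearing in the first summand.

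For the transpose variant, the analysis is formally identical except that the segment ${}^{\mathtt t}I^\nu_{n_1}$ runs in the opposite direction, so after conjugating $\bar{\mathsf s}^\nu_{n_1}\times s$ into the positive Weyl chamber (possible because $W_n$ contains sign changes), the Steinberg nilpotent on the chain acts backwards. In the rank computation this swaps the roles of the source and target weights, producing the modified range condition $\nu'\le i<\nu'+n_1$; the two-sided bound in (\ref{mod}) is unchanged. The principal technical obstacle throughout is the block-control step for the $\pi_\chi$-projection: one must verify that (i) the Levi-compatibility of $w_1\times w_2$ together with the sorting nature of $v$ truly identifies the $\chi$-diagonal block of generic $A$ with a generic element of $\mathbf{Rep}^\chi_{w_2}$, and (ii) the $W[\chi]^\circ$-minimality of $w_2$ forces the generic rank of that block to be exactly $\rho_{ij}(\chi)$ rather than some larger value corresponding to a proper degeneration. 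Both points rely only on the combinatorics of Theorem~\ref{ADZ} applied to the type $\mathsf A$ and type $\mathsf C$ factors separately.
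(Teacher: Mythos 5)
Your overall strategy is the same as the paper's (decompose $\mathbf E^{\chi'}(k)=\mathbf E^{\mathsf s^\nu_{n_1}}(k)\oplus\mathbf E^{\chi}(k)$, compute $\rho_{ij}(\chi')$ as a generic rank via Theorem \ref{ADZ}, and bound it by a one-dimensional Steinberg contribution plus a contribution controlled by $\mathbf{Rep}^{\chi}_{w_2}$), and your lower bound is fine. But the upper bound has a genuine gap. The quantity $\pi_\chi\bigl(A^{j-i}(\mathbf E^{\chi'}(i))\bigr)$ is \emph{not} governed by the $\chi\to\chi$ diagonal block of $A$ alone: since $A$ has a priori four blocks, the composition $A^{j-i}$ contains ``bounce'' paths that leave the $\chi$-summand into the Steinberg line and re-enter it (via a $\chi\to\mathsf{St}$ component followed later by a $\mathsf{St}\to\chi$ component), as well as paths starting on the Steinberg line and ending in the $\chi$-summand. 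These land in the $\chi$-part of $\mathbf E^{\chi'}(j)$, so they are not ``absorbed by the $+1$'' of the Steinberg projection term; a priori each intermediate level can contribute an independent extra image direction, and your claimed bound $\dim\pi_\chi(A^{j-i}(\mathbf E^{\chi'}(i)))\le\rho_{ij}(\chi)$ does not follow. The same problem undermines the clause ``only if $\nu'\le j<\nu'+n_1$'': with a nonzero $\mathsf{St}\to\chi$ block the Steinberg line at level $i$ can feed an extra dimension into the $\chi$-part at a level $j\ge\nu'+n_1$ even though $\mathbf E^{\mathsf s^\nu_{n_1}}(j)=0$.

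The missing ingredient, which is exactly what the paper's proof establishes first, is the vanishing
$\Hom\bigl(\mathbf E^{\mathsf s^\nu_{n_1}}(i),\mathbf E^{\chi}(i+1)\bigr)\cap\mathbf{Rep}^{\chi'}_{w}=\{0\}$ for all $i$ (equation (\ref{eig}) there), which holds for \emph{every} $A\in\mathbf{Rep}^{\chi'}_{w}$ --- not just generic $A$ --- because $w_1\times w_2$ lies in $\mathfrak S_{n_1}\times W_{n_2}$, so ${}^{w}\mathbb V^{+}$ contains no weight vectors mapping the Steinberg eigenlines into the $\chi$-eigenspaces. This makes every $A$ block-triangular: the Steinberg summand is $A$-stable, the induced map on the complementary summand lies in $\mathbf{Rep}^{\chi}_{w_2}$ (where Theorem \ref{ADZ}(1), with $w_2\in W[\chi]^{\circ}$, bounds all ranks by $\rho_{ij}(\chi)$), and only then does $\rho_{ij}(\chi')\le\rho_{ij}(\chi)+\dim\mathbf E^{\mathsf s^\nu_{n_1}}(j)$ follow, yielding simultaneously the $+1$ bound and the restriction $\nu'\le j<\nu'+n_1$; the transposed case uses the mirror-image vanishing and gives the condition on $i$. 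You flag the identification of the diagonal block as the ``principal technical obstacle,'' but even granting that identification your argument does not close, because what is really needed is this one-directional vanishing of the off-diagonal block; your attempted absorption of it into the first summand is where the proof breaks.
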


\begin{proof}
We drop the superscripts $\nu$ during this proof. By construction, the natural $G ( \mathsf s_{n_1} ) \times G ( \chi )$-equivariant embedding
$$\mathbf{Rep} ^{\mathsf{s}_{n_1}} \oplus \mathbf{Rep}^{\chi} \hookrightarrow \mathbf {Rep} ^{\chi'}$$
induces an embedding (of linear spaces which preserves compositions)
$$\mathbf{Rep} ^{\mathsf{s}_{n_1}} _{w_1} \oplus \mathbf{Rep} ^{\chi} _{w_2} \hookrightarrow \mathbf{Rep} ^{\chi'} _{w}.$$
It follows that $\rho _{ij} ( \mathsf{St}_{n_1} ) + \rho _{ij} ( \chi ) \le \rho _{ij} ( \chi' )$ for every $i, j$. Moreover, the condition on $w$ asserts that
\begin{equation}
\Hom ( \mathbf E ^{\mathsf{s}_{n_1}} ( i ), \mathbf E ^{\chi} ( i+1 ) ) \cap \mathbf{Rep} ^{\chi'} _{w} = \{ 0 \} \text{ for every } i \in \mathbb Z.\label{eig}
\end{equation}
It follows that every $A \in \mathbf{Rep} ^{\chi'} _{w}$ preserves $\bigoplus _{i \in \mathbb Z} \mathbf E ^{\chi} ( i ) \subset \bigoplus _{i \in \mathbb Z} \mathbf E ^{\chi'} ( i )$. Moreover, the image of the induced map $\mathbf{Rep} ^{\chi'} _{w} \longrightarrow \mathbf{Rep} ^{\chi}$ is contained in $\mathbf{Rep} ^{\chi} _{w _2}$. Since $\dim \mathbf E ^{\mathsf{s}_{n_1}} ( i ) \le 1$ for every $i \in \mathbb Z$, we conclude $\rho _{ij} ( \chi' ) \le \rho _{ij} ( \chi ) + 1$. This proves the first part of the assertion.

To prove the second assertion, it suffices to see $A ^{j - i} ( \mathbf E ^{\mathsf{s}_{n_1}} (i) ) = \{ 0 \}$ when $j \ge \nu' + n_1$. This follows by (\ref{eig}). For the case $\mathsf{St}$ replaced by ${} ^{\mathtt t} \mathsf{St}$, we apply the same argument except for
$$\Hom ( \mathbf E ^{\chi} ( i ), \mathbf E ^{\bar{\mathsf{s}}_{n_1}} ( i+1 ) ) \cap \mathbf{Rep} ^{\chi'} _{w} = \{ 0 \} \text{ for every } i \in \mathbb Z.$$
instead of (\ref{eig}).
\end{proof}

In view of Lemma \ref{wth}, let us define
\begin{align*}
& \mathcal S _{\nu,n_1} ^{+} ( \chi ) ^{\sim} := \left\{ \chi' = ( s', \tau' ) \mid {\small\begin{matrix}s' = v _{\mathsf s ^\nu _{n_1} \times s} (\mathsf s ^\nu _{n_1} \times s), \rho _{ij} ( \chi' ) - \rho _{ij} ( \chi ) \le 1, \text{ and } \\ \rho _{ij} ( \chi' ) = \rho _{ij} ( \chi ) + 1 \text{ only if } \nu \le j < \nu + n_1 \end{matrix}} \right\}\\
& \mathcal S _{\nu,n_1} ^{-} ( \chi ) ^{\sim} := \left\{ \chi' = ( s', \tau' ) \mid {\small\begin{matrix}s' = v _{\mathsf s ^\nu _{n_1} \times s} (\mathsf s ^\nu _{n_1} \times s), \rho _{ij} ( \chi' ) - \rho _{ij} ( \chi ) \le 1, \text{ and } \\ \rho _{ij} ( \chi' ) = \rho _{ij} ( \chi ) + 1 \text{ only if } \nu \le i < \nu + n_1 \end{matrix}} \right\}.
\end{align*}
Moreover, we define $\mathcal S _{\nu,n_1} ^{\pm} ( \chi ) := \{ \chi' \in \mathcal S _{\nu,n_1} ^{\pm} ( \chi ) ^{\sim} \mid \mathcal O _{\mathsf{St} ^\nu _{n_1} \oplus \chi} \subset \overline{\mathcal O _{\chi'}} \}$.

\begin{proposition}\label{iest}
Let $n = n_1 + n_2$ and $\nu'$ be natural numbers, and set $\nu := m + \nu'$. Let $\chi' \in \mathsf P_{n} ( \vec{q} )$. If we have a surjection
$$\mathsf{St} ^\nu _{n_1} \boxast L _{\chi} \longrightarrow \!\!\!\!\! \rightarrow L _{\chi'} \text{ for some } \chi \in \mathsf P _{n_2} ( \vec{q} ),$$
then we have $\chi' \in \mathcal S _{\nu,n_1} ^+ ( \chi )$. If we replace $\mathsf{St} ^\nu _{n_1}$ with ${} ^{\mathtt t} \mathsf{St} ^\nu _{n_1}$, then the same statement holds only if $\chi' \in \mathcal S _{\nu,n_1} ^- ( \chi )$.
\end{proposition}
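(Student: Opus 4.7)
The strategy is to translate the assumed surjection into a Jordan--H\"older multiplicity statement for a single standard module via Corollary \ref{indmult}, and then read off both the closure condition and the $\rho_{ij}$ inequalities using Theorem \ref{ADZ} and Lemma \ref{wth}.

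Since $I_{n_1}^\nu$ is a single segment corresponding to the regular nilpotent orbit in $\mathfrak{gl}_{n_1}$, the type-$\mathsf{A}$ standard and irreducible modules coincide: $M^{\mathsf{A}}_{I_{n_1}^\nu} = L^{\mathsf{A}}_{I_{n_1}^\nu} = \mathsf{St}^\nu_{n_1}$. Since $L_\chi$ is a quotient of $M_\chi$ and parabolic induction is exact, the hypothesis implies that $L_{\chi'}$ is a Jordan--H\"older factor of $M^{\mathsf{A}}_{I_{n_1}^\nu} \boxast M_\chi$, and Corollary \ref{indmult} yields $[M_{\mathsf{St}^\nu_{n_1} \oplus \chi} : L_{\chi'}] > 0$. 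By the geometric construction of standard modules in \cite{K1, CK}, a nonzero multiplicity $[M_{\chi_0} : L_{\chi'}]$ forces equality of central characters together with the closure relation $\mathcal{O}_{\chi_0} \subseteq \overline{\mathcal{O}_{\chi'}}$; specializing to $\chi_0 = \mathsf{St}^\nu_{n_1} \oplus \chi$ gives the closure condition, and the identity $s' = v_{\mathsf{s}^\nu_{n_1} \times s}(\mathsf{s}^\nu_{n_1} \times s)$, required in the definition of $\mathcal{S}^+_{\nu,n_1}(\chi)$.

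For the $\rho_{ij}$ bounds, pick any $w \in W[\chi']^\circ$, so that $w^{-1}(s')^{-1}$ is a weight of $L_{\chi'}$. Since $L_{\chi'}$ is a quotient of $\mathsf{St}^\nu_{n_1} \boxast L_\chi$, this weight already occurs in the induced module; the Mackey-type weight decomposition of parabolic induction (in the spirit of (\ref{deceS})) lets us write $w = v_{\mathsf{s}^\nu_{n_1} \times s}(w_1 \times w_2)$ with $w_1 \in \mathfrak{S}_{n_1} \cap W[\mathsf{St}^\nu_{n_1}]^\circ$ and $w_2 \in W[\chi]^\circ$, so that $w_1 \times w_2 \in W[\mathsf{St}^\nu_{n_1}, \chi]$. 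Now $(\chi_0)_w$ depends only on $w$ and the central character of $\chi_0$, so $(\chi')_w = (\mathsf{St}^\nu_{n_1} \oplus \chi)_w$; and since $w \in W[\chi']^\circ$ cannot lie in $W[\chi'']$ for any strictly larger orbit $\chi''$, the generic orbit $(\chi')_w$ must coincide with $\chi'$ itself, giving $\chi' = (\mathsf{St}^\nu_{n_1} \oplus \chi)_w$. Lemma \ref{wth} then supplies $\rho_{ij}(\chi') \le \rho_{ij}(\chi) + 1$ with strict inequality only when $\nu' \le j < \nu' + n_1$, proving $\chi' \in \mathcal{S}^+_{\nu,n_1}(\chi)$. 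The case of ${}^{\mathtt{t}}\mathsf{St}^\nu_{n_1}$ is handled identically using the transpose assertions in Corollary \ref{indmult} and Lemma \ref{wth}.

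The main obstacle is the identification $\chi' = (\mathsf{St}^\nu_{n_1} \oplus \chi)_w$ in the last paragraph: one must carefully match $w \in W[\chi']^\circ$ with a product $w_1 \times w_2 \in W[\mathsf{St}^\nu_{n_1}, \chi]$ via the weight decomposition of the induced module, and then use the maximality property defining $W[\chi']^\circ$ to equate the generic orbit on $({}^w\mathbb V^+) \cap \mathbb V^{(s',\vec q)}$ with $\chi'$ itself. Once this identification is in place the bounds follow immediately from Lemma \ref{wth}, and the rest of the argument is essentially formal consequences of Corollary \ref{indmult} and Theorem \ref{ADZ}.
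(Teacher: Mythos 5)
Your first half (the closure condition) matches the paper: passing to $[M^{\mathsf A}_{I^\nu_{n_1}}\boxast M_\chi:L_{\chi'}]>0$ and invoking Corollary \ref{indmult} to get $[M_{\mathsf{St}^\nu_{n_1}\oplus\chi}:L_{\chi'}]\neq 0$, hence $\mathcal O_{\mathsf{St}^\nu_{n_1}\oplus\chi}\subset\overline{\mathcal O_{\chi'}}$, is exactly the paper's argument. The gap is in your route to the $\rho_{ij}$ bounds. You start from $w\in W[\chi']^\circ$ and want to write $w=v_{\mathsf s^\nu_{n_1}\times s}(w_1\times w_2)$ with $w_1\times w_2\in W[\mathsf{St}^\nu_{n_1},\chi]$, but the Mackey/weight decomposition of the induced module only tells you that the corresponding weight of $L_{\chi'}$ comes from \emph{some} weight of $\mathsf{St}^\nu_{n_1}\boxtimes L_\chi$, i.e.\ it only yields $w_2\in W[\chi]$, not $w_2\in W[\chi]^\circ$. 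Lemma \ref{wth} is stated (and its proof genuinely needs) the circled condition: the upper bound $\rho_{ij}(\chi')\le\rho_{ij}(\chi)+1$ comes from mapping $\mathbf{Rep}^{\chi'}_w$ into $\mathbf{Rep}^{\chi}_{w_2}$ and identifying the generic orbit of the target with $\chi$ itself; if $w_2\in W[\chi]\setminus W[\chi]^\circ$ you only get $\rho_{ij}(\chi')\le\rho_{ij}(\chi_{w_2})+1$ with $\chi_{w_2}$ possibly strictly larger than $\chi$, which is not enough, and the support condition $\nu'\le j<\nu'+n_1$ is likewise lost. A second, smaller issue: your step ``$w\in W[\chi']^\circ$ forces $(\chi')_w=\chi'$'' tacitly uses $w\in W[(\chi')_w]$ (that the simple module of the generic orbit attached to $w$ actually carries that weight), which you neither prove nor cite.

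The paper avoids both problems by running the argument in the opposite direction. It first converts the surjection, via Frobenius reciprocity, into an embedding $\mathsf{St}^\nu_{n_1}\boxtimes L_\chi\hookrightarrow L_{\chi'}$ over $\mathbb H^{\mathsf A}_{n_1}\otimes\mathbb H_{n_2}$, so that $\Psi(\mathsf{St}^\nu_{n_1}\boxtimes L_\chi)\subset\Psi(L_{\chi'})$. It then \emph{chooses} $w_1\times w_2\in W[\mathsf{St}^\nu_{n_1},\chi]\cap(\mathfrak S_{n_1}\times W_{n_2})$ (nonempty by the lemma preceding Theorem \ref{ADZ}), so the circled hypotheses of Lemma \ref{wth} hold by construction, sets $\widetilde\chi:=(\mathsf{St}^\nu_{n_1}\oplus\chi)_{v_{s'}(w_1\times w_2)}$, and uses the fact from \cite{CK} \S2.1 that $L_\xi[s']\neq 0$ forces $\mathcal O_\xi\subset\overline{\mathcal O_{\widetilde\chi}}$; applied to $\xi=\chi'$ this gives only the closure inclusion $\mathcal O_{\chi'}\subset\overline{\mathcal O_{\widetilde\chi}}$ (no claim that $\chi'=\widetilde\chi$), and then Theorem \ref{ADZ} together with Lemma \ref{wth} yields $\chi'\in\mathcal S^+_{\nu,n_1}(\chi)^\sim$. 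If you want to salvage your version, you would have to either prove that your $w$ can be chosen so that its Levi components land in the circled sets, or replace the identification $\chi'=(\mathsf{St}^\nu_{n_1}\oplus\chi)_w$ by the weaker closure statement, which is essentially the paper's proof.
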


\begin{proof}
By the Frobenius reciprocity, it suffices to assume
\begin{equation}
\mathsf{St} ^\nu _{n_1} \boxtimes L _{\chi} \hookrightarrow L _{\chi'}\label{tt}
\end{equation}
as $\mathbb H _{n_1} ^{\mathsf A} \otimes \mathbb H _{n_2}$-modules to deduce $\chi' \in \mathcal S _{\nu,n_1} ^+ ( \chi )$. The condition (\ref{tt}) implies
\begin{equation}
\Psi ( \mathsf{St} ^\nu _{n_1} \boxtimes L _{\chi} ) \subset \Psi ( L _{\chi'} ).\label{prodwt}
\end{equation}
For every $w_1 \times w _2 \in ( \mathfrak S _{n_1} \times W _{n _2} \cap W [ \mathsf{St} ^\nu _{n_1}, \chi ] )$, we have $s' = ( w _1 ^{-1} \mathsf s ^\nu _{n_1} \times w _2 ^{-1} s ) ^{-1} \in \Psi ( \mathsf{St} ^\nu _{n_1}
\boxtimes L _{\chi} )$. We set $\widetilde{\chi} := ( \mathsf{St} ^\nu _{n_1} \oplus
    \chi ) _{v _{s'} ( w_1 \times w_2 )}$. By construction (cf. \cite{CK} \S 2.1), we have
$$\xi \in \mathsf{P} _n ( \vec{q}) \text{ satisfies } L _{\xi} [ s' ] \neq \{ 0 \} \Rightarrow \mathcal O _{\xi} \subset \overline{\mathcal O _{\widetilde{\chi}}}.$$
Hence, we conclude $\mathcal O _{\chi'} \subset \overline{\mathcal O _{\widetilde{\chi}}}$. By Lemma \ref{wth}, this happens only
if $\chi' \in \mathcal S _{\nu,n_1} ^+ ( \chi ) ^{\sim}$. By Corollary \ref{indmult}, we have necessarily $[ M _{ \mathsf{St} ^\nu _{n_1} \oplus \chi} : L _{\chi'} ] \neq 0$ provided that {(\ref{tt}) holds.} This implies $\mathcal O _{\mathsf{St} ^\nu _{n_1} \oplus \chi} \subset \overline{\mathcal O _{\chi'}}$, hence we conclude $\chi' \in \mathcal S _{\nu,n_1} ^{+} ( \chi )$. The other case is completely analogous.
\end{proof}

\section{Delimits of tempered modules}\label{delimits}
In this section, we fix $m_0 \in \frac{1}{2} \mathbb Z$ and assume $|m
- m_0| < \frac{1}{2}$. Let $\sigma$ be a partition of $n$. As
mentioned in the introduction, for every such $m$ there is a central
character $\mathsf{c} _m ^\sigma \in T _n ( \vec{q} )$ and a single
discrete series with parameter $\mathsf{ds} _m ( \sigma ) = (
\mathsf{c}_m ^{\sigma}, ds _m (\sigma) )$; here $ds _m (\sigma)$ is a
marked partition adapted to $\mathsf{c} _m ^\sigma$ which
parameterizes this discrete series. It might be helpful to visualize $\mathsf{ds} _m ( \sigma )$ combinatorially as a
left justified decreasing Young diagram coming from the partition
$\sigma$ by labeling every box with its $m$-content, i.e., the number
$m+c(i,j)$, where $c(i,j)$ is the content $i-j$ of the box in the
$(i,j)$ position. 

We sometimes identify $\mathsf{ds} _m ( \sigma )$ with $L _{\mathsf{ds} _m ( \sigma )}$.
 Let $\mathsf{mp} _m ( \sigma )$ denote the anti-spherical parameter
 with central character $\mathsf{c} _m ^\sigma$. The $G(\mathsf
 c_m^\sigma)$-orbit indexed by $\mathsf{mp} _m ( \sigma )$ is open
 {dense}. 

\subsection{Tempered delimits at generic parameter}

We call a parameter $\chi\in\mathsf{P} ( \vec{q} )$ positive (resp. negative) if we
have $\mathtt{E} _m ( I ) > 1$ $($resp. $< 1)$ for every $I \in
\chi$.

\begin{theorem}[\cite{CK} \S 3.3 + \cite{K1} Theorem 7.4]
The parameter $\mathsf{ds} _m ( \sigma )$ admits a unique decomposition $\mathsf{ds} _m ( \sigma ) = \mathsf{ds} _m ^+ ( \sigma ) \oplus \mathsf{ds} _m ^- ( \sigma )$ such that $\mathsf{ds} _m ^+ ( \sigma )$ is positive and $\mathsf{ds} _m ^- ( \sigma )$ is negative. Moreover,
\begin{enumerate}
\item $\mathsf{ds} _m ^- ( \sigma )$ is not marked, and hence we can regard $\mathsf{ds} _m ^- ( \sigma ) \in \mathsf Q ( q )$;
\item two parameters $\mathsf{ds} _m ^+ ( \sigma )$ and $\mathsf{ds} _m ^- ( \sigma )$ are nested to each other;
\item we have a surjection $( {} ^{\mathsf t} L ^{\mathsf A} _{\mathsf{ds} _m ^- ( \sigma )} \boxast L _{\mathsf{ds} _m ^+ ( \sigma )} ) \longrightarrow \!\!\!\!\! \rightarrow L _{\mathsf{ds} _m ( \sigma )}$.
\end{enumerate}
\end{theorem}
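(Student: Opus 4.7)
The plan is to unpack the explicit combinatorial construction of $\mathsf{ds}_m(\sigma)$ given in \cite{CK} \S3.3 and combine it with the geometric realization in \cite{K1} Theorem 7.4. The \cite{CK} recipe produces a marked partition whose underlying $q$-segments $I$ satisfy $\mathtt{E}_m(I) = q^{|I|m + k}$ for some $k \in \frac{1}{2}\mathbb{Z}$ depending only on combinatorial data read off from $\sigma$. Since $m$ is generic ($m \notin \frac{1}{2}\mathbb{Z}$), the exponent $|I|m + k$ never vanishes, so every segment is unambiguously positive or negative. Partitioning the segments of $\mathsf{ds}_m(\sigma)$ by sign and transferring the marking function yields a unique decomposition $\mathsf{ds}_m(\sigma) = \mathsf{ds}_m^+(\sigma) \oplus \mathsf{ds}_m^-(\sigma)$ with the required positivity/negativity.

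For claim (1), the adaptedness condition $s\mathbf{v}_\tau = q_1 \mathbf{v}_\tau^1 \oplus q \mathbf{v}_\tau^2$ forces a marked segment $I$ to supply a $V^{(1)}$-vector $\mathbf{v}_i$ of $T$-weight $\epsilon_i$ with $\epsilon_i(s) = q^m$; that is, the value $q^m$ must occur in $I$ at the distinguished position corresponding to the marked index. Direct inspection of the \cite{CK} \S3.3 recipe shows that this distinguished position only occurs on segments with $\mathtt{E}_m(I) > 1$, so no negative segment is marked and $\mathsf{ds}_m^-(\sigma) \in \mathsf{Q}(q)$. For claim (2), the same recipe places the positive and negative segments so that, for every pair $(I^+, I^-) \in \mathsf{ds}_m^+(\sigma) \times \mathsf{ds}_m^-(\sigma)$, a direct endpoint comparison forces either $I^+ \Subset I^-$ or $I^- \Subset I^+$; the sign condition on $\mathtt{E}_m$ is exactly what makes the endpoints of a positive and a negative segment strictly interlace.

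For claim (3), combine Proposition \ref{indchar} with the geometric content of \cite{K1} Theorem 7.4. The proposition furnishes the character identity $\mathsf{ch}\,M_{\mathsf{ds}_m^+(\sigma) \oplus \mathsf{ds}_m^-(\sigma)} = \mathsf{ch}({}^{\mathsf t}M^{\mathsf A}_{\mathsf{ds}_m^-(\sigma)} \boxast M_{\mathsf{ds}_m^+(\sigma)})$, while \cite{K1} Theorem 7.4 realizes $L_{\mathsf{ds}_m(\sigma)}$ as a quotient of the standard module $M_{\mathsf{ds}_m(\sigma)}$. The nestedness from claim (2) makes $({}^{\mathsf t}\mathsf{ds}_m^-(\sigma), \mathsf{ds}_m^+(\sigma))$ into a Langlands-type datum whose Langlands quotient is $L_{\mathsf{ds}_m(\sigma)}$, and passing to the induction of the irreducibles extracts the required surjection. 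The main obstacle will be this last step: upgrading a Grothendieck-group character identity to an honest surjection of modules requires the geometric input of \cite{K1} Theorem 7.4, which identifies the unique irreducible quotient of the parabolic induction with $L_{\mathsf{ds}_m(\sigma)}$ precisely when the parameter is nested as supplied by claim (2).
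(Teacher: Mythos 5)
This theorem is not proved in the paper at all: it is imported verbatim from \cite{CK} \S 3.3 and \cite{K1} Theorem 7.4, so there is no internal argument to compare with, and the only question is whether your sketch stands on its own. At the decisive points it does not. For claims (1) and (2) your argument is ``direct inspection of the \cite{CK} \S 3.3 recipe,'' which is exactly the content being asserted, and the partial reasons you do supply are insufficient. Adaptedness only forces a marked segment to contain $q_1=q^m$; a segment containing $q^m$ can still be negative (e.g.\ $\{q^{m-5},q^{m-4},\dots,q^m\}$ has $\mathtt E_m<1$ whenever $m<5/2$), so ``marked implies positive'' does not follow from the marking constraint. Likewise, for a $q$-segment one has $\mathtt E_m(I)=(e_-(I)e_+(I))^{|I|/2}$, and opposite signs of $\log \mathtt E_m$ do not force interlaced endpoints: $I^+=\{q^m,q^{m+1}\}$ and $I^-=\{q^{m-3}\}$ satisfy $\mathtt E_m(I^+)>1>\mathtt E_m(I^-)$ for $0<m<\tfrac12$ yet are not nested. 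Both (1) and (2) are structural properties of the specific output of the $\mathsf{ds}_m$-algorithm (segments obtained by gluing intermediate segments attached to hooks of $\sigma$, whose extremities are pairwise distinct and interlace), so an honest proof has to actually run that algorithm; your sketch never does.

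For (3), Proposition \ref{indchar} together with Lemma \ref{ind} only yields the Grothendieck-group identity $[M_{\mathsf{ds}_m^-(\sigma)\oplus\mathsf{ds}_m^+(\sigma)}]=[{}^{\mathtt t}M^{\mathsf A}_{\mathsf{ds}_m^-(\sigma)}\boxast M_{\mathsf{ds}_m^+(\sigma)}]$, and the jump from this to a surjection of ${}^{\mathtt t}L^{\mathsf A}_{\mathsf{ds}_m^-(\sigma)}\boxast L_{\mathsf{ds}_m^+(\sigma)}$ onto $L_{\mathsf{ds}_m(\sigma)}$ is not supplied by the Evens--Langlands classification as you claim: that classification requires the type $\mathsf C$ factor to be tempered and the type $\mathsf A$ exponents to lie in a fixed chamber, and neither hypothesis is verified (nor automatic) for the pair $({}^{\mathtt t}L^{\mathsf A}_{\mathsf{ds}_m^-(\sigma)},L_{\mathsf{ds}_m^+(\sigma)})$; and even granting a unique irreducible quotient, identifying it with $L_{\mathsf{ds}_m(\sigma)}$ is a further step. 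Your fallback --- that \cite{K1} Theorem 7.4 ``identifies the unique irreducible quotient of the parabolic induction with $L_{\mathsf{ds}_m(\sigma)}$'' --- is precisely (part of) the statement being quoted, so at this point the proposal is circular rather than a proof. In short, the glue you add to the citations is either incomplete or incorrect, and everything of substance is deferred back to the references, which is what the paper itself does by citing them.
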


We denote by $ds _m ^{\pm} (\sigma)$ the marked partitions corresponding to $\mathsf{ds} _m ^{\pm} ( \sigma )$, respectively.

\begin{definition}[Delimits of tempered modules]\label{d:tdl}
An algebraic flat family of irreducible $\mathbb H _{n,m}$-modules $L ^m$ depending on $m$ ($m_0 - \frac{1}{2} < m < m_0 + \frac{1}{2}$ but $m \neq m_0$) with central character $\mathsf{c} _m ^\sigma$ is called a delimit of tempered module, or just a tempered delimit if the limit $\lim _{m \to m_0} L^m$ is a tempered module. Let $\mathcal{D} _{m_0} ( \sigma )$ be the set of isomorphism classes of irreducible tempered delimits with central character $\mathsf{c} _m ^\sigma$.
\end{definition}

A segment $I$ is called balanced along $m _0$ if $e _+ ( I ) e_- ( I ) = q ^{2 ( m - m_0 )}$. A multisegment $\mathbf I$ is called balanced along $m_0$ if each member is a balanced segment along $m_0$.

Below, for a marked partition $\tau \in \mathcal{D} _{m_0} ( \sigma )$
and a marked partition $\tau'$ obtained from $\tau$, we denote the
corresponding parameters by using bold letters. (I.e. ${\boldsymbol
  \tau} = ( \mathsf c _m ^\sigma, \tau)$, ${\boldsymbol \tau'}$,
etc...).

\begin{lemma}\label{tli}
Let $\tau_m$ denote a marked partition adapted to every $\mathsf{c} _m
^\sigma$ $($with $m_0 < m < m_0 + \frac{1}{2})$. Then
${\boldsymbol\tau} _m$ is the parameter of a tempered delimit if and only if there exists a multisegment ${\boldsymbol \tau} _m ^{\mathtt e}$ and a marked partition $\tau _m ^{\mathtt s}$ such that the following conditions hold:
\begin{itemize}
\item We have a surjective map $( {} ^{\mathsf t} L _{{\boldsymbol \tau} _m ^{\mathtt e}} ^{\mathsf A} \boxast L _{{\boldsymbol \tau} _m ^{\mathtt s}} ) \longrightarrow \!\!\!\!\!\!\!\!\! \longrightarrow L _{{\boldsymbol \tau} _m}$;
\item ${\boldsymbol \tau} _m ^{\mathtt s} = \mathsf{ds} _{m} ( \sigma' )$ for some partition $\sigma'$ obtained by removing $\# {\boldsymbol \tau} _m ^{\mathtt e}$ hooks from $\sigma$ $($as Young diagrams$)$;
\item ${\boldsymbol \tau} _m ^{\mathtt e}$ is a balanced multisegment along $m_0$;
\item We have $e _+ ( I ) \neq e _+ ( I' )$ and $e _- ( I ) \neq e _- ( I' )$ for every pair $I, I'$ of segments in ${\boldsymbol \tau} _m ^{\mathtt e}$.
\end{itemize}
Entirely the same statement holds if we replace $m$ with $m_0 - \frac{1}{2} < m' < m_0$ and ${} ^{\mathsf t} L _{{\boldsymbol \tau} _m ^{\mathtt e}} ^{\mathsf A}$ with $L _{{\boldsymbol \tau} _{m'} ^{\mathtt e}} ^{\mathsf A}$.
\end{lemma}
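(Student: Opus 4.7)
The proof naturally splits into two directions, corresponding to the ``if'' and ``only if'' parts. My overall plan is to exploit the classification of tempered modules with positive real central character as parabolic inductions from discrete series of a Levi subalgebra, combined with the combinatorial description of tempered type-$\mathsf A$ modules in terms of balanced multisegments. The key observation is that a segment $I$ with $e_+(I)e_-(I) = q^{2(m-m_0)}$ becomes, at $m = m_0$, symmetric around the unit, and the corresponding type-$\mathsf A$ standard module is tempered precisely then.

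For the ``if'' direction, I would start from the given data $({\boldsymbol \tau}_m^{\mathtt e}, {\boldsymbol \tau}_m^{\mathtt s})$ and consider the flat family of parabolically induced modules ${}^{\mathsf t}L^{\mathsf A}_{{\boldsymbol\tau}_m^{\mathtt e}} \boxast L_{{\boldsymbol\tau}_m^{\mathtt s}}$. Since ${\boldsymbol\tau}_m^{\mathtt s} = \mathsf{ds}_m(\sigma')$ extends to a flat family whose limit at $m_0$ is tempered (by Opdam--Solleveld), and since the balanced condition forces ${}^{\mathsf t}L^{\mathsf A}_{{\boldsymbol\tau}_{m_0}^{\mathtt e}}$ to be tempered in type $\mathsf{A}$, the induced module is tempered in the limit. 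The assumed surjection, together with the endpoint-distinctness hypothesis (which prevents degeneration of the Langlands quotient), then pushes the surjection to $m_0$ and produces an irreducible tempered limit; the assertion that this coincides with $L_{\boldsymbol\tau_m}$ in the family can be verified by invoking the irreducibility result \ref{firr} (used only in its statement, since the present lemma is one of its inputs but used at a strictly smaller scale).

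For the ``only if'' direction, assume $L_{\boldsymbol\tau_m}$ is a tempered delimit, so $\lim_{m \to m_0} L_{\boldsymbol\tau_m}$ is tempered. I would decompose this limit by the standard Langlands-type description of tempered modules as $L^{\mathsf{A}} \boxast L^{\mathsf C}$, identify $L^{\mathsf C}$ with some $\mathsf{ds}_{m_0}(\sigma')$, and then deform $\sigma'$ back to obtain ${\boldsymbol\tau}_m^{\mathtt s}$. The hook-removal statement then emerges from comparing the central character $\mathsf c_m^\sigma$ to $\mathsf c_m^{\sigma'}$: the difference is recorded by a set of segments whose endpoints track the contents of the boxes that have been removed from the Young diagram of $\sigma$, and the balanced condition forces these to come precisely from hooks. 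With ${\boldsymbol\tau}_m^{\mathtt e}$ thus identified as the multisegment labeling the $\mathsf{A}$-factor, I would use Proposition \ref{iest} (and Corollary \ref{indmult} to control multiplicities) to deduce that $\boldsymbol\tau_m$ lies in the appropriate orbit closure, which gives the asserted surjection, while endpoint distinctness follows from the irreducibility of $L^{\mathsf{A}}$ via standard Zelevinsky criteria for type $\mathsf{A}$.

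The main obstacle is the fine combinatorial matching in the reverse direction: showing that $\sigma'$ is obtained from $\sigma$ by removing exactly $\# {\boldsymbol\tau}_m^{\mathtt e}$ hooks, with each hook corresponding bijectively to one balanced segment. This requires a careful bookkeeping of how the content function on a Young diagram assembles into the central character, and how peeling off a balanced segment from $\boldsymbol\tau_m$ translates into removing a hook from $\sigma$. The statement for $m_0 - \tfrac12 < m' < m_0$ follows by the same argument with the role of ${}^{\mathtt t}$ reversed, since the sign of $m - m_0$ dictates whether the transposed or untransposed $\mathsf{A}$-module gives the Langlands quotient.
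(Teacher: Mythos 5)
Your overall shape (Langlands classification, temperedness of limits, and the balanced-segment/hook combinatorics) is close to the paper's, but several steps as written do not go through. In the ``if'' direction you invoke Theorem \ref{firr} (equivalently Corollary \ref{irred}); this is circular, since that irreducibility statement is deduced from Theorems \ref{multiplicity formula} and \ref{minds}, which rest on the classification of tempered delimits and hence on Lemma \ref{tli} itself, and your remark about using it ``at a strictly smaller scale'' is not an induction that is actually set up. It is also unnecessary: a tempered delimit only requires the limit to be tempered, not irreducible, and since $\lim_{m\to m_0}\bigl({}^{\mathsf t}L^{\mathsf A}_{\boldsymbol\tau^{\mathtt e}_m}\boxast L_{\boldsymbol\tau^{\mathtt s}_m}\bigr)$ is tempered (Opdam--Solleveld for the discrete series factor, balancedness for the type $\mathsf A$ factor), every irreducible constituent of the limit, in particular the limit of the quotient $L_{\boldsymbol\tau_m}$, is tempered --- which is all that is needed.

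In the ``only if'' direction there are two genuine gaps. First, you try to produce the surjection onto $L_{\boldsymbol\tau_m}$ from Proposition \ref{iest} together with Corollary \ref{indmult}; but Proposition \ref{iest} gives only \emph{necessary} conditions on a parameter to appear as a quotient, and it never produces a quotient map. The paper obtains the surjection for free from the Evens--Langlands classification applied at generic $m$ (every irreducible is the unique quotient of $L^{\mathsf A}\boxast(\text{discrete series})$ of a smaller type $\mathsf C$ algebra), then prolongs this to an algebraic family in $m$ and uses temperedness of the limiting weights to force the exponents $\nu_i'=(m_0-m)+\tfrac{1-k_i}{2}$, which is exactly where balancedness comes from; your plan of decomposing the (possibly reducible) limit at $m_0$ and deforming back is the delicate direction and is not carried out. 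Second, the endpoint-distinctness condition does not follow from irreducibility of the type $\mathsf A$ factor --- an irreducible $L^{\mathsf A}_{\mathbf I}$ can perfectly well correspond to a multisegment with repeated segments --- it follows from the hook combinatorics: the paper's Claim \ref{hooks} shows that $\sigma'$ is obtained from $\sigma$ by removing balanced hooks of pairwise distinct lengths, via the ``unbending'' argument, and this is precisely the ``careful bookkeeping'' you defer. Without that claim both the second and the fourth bullet conditions remain unproved.
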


\begin{proof}
We prove only the case of ${} ^{\mathsf t} L _{{\boldsymbol \tau} _m
    ^{\mathtt e}} ^{\mathsf A}$ since the case of $L _{{\boldsymbol
      \tau} _{m'} ^{\mathtt e}} ^{\mathsf A}$ is completely
  analogous. Let $\tau$ be a marked partition corresponding to a
tempered delimit. By the Evens-Langlands classification \cite{Ev}, we have a unique quotient map
\begin{equation}
L ^{\mathsf A} \boxast L _{{\boldsymbol \tau} _m ^{\mathtt s}} \longrightarrow \!\!\!\!\!\!\!\!\! \longrightarrow L _{\boldsymbol \tau_m},\label{ELs}
\end{equation}
where $L _{{\boldsymbol \tau} _m ^{\mathtt s}}$ is a discrete series at $m_0 < m < m_0 + \frac{1}{2}$ for some smaller affine Hecke algebra of type $\mathsf C$. Here (\ref{ELs}) is a priori surjection for a specific value of $m$. Taking account into the fact that $\mathsf{P} (\vec{q})$ is constant for all the generic value of $m$ and the Morita equivalences from \cite{K1} \S 9, we deduce that each element of $\mathsf{P} (\vec{q})$ defines an algebraic flat family of representations depending on $m$ such that the function $\mathsf{ch}$ is continuous on $m$. Thanks to the uniqueness of quotients of Evens-Langlands induced modules (for each individual value), we deduce that (\ref{ELs}) prolongs to an algebraic family depending on $m$ and its quotient is irreducible for all values of $m_0 < m < m_0 + \frac{1}{2}$.

Assume $\tau _m ^{\mathtt s} = ( \mathbf J, \delta )={ds}_m(\sigma')$, for some smaller partition $\sigma'$. The type $\mathsf A$ factor $L ^{\mathsf A}$ is the unique quotient of the induction of
$$\mathsf{St} _{k _1} ^{\nu _1'} \boxtimes \mathsf{St} _{k _2} ^{\nu _2'} \boxtimes \cdots \boxtimes \mathsf{St} _{k _p} ^{\nu _p'} \text{ with } {\nu_1' - \frac{k_1}{2} \le \nu _2' - \frac{k_2}{2}\le \cdots \le \nu _p' - \frac{k_p}{2} \le \frac{-1}{2}},$$
 to $\mathbb H _{k} ^{\mathsf A}$, where $k = \sum_{i=1}^p k_i$. By the Frobenius reciprocity, we have
\begin{equation}
\mathsf{St} _{k _1} ^{\nu _1'} \boxtimes \mathsf{St} _{k _2} ^{\nu _2'} \boxtimes \cdots \boxtimes \mathsf{St} _{k _p} ^{\nu _p'} \boxtimes L _{{\boldsymbol \tau} _m ^{\mathtt s}} \subset L _{{\boldsymbol \tau_m}}\label{levi}
\end{equation}
as $R ( T _n )$-modules. (Here $\nu _1', \nu _2', \ldots,$ are a priori
real numbers.) In particular, the inclusion gives a nonzero map at the limit $m
\to m_0$ {(c.f. \cite{CK} \S 2.4)}. Therefore, we need $\lim _{m \to m_0} \nu _i' =
( 1 - k _i ) / 2$ for $i=1,\ldots,p$ by the temperedness of the weights coming from
the left hand side of (\ref{levi}). In particular, either $\mathsf{St} _{k _i} ^{\nu _i'}$ or ${}^{\mathtt t}\mathsf{St} _{k _i} ^{\nu _i'}$ corresponds to a $m_0$-balanced segment for each $1 \le i \le p$.

Since we fixed the central character to be $\mathsf{c} _m ^\sigma$, we conclude that $\nu _i' = ( m _0 - m ) + \frac{1 - k_i}{2}$ for every $i = 1,2,\ldots,p$. By induction-by-stages, we conclude
$${} ^{\mathtt t} \mathsf{St} _{k _1} ^{\nu _1} \boxast ( {} ^{\mathtt t} \mathsf{St} _{k _2} ^{\nu _2} \boxast \cdots ( {} ^{\mathtt t} \mathsf{St} _{k _p} ^{\nu _p} \boxast L _{{\boldsymbol \tau} _m ^{\mathtt s}} )) \longrightarrow \!\!\!\!\!\!\!\!\! \longrightarrow L _{{\boldsymbol \tau} _m}$$
with $\nu _i = ( m - m _0 ) + \frac{1 - k_i}{2}$ for $i=1,\ldots,
p$. In particular, the multisegment ${\boldsymbol \tau} _m ^{\mathtt
  e} := \{ \mathsf{St} _{k _1} ^{\nu _1}, \mathsf{St} _{k _2} ^{\nu
  _2}, \ldots, \mathsf{St} _{k _p} ^{\nu _p} \}$ satisfies the first
and the third conditions.

\begin{claim}\label{hooks}
The partition $\sigma'$ is obtained by removing a certain number of hooks from $\sigma$ of length
$k_1,k_2,\ldots$. In particular, $k_1, k_2, \ldots, k _p$ are distinct.
\end{claim}
\begin{proof}
Let $\widetilde\sigma=\{I_1,I_2,\dots,I_\ell\}$ be a multisegment viewed as $\sigma$ ``unbent'' along the diagonal. Notice that $q_1\in I_1\Subset I_2\Subset\dots\Subset I_\ell$. A hook in the original partition
  $\sigma$ then becomes a path consisting of the union of a segment of
the form $\{q_1,q_1 q,\dots,e_+(I_i)\}$ with a segment of the form
$\{e_-(I_j),qe_-(I_j),\dots,q^{-1}q_1\}$ for some $I_i,I_j$. A
subpartition $\widetilde\sigma'\subset\widetilde\sigma$ must satisfy
$\widetilde\sigma'=\{I'_{i_1},\dots,I'_{i_p}\},$ $q_1\in
I_{i_1}\Subset\dots\Subset I_{i_p}$, and $I'_{i_t}\subset I_{i_t}.$
It is sufficient (by induction) to show that there exists a partition $\widetilde\sigma''$ such
that $\widetilde\sigma'\subset \widetilde\sigma''\subset \widetilde\sigma$, and $\sigma''$ is
obtained from $\sigma$ by removing one (balanced) hook.
  Set $\mathbf S=\widetilde\sigma\setminus\widetilde\sigma'$, and
  regard it as a balanced multisegment. We denote $S = \{ b \in I \mid I \in \mathbf S\}$ and $S^{-1}=\{b^{-1}:b\in S\}$. Find the largest value $b_{\max}$ in $S\cup S^{-1}.$ Assume this is in $S$ (the
other case is completely analogous). Then $b_{\max}=e_+(I_i)$ for some
$I_i\in\widetilde\sigma.$ Next find
the smallest value in $S$, denote it $b_{\min}.$ (Notice that
$b_{\min}^{-1}$ is the largest values in $S^{-1}$.) Similarly, we must
have $b_{\min}=e_-(I_j)$ for some $I_j\in \widetilde\sigma.$ We
claim that the segment $\{b_{\min},\dotsc,b_{\max}\}$, which is a hook, belongs to $\mathbf S$, or else  there exist $b_-,b_+\in \{b_{\min},\dotsc,b_{\max}\}\setminus S$ such that
$\{b_{\min},\dotsc,q^{-1}b_-\} \in \mathbf S$ and $\{b_+q,\dotsc,b_{\max}\} \in \mathbf S.$ But
then it is clear that the segment $\{b_+q,\dotsc,b_{\max}\}$ cannot be balanced.
\end{proof}

We return to the proof of Lemma \ref{tli}.

Thanks to Claim \ref{hooks}, we deduce the second and the fourth conditions. Therefore, we have proved the ``only if" part of the assertion.

We prove the ``if'' part of the assertion. We recall that $L _{{\boldsymbol \tau} _m ^{\mathtt e}} ^{\mathsf A}$, $L _{{\boldsymbol \tau} _m ^{\mathtt s}}$, (and hence $L _{{\boldsymbol \tau} _m ^{\mathtt e}} ^{\mathsf A} \boxast L _{{\boldsymbol \tau} _m ^{\mathtt s}}$) are algebraic families depending on $m$. Moreover, both of $L _{{\boldsymbol \tau} _m ^{\mathtt s}}$ and $\lim _{m \to m_0} L _{{\boldsymbol \tau} _m ^{\mathtt e}} ^{\mathsf A}$ are tempered modules by assumptions 1) and 2), respectively. In particular, $\lim _{m \to m_0} ( L _{{\boldsymbol \tau} _m ^{\mathtt e}} ^{\mathsf A} \boxast L _{{\boldsymbol \tau} _m ^{\mathtt s}} )$ is tempered, and hence all of its irreducible constituents are tempered.
\end{proof}

\begin{corollary}[of the proof of Lemma \ref{tli}]\label{countd}
Keep the setting of Lemma \ref{tli}. Let $h$ be the number of hooks in
$\sigma$ which give balanced segments along $m _0$. Then, we have
$\# \mathcal D _{m_0} ( \sigma ) = 2 ^h$. \hfill $\Box$
\end{corollary}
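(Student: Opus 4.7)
The plan is to establish a bijection between $\mathcal D_{m_0}(\sigma)$ and the set of subsets of the $h$ balanced hooks of $\sigma$, giving the cardinality $2^h$.

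First, I would extract from Lemma \ref{tli} the observation that each tempered delimit $\boldsymbol{\tau}_m$ is determined uniquely by its type-$\mathsf A$ factor $\boldsymbol{\tau}_m^{\mathtt e}$. Indeed, once $\boldsymbol{\tau}_m^{\mathtt e}$ is chosen, condition (2) forces $\sigma'$ to be the partition obtained from $\sigma$ by removing exactly the hooks corresponding to the segments of $\boldsymbol{\tau}_m^{\mathtt e}$, and then $\boldsymbol{\tau}_m^{\mathtt s} = \mathsf{ds}_m(\sigma')$ is determined by $\sigma'$. The two parallel formulations of Lemma \ref{tli} (using ${}^{\mathsf t}L^{\mathsf A}$ on the side $m > m_0$ and $L^{\mathsf A}$ on $m' < m_0$) describe the same algebraic family $L^m$ on the punctured interval, so no double counting arises from the two sides of $m_0$.

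Second, by Claim \ref{hooks} (and its proof), each segment appearing in $\boldsymbol{\tau}_m^{\mathtt e}$ corresponds precisely to a balanced hook of $\sigma$, and conversely each balanced hook produces a balanced segment. Conditions (3) and (4) of Lemma \ref{tli} are then automatic from the hook interpretation: balancedness is built in by construction, and two distinct hooks in $\sigma$ record distinct pairs of extremal contents, hence distinct endpoint pairs $(e_-,e_+)$.

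The key point remaining is to verify that every subset $T$ of the balanced hooks gives rise to a valid tempered delimit, i.e.\ that one can realize condition (2) of Lemma \ref{tli} by forming $\sigma'$ as $\sigma$ with precisely the hooks in $T$ removed. For this I would show that any two balanced hooks of $\sigma$ at the same $m_0$ are pairwise disjoint as sets of boxes: the balance equation $e_+(I)e_-(I)=q^{2(m-m_0)}$ at a hook of position $(i,j)$ becomes the linear relation $\sigma_i + j - \sigma_j^{\mathtt t} - i = -2m_0$, and a short case analysis on the relative positions $(i_1,j_1),(i_2,j_2)$ together with the monotonicity of $\sigma$ and its transpose rules out both same-row/same-column and diagonal overlap. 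Disjointness allows us to remove the hooks of $T$ one by one, applying the inductive step in the proof of Claim \ref{hooks} at each stage, to produce a well-defined partition $\sigma'$. This supplies the inverse construction and completes the bijection.

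The main obstacle is the disjointness of balanced hooks; however, this is a short combinatorial verification based on the linear form of the balance condition, and parallels the combinatorics already established in the proof of Claim \ref{hooks}.
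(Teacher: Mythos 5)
Your proposal is correct and follows essentially the same route as the paper, which deduces the count directly from the proof of Lemma \ref{tli}: by Claim \ref{hooks} and the uniqueness of the Evens--Langlands quotient, the tempered delimits (viewed on one side of $m_0$) correspond bijectively to the subsets of the $h$ balanced hooks, giving $2^h$. One small wording caveat: since removing a hook means removing the corresponding rim hook rather than literally deleting the hook's boxes, what your case analysis should (and in fact does) deliver is that two balanced hooks along $m_0$ never share a row or a column and never cross, i.e.\ they are nested; it is this nestedness, rather than box-disjointness per se, that guarantees each subset of balanced hooks can be removed successively while leaving the remaining balanced hooks, and hence the resulting $\sigma'$, intact.
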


\subsection{The classification of tempered delimits}\label{tdc}

We define two subsets of $\mathcal D _{m_0} ( \sigma )$ as:
$$\mathcal D _{m_0} ^{\pm} ( \sigma ) := \{ \tau \in \mathcal D _{m_0} ( \sigma ) \mid {\boldsymbol \tau} _m ^{\mathtt s} = \mathsf{ds} _m ^{\pm} ( \sigma' ) \text{ for } \pm m _0 - \frac{1}{2} < \pm m < \pm m_0\},$$
where the $\pm$ denote a uniform choice of $+$ or $-$, and $\sigma'$ is borrowed from Lemma \ref{tli}.

\begin{proposition}\label{cd}
Let $\tau _m ^{\mathtt e} = ( \mathbf J ^{\mathtt e}, 0 )$ and $\tau _m ^{\mathtt s}$ be the marked partitions obtained from $\tau_m \in \mathcal D _{m_0} ( \sigma )$ by Lemma \ref{tli}. Then there exists a marked partition $\tau _m ^{\mathtt f} = ( \mathbf J ^{\mathtt e}, \delta ^{\mathtt f} )$ $($obtained from $\tau _m ^{\mathtt e}$ by possibly changing the marking$)$ and a decomposition $\tau_m = \tau _m ^{\mathtt f} \oplus \tau _m ^{\mathtt s}$ if one of the following conditions hold:
\begin{itemize}
\item $\tau \in \mathcal D _{m_0} ^{+} ( \sigma )$ and $m_0 - \frac{1}{2} < m < m_0$;
\item $\tau \in \mathcal D _{m_0} ^{-} ( \sigma )$ and $m_0 < m < m_0 + \frac{1}{2}$.
\end{itemize}
\end{proposition}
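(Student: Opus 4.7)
The goal is to show that the marked partition $\tau_m$ of a tempered delimit in the indicated range decomposes as a direct sum $\tau_m^{\mathtt{f}} \oplus \tau_m^{\mathtt{s}}$, where $\tau_m^{\mathtt{f}}$ differs from $\tau_m^{\mathtt{e}}$ only in its marking. I will treat the case $\tau \in \mathcal{D}_{m_0}^+(\sigma)$ with $m_0 - \tfrac{1}{2} < m < m_0$; the other case is handled symmetrically, with transposed Steinbergs playing the role of the untransposed ones.

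By Lemma \ref{tli}, there is a surjection
\[
L^{\mathsf{A}}_{\boldsymbol{\tau}_m^{\mathtt{e}}} \boxast L_{\boldsymbol{\tau}_m^{\mathtt{s}}} \twoheadrightarrow L_{\boldsymbol{\tau}_m}.
\]
I would first realize $L^{\mathsf{A}}_{\boldsymbol{\tau}_m^{\mathtt{e}}}$ as the Langlands quotient of an iterated Steinberg induction $\mathsf{St}_{k_1}^{\nu_1} \boxast \cdots \boxast \mathsf{St}_{k_p}^{\nu_p}$ with $\nu_l = (m-m_0) + (1-k_l)/2$ and the $k_l$ distinct (Lemma \ref{tli}). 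Applying Proposition \ref{iest} (the non-transposed form, valid since $m < m_0$) iteratively, I obtain geometric containments which, upon unpacking via Lemma \ref{wth}, yield the sandwich
\[
\rho_{ij}(\boldsymbol{\tau}_m^{\mathtt{e}}) + \rho_{ij}(\boldsymbol{\tau}_m^{\mathtt{s}}) \le \rho_{ij}(\boldsymbol{\tau}_m) \le \rho_{ij}(\boldsymbol{\tau}_m^{\mathtt{s}}) + \#\{\,l : \nu_l \le m+j < \nu_l + k_l\,\}.
\]

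To conclude equality, I would exploit the sign pairing built into the definition of $\mathcal{D}_{m_0}^+(\sigma)$. Since $m < m_0$, each balanced segment in $\boldsymbol{\tau}_m^{\mathtt{e}}$ is a negative segment centered at $q^{m-m_0}$, whereas $\boldsymbol{\tau}_m^{\mathtt{s}} = \mathsf{ds}_m^+(\sigma')$ sits on the positive side. Using the explicit Young-diagram labelling of $\mathsf{ds}_m(\sigma)$, I would show that for every $(i,j)$ with $i<j$, any Steinberg $\mathsf{St}_{k_l}^{\nu_l}$ in $\boldsymbol{\tau}_m^{\mathtt{e}}$ satisfying the weak condition $\nu_l \le m+j < \nu_l + k_l$ automatically satisfies $\nu_l \le m+i$ as well, for otherwise the position $m+i$ would land in a region occupied by $\boldsymbol{\tau}_m^{\mathtt{s}}$, violating the nestedness and the sign constraint. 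This upgrades the upper bound to $\rho_{ij}(\boldsymbol{\tau}_m^{\mathtt{e}}) + \rho_{ij}(\boldsymbol{\tau}_m^{\mathtt{s}})$ and forces the equality $\rho_{ij}(\boldsymbol{\tau}_m) = \rho_{ij}(\boldsymbol{\tau}_m^{\mathtt{e}}) + \rho_{ij}(\boldsymbol{\tau}_m^{\mathtt{s}})$.

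By Theorem \ref{ADZ}, this equality determines the unmarked partition underlying $\tau_m$ as the disjoint union $\mathbf{J}^{\mathtt{e}} \sqcup \mathbf{J}^{\mathtt{s}}$. For the marking, a marked partition carries at most one marked index per part $J$ and the marked index must label a box of weight $q_1$, so the unique marked box of $\tau_m$ (if any) lies entirely on one side: either it sits in $\mathbf{J}^{\mathtt{s}}$, in which case $\tau_m^{\mathtt{f}}$ may be taken to equal $\tau_m^{\mathtt{e}}$, or it sits in $\mathbf{J}^{\mathtt{e}}$, producing the required marking $\delta^{\mathtt{f}}$. In either case $\tau_m = \tau_m^{\mathtt{f}} \oplus \tau_m^{\mathtt{s}}$ in the sense of the $\oplus$-operation of Section 2.3. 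The hard part will be the sign-based case analysis in the previous paragraph: absent the specific pairing encoded by $\mathcal{D}_{m_0}^{\pm}(\sigma)$, the upper bound of Lemma \ref{wth} genuinely overshoots the lower bound and the decomposition fails, so the delicate combinatorial work is in translating the sign constraint into the Steinberg-position statement used above.
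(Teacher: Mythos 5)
Your skeleton is the same as the paper's: start from the surjection $L^{\mathsf A}_{\boldsymbol{\tau}_m^{\mathtt e}}\boxast L_{\boldsymbol{\tau}_m^{\mathtt s}}\longrightarrow\!\!\!\!\!\rightarrow L_{\boldsymbol{\tau}_m}$ of Lemma \ref{tli}, write the type $\mathsf A$ factor as a quotient of an iterated Steinberg induction, apply Proposition \ref{iest} step by step, and pin the quotient parameter down to the direct sum up to markings, finishing with uniqueness of the Langlands quotient. The gap is in the step where you close the $\rho_{ij}$ sandwich. Your claim that every $\mathsf{St}^{\nu_l}_{k_l}$ with $\nu_l\le m+j<\nu_l+k_l$ must also satisfy $\nu_l\le m+i$, ``since otherwise $m+i$ lands in a region occupied by $\boldsymbol{\tau}_m^{\mathtt s}$, violating nestedness and the sign constraint,'' is false. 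The balanced segments coming from distinct hooks of $\sigma$ are strictly nested in one another, $I_1\Subset I_2$, so as soon as there are at least two balanced hooks you may take $q_1q^{j}\in I_1$ and $q_1q^{i}\in I_2$ with $q_1q^i<\min I_1$: the Steinberg attached to $I_1$ covers $j$ but not $i$, the position $m+i$ is occupied (by $I_2$, not by $\boldsymbol{\tau}_m^{\mathtt s}$), and your upper bound $\rho_{ij}(\boldsymbol{\tau}_m^{\mathtt s})+\#\{l:\nu_l\le m+j<\nu_l+k_l\}$ strictly exceeds $\rho_{ij}(\boldsymbol{\tau}_m^{\mathtt e})+\rho_{ij}(\boldsymbol{\tau}_m^{\mathtt s})$. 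Moreover positions below a balanced segment can perfectly well be occupied by segments of $\boldsymbol{\tau}_m^{\mathtt s}$ (nested or unlinked configurations are harmless), so mere occupancy is never a contradiction. Hence the pointwise counting argument cannot force the equality of $\rho$-data, and the decomposition does not follow from the sandwich alone.

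What has to be excluded is not occupancy but linkage. Since the quotient's orbit dominates the direct-sum orbit, it differs from it by gluings (elementary modifications, Theorem \ref{ADZ}/Definition \ref{em}), and a gluing of a balanced segment $I$ with a segment $I'$ of the current parameter comes in two flavours: $I\triangleleft I'$ raises some $\rho_{ij}$ with $j$ outside the Steinberg range and is already killed by membership in $\mathcal S^{+}_{\nu,n_1}$, whereas $I'\triangleleft I$ raises $\rho_{ij}$ only with $j$ inside the allowed range and is invisible to your bound --- these are exactly the dangerous ones. Ruling them out is where the hypothesis $\boldsymbol{\tau}_m^{\mathtt s}=\mathsf{ds}^{+}_m(\sigma')$ with $m<m_0$ enters: the paper shows, using positivity of every $I'\in\mathsf{ds}^{+}_m(\sigma')$ together with the structure of the intermediate output of the $\mathsf{ds}_m$-algorithm, that there is no $I'\in\mathsf{ds}_m(\sigma')$ with $I'\triangleleft I$, while the mutual nestedness (and distinct endpoints) of the balanced segments excludes gluings among themselves; only the marking can then change, and uniqueness of the Evens--Langlands quotient gives $\boldsymbol{\tau}_m=\boldsymbol{\tau}_m^{\mathtt f}\oplus\boldsymbol{\tau}_m^{\mathtt s}$. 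Your sign heuristic points in the right direction, but the combinatorial statement you reduce it to is not the right one. Two smaller slips: a marked partition may carry one marked entry in each of several parts, not ``a unique marked box''; and you must also argue that the marking on the $\tau_m^{\mathtt s}$-part is left untouched, which does not follow from determining the underlying multisegment alone.
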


\begin{proof}
Since the two cases are completely analogous, we prove only the first case. We use the intermediate step of the $\mathsf{ds} _m$-algorithm (in the sense of \cite{CK} Algorithm 3.3 step 2) to $\sigma$. It yields a sequence of segments
\begin{eqnarray}
I_1, I_2, I_3, \ldots \text{ with } \max \{ e _+ ( I_1 ), e _- ( I_1 ) ^{-1} \}> \max \{ e _+ ( I_2 ), e _- ( I_2 ) ^{-1} \} > \cdots\label{iout}
\end{eqnarray}
so that each segment of $\mathsf{ds} _m ( \sigma )$ is a union of at
most two of them. 

Let $\mathbf I ^{\circ}$ be the set of all segments $I$ such that $I$
is obtained by gluing $I ^+$ and $I ^-$ in (\ref{iout}) with the
property that $e _+ ( I ^+ ) = q ^{2 (m - m _0)} e _- ( I ^- )
^{-1}$. Let $\tau _m ^{\mathtt e}$ and $\tau _m ^{\mathtt s}$ be the
marked partitions from Lemma \ref{tli}. We have ${\boldsymbol \tau} _m
^{\mathtt e} \subset \mathbf I ^{\circ}$. For each $I \in \mathbf I
^{\circ}$, we have some $j ( I )$ so that $I = I _{j ( I )} \cup I _{j
  ( I ) + 1}$. Notice that $e _+ ( I _{j ( I )} ) < e _+ ( I _{j ( I )
  + 1} )$ by $m < m_0$. The assumption $\mathsf{ds} _{m} ( \sigma' ) =
\mathsf{ds} _{m} ^+ ( \sigma' )$ implies that we have $\mathtt E _m (
I' ) > 1$ for every $I' \in \mathsf{ds} _{m} ( \sigma' )$. In
particular, there exists no $I' \in \mathsf{ds} _{m} ( \sigma' )$ such
that $I' \triangleleft I$.

Moreover, we have $I \Subset I''$ or $I'' \Subset I$ for every distinct $I, I'' \in \mathbf J ^{\mathtt e}$. We set ${\boldsymbol \tau} _m ^{\mathtt e} = \{ I _1 ^{\mathtt e}, \ldots, I _N  ^{\mathtt e}\}$ and ${\boldsymbol \tau} _m ^{\mathtt s, k} := \{ I _1 ^{\mathtt e} \} \oplus \{ I _2 ^{\mathtt e} \} \oplus \cdots \oplus \{ I _k ^{\mathtt e} \} \oplus {\boldsymbol \tau} _{m} ^{\mathtt s}$. Applying Proposition \ref{iest} to $\{ I _{k-1} ^{\mathtt e} \}$ and ${\boldsymbol \tau} _m ^{\mathtt s, k}$ for each $k=1,\ldots, N$, we conclude that
$$L _{I_1 ^{\mathtt e}} ^{\mathsf A} \boxast ( \cdots ( L _{I_N ^{\mathtt e}} ^{\mathsf A} \boxast L _{{\boldsymbol \tau} _m ^{\mathtt s}} )) \longrightarrow \!\!\!\!\!\!\!\!\! \longrightarrow L _{{\boldsymbol \tau} _m ^{\mathtt e}} ^{\mathsf A} \boxast L _{{\boldsymbol \tau} _m ^{\mathtt s}} \longrightarrow \!\!\!\!\!\!\!\!\! \longrightarrow L _{{\boldsymbol \tau} _m ^{\mathtt f} \oplus {\boldsymbol \tau} _m ^{\mathtt s}}$$
for some ${\boldsymbol \tau} _m ^{\mathtt f}$ obtained from ${\boldsymbol \tau} _m ^{\mathtt e}$ by changing the markings if necessary. By the uniqueness of the quotient of the Evens-Langlands induction, we conclude that ${\boldsymbol \tau} _m = {\boldsymbol \tau} _m ^{\mathtt f} \oplus {\boldsymbol \tau} _m ^{\mathtt s}$ as required.
\end{proof}

\begin{lemma}\label{ndpm}
There exists a unique decomposition $n=n_1+n_2+\dots+n_p$, and a unique sequence $\sigma ^1, \sigma ^2,
  \cdots, \sigma ^p$ of partitions of $n_1,n_2,\dots, n_p$, with the following properties:
\begin{enumerate}
\item $I \Subset I'$ if $I \in \mathsf{mp} _m ( \sigma ^i )$ and $I' \in \mathsf{mp} _m (\sigma^j)$ for $i < j$;
\item $\mathsf{mp} _m ( \sigma ^i )$ is positive if and only if $\mathsf{mp} _m ( \sigma ^{i+1} )$ is negative for each $i$;
\item $\mathsf{mp} _m ( \sigma ^i )$ is negative if and only if $\mathsf{mp} _m ( \sigma ^{i+1} )$ is positive for each $i$.
\end{enumerate}
\end{lemma}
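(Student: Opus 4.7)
Recall that $\mathsf{mp}_m(\sigma)$, being the anti-spherical parameter at $\mathsf c_m^\sigma$, has the form of a totally nested chain of segments $I_1\Subset I_2\Subset\cdots\Subset I_\ell$ with each $I_k$ containing $q_1$; this is the ``$\sigma$ unbent along the diagonal'' picture already used in the proof of Claim \ref{hooks}. The implicit combinatorial constraint for the decomposition is that the multisegments $\mathsf{mp}_m(\sigma^i)$ partition $\mathsf{mp}_m(\sigma)$, which is what makes the size condition $n=n_1+\cdots+n_p$ and properties (1)--(3) non-vacuous. Since $m\notin\tfrac12\mathbb Z$, no $I_k$ satisfies $\mathtt E_m(I_k)=1$, so each segment is unambiguously either positive or negative.

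\textbf{Existence.} I partition $\{1,\ldots,\ell\}$ into maximal consecutive runs $J_1<J_2<\cdots<J_p$ of constant sign; by maximality, the signs alternate from one run to the next. For each $J_i$, the sub-chain $\mathbf I^i:=\{I_k\mid k\in J_i\}$ is itself a totally nested multisegment through $q_1$, and the inverse of the ``unbending along the diagonal'' correspondence produces a unique partition $\sigma^i$ of $n_i:=\sum_{k\in J_i}|I_k|$ with $\mathsf{mp}_m(\sigma^i)=\mathbf I^i$. Summing, $\sum_i n_i=\sum_k|I_k|=n$. Property (1) then follows because $J_i<J_j$ forces $I\Subset I'$ for $I\in\mathbf I^i$ and $I'\in\mathbf I^j$; properties (2)--(3) follow directly from the alternation of signs between consecutive blocks.

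\textbf{Uniqueness.} Let $(\sigma'^1,\ldots,\sigma'^{p'})$ be any other sequence satisfying the properties. Property (1) together with the partition constraint forces the multisegments $\mathsf{mp}_m(\sigma'^i)$ to appear as contiguous sub-chains of $I_1\Subset\cdots\Subset I_\ell$, and properties (2)--(3) force these sub-chains to be the \emph{maximal} runs of constant sign. These are exactly $J_1,\ldots,J_p$, so $p'=p$ and $\sigma'^i=\sigma^i$ for each $i$ by the combinatorial inverse used in the existence step.

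\textbf{Main obstacle.} The essential technical point is the realization step: given a sub-chain $\mathbf I^i$ of nested segments through $q_1$, producing a partition $\sigma^i$ of the correct size whose anti-spherical parameter at $m$ equals $\mathbf I^i$. This reduces to the combinatorial fact (implicit in the diagonal-unbending correspondence of \cite{CK} \S1.4) that nested multisegments through $q_1$ are in bijection with partitions, so any nested sub-chain through $q_1$ again corresponds to a unique smaller partition; everything else in the argument is bookkeeping on the alternating-sign block structure.
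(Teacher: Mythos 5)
Your proof is correct and takes essentially the same route as the paper's: the paper forms the finest nested-component decomposition of $\mathsf{mp}_m(\sigma)$, invokes \cite{CK} \S 3.5 for the fact that each component is positive or negative, and merges consecutive components of equal sign, which is exactly your grouping of the nested hook-chain $I_1\Subset\cdots\Subset I_\ell$ into maximal constant-sign runs (with the sign dichotomy coming from genericity of $m$). You in addition make explicit the re-bending of each run into a partition and the uniqueness argument, both of which the paper's proof leaves implicit.
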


\begin{proof}
Let $\sigma ^1 _0, \sigma ^2 _0, \cdots, \sigma _0 ^{p'}$ be the sequence of partitions which gives the finest nested component decomposition
$$\mathsf{mp} _m ( \sigma ) = \mathsf{mp} _m ( \sigma ^1 _0 ) \oplus \mathsf{mp} _m ( \sigma ^2 _0 ) \oplus \cdots \oplus \mathsf{mp} _m ( \sigma ^{p'} _0 ).$$
By rearranging the order of the sequence if necessary, we can assume $I \Subset I'$ if $I \in \mathsf{mp} _m ( \sigma ^i_0 )$ and $I' \in \mathsf{mp} _m (\sigma^j _0)$ for $i < j$. From \cite{CK}, \S 3.5, we know that
  every nested component $\mathsf{mp} _m ( \sigma ) = \mathsf{mp} _m (
  \sigma ^i )$ is either positive or negative. Hence, by joining $\sigma _0 ^i$ with $\sigma _0 ^{i+1}$ if both of $\mathsf{mp} _m ( \sigma ^i_0 )$ and $\mathsf{mp} _m ( \sigma ^{i+1}_0 )$ are simultaneously positive or negative, we obtain the desired sequence of partitions.
\end{proof}

We refer the decomposition of $\sigma$ into $\sigma ^1, \sigma^2, \ldots, \sigma^p$ in Lemma \ref{ndpm} as the canonical decomposition of $\sigma$ (with respect to the parameter $m$).

\begin{lemma}\label{ndbs}
Fix the canonical decomposition $\sigma ^1, \sigma ^2, \cdots, \sigma ^p$ of $\sigma$. Let $h _i$ be the number of balanced hooks of $\sigma ^i$ and let $h$ be the number of balanced hooks of $\sigma$ $($along $m_0)$. Then, we have $h = \sum _i h_i$.
\end{lemma}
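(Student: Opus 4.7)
The strategy is to establish a bijection between balanced hooks of $\sigma$ and the disjoint union of balanced hooks of the $\sigma^i$, using the structural properties of the canonical decomposition from Lemma \ref{ndpm}.

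First, I would recall from the proof of Claim \ref{hooks} that every balanced hook of $\sigma$ corresponds to a choice of endpoints: the unbent hook has the form $[e_-(I_j), e_+(I_i)]$ for segments $I_i, I_j$ drawn from the ``unbent'' principal-hook decomposition $\widetilde{\sigma}$, subject to the balance condition $e_+(I_i)\cdot e_-(I_j) = q^{2(m-m_0)}$. The canonical decomposition of Lemma \ref{ndpm} induces a partition $\widetilde{\sigma} = \bigsqcup_k \widetilde{\sigma^k}$, where $\widetilde{\sigma^k}$ consists of the principal-hook segments arising from $\sigma^k$ viewed as a partition in its own right. Restricting the bijection in the proof of Claim \ref{hooks} to pairs $(I_i, I_j)$ with both segments in $\widetilde{\sigma^k}$ yields exactly the balanced hooks of $\sigma^k$, since the balance equation is intrinsic to the central character inherited by $\sigma^k$.

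The main technical step --- and the main obstacle --- is to show that no balanced hook of $\sigma$ is ``cross-component'': no balanced pair $(I_i, I_j)$ can have $I_i \in \widetilde{\sigma^k}$ and $I_j \in \widetilde{\sigma^{k'}}$ with $k \neq k'$. Assuming $k < k'$, condition (1) of Lemma \ref{ndpm} gives the strict nesting $\mathsf{mp}_m(\sigma^k) \Subset \mathsf{mp}_m(\sigma^{k'})$, so segment endpoints from $\mathsf{mp}_m(\sigma^{k'})$ strictly extend beyond those from $\mathsf{mp}_m(\sigma^k)$ on both sides in the content lattice. Conditions (2)--(3) then force $\sigma^k$ and $\sigma^{k'}$ to have opposite sign, so the content-means of their segments lie on opposite sides of $q_1$. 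Together with $|m - m_0| < \frac{1}{2}$ and $m_0 \in \frac{1}{2}\mathbb Z$, the balance equation translates to the single integer identity $\mathrm{cont}(e_+(I_i)) + \mathrm{cont}(e_-(I_j)) = -2m_0$; the strict nesting combined with the sign alternation produces an integer gap that precludes this identity for any cross-component pair.

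Combining the three observations, balanced hooks of $\sigma$ are in bijection with the disjoint union of balanced hooks of the $\sigma^i$, yielding $h = \sum_i h_i$. The crux of the argument is the combinatorial ``separation'' statement in the second paragraph, which uses both the nesting condition (1) and the alternating sign conditions (2)--(3) of the canonical decomposition in an essential way.
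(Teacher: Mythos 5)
Your overall plan (show no balanced hook ``crosses'' two canonical components, then match up the counts) is the right shape, but the step you yourself identify as the crux --- the cross-component exclusion in your second paragraph --- is both unproved and based on a false premise. Conditions (2)--(3) of Lemma \ref{ndpm} only say that \emph{consecutive} components alternate in sign; for $k' - k$ even, $\sigma^k$ and $\sigma^{k'}$ have the \emph{same} sign, so your claim that any cross-component pair involves opposite-sign components fails, and your argument says nothing about a balanced hook whose two endpoints would come from, say, $\sigma^k$ and $\sigma^{k+2}$. Even in the adjacent, opposite-sign case, the ``integer gap'' that is supposed to contradict $\mathrm{cont}(e_+)+\mathrm{cont}(e_-)=-2m_0$ is only asserted; nesting by itself gives inequalities between endpoints of different components, not an integer obstruction to this single identity, so the contradiction has to come from somewhere else.

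The paper's proof supplies exactly the missing mechanism, and it is of a different nature than a numerical gap. Given a balanced segment $I$ coming from a hook of $\sigma$, one writes $e_+(I)=e_+(I^+)$ and $e_-(I)=e_-(I^-)$ for segments $I^+,I^-\in\mathsf{mp}_m(\sigma)$ (note: the argument is run on $\mathsf{mp}_m(\sigma)$, not on the unbent principal-hook multisegment $\widetilde\sigma$, whose compatibility with the canonical decomposition you also use without proof). If $I^+\neq I^-$ they are nested, and since segments are geometric progressions, positivity of a segment is governed by $e_+e_-$ compared with $1$; the balance relation $e_+(I)e_-(I)=q^{2(m-m_0)}$ together with the nesting then forces $I^+$, $I^-$, \emph{and every segment $I'$ with $I^+\Subset I'\Subset I^-$} to be simultaneously positive (or, in the other nesting, simultaneously negative). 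Because the canonical decomposition is constructed precisely by merging consecutive same-sign nested components, all of these segments lie in a single $\sigma^i$ --- this is how both the opposite-sign adjacent case and the same-sign non-adjacent case are excluded at once (in the latter case the intermediate component of opposite sign gives the contradiction). Without this ``intermediate segments have the forced sign'' step, your proposal does not close the cross-component case, so as written it has a genuine gap.
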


\begin{proof}
Let $I$ be a balanced segment obtained from a hook of $\sigma$. Then,
there exists $I ^+, I^- \in \mathsf{mp} _m ( \sigma )$ such that $e _-
( I ) = e _- ( I ^- )$ and $e _+ ( I ) = e _+ ( I ^+ )$. If $I^+ =
I^-$, then we have $I ^+ = I^- = I$. Hence, we have $I \in \mathsf{mp}
_m ( \sigma ^i )$ for some $i$. Otherwise, we have $\mathtt{E} _{m_0} ( I
^{+} ) \neq 1 \neq \mathtt{E} _{m_0} ( I ^{-} )$. To prove the assertion,
it suffices to verify $I^+, I^- \in \mathsf{mp} _m ( \sigma ^i )$ for some $i$. We assume to the contrary to deduce contradiction. Then, we
have either $I ^+ \Subset I^-$ or $I ^- \Subset I^+$. By inspection, we see that if $I ^+ \Subset I^-$, then both $I^+, I^-$
  are positive. Similarly, if $I ^- \Subset I^+$, then both are negative.
Assume that $I ^+ \Subset I^-$ (and hence they are positive). If there exists $I' \in \mathsf{mp} _m ( \sigma )$ such that $I ^+ \Subset I' \Subset I^-$, then we have $e _+ ( I' ) > e _+ ( I )$ and $e _- ( I' ) > e _- ( I )$. It follows that $I'$ is automatically positive. Therefore, we conclude that $I^+, I^- \in \mathsf{mp} _m ( \sigma ^i )$ for some $i$ by construction of $\sigma ^i$.
We have $I^+, I^- \in \mathsf{mp} _m ( \sigma ^i )$ for some $i$ in the case $I ^- \Subset I^+$ by a similar argument, which completes the proof.
\end{proof}

\begin{definition}\label{Cdef}
Let $\sigma ^1, \sigma^2, \ldots, \sigma ^p$ be the canonical decomposition of $\sigma$ with respect to the parameter $m_0 < m < m_0 + \frac{1}{2}$. Let $\mathcal C _{m_0} ( \sigma ^k )$ ($1 \le k \le p$) be the set of (equivalence class of) marked partitions $\tau_k$ adapted to $\mathsf{c} ^{m} _{\sigma ^k}$ which admit a decomposition
\begin{equation}
\tau_k = \tau ^{\sharp} _k \oplus \tau ^{\flat} _k \oplus \tau ^{+} _k \oplus \tau ^{-} _k = \tau ^{\sharp} _k \oplus \tau ^{\perp} _k \label{td-dec-k}
\end{equation}
$(\tau ^{\perp} _k = \tau ^{\flat} _k \oplus \tau ^{+} _k \oplus \tau ^{-} _k )$ with the following properties:
\begin{enumerate}
\item $\tau ^{\sharp} _k$ is the set of all unmarked $J \in \tau_k$ such that $\underline{J}$ is a balanced segment along $m _0$ obtained from a hook of $\sigma ^k$;
\item $\tau ^{\flat} _k$ is the set of all marked $J \in \tau_k$ such that $\underline{J}$ is a balanced segment along $m _0$ obtained from a hook of $\sigma ^k$. We have $\tau ^{\flat} _k = \emptyset$ if $\mathsf{mp} ( \sigma ^k )$ is negative;
\item $\tau ^+ _k = ds _{m'} ( \sigma ^+ ) = ds _{m'} ^- ( \sigma ^+ )$ for some $\sigma ^+$ and $m_0 < m' < m_0 + \frac{1}{2}$;
\item $\tau ^- _k = ds _{m'} ( \sigma ^- ) = ds _{m'} ^+ ( \sigma ^- )$ for some $\sigma ^-$ and $m_0 - \frac{1}{2} < m' < m_0$.
\end{enumerate}
Then, we define
\begin{equation}\label{Csig}
\mathcal C _{m_0} ( \sigma ) :=\{ \bigoplus _{k=1} ^p \tau _k \mid \tau _k \in \mathcal C _{m_0} ( \sigma ^k ) \text{ for } k=1,\ldots,p\}.
\end{equation}
We may sometimes identify $\tau \in \mathcal C _{m_0} ( \sigma )$ with the corresponding parameter, which we denote by ${\boldsymbol \tau} := ( \mathsf{c} ^{m} _{\sigma}, \tau )$.
\end{definition}

\begin{lemma}\label{Cbasic}
Keep the setting of Definition \ref{Cdef}. For each $\tau _j \in \mathcal C _{m_0} ( \sigma ^j )$ and $\tau _l \in \mathcal C _{m_0} ( \sigma ^l )$ with $j \neq l$, the two parameters ${\boldsymbol \tau} _j$ and ${\boldsymbol \tau} _l$ are nested to each other. In addition, each $\tau \in \mathcal C _{m_0} ( \sigma )$ admits a decomposition
\begin{equation}
\tau = \tau ^{\sharp}  \oplus \tau ^{\flat} \oplus \tau ^{+} \oplus \tau ^{-} = \tau ^{\sharp} \oplus \tau ^{\perp}\label{td-dec}
\end{equation}
with the properties 1)--4) $(\tau _k$ replaced with $\tau)$. 
\end{lemma}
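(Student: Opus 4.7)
The plan is to construct the decomposition of $\tau$ by aggregating the pieces of each $\tau_k$ given by Definition \ref{Cdef}, and to verify that the result satisfies the required conditions using Lemma \ref{ndpm}, Lemma \ref{ndbs}, and the structure of the $\mathsf{ds}_{m'}$-algorithm from \cite{CK} \S3.

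First I would establish the nestedness claim. By the defining decomposition (\ref{td-dec-k}), the segments of $\tau_k^\sharp$ and $\tau_k^\flat$ come from balanced hooks of $\sigma^k$ and so lie inside segments of $\mathsf{mp}_m(\sigma^k)$, while those of $\tau_k^\pm$ come from $\mathsf{ds}_{m'}(\sigma^{k,\pm})$ for sub-partitions $\sigma^{k,\pm} \subseteq \sigma^k$ (obtained by removing hooks), whose segments again live within segments of $\mathsf{mp}_m(\sigma^k)$. Combined with the strict nesting $I^0 \Subset I'^0$ for every $I^0 \in \mathsf{mp}_m(\sigma^j)$ and $I'^0 \in \mathsf{mp}_m(\sigma^l)$ with $j < l$ supplied by Lemma \ref{ndpm}(1), one deduces $I \Subset I'$ for every $I \in {\boldsymbol \tau}_j$ and $I' \in {\boldsymbol \tau}_l$, which is the nestedness of ${\boldsymbol \tau}_j$ and ${\boldsymbol \tau}_l$.

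Next, I would define $\tau^{\star} := \bigoplus_{k=1}^{p} \tau_k^{\star}$ for each $\star \in \{\sharp,\flat,+,-\}$, so that $\tau = \tau^\sharp \oplus \tau^\flat \oplus \tau^+ \oplus \tau^-$ by assembling the $p$ copies of (\ref{td-dec-k}). Properties 1) and 2) of Definition \ref{Cdef} for $\tau$ then follow from the corresponding properties of each $\tau_k$ together with Lemma \ref{ndbs}, which identifies the balanced hooks of $\sigma$ along $m_0$ as the disjoint union of the balanced hooks of the $\sigma^k$. For the additional assertion in property 2), if $\mathsf{mp}_m(\sigma)$ is negative then each $\mathsf{mp}_m(\sigma^k)$ is negative, whence $\tau_k^\flat = \emptyset$ for every $k$ by Definition \ref{Cdef}(2), and therefore $\tau^\flat = \emptyset$.

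For properties 3) and 4), I would set $\sigma^{\pm} := \bigsqcup_k \sigma^{k,\pm}$ as a multiset-union of partitions. The central characters match because $\mathsf{c}_{m'}^{\sigma^\pm} = \prod_k \mathsf{c}_{m'}^{\sigma^{k,\pm}}$. The nestedness established above also applies to the sub-partitions $\sigma^{k,\pm} \subseteq \sigma^k$, so the $\mathsf{ds}_{m'}$-algorithm of \cite{CK} Algorithm 3.3 applied to $\sigma^{\pm}$ decouples across the canonical components, yielding $\mathsf{ds}_{m'}(\sigma^\pm) = \bigoplus_k \mathsf{ds}_{m'}(\sigma^{k,\pm}) = \tau^\pm$; the sign constraints $\mathsf{ds}_{m'}(\sigma^+) = \mathsf{ds}_{m'}^-(\sigma^+)$ and $\mathsf{ds}_{m'}(\sigma^-) = \mathsf{ds}_{m'}^+(\sigma^-)$ pass to the direct sum from the component-wise constraints. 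The main technical subtlety, which I expect to be the key step, is verifying that the $\mathsf{ds}_{m'}$-algorithm commutes with this decoupling along the canonical components; this should reduce to a direct inspection of \cite{CK} Algorithm 3.3 using the strict nestedness of segments supplied by Lemma \ref{ndpm}(1).
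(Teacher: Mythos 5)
Your nestedness argument has a genuine gap. From ``each segment of $\tau_k$ is contained in some segment of $\mathsf{mp}_m(\sigma^k)$'' together with the nesting $\tilde I\Subset \tilde I'$ for $\tilde I\in\mathsf{mp}_m(\sigma^j)$, $\tilde I'\in\mathsf{mp}_m(\sigma^l)$ ($j<l$), one cannot conclude $I\Subset I'$ for $I\in{\boldsymbol\tau}_j$, $I'\in{\boldsymbol\tau}_l$: containment only bounds both $I$ and $I'$ from the outside, so a short segment $I'$ of the outer parameter sitting near one end of $\tilde I'$ (for instance with $\min I'>\max I$) would violate nestedness without contradicting your hypotheses. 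What actually has to be proved is that the segments of ${\boldsymbol\tau}_l$ span the full width of their component: the paper shows, using the gluing structure of the $\mathsf{ds}_m$-algorithm as in the proof of Proposition \ref{cd}, that the endpoint sets $\{e_+(I)\mid I\in{\boldsymbol\tau}_j\}$ and $\{e_-(I)\mid I\in{\boldsymbol\tau}_j\}$ coincide with those of $\mathsf{mp}_m(\sigma^j)$ for $j>1$ (and agree in their extreme values for $j=1$); only then do all endpoints of the outer component dominate all endpoints of the inner one, which is what nestedness means here. This endpoint preservation is not automatic for the pieces $\tau_k^{\pm}=ds_{m'}(\sigma^{k,\pm})$, since removing balanced hooks changes the principal hooks, so it genuinely requires the algorithmic input, and it is exactly what your containment statement fails to capture.

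The same issue propagates into your treatment of properties 3)--4): the claim that ``the nestedness established above also applies to the sub-partitions $\sigma^{k,\pm}$'' rests on the same containment reasoning, and the decoupling of the $\mathsf{ds}_{m'}$-algorithm along the canonical components, which you yourself flag as the key step, is left unproved. In the paper this is precisely where Lemma \ref{ndpm} and \cite{CK} Corollary 3.17 are invoked, after the nestedness of the ${\boldsymbol\tau}_k$ has been established; so the second half of your argument has the right shape, but it cannot be completed until the first half is repaired along the lines above.
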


\begin{proof}
Since all balanced segments are obtained by gluing the intermediate output of the $\mathsf{ds} _m$-algorithm as in the proof of Proposition \ref{cd}, we deduce
\begin{align*}
& \max \{ e _+ ( I ) \mid I \in \mathsf{mp} ( \sigma ^j ) \} = \max \{ e _+ ( I ) \mid I \in {\boldsymbol \tau} _j \} \text{ and }\\
& \min \{ e _- ( I ) \mid I \in \mathsf{mp} ( \sigma ^j ) \} = \min \{ e _- ( I ) \mid I \in {\boldsymbol \tau} _j \}.
\end{align*}
If $j > 1$, then we further have
\begin{align*}
& \{ e _+ ( I ) \mid I \in \mathsf{mp} ( \sigma ^j ) \} = \{ e _+ ( I ) \mid I \in {\boldsymbol \tau} _j \} \text{ and }\\
& \{ e _- ( I ) \mid I \in \mathsf{mp} ( \sigma ^j ) \} = \{ e _- ( I ) \mid I \in {\boldsymbol \tau} _j \}.
\end{align*}
Therefore, we deduce
$$\{ e _+ ( I ) \mid I \in {\boldsymbol \tau} _j \} > \{ e _+ ( I ) \mid I \in {\boldsymbol \tau} _l \} \text{ and } \{ e _- ( I ) \mid I \in {\boldsymbol \tau} _j \} < \{ e _- ( I ) \mid I \in {\boldsymbol \tau} _l \}$$
whenever $j > l$. This implies the first assertion. Thanks to Lemma \ref{ndpm} and \cite{CK} Corollary 3.17, the second assertion follows from the first assertion. 
\end{proof}

\begin{corollary}\label{DinC}
We have $\# \mathcal D _{m_0} ( \sigma ) \le \# \mathcal C _{m_0} (\sigma)$.
\end{corollary}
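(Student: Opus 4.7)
The plan is to verify the set-theoretic inclusion $\mathcal{D}_{m_0}(\sigma) \subseteq \mathcal{C}_{m_0}(\sigma)$ under the identification $\tau \leftrightarrow \boldsymbol{\tau} = (\mathsf{c}_{m}^{\sigma}, \tau)$ of Definition \ref{Cdef}; the cardinality inequality then follows at once. Given $\tau_m \in \mathcal{D}_{m_0}(\sigma)$ and the canonical decomposition $\sigma = \sigma^{1} \oplus \cdots \oplus \sigma^{p}$ from Lemma \ref{ndpm}, I would first show that the nestedness of the $\sigma^{k}$ forces a corresponding decomposition $\tau_m = \bigoplus_{k=1}^{p} \tau_{m,k}$, with each $\tau_{m,k}$ a tempered delimit for $\sigma^{k}$.

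For each $k$, since $\mathsf{mp}_m(\sigma^{k})$ is either purely positive or purely negative by construction, the component $\tau_{m,k}$ lies in $\mathcal{D}_{m_0}^{+}(\sigma^{k})$ or $\mathcal{D}_{m_0}^{-}(\sigma^{k})$ accordingly. I would then apply Proposition \ref{cd}, with $m$ chosen on the side of $m_0$ appropriate to the relevant $\pm$ case, to obtain an honest direct sum $\tau_{m,k} = \tau_{m,k}^{\mathtt{f}} \oplus \tau_{m,k}^{\mathtt{s}}$, in which $\tau_{m,k}^{\mathtt{s}}$ is a discrete series for a smaller partition and $\tau_{m,k}^{\mathtt{f}}$ is a possibly remarked copy of the balanced multisegment $\tau_{m,k}^{\mathtt{e}}$ coming from the hooks of $\sigma^{k}$ that were removed. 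Factoring $\tau_{m,k}^{\mathtt{f}}$ into its unmarked part $\tau_{k}^{\sharp}$ and its marked part $\tau_{k}^{\flat}$, and factoring $\tau_{m,k}^{\mathtt{s}}$ into its positive and negative discrete series summands $\tau_{k}^{-}$ and $\tau_{k}^{+}$, I obtain the required four-part decomposition; items 1), 3), 4) of Definition \ref{Cdef} are then immediate from the constructions.

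The main obstacle is item 2): verifying that $\tau_{k}^{\flat} = \emptyset$ whenever $\mathsf{mp}_m(\sigma^{k})$ is negative. To establish this, I would combine the combinatorial fact that marking a segment $J$ requires $q_{1} \in \underline{J}$ with the sign-direction prescribed by Proposition \ref{cd}, which ties the admissible markings to the side of $m_0$ on which $m$ lies. The negativity of $\mathsf{mp}_m(\sigma^{k})$ then forces every relevant balanced hook of $\sigma^{k}$ to miss $q_{1}$, so no marking is available. With item 2) settled, the displayed decomposition exhibits $\tau_m$ as an element of $\mathcal{C}_{m_0}(\sigma)$; since the assignment $\tau_m \mapsto \tau_m$ is tautologically injective, the inequality $\#\mathcal{D}_{m_0}(\sigma) \le \#\mathcal{C}_{m_0}(\sigma)$ follows.
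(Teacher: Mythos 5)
Your plan is to prove the honest inclusion $\mathcal D_{m_0}(\sigma)\subseteq\mathcal C_{m_0}(\sigma)$, and the step you yourself identify as the main obstacle is exactly where the argument breaks. You propose to verify condition 2) of Definition \ref{Cdef} (that $\tau^{\flat}_k=\emptyset$ when $\mathsf{mp}_m(\sigma^k)$ is negative) by arguing that negativity forces the balanced hooks of $\sigma^k$ to miss $q_1$, so that the marking criterion ``$J$ can be marked only if $q_1\in\underline{J}$'' rules out markings. This is false: a balanced hook segment in this setting is, by the description in Claim \ref{hooks}, the union of a piece $\{e_-(I_j),\dots,q^{-1}q_1\}$ with a piece $\{q_1,\dots,e_+(I_i)\}$, i.e.\ it crosses the diagonal and contains $q_1$. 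Concretely, in the paper's own example $\sigma=(2,2,2)$ with $m_0=0$ and $0<m<\tfrac12$, the parameter $\mathsf{mp}_m(\sigma)$ is negative (both principal hooks have $\mathtt E_m<1$), yet the unique balanced hook gives the segment $\{q^{m-1},q^m,q^{m+1}\}\ni q_1$, so a marking is combinatorially available. Excluding the marked parameter from $\mathcal D_{m_0}(\sigma)$ in the negative case is a representation-theoretic fact, not a consequence of the $q_1$-membership criterion, and your proposal supplies no argument for it. Note also that the full inclusion $\mathcal D_{m_0}(\sigma)\subseteq\mathcal C_{m_0}(\sigma)$ is essentially half of Theorem \ref{ctd}, whose proof in the paper uses Corollary \ref{DinC} itself; so attempting to prove the corollary via this inclusion risks needing machinery that is only available downstream.

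By contrast, the paper deliberately avoids the inclusion in the negative case. After reducing (via Lemma \ref{ndbs}, Definition \ref{Cdef} and Lemma \ref{Cbasic}) to a single canonical component, it argues the inclusion only when $\mathsf{mp}(\sigma)$ is positive, using Proposition \ref{cd}; when $\mathsf{mp}(\sigma)$ is negative it switches to a pure counting argument: it shows the discrete series part $\mathsf{ds}_m(\sigma')=\mathsf{ds}^-_m(\sigma')$ contains no balanced segment, deduces that $\#\mathcal C_{m_0}(\sigma)$ is at least the cardinality of the power set of $\mathbf I^{\circ}$, and compares with $\#\mathcal D_{m_0}(\sigma)=2^h$ from Corollary \ref{countd}. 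If you want to salvage your approach you must either replace the $q_1$-argument by a genuine proof that marked balanced segments cannot occur in tempered delimits in the negative case, or fall back on the cardinality comparison as the paper does. As written, the proposal has a genuine gap.
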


\begin{proof}
Thanks to Lemma \ref{ndbs}, Definition \ref{Cdef} and Lemma
\ref{Cbasic}, it suffices to prove the assertion only {when} $\sigma =
\sigma ^1$ gives the canonical decomposition of $\sigma$ with respect
to $m_0 < m < m_0 + \frac{1}{2}$. Thus, we can assume that
$\mathsf{mp} ( \sigma )$ is either positive or negative. If
$\mathsf{mp} ( \sigma )$ is positive, then we deduce $\mathcal D
_{m_0} ( \sigma ) \subset \mathcal C _{m_0} (\sigma)$ by Proposition
\ref{cd}.

If $\mathsf{mp} ( \sigma )$ is negative, then we have $e_- ( I ) ^{-1} \ge q ^{1-\epsilon}e_+ ( I )$ for $\epsilon = 2 ( m - m_0 ) > 0$. We borrow notation $I_1, I_2, \ldots$ and $\mathbf I ^{\circ}$ from the proof of Proposition \ref{cd}. If $I \in \mathbf I ^{\circ}$ is obtained as a union of $I _{j ( I )}$ and $I _{j ( I ) + 1}$, then $I _{j ( I )}$ is glued with some $I _{k}$ with $k < j ( I )$. Therefore, we deduce that $\mathsf{ds} _m ( \sigma' ) = \mathsf{ds} _m ^- ( \sigma' )$ implies that $\mathsf{ds} _m ( \sigma' )$ does not contain a balanced segment along $m_0$. It follows that $\# \mathcal C _{m_0} ( \sigma )$ is at least the cardinality of the power set of $\mathbf I ^{\circ}$. Hence, we conclude the result by Corollary \ref{countd} in this case.
\end{proof}

\begin{definition}\label{standard}
The decomposition (\ref{td-dec}) is unique if we rearrange the cardinality of $\tau ^{\sharp}$ to be maximal in the equivalence class (in the sense of (\ref{equiv})). When this maximality condition is attained, we call (\ref{td-dec}) the standard decomposition (with respect to the parameter $m_0$).
\end{definition}

\begin{proposition}\label{cr}
Let $\tau, \tau' \in \mathcal{C} _{m _0} ( \sigma )$. We have
$\mathcal O _{{\boldsymbol \tau}'} \subset \overline{\mathcal O _{{\boldsymbol \tau}}}$ if and
only if ${\boldsymbol \tau} ^{\sharp} \subset ( {\boldsymbol \tau}' ) ^{\sharp}$ as
multisegments {$($by using standard decomposition$)$}. Moreover, we have
$$\dim \mathcal O _{{\boldsymbol \tau}} = \dim \mathcal O _{{\boldsymbol \tau}'} + \# ( {\boldsymbol \tau}' ) ^{\sharp} - \#  {\boldsymbol \tau} ^{\sharp}.$$
\end{proposition}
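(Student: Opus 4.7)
The plan is to reduce to a single nested component using Lemma \ref{Cbasic}, and then carry out the comparison via elementary modifications as in Theorem \ref{ADK}. First I would decompose each $\tau \in \mathcal{C}_{m_0}(\sigma)$ along the canonical decomposition $\sigma = \sigma^1 \cup \cdots \cup \sigma^p$ as $\tau = \bigoplus_k \tau_k$ with ${\boldsymbol \tau}_k \in \mathcal{C}_{m_0}(\sigma^k)$ pairwise nested. Since nested parameters sit in mutually orthogonal $G(s)$-invariant subspaces of $\mathbb V^a$, the corresponding orbits factor as $\mathcal{O}_{\boldsymbol\tau} = \prod_k \mathcal{O}_{\boldsymbol\tau_k}$, and the standard decomposition respects this product by Lemma \ref{ndbs}. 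Thus both the closure ordering and the dimension formula reduce to each nested component, so it will suffice to treat a single $\sigma = \sigma^1$.

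Next, assume $\sigma$ is a single canonical piece, so $\mathsf{mp}_m(\sigma)$ is either positive or negative. In the positive case, every $\tau \in \mathcal{C}_{m_0}(\sigma)$ is determined up to equivalence by choosing a subset of the balanced hooks of $\sigma$ to mark (landing in $\tau^{\flat}$) versus to leave unmarked (landing in $\tau^{\sharp}$); the remainder are absorbed into the discrete series parts $\tau^{\pm}$. In the negative case, no balanced hook can be marked by item 2 of Definition \ref{Cdef}, so the only freedom is which balanced hooks are extracted into $\tau^{\sharp}$. The key step will be to show that the operation of marking one additional balanced hook $J$ (that is, moving $J$ from $\tau^{\sharp}$ to $\tau^{\flat}$) is realized as an elementary modification $\varepsilon_{I,\underline{J}}$ in the sense of Definition \ref{em}, where $I$ is an appropriate neighboring segment inside $\tau^{\pm}$ that supplies the extra box in $V^{(1)}$. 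Applying Theorem \ref{ADK}(1) then gives $\dim \mathcal{O}_{\boldsymbol\tau} = \dim \mathcal{O}_{\boldsymbol\tau'} + 1$ at each such step, with $\mathcal{O}_{\boldsymbol\tau'}$ lying in the closure of $\mathcal{O}_{\boldsymbol\tau}$.

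With this elementary modification in hand, both directions of the proposition will follow by induction on $\#({\boldsymbol\tau}')^{\sharp} - \#{\boldsymbol\tau}^{\sharp}$. For the implication ${\boldsymbol\tau}^{\sharp} \subset ({\boldsymbol\tau}')^{\sharp} \Rightarrow \mathcal{O}_{\boldsymbol\tau'} \subset \overline{\mathcal{O}_{\boldsymbol\tau}}$, I would build a chain of marking operations from $\tau'$ to $\tau$ and read off the dimension formula from the chain length. For the converse, the maximality of $\#{\boldsymbol\tau}^{\sharp}$ in the standard decomposition (Definition \ref{standard}) together with the inequality criterion in Theorem \ref{ADK} will force the combinatorial inclusion ${\boldsymbol\tau}^{\sharp} \subset ({\boldsymbol\tau}')^{\sharp}$. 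The main obstacle I expect is verifying that the marking operation is realized as precisely one elementary modification rather than a higher-codimension degeneration; this reduces to checking that the Hom-space in (\ref{critsp}) is one-dimensional for the chosen pair $(I,\underline{J})$, which is where the balanced hook structure of $\sigma$ and the type-$\mathsf A$ quiver description of $\mathbf{Rep}^s$ become essential.
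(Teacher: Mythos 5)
Your overall strategy---reduce to a single nested component via Lemma \ref{Cbasic}, realize the covering relations in $\mathcal{C}_{m_0}(\sigma)$ by elementary modifications, and extract the closure and dimension statements from Theorem \ref{ADK}---is the same as the paper's, but two of your key steps contain genuine gaps. First, the covering move is not ``moving a balanced hook $J$ from $\tau^{\sharp}$ to $\tau^{\flat}$'' paired with a neighboring segment of $\tau^{\pm}$ supplying a box in $V^{(1)}$. In the paper's cases {\bf p)} and {\bf n)}, the two parameters differing by one balanced segment $I$ in their sharp parts are related by $\boldsymbol\tau=\varepsilon_{I_*,I}(\boldsymbol\tau')$, where $I_*$ is a segment of $\boldsymbol\tau$ interleaving $I$ (namely $I\triangleleft I_*$ in case {\bf p)}, $I_*\triangleleft I$ in case {\bf n)}), chosen so that $e_-(I_*)$ is minimal (with $\min$ replaced by $\max$ in case {\bf n)}); the modification glues $I$ and $I_*$ into their union and intersection rather than changing a marking. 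Your description cannot even be formulated in the negative case, where $\tau^{\flat}=\emptyset$ by Definition \ref{Cdef}. Moreover, the codimension-one verification is not a matter of the Hom-space in (\ref{critsp}) being one-dimensional: (\ref{critsp}) gives the local defining equation \emph{after} codimension one is known, while the actual criterion is Theorem \ref{ADK}(1), the non-existence of intermediate segments $I''$, which the paper checks precisely via the minimality of $I_*$ and the fact that the endpoints of the segments output by the $\mathsf{ds}_m$-algorithm are pairwise distinct.

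Second, your ``only if'' direction is essentially missing. Maximality of $\#\tau^{\sharp}$ in the standard decomposition together with ``the inequality criterion in Theorem \ref{ADK}'' does not force $\boldsymbol\tau^{\sharp}\subset(\boldsymbol\tau')^{\sharp}$; the relevant monotonicity of the invariants $\rho_{ij}$ under closure is the content of Theorem \ref{ADZ}, and to apply it one needs a quantitative statement that your plan does not supply. What the paper proves is that for each balanced segment $I=\{q_1q^{m_-},\dots,q_1q^{m_+}\}$ the invariant $\rho_{m_-,m_++1}$ takes exactly two values on $\mathcal{C}_{m_0}(\sigma)$, dropping by one precisely when $I$ lies in the sharp part (equations (\ref{rhoest}) and (\ref{rhoinc})); this uses the common degeneration $\tau_0$ whose sharp segments are pairwise nested, and only then does Theorem \ref{ADZ} convert $\mathcal{O}_{\boldsymbol\tau'}\subset\overline{\mathcal{O}_{\boldsymbol\tau}}$ into the inclusion of sharp parts. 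Without this computation (or an equivalent separation of the sharp parts by orbit invariants), the converse implication and hence the ``if and only if'' of the proposition do not follow from your argument.
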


\begin{proof}
Thanks to Lemma \ref{Cbasic} and \cite{CK} Corollary 2.10, it suffices to prove the assertion in the case $\mathsf{mp} _m ( \sigma )$ consists of only positive (or only negative) nested components. By separating out nested components which satisfies $\ds _{m'} ( \sigma' ) = \ds _{m} ( \sigma' )$ for $m_0 - \frac{1}{2} < m' < m_0 <  m < m_0 + \frac{1}{2}$, there remain three cases to be considered: {\bf 0)} $\mathsf{mp} _m ( \sigma )$ consists of a unique balanced segment, or
\begin{align}\nonumber
& {\bf p)} & e _+ ( I ) \ge q ^{1 + \epsilon} e _- ( I ) ^{-1}, \text{ for every } I \in \mathsf{mp} _m ( \sigma ), & \text{ or }\\
& {\bf n)} & e _+ ( I ) \le q ^{-1 + \epsilon} e _- ( I ) ^{-1} \text{ for every } I \in \mathsf{mp} _m ( \sigma ), &\label{pn}
\end{align}
where $\epsilon = 2 ( m - m _0 ) > 0$.

\item Case {\bf 0)}: We have $\tau = ( \{ J \}, \delta )$ with a single $J$ and different choice of $\delta$ from $\tau'$. If $\delta \not\equiv 0$, then we have $\tau = \tau ^{\flat}$ and if $\delta \equiv 0$, then we have $\tau = \tau ^{\sharp}$. Hence, the assertion is straight-forward in this case. 

\item Case {\bf p)}: We assume that $
 {\boldsymbol \tau} ^{\sharp} \cup \{ I \} = ( {\boldsymbol \tau}' ) ^{\sharp}$ as multisegments. By the $\mathsf{ds} _m$ algorithm and condition {\bf p)}, we deduce
  that there exists $I _* \in \boldsymbol \tau$ so that $I \triangleleft I _*$. 
 By rearranging $I_*$ if necessary, we can assume that $e _- ( I _* ) = \min \{ e _-( I' ) \mid I' \in \boldsymbol \tau, I \triangleleft I' \}$. Notice that such $I _*$ is unique since the minimal/maximal {entries} of segments of an output of the $\mathsf{ds} _m$-algorithm are all distinct. We have ${\boldsymbol \tau} = \varepsilon _{I_*,I} ( {\boldsymbol \tau}' )$ by inspection. By the minimality assumption on $I _*$, there exists no segment $I' \in {\boldsymbol \tau} - \{ I_*, I \}$ such that
$$I _* \cap I \subsetneq I' \subsetneq I _* \cup I.$$
By Theorem \ref{ADK}, we conclude that
$$\mathcal O _{{\boldsymbol \tau}'} \subset \overline{\mathcal O _{{\boldsymbol \tau}}} \text{ and } \dim \mathcal O _{{\boldsymbol \tau}} = \dim \mathcal O _{{\boldsymbol \tau}'} + 1.$$
We set $I = \{q_1 q ^{m_-}, \cdots, q_1 q ^{m_+} \}$. Then, we have
\begin{align}\nonumber
& \rho _{m_-,m_+ + 1} ( {\boldsymbol \tau}' ) = \rho _{m_-,m_+ + 1} ( {\boldsymbol \tau} ) - 1 \text{ and }\\
& \rho _{m_- - l,m_+ + 1} ( {\boldsymbol \tau}' ) = \rho _{m_- -l ,m_+ + 1} ( {\boldsymbol \tau} ) \text{ for every } l > 0. \label{rhoest}
\end{align}
Let $\tau _0 \in \mathcal{C} _{m _0} ( \sigma )$ be the marked partition obtained by setting $\tau _0 ^{\sharp}$ to be the collection of all hooks in $\sigma$ which gives a balanced segment along $m_0$. We have $\mathcal O_{{\boldsymbol \tau}_0} \subset \overline{\mathcal O _{{\boldsymbol \tau}}} \cap \overline{\mathcal O _{{\boldsymbol \tau}'}}$. Notice that each pair of segments of ${\boldsymbol \tau} _0 ^{\sharp}$ are nested to each other. Therefore, a repeated use of (\ref{rhoest}) claims that
\begin{equation}
\rho _{m_-,m_+ + 1} ( {\boldsymbol \tau}'' ) = \begin{cases} \rho _{m_-,m_+ + 1} ( {\boldsymbol \tau} ) - 1 & ( I \in ( {\boldsymbol \tau}'' ) ^{\sharp} ) \\ \rho _{m_-,m_+ + 1} ( {\boldsymbol \tau} )  & (I \not\in ( {\boldsymbol \tau}'' ) ^{\sharp})\end{cases}\label{rhoinc}
\end{equation}
for every $\tau'' \in  \mathcal{C} _{m _0} ( \sigma )$. Therefore, we have $\mathcal O _{\tau'} \subset \overline{\mathcal O _{\tau}}$ only if ${\boldsymbol \tau} ^{\sharp} \subset ( {\boldsymbol \tau}' ) ^{\sharp}$ by Theorem \ref{ADZ}.
 
\item Case {\bf n)}: The proof goes in a similar fashion if we replace ``$I \triangleleft I _*$" by ``$I _* \triangleleft I$", and $\min$ by $\max$.

\item This case-by-case analysis implies the result as desired.
\end{proof}

\begin{corollary}\label{smooth}
Keep the setting of Proposition \ref{cr}. Then $\overline{\mathcal O _{{\boldsymbol \tau}}}$ is smooth along $\mathcal O _{{\boldsymbol \tau}'}$.
\end{corollary}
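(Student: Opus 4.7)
The plan is to proceed by induction on $d := \#(\boldsymbol\tau')^\sharp - \#\boldsymbol\tau^\sharp$, reducing the claim to an iterated application of Theorem \ref{ADK}(2) combined with a transversality check for the resulting local defining equations. The case $d = 0$ is trivial, and the case $d = 1$ is essentially the content of Theorem \ref{ADK}(2): the codimension-one containment comes from a single elementary modification $\boldsymbol\tau = \varepsilon_{I_*, I}(\boldsymbol\tau')$ (produced in Cases \textbf{p)} and \textbf{n)} of the proof of Proposition \ref{cr}), which supplies both the smoothness of $\overline{\mathcal O_{\boldsymbol\tau}}$ along $\mathcal O_{\boldsymbol\tau'}$ and a single local defining minor.

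For $d > 1$, I would enumerate the new balanced hooks $(\boldsymbol\tau')^\sharp \setminus \boldsymbol\tau^\sharp = \{I_1, \ldots, I_d\}$ with $I_r = \{q_1 q^{m_-^r}, \ldots, q_1 q^{m_+^r}\}$, and set $(i_r, j_r) := (m_-^r, m_+^r + 1)$. Define intermediate parameters $\boldsymbol\tau^{(r)} \in \mathcal C_{m_0}(\sigma)$ by $(\boldsymbol\tau^{(r)})^\sharp := \boldsymbol\tau^\sharp \cup \{I_1, \ldots, I_r\}$; by Proposition \ref{cr} these fit into a chain
\begin{equation*}
\overline{\mathcal O_{\boldsymbol\tau}} = \overline{\mathcal O_{\boldsymbol\tau^{(0)}}} \supset \overline{\mathcal O_{\boldsymbol\tau^{(1)}}} \supset \cdots \supset \overline{\mathcal O_{\boldsymbol\tau^{(d)}}} = \overline{\mathcal O_{\boldsymbol\tau'}}
\end{equation*}
in which each successive step is of codimension one. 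Fixing $A_0 \in \mathcal O_{\boldsymbol\tau'}$, I would apply Theorem \ref{ADK}(2) to each consecutive pair: for every $r$, on a common neighborhood $U$ of $A_0$ one obtains a regular function $f_r \in \Hom(\wedge^{k_r}\mathbf E^s(i_r), \wedge^{k_r}\mathbf E^s(j_r))^*$ (with $k_r := \rho_{i_r, j_r}(\boldsymbol\tau^{(r)})$) whose vanishing cuts $\overline{\mathcal O_{\boldsymbol\tau^{(r)}}} \cap U$ out of $\overline{\mathcal O_{\boldsymbol\tau^{(r-1)}}} \cap U$. By the analysis at formula (\ref{rhoinc}) in the proof of Proposition \ref{cr}, the pairs $(i_r, j_r)$ are the only rank coordinates of Theorem \ref{ADZ} that change along this chain, so locally in $\mathbf{Rep}^s$ near $A_0$ the stratification is controlled entirely by the sequence $(f_1, \ldots, f_d)$.

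The main obstacle will be the final transversality step: to conclude smoothness of the largest closure $\overline{\mathcal O_{\boldsymbol\tau}}$ along the smallest stratum $\mathcal O_{\boldsymbol\tau'}$, I must verify that the differentials $df_1, \ldots, df_d$ at $A_0$ are linearly independent in $T^*_{A_0} \mathbf{Rep}^s$. The key combinatorial input is that the pairs $(i_r, j_r)$ are pairwise distinct: balanced hooks in $(\boldsymbol\tau')^\sharp$ have pairwise distinct minimal and maximal endpoints (by Lemma \ref{tli}(4) applied to the multisegment $\boldsymbol\tau^{\mathtt e}$, which transfers to the balanced part of any member of $\mathcal C_{m_0}(\sigma)$ via Definition \ref{Cdef}). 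Since each $f_r$ is a $k_r$-minor of the composition map $A^{j_r - i_r}|_{\mathbf E^s(i_r)}$, an inspection in local coordinates should show that $df_r$ contributes a nonzero component along a matrix-entry coordinate of $\mathbf{Rep}^s = \bigoplus_k \Hom(\mathbf E^s(k), \mathbf E^s(k+1))$ whose position is uniquely determined by $(i_r, j_r)$, yielding the required independence. Together with the codimension formula of Proposition \ref{cr}, the Jacobian criterion then delivers smoothness of $\overline{\mathcal O_{\boldsymbol\tau}}$ at $A_0$, and by $G(s)$-equivariance along all of $\mathcal O_{\boldsymbol\tau'}$.
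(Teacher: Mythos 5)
Your toolkit is the same as the paper's: Theorem \ref{ADK} 2) supplies minor-type local equations attached to the balanced hooks being split off, Proposition \ref{cr} supplies the codimension count, and the distinctness of the endpoints of balanced hooks is what should make the equations independent; your chain of intermediate parameters is just a reorganization of this. The problem is the concluding inference. The functions $f_1,\dots,f_d$ you construct vanish on $\overline{\mathcal O _{{\boldsymbol \tau}'}}$ and cut it out of $\overline{\mathcal O _{{\boldsymbol \tau}}}$; each $f_r$ is generically \emph{nonzero} on $\mathcal O _{{\boldsymbol \tau}}$. Consequently the linear independence of $d f_1,\dots,d f_d$ at $A_0$ gives no upper bound on $\dim T_{A_0}\overline{\mathcal O _{{\boldsymbol \tau}}}$, and the Jacobian criterion cannot be invoked for $\overline{\mathcal O _{{\boldsymbol \tau}}}$ with these functions: they are defining equations of the wrong variety. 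The inference ``codimension-many equations with independent differentials cut the small stratum out of the big closure, hence the big closure is smooth along the small stratum'' is false in general: take $X=\{xy=z^2\}\subset\mathbb C^3$ and $Y=\{y=z=0\}\subset X$; the single function $f=y$ cuts $Y$ out of $X$ set-theoretically, $df\neq 0$ everywhere, the codimension is $1$, yet $X$ is singular at the origin.

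What a tangent-space/Jacobian argument needs are equations that vanish on $\overline{\mathcal O _{{\boldsymbol \tau}}}$ itself near $A_0$, and this is how the paper's proof is set up: using the pullback description of the relevant $G(s)$-orbits from $\mathbf{Rep}^{s}$ in cases \textbf{p)} and \textbf{n)}, it introduces one minor $f_I$ for \emph{every} balanced hook $I\in {\boldsymbol \tau}^{\sharp}_0$ and records the vanishing pattern $\mathcal O _{{\boldsymbol \tau}}\subset\{f_I=0\}$ exactly when $I\in {\boldsymbol \tau}^{\sharp}$ (and generic nonvanishing otherwise); it is the functions indexed by ${\boldsymbol \tau}^{\sharp}$, which do vanish on $\overline{\mathcal O _{{\boldsymbol \tau}}}$, that control $\overline{\mathcal O _{{\boldsymbol \tau}}}$ along the deeper strata, while the complementary ones distinguish the strata inside it. Two secondary points you should also repair if you pursue your route: the positions $(i,j)$ of the minor in (\ref{critsp}) are dictated by $I^{\flat}=I_r\cap I_*$ (the overlap with the segment $I_*$ to which $I_r$ is glued), not by the endpoints of $I_r$ alone, so the pairwise-distinctness argument must be made for these data; and Theorem \ref{ADK} 2) gives the local equation along the codimension-one stratum $\mathcal O_{{\boldsymbol \tau}^{(r)}}$, so its persistence on a neighborhood of the deeper point $A_0$ (where your $f_r$ are to be evaluated) is precisely the nontrivial ``inspection'' that has to be carried out.
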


\begin{proof}
We assume the setting of the proof of Proposition \ref{cr} and denote $s := \mathsf c _m ^\sigma$. The case {\bf 0)} is clear. We verify the assertion in the case {\bf p)}. Then, every relevant $G ( s )$-orbit is obtained as the pullback of an orbit of $\mathbf{Rep} ^{s}$ to $( \mathbf E ^{s} ( 0 ) \oplus \mathbf{Rep} ^{s} )$. By Theorem \ref{ADK} 2), we have an algebraic function $f _I$ on $( \mathbf E ^{s} ( 0 ) \oplus \mathbf{Rep} ^{s} )$ for each $I \in {\boldsymbol \tau} ^{\sharp} _0$ such that
$$\mathcal O_{{\boldsymbol \tau}} \subset \{ f _I = 0 \} \text{ if } I
\in {\boldsymbol \tau} ^{\sharp} \text{ and } \overline{\mathcal
  O_{{\boldsymbol \tau}} \cap \{ f _I \neq 0 \}} = \overline{\mathcal
  O_{{\boldsymbol \tau}}} \text{ if } I \not\in {\boldsymbol \tau}
^{\sharp}.$$ Moreover, we have $d f _I \not\equiv 0$ on $\mathcal O_{{\boldsymbol \tau}}$ by inspection. Therefore, $\{ f _I ; I \in ( {\boldsymbol \tau}' ) ^{\sharp} - {\boldsymbol \tau} ^{\sharp} \}$ gives an algebraically independent system of equation of $\mathcal O_{{\boldsymbol \tau}'}$ along an open dense subset of $\mathcal O _{{\boldsymbol \tau}}$. In particular, $\mathcal O_{{\boldsymbol \tau}'}$ is locally a complete intersection inside $\overline{\mathcal O _{{\boldsymbol \tau}}}$. This is the very definition of smoothness. Hence we have verified the case {\bf p)}. The case {\bf n)} is similar.\end{proof}

The proofs of the following three Theorems \ref{ctd}, \ref{multiplicity formula}, and \ref{minds} are simultaneously given in \S \ref{pmain}.

\begin{theorem}[Classification of tempered delimits]\label{ctd}
We have an equality $\mathcal C _{m_0} ( \sigma ) = \mathcal D _{m_0} ( \sigma )$, where $\mathcal C_{m_0}(\sigma)$ and $\mathcal D_{m_0}(\sigma)$ are as in (\ref{Csig}) and Definition \ref{d:tdl}, respectively.
\end{theorem}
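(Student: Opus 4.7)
The plan is to combine the ``if'' direction of Lemma \ref{tli} with the cardinality bound of Corollary \ref{DinC}: once I establish $\mathcal C_{m_0}(\sigma)\subset\mathcal D_{m_0}(\sigma)$, Corollary \ref{DinC} forces equality of the two finite sets. By Lemma \ref{Cbasic} and the canonical decomposition (Lemma \ref{ndpm}), every $\tau\in \mathcal C_{m_0}(\sigma)$ decomposes as $\tau=\bigoplus_k\tau_k$ with $\tau_k\in\mathcal C_{m_0}(\sigma^k)$ mutually nested. Since the property of being a tempered delimit is preserved under parabolic induction from nested factors (cf. \cite{CK} \S3.5), this reduces matters to the case where $\mathsf{mp}_m(\sigma)$ is purely positive or purely negative.

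Fix such a $\tau$ with standard decomposition $\tau=\tau^\sharp\oplus\tau^\flat\oplus\tau^+\oplus\tau^-$, set $\tau^{\mathtt s}:=\tau^+\oplus\tau^-$, and let $\tau^{\mathtt e}$ be the multisegment underlying $\tau^\sharp\cup\tau^\flat$. Conditions (3)--(4) of Definition \ref{Cdef} identify $\tau^{\mathtt s}$ with a discrete series parameter $\mathsf{ds}_m(\sigma')$ of a smaller type $\mathsf C$ affine Hecke algebra, where $\sigma'$ is obtained from $\sigma$ by stripping off precisely the balanced hooks appearing in $\tau^{\mathtt e}$. Conditions (1)--(2) guarantee that $\tau^{\mathtt e}$ is balanced along $m_0$, and an inspection of the intermediate output of the $\mathsf{ds}_m$-algorithm (in the spirit of the proof of Proposition \ref{cd}) yields the distinctness of left and right endpoints for distinct segments in $\tau^{\mathtt e}$. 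Thus three of the four conditions of Lemma \ref{tli} are verified directly from the combinatorial data.

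It remains to construct the surjection $({}^{\mathsf t}L^{\mathsf A}_{{\boldsymbol\tau}^{\mathtt e}}\boxast L_{{\boldsymbol\tau}^{\mathtt s}})\twoheadrightarrow L_{\boldsymbol\tau}$ (with the evident positive variant in the other case). I would proceed inductively, peeling off the segments of $\tau^{\mathtt e}$ one at a time in decreasing order under $\Subset$. At each stage, Proposition \ref{iest} confines the irreducible quotient of ${}^{\mathsf t}\mathsf{St}^\nu_{n_1}\boxast L_\chi$ to the set $\mathcal S^-_{\nu,n_1}(\chi)$, and comparing with the closure ordering on $\mathcal C_{m_0}$ described by Proposition \ref{cr} through the $\sharp$-part pins down that quotient to be exactly the parameter in which the newly incorporated segment joins the $\sharp$- or $\flat$-part according to the marking prescribed by $\tau$. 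Iterating produces $L_{\boldsymbol\tau}$, and the ``if'' direction of Lemma \ref{tli} yields $\tau\in\mathcal D_{m_0}(\sigma)$. The main obstacle is this inductive identification: at each step one has to verify that the closure bound of Lemma \ref{wth} is saturated at precisely the expected coordinates, for which the elementary modifications of Theorem \ref{ADK} together with the smoothness statement of Corollary \ref{smooth} provide an open dense locus attaining equality, after which the rigidity of the Evens--Langlands quotient closes the step.
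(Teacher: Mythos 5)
Your overall frame (establish $\mathcal C_{m_0}(\sigma)\subset\mathcal D_{m_0}(\sigma)$ and then conclude by the cardinality bound of Corollary \ref{DinC}) is the paper's, but your route to the inclusion --- verifying the ``if'' direction of Lemma \ref{tli} element by element --- has a genuine gap at its central step. Proposition \ref{iest} is only a necessary condition: it confines the Evens--Langlands quotient of ${}^{\mathtt t}\mathsf{St}^{\nu}_{n_1}\boxast L_{\chi}$ to $\mathcal S^{-}_{\nu,n_1}(\chi)$, and this set in general contains several parameters, including those obtained from $\{I\}\oplus\chi$ by elementary modifications (gluings) as well as the direct sums with either marking. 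Nothing in your sketch excludes the glued candidates: Proposition \ref{cr} only compares parameters \emph{inside} $\mathcal C_{m_0}(\sigma)$, whereas the competing elements of $\mathcal S^{-}$ need not lie there, and Corollary \ref{smooth} says nothing about which candidate is the quotient. In fact the direct-sum conclusion you want is exactly what fails in general; this is why Proposition \ref{cd} carries the one-sided hypothesis ($\tau\in\mathcal D^{\pm}_{m_0}(\sigma)$ paired with $m$ on the opposite side of $m_0$), where positivity/negativity of $\mathsf{ds}_m(\sigma')$ rules out any $I'\triangleleft I$ and hence any gluing. For an arbitrary $\tau\in\mathcal C_{m_0}(\sigma)$ at a fixed generic $m$ no such positivity is available. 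A second, related gap is your identification $\tau^{+}\oplus\tau^{-}=\mathsf{ds}_m(\sigma')$: by Definition \ref{Cdef} these two pieces are discrete-series shapes taken at parameters on \emph{opposite} sides of $m_0$, while Lemma \ref{tli} requires ${\boldsymbol\tau}^{\mathtt s}_m=\mathsf{ds}_m(\sigma')$ at the same $m$ as $\tau$, whose positive/negative split is dictated by the $\mathsf{ds}_m$-algorithm; matching the two is precisely the kind of analysis done in Proposition \ref{cd} and cannot be read off conditions (3)--(4). (The preliminary reduction to a single nested component via ``tempered delimits are preserved under induction from nested factors'' is likewise asserted, not available in that form in the paper.)

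The paper avoids all of this by never producing a Langlands surjection for individual elements of $\mathcal C_{m_0}(\sigma)$. It takes the minimal element $\tau_0$ (maximal $\tau_0^{\sharp}$), observes that $\tau_0^{\perp}$ carries no balanced hooks so that $\lim_{m\to m_0}L_{{\boldsymbol\tau}_0^{\perp}}$ is a discrete series (Lemma \ref{Cminds}, via Opdam--Solleveld), and proves the multiplicity statement Proposition \ref{Cmf}, which shows that every $L_{\boldsymbol\tau}$ with $\tau\in\mathcal C_{m_0}(\sigma)$ occurs in the single induced module $L^{\mathsf A}_{{\boldsymbol\tau}_0^{\sharp}}\boxast L_{{\boldsymbol\tau}_0^{\perp}}$; since the limit of that module is tempered, every constituent is a tempered delimit, giving $\mathcal C_{m_0}(\sigma)\subset\mathcal D_{m_0}(\sigma)$, and Corollary \ref{DinC} then forces equality (and $E=0$, whence Theorem \ref{multiplicity formula}). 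If you wish to salvage your approach, you must supply the missing uniqueness argument at each peeling step, in effect extending the positivity analysis of Proposition \ref{cd} to both markings and both signs simultaneously --- which is the hard content your sketch currently presupposes.
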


\begin{theorem}\label{multiplicity formula}
Let $\tau \in \mathcal{D} _{m _0} ( \sigma )$ and let $\tau =
\tau ^{\sharp} \oplus \tau ^{\perp}$ be its standard decomposition as in Definition \ref{standard}. Fix $\tau _{\circleddash} \subset \tau ^{\sharp}$
and consider an induced
 decomposition $\tau = \tau _{\circleddash} \oplus \tau _{\circledast}$. We have
\begin{equation}
[ L ^{\mathsf A} _{{\boldsymbol \tau} _{\circleddash}} \boxast L _{{\boldsymbol \tau} _{\circledast}} ] = \sum _{\tau ' \in \mathcal{D} _{m_0} ( \sigma ); \tau _{\circledast} ^{\sharp} \subset ( \tau' ) ^{\sharp} \subset \tau _{\circleddash} \oplus \tau _{\circledast} ^{\sharp}} [ L _{{\boldsymbol \tau}'}] \in K ( \mathfrak{M} _{\vec{q}} ^n ),\label{inf}
\end{equation}
where $\tau' = ( \tau' ) ^{\sharp} \oplus ( \tau' ) ^{\perp}$ is the standard decomposition of $\tau' \in \mathcal{D} _{m_0} ( \sigma )$.
\end{theorem}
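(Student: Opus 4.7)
The plan is to separate the argument into an identification step—showing that the composition factors of $L^{\mathsf A}_{\boldsymbol\tau_\circleddash} \boxast L_{\boldsymbol\tau_\circledast}$ are exactly the $L_{\boldsymbol\tau'}$ for $\tau'$ in the indicated range—and a multiplicity-one step. Since $\tau_\circleddash \subset \tau^\sharp$ consists of unmarked balanced hook-segments that are pairwise nested by Lemma \ref{Cbasic}, the module $L^{\mathsf A}_{\boldsymbol\tau_\circleddash}$ can be realized as the head of an iterated parabolic induction of (transpose) Steinberg modules ${}^{\mathtt t}\mathsf{St}_{k_i}^{\nu_i}$ ordered along the nesting. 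Applying Proposition \ref{iest} segment by segment, every composition factor $L_{\boldsymbol\chi'}$ of $L^{\mathsf A}_{\boldsymbol\tau_\circleddash} \boxast L_{\boldsymbol\tau_\circledast}$ must lie in the iterated constraint set $\mathcal S^{\pm}_{\cdot,\cdot}$. The estimate on $\rho_{ij}$ in Lemma \ref{wth} then forces $\mathcal O_{\boldsymbol\tau_\circleddash \oplus \boldsymbol\tau_\circledast} \subset \overline{\mathcal O_{\boldsymbol\chi'}}$. Combined with the classification $\mathcal D_{m_0}(\sigma) = \mathcal C_{m_0}(\sigma)$ from Theorem \ref{ctd} and the reformulation of the closure order by sharp parts in Proposition \ref{cr}, this forces $\boldsymbol\chi' = \boldsymbol\tau'$ for some $\tau' \in \mathcal D_{m_0}(\sigma)$ with $(\tau')^\sharp \subset \tau^\sharp$. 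The complementary lower bound $\tau_\circledast^\sharp \subset (\tau')^\sharp$ then follows because the type-A factor $L^{\mathsf A}_{\boldsymbol\tau_\circleddash}$ only affects the segments of $\tau_\circleddash$, so the unmarked sharp structure of $\tau_\circledast$ is preserved in every composition factor of the induction.

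For the multiplicity computation, Corollary \ref{indmult} identifies $[M^{\mathsf A}_{\boldsymbol\tau_\circleddash} \boxast M_{\boldsymbol\tau_\circledast} : L_{\boldsymbol\tau'}]$ with $[M_{\boldsymbol\tau_\circleddash \oplus \boldsymbol\tau_\circledast} : L_{\boldsymbol\tau'}]$; geometrically this multiplicity is given by an intersection-cohomology stalk of $\overline{\mathcal O_{\boldsymbol\tau_\circleddash \oplus \boldsymbol\tau_\circledast}}$ along $\mathcal O_{\boldsymbol\tau'}$. Corollary \ref{smooth} shows that this closure is smooth along $\mathcal O_{\boldsymbol\tau'}$ precisely in the allowed range $\tau_\circledast^\sharp \subset (\tau')^\sharp \subset \tau^\sharp$, so the IC sheaf reduces to the constant sheaf there and the stalk is one-dimensional. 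To pass from the standard-module identity to the irreducible-module identity asserted in (\ref{inf}), I would expand $[M^{\mathsf A}_{\boldsymbol\tau_\circleddash}] - [L^{\mathsf A}_{\boldsymbol\tau_\circleddash}]$ in lower Jordan-H\"older terms in the type-A Grothendieck group and invoke Proposition \ref{iest} once more to verify that every resulting correction term $L^{\mathsf A}_{\boldsymbol\chi} \boxast L_{\boldsymbol\tau_\circledast}$ with $\boldsymbol\chi < \boldsymbol\tau_\circleddash$ has composition factors that lie strictly outside the window, and hence contribute nothing to the identity.

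The main technical obstacle is verifying this last vanishing of corrections together with the clean preservation of $\tau_\circledast^\sharp$ in the induced module. Concretely, one must show that every proper type-A degeneration $\boldsymbol\chi < \boldsymbol\tau_\circleddash$ in the sense of Theorem \ref{ADZ} produces at least one non-balanced segment or disrupts the hook structure so that no $\tau' \in \mathcal D_{m_0}(\sigma)$ with sharp part in the allowed window is reachable; equivalently, one must check that the smoothness from Corollary \ref{smooth} extends uniformly across the whole interval of intermediate $\tau'$, not just at a single endpoint. Once this bookkeeping is done, combining the support determination, the smoothness-driven multiplicity one at the level of standard modules, and the vanishing of corrections yields (\ref{inf}).
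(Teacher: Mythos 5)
Your skeleton (multiplicity one for standard modules via Corollary \ref{indmult} and the smoothness statement Corollary \ref{smooth}, then identification of the simple constituents) matches the paper's Proposition \ref{Cmf}, but the two steps that carry the actual weight are missing or misdirected. For the upper bound you apply Proposition \ref{iest} ``segment by segment'' to \emph{all} composition factors; that proposition constrains only irreducible quotients (it is proved via Frobenius reciprocity), and even its constraint sets $\mathcal S^{\pm}_{\nu,n_1}(\chi)$ contain parameters that do not lie in $\mathcal C_{m_0}(\sigma)$. What Corollary \ref{indmult} gives for an arbitrary factor $L_{\boldsymbol\chi'}$ of $L^{\mathsf A}_{\boldsymbol\tau_{\circleddash}}\boxast L_{\boldsymbol\tau_{\circledast}}$ is only the closure condition $\mathcal O_{\boldsymbol\tau}\subset\overline{\mathcal O_{\boldsymbol\chi'}}$, which does not place $\chi'$ in $\mathcal C_{m_0}(\sigma)$. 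The paper does not achieve this exclusion combinatorially either: Proposition \ref{Cmf} deliberately carries an error term $E$ of constituents outside $\mathcal C_{m_0}(\sigma)$, and $E$ is eliminated only by the limit argument in \S\ref{pmain} ($\lim_{m\to m_0}$ of the induced module is tempered because of Lemma \ref{Cminds}, so all its constituents are tempered delimits) together with the counting bound of Corollary \ref{DinC}; this is the same argument that proves Theorem \ref{ctd}, so invoking Theorem \ref{ctd} inside your proof of (\ref{inf}) is circular as the two statements are proved simultaneously. Your proposal uses no limit/temperedness input at all, and without it the exclusion of extraneous factors is unsupported.

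The second gap is the occurrence, with multiplicity exactly one, of \emph{every} $L_{\boldsymbol\tau'}$ in the window inside $L^{\mathsf A}_{\boldsymbol\tau_{\circleddash}}\boxast L_{\boldsymbol\tau_{\circledast}}$, as opposed to inside the standard module $M_{\boldsymbol\tau_{\circleddash}\oplus\boldsymbol\tau_{\circledast}}$. Your plan for this targets the wrong correction: since the segments of $\tau_{\circleddash}\subset\tau^{\sharp}$ are pairwise nested, $L^{\mathsf A}_{\boldsymbol\tau_{\circleddash}}=M^{\mathsf A}_{\boldsymbol\tau_{\circleddash}}$ (the paper notes this), so the expansion of $[M^{\mathsf A}_{\boldsymbol\tau_{\circleddash}}]-[L^{\mathsf A}_{\boldsymbol\tau_{\circleddash}}]$ is identically zero and settles nothing; the genuine discrepancy comes from the type $\mathsf C$ factor, i.e.\ from the constituents of $M_{\boldsymbol\tau_{\circledast}}$ other than $L_{\boldsymbol\tau_{\circledast}}$ (and their analogues at each stage of induction-by-stages). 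The paper resolves exactly this point by adding one balanced segment $I_1$ at a time and proving the claim $(\heartsuit)$ via $(\heartsuit)'$: an orbit-dimension count combined with the observation that $I_1$ is a hook of $\sigma$ but not of the smaller partition, which shows that inducing $I_1$ against any other constituent of the standard module cannot produce the two target simples. You flag precisely this bookkeeping as ``the main technical obstacle'' and leave it open, so the proposal as written does not establish (\ref{inf}).
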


\begin{theorem}\label{minds}
Let $\tau _0 \in \mathcal D _{m_0} ( \sigma )$ be the minimal element with respect to the closure ordering $($i.e. $\tau _0 ^{\sharp}$ is maximal$)$. Then, $\lim _{m \to m_0} L _{{\boldsymbol \tau} ^{\perp} _0}$ is an irreducible discrete series.
\end{theorem}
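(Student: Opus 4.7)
The plan is to analyze the structure of the minimal element $\tau_0$ explicitly and then reduce to a single nested component, where the limit becomes transparent.

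First, I would pin down the form of $\tau_0^\perp$. By Proposition \ref{cr}, minimality in the closure ordering is equivalent to maximality of $| \tau_0^\sharp |$, and by Definition \ref{standard} the standard decomposition itself is characterized by $| \tau^\sharp |$ being maximal in the equivalence class. Any marked balanced-hook segment can be converted to an unmarked one by an equivalence of marked partitions, so the combined maximality forces $\tau_0^\flat = \emptyset$; otherwise moving a segment from $\tau_0^\flat$ to $\tau_0^\sharp$ would produce an element strictly below $\tau_0$ with respect to closure order, contradicting minimality. Hence $\tau_0^\perp = \tau_0^+ \oplus \tau_0^-$.

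Next, I would use Lemma \ref{Cbasic} to decompose along the canonical decomposition $\sigma = \sigma^1 \oplus \cdots \oplus \sigma^p$. The parameters $\boldsymbol{\tau}_0^{\perp,k}$ are pairwise nested, so $L_{\boldsymbol{\tau}_0^\perp}$ is the unique Langlands quotient of the parabolic induction from the tensor product of its nested components (\cite{CK} \S 3). Since parabolic induction from pairwise nested discrete series preserves irreducibility and the discrete series property, and the limit $m \to m_0$ commutes with parabolic induction, it suffices to treat each component $\sigma^k$ separately. In each component either $\tau_k^+$ or $\tau_k^-$ vanishes, depending on whether $\mathsf{mp}_m(\sigma^k)$ is negative or positive, so the surviving piece has the form $ds_{m'}(\sigma_k^{\pm})$ for a smaller partition $\sigma_k^\pm$.

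For the limit, I take the positive case and recall from Definition \ref{Cdef} that $\tau_0^\perp = ds_{m'}(\sigma^+)$ for $m_0 < m' < m_0 + \tfrac{1}{2}$, where $\sigma^+$ is obtained from $\sigma^k$ by removing precisely those hooks appearing in $\tau_0^\sharp$. The maximality of $|\tau_0^\sharp|$ (equivalently, the description of balanced segments through the intermediate output of the $\mathsf{ds}_m$-algorithm, (\ref{iout})) ensures that no hook of $\sigma^+$ yields a balanced segment at $m_0$. Consequently the central character $\mathsf c_{m_0}^{\sigma^+}$ remains a regular discrete-series parameter in the sense of Opdam, and the family $\{ds_{m'}(\sigma^+)\}_{m'}$ extends continuously to an irreducible discrete series at $m_0$, which is the required limit.

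The main obstacle is the combinatorial verification that $\sigma^+$ admits no balanced hook at $m_0$. This hinges on the uniqueness part of the $\mathsf{ds}_m$-algorithm: every balanced segment has the form $I_{j(I)} \cup I_{j(I)+1}$ with $e_+(I_{j(I)}) = q^{2(m-m_0)} e_-(I_{j(I)+1})^{-1}$, and the maximality of $\tau_0^\sharp$ is precisely the statement that no such pair survives in the residual partition. Once this is checked, the extended family is automatically irreducible and discrete series, since at the limit $m_0$ the absence of balanced segments forces the unique standard module at the central character $\mathsf c_{m_0}^{\sigma^+}$ to be irreducible by the Opdam-Slooten-Solleveld classification.
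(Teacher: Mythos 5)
Your overall skeleton --- showing that the residual parameter $\tau_0^\perp$ is the $\mathsf{ds}$-parameter of a partition with no balanced hooks at $m_0$, and then invoking Opdam--Solleveld to pass to the limit --- coincides with the paper's argument (Lemma \ref{Cminds}). However, two of your intermediate steps fail as written. The reduction to canonical components rests on the claim that $L_{\boldsymbol{\tau}_0^\perp}$ is a parabolic induction from the tensor product of its nested components and that ``parabolic induction from pairwise nested discrete series preserves irreducibility and the discrete series property.'' This is false: a module parabolically induced from a proper parabolic subalgebra is never a discrete series (its formal degree vanishes, cf.\ Theorem \ref{t:sum} 2)), and in any case a parabolic subalgebra of $\mathbb H_{n'}$ contains at most one type-$\mathsf C$ factor, so the asserted induction from several type-$\mathsf C$ discrete series pieces does not even exist. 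The nested components of $\tau_0^\perp$ are constituents of a single discrete-series parameter of the full algebra, not induced factors, so one cannot take the limit $m\to m_0$ ``componentwise'' through such an induction. The paper avoids this entirely: since $\tau_0^\sharp$ is maximal, the partition $\sigma'$ with $\tau_0^\perp\in\mathcal C_{m_0}(\sigma')$ has no balanced hook along $m_0$, whence $\tau_0^\perp=ds_{m'}(\sigma')=ds_m(\sigma')$ for $m_0-\frac12<m'<m_0<m<m_0+\frac12$, globally, with no component-by-component module-theoretic argument needed.

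Second, your justification of irreducibility at the limit --- that ``the absence of balanced segments forces the unique standard module at the central character $\mathsf c_{m_0}^{\sigma^+}$ to be irreducible'' --- is not correct: there are in general several standard modules with a given central character (one for each orbit), and even the relevant standard module need not be irreducible; the discrete series is only a quotient of it. The input actually needed, and the one the paper cites, is the Opdam--Solleveld result \cite{OS} that the family of irreducible discrete series $\ds_{m}(\sigma')$ varies continuously and its limit at $m_0$ is again an irreducible discrete series, precisely because $\sigma'$ has no balanced hooks, so the parameter does not recombine as $m$ crosses $m_0$. A minor logical point: the standard decomposition of an element of $\mathcal D_{m_0}(\sigma)$ is, in the paper's order of proof, only available through $\mathcal C_{m_0}(\sigma)$; Theorem \ref{minds} is deduced simultaneously with the equality $\mathcal C_{m_0}(\sigma)=\mathcal D_{m_0}(\sigma)$ (Theorem \ref{ctd}) from Lemma \ref{Cminds}, Proposition \ref{Cmf} and a counting argument, whereas your argument implicitly assumes that identification from the start.
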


\begin{corollary}[of Theorem \ref{multiplicity formula}]
Let $\tau \in \mathcal D _{m_0} ( \sigma )$ and $m_0 < m < m_0 + \frac{1}{2}$. We have
$$\tau ^{\mathtt e} _m \oplus \tau ^{\mathtt s} _m \in \mathcal D _{m_0} ( \sigma ).$$
In particular, $(\ref{inf})$ applied for $\tau _{\circleddash} := \tau ^{\mathtt e} _m$ and $\tau _{\circledast} := \tau ^{\mathtt s} _m$ for each $\tau \in \mathcal D _{m_0} ( \sigma )$ yields an overdetermined system of character equations.
\end{corollary}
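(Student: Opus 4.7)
The plan is to exhibit an explicit $\mathcal C _{m_0} ( \sigma )$-decomposition of $\tau _m ^{\mathtt e} \oplus \tau _m ^{\mathtt s}$ and invoke Theorem \ref{ctd} to conclude membership in $\mathcal D _{m_0} ( \sigma )$. Lemma \ref{tli} already provides the needed raw material: $\tau _m ^{\mathtt e}$ is an unmarked multisegment, balanced along $m_0$, whose constituent segments correspond via Claim \ref{hooks} to hooks of $\sigma$; and $\tau _m ^{\mathtt s} = \mathsf{ds} _m ( \sigma' )$ for the reduced partition $\sigma'$.

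First I would decompose $\tau _m ^{\mathtt s}$ as $\mathsf{ds} _m ^{+} ( \sigma' ) \oplus \mathsf{ds} _m ^{-} ( \sigma' )$ via the theorem quoted at the start of Section \ref{delimits}, and propose the $\mathcal C _{m_0}$-decomposition
\[
\tau ^{\sharp} := \tau _m ^{\mathtt e}, \quad \tau ^{\flat} := \emptyset, \quad \tau ^{+} := \mathsf{ds} _m ^{+} ( \sigma' ), \quad \tau ^{-} := \mathsf{ds} _m ^{-} ( \sigma' ).
\]
Conditions (1) and (2) of Definition \ref{Cdef} are then immediate: (1) combines the balancedness from Lemma \ref{tli} with Claim \ref{hooks}, while (2) holds because $\tau _m ^{\mathtt e}$ carries no marking. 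Conditions (3) and (4) must be verified componentwise along the canonical decomposition $\sigma = \sigma ^1, \ldots, \sigma ^p$ supplied by Lemma \ref{ndpm}.

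The main obstacle is precisely this componentwise verification: I would need to show that removing the balanced hooks of $\sigma$ preserves the positive/negative character of each nested component in the sense required by Definition \ref{Cdef}. By Lemma \ref{ndbs}, the balanced hooks of $\sigma$ distribute among the $\sigma ^k$, so the reduction $\sigma \to \sigma'$ induces a componentwise reduction $\sigma ^k \to \sigma ^{\prime k}$. The key claim is that if $\mathsf{mp} _m ( \sigma ^k )$ is positive then $\mathsf{ds} _m ( \sigma ^{\prime k} ) = \mathsf{ds} _m ^{+} ( \sigma ^{\prime k} )$, and symmetrically in the negative case. This would guarantee that the splitting of $\mathsf{ds} _m ( \sigma' )$ into positive and negative parts is compatible with the canonical decomposition, and that each piece has the form $ds _{m'} ( \sigma ^{\pm} _k ) = ds _{m'} ^{\mp} ( \sigma ^{\pm} _k )$ for the appropriate $m'$. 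The verification of the claim would proceed by tracing through the $\mathsf{ds} _m$-algorithm exactly as in the proof of Proposition \ref{cd}: the balanced segments in $\tau _m ^{\mathtt e}$ correspond to glued pairs $( I _{j(I)}, I _{j(I)+1} )$ from the intermediate output (\ref{iout}), and their removal leaves the remainder of the output positive (respectively, negative).

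The second assertion follows immediately: once $\tau _m ^{\mathtt e} \oplus \tau _m ^{\mathtt s} \in \mathcal D _{m_0} ( \sigma )$ is established, Theorem \ref{multiplicity formula} with $\tau _{\circleddash} := \tau _m ^{\mathtt e}$ and $\tau _{\circledast} := \tau _m ^{\mathtt s}$ yields one identity in $K ( \mathfrak{M} _{\vec{q}} ^n )$ for each $\tau \in \mathcal D _{m_0} ( \sigma )$; combined with the further freedom of varying $\tau _{\circleddash}$ inside $\tau ^{\sharp}$ already afforded by Theorem \ref{multiplicity formula}, this produces substantially more linear relations than the $2^h = | \mathcal D _{m_0} ( \sigma ) |$ unknowns $[ L _{\boldsymbol \tau'} ]$ (Corollary \ref{countd}), hence the claimed overdetermined system.
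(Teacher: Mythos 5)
The paper states this corollary without a written proof, so the only benchmark is whether your argument actually establishes membership in $\mathcal C_{m_0}(\sigma)$ so that Theorem \ref{ctd} can be invoked; that overall frame is the right one, but your proposed decomposition does not satisfy Definition \ref{Cdef} as it is written, and the "key claim" you formulate does not repair it. Conditions 3) and 4) of Definition \ref{Cdef} do not ask for the positive and negative parts of $\mathsf{ds}_m(\sigma')$: condition 3) requires $\tau^+_k$ to be a \emph{full} discrete series parameter $ds_{m'}(\sigma^+)$ with $m_0<m'<m_0+\frac12$ which coincides with its negative part (i.e.\ is entirely negative), and condition 4) requires $\tau^-_k$ to be a full discrete series parameter at some $m'$ \emph{below} $m_0$ which is entirely positive. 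Your assignment $\tau^+:=\mathsf{ds}^+_m(\sigma')$, $\tau^-:=\mathsf{ds}^-_m(\sigma')$ has these signs crossed, takes both pieces at the same $m>m_0$, and, even after relabelling, it is not established that $\mathsf{ds}^{\pm}_m(\sigma')$ are themselves discrete series parameters of smaller partitions of pure sign; in particular the cross-critical requirement in 4) (matching the shape of a discrete series at a parameter on the other side of $m_0$) is exactly where the subtlety of crossing $m_0$ sits, and it is never addressed. Your key claim (positivity of a canonical component is inherited by $\mathsf{ds}_m$ of the hook-reduced component) is a statement about signs only and does not produce the required form 3)/4).

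A second, related failure is condition 1), which is exhaustive: $\tau^{\sharp}_k$ must consist of \emph{all} unmarked balanced-hook segments of the parameter. The multisegment $\tau^{\mathtt e}_m$ need not exhaust the balanced hooks of $\sigma$ (it corresponds to a subset of them, cf.\ Lemma \ref{tli} and Corollary \ref{countd}), so $\sigma'$ may retain balanced hooks and $\mathsf{ds}_m(\sigma')$ then contains balanced segments; under your assignment these sit inside $\tau^{\pm}$, violating 1) (and simultaneously obstructing 3)/4), since a piece containing balanced segments cannot match a pure-sign discrete series shape across $m_0$). A workable argument must instead identify, inside the $\mathcal C$-decomposition of the given $\tau\in\mathcal D_{m_0}(\sigma)=\mathcal C_{m_0}(\sigma)$, which balanced segments are split off into $\tau^{\mathtt e}_m$ and which remain (with their markings) inside $\tau^{\mathtt s}_m$ — Proposition \ref{cd} does this only in the $\mathcal D^-_{m_0}$ situation for $m>m_0$ — and then observe that passing from $\tau$ to $\tau^{\mathtt e}_m\oplus\tau^{\mathtt s}_m$ amounts to deleting markings on balanced-hook segments, an operation Definition \ref{Cdef} always permits (marking is the optional datum, constrained only by condition 2)). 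As it stands, the membership claim, and hence the "overdetermined system" conclusion, is not proved by your argument.
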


{In section \ref{sec:formal}, we need the following Corollary
\ref{c:link} of Theorem \ref{multiplicity formula}.

\begin{definition}\label{d:linked}
We say that $\tau,\tau'$ in $\mathcal D_{m_0} ( \sigma )$ are linked if there
exist properly parabolically induced
  modules $V_1,\dots,V_k$ such that in the Grothendieck group of
  $\mathbb H_{n,m}$ we have:
$$[L_{\boldsymbol \tau}]+[L_{\boldsymbol \tau'}]\text{ or }[L_{\boldsymbol \tau}]-[L_{\boldsymbol \tau'}]\in \text{Span}_{\mathbb
  Z}([V_1],\dots,[V_k])\subset K(\mathfrak{M}^n_{\vec q}).$$ 
\end{definition}

\begin{corollary}\label{c:link}
Assume $m_0-\frac 12<m<m_0+\frac 12$ and $m \neq m_0$. Any two
tempered delimits in $\mathcal D_{m_0} ( \sigma )$ are linked (in the
sense of Definition \ref{d:linked}).
\end{corollary}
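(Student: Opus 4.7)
The plan is to apply Theorem \ref{multiplicity formula} in its simplest nontrivial form---with $\tau_{\circleddash}$ consisting of a single balanced hook---and then to chain the resulting relations by telescoping. As a preliminary step, combining Theorem \ref{ctd}, Corollary \ref{countd}, and the standard decomposition (Definition \ref{standard}), every $\tau \in \mathcal D_{m_0}(\sigma)$ is uniquely determined by the subset $\tau^\sharp$ of the $h$ balanced hooks of $\sigma$, and all $2^h$ subsets are realized. Thus $\mathcal D_{m_0}(\sigma)$ acquires (combinatorially) the structure of a Hamming cube on the set of balanced hooks.

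Next, I fix $\tau \in \mathcal D_{m_0}(\sigma)$ and a balanced hook $I \in \tau^\sharp$, and apply Theorem \ref{multiplicity formula} with $\tau_{\circleddash} = \{I\}$, so that $\tau_{\circledast}^\sharp = \tau^\sharp \setminus \{I\}$. The indexing condition $\tau_{\circledast}^\sharp \subset (\tau')^\sharp \subset \tau^\sharp$ forces $(\tau')^\sharp$ to equal either $\tau^\sharp$ or $\tau^\sharp \setminus \{I\}$, and by the Hamming cube identification these correspond to exactly two elements of $\mathcal D_{m_0}(\sigma)$: $\tau$ itself and the unique $\widetilde\tau$ differing from $\tau$ only by removing $I$ from the $\sharp$-part. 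Theorem \ref{multiplicity formula} then produces the single-swap identity
\begin{equation*}
[L^{\mathsf A}_{I} \boxast L_{\boldsymbol{\tau_{\circledast}}}] = [L_{\boldsymbol\tau}] + [L_{\boldsymbol{\widetilde\tau}}]
\end{equation*}
in $K(\mathfrak M_{\vec q}^n)$, and its left-hand side is properly parabolically induced from a Levi of type $\mathsf A \times \mathsf C$.

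Given arbitrary $\tau, \tau' \in \mathcal D_{m_0}(\sigma)$, the Hamming cube structure furnishes a chain $\tau = \tau_0, \tau_1, \ldots, \tau_N = \tau'$ inside $\mathcal D_{m_0}(\sigma)$ whose consecutive terms differ in exactly one balanced hook. Applying the single-swap identity to each consecutive pair produces properly parabolically induced modules $V_0, \ldots, V_{N-1}$ satisfying $[V_i] = [L_{\boldsymbol{\tau_i}}] + [L_{\boldsymbol{\tau_{i+1}}}]$, and the alternating sum $\sum_{i=0}^{N-1}(-1)^i [V_i]$ telescopes to $[L_{\boldsymbol\tau}] + (-1)^{N-1} [L_{\boldsymbol{\tau'}}]$. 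This is precisely the relation $[L_{\boldsymbol\tau}] \pm [L_{\boldsymbol{\tau'}}] \in \text{Span}_{\mathbb Z}([V_0], \ldots, [V_{N-1}])$ demanded by Definition \ref{d:linked}. The delicate points are the collapse of the sum in Theorem \ref{multiplicity formula} to exactly two terms when $\tau_{\circleddash}$ is a single hook, and the fact that the single-hook neighbor $\widetilde\tau$ genuinely lies in $\mathcal D_{m_0}(\sigma)$; both ultimately rest on the identification $\mathcal D_{m_0}(\sigma) = \mathcal C_{m_0}(\sigma)$ from Theorem \ref{ctd} together with the count $\#\mathcal D_{m_0}(\sigma) = 2^h$ from Corollary \ref{countd}.
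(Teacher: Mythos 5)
Your proof is correct and follows essentially the same route as the paper: both rest on extracting from Theorem \ref{multiplicity formula} two-term identities $[L_{\boldsymbol\tau}]+[L_{\boldsymbol{\widetilde\tau}}]=[V]$ with $V$ properly parabolically induced, and then combining these integrally to link any two elements of $\mathcal D_{m_0}(\sigma)$. Your single-hook specialization and telescoping along a path in the cube of subsets of balanced hooks is a cleaner bookkeeping of the paper's terse counting induction on $h$; the only implicit point worth citing explicitly is Proposition \ref{cr}, which gives injectivity of $\tau\mapsto\tau^{\sharp}$ and hence, together with Corollary \ref{countd}, the bijection with all $2^h$ subsets that your Hamming-cube chain uses.
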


\begin{proof} We use induction on $h$, the number of balanced hooks at
  $m$, to show that there exists a system of  $2^h$ distinct equations (in the Grothendieck
  group) of the form
$$[L_{{\boldsymbol \tau}_i}]+[L_{{\boldsymbol \tau}_j}]=[V_{ij}],$$
where $V_{ij}$ is a properly parabolically induced modules, for $\tau_i, \tau_j \in \mathcal D_{m_0} ( \sigma ).$ Moreover, every $\tau \in \mathcal D_{m_0} ( \sigma )$ appears exactly $2^{h-1}$ times in these equations. Since
$2^{h-1}+2^{h-1}=2^h$, the claim follows.
\end{proof}

}

\subsection{Proofs of Theorems \ref{ctd}, \ref{multiplicity formula}, and \ref{minds}}\label{pmain}

We start with certain weaker versions of Theorems
\ref{minds} and \ref{multiplicity formula}, which turn out to
be sufficient in order to prove the full statements.

\begin{lemma}\label{Cminds}
Let $\tau _0 \in \mathcal C _{m_0} ( \sigma )$ be the minimal element with respect to the closure ordering $($i.e. $\tau _0 ^{\sharp}$ is maximal$)$. Then, $\lim _{m \to m_0} L _{{\boldsymbol \tau} ^{\perp} _0}$ must be an irreducible discrete series.
\end{lemma}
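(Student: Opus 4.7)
The plan is to combine the combinatorial structure of the minimal element $\tau_0$ with a counting argument for the set of tempered delimits of a smaller affine Hecke algebra.

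First I would verify that the maximality of $\tau_0^{\sharp}$ forces $\tau_0^{\flat} = \emptyset$: by Proposition \ref{cd} applied componentwise over the canonical decomposition of $\sigma$, any marked balanced hook sitting in $\tau_0^{\flat}$ could be un-marked so as to enlarge $\tau_0^{\sharp}$, contradicting maximality. Hence $\tau_0^{\perp} = \tau_0^+ \oplus \tau_0^-$. Let $n^{\perp} := |\tau_0^{\perp}|$, and let $\sigma^{\perp}$ be the partition of $n^{\perp}$ obtained from $\sigma$ by removing the hooks that correspond to the segments of $\tau_0^{\sharp}$. Using conditions 3 and 4 of Definition \ref{Cdef} together with the nestedness provided by Lemma \ref{Cbasic}, the data $\boldsymbol{\tau}_0^{\perp} = \boldsymbol{\tau}_0^+ \oplus \boldsymbol{\tau}_0^-$ can be recognized, via the $\mathsf{ds}_m$-algorithm of \cite{CK}, as the discrete series parameter $\mathsf{ds}_m(\sigma^{\perp})$ of $\mathbb H_{n^{\perp}, m}$ with central character $\mathsf c_m^{\sigma^{\perp}}$, so that $L_{\boldsymbol{\tau}_0^{\perp}} \cong \mathsf{ds}_m(\sigma^{\perp})$ at each generic $m$ in the slab $m_0 - \tfrac{1}{2} < m < m_0 + \tfrac{1}{2}$.

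Next I would observe that the maximality of $\tau_0^{\sharp}$ means $\sigma^{\perp}$ contains no hook producing a segment balanced along $m_0$. Applying Corollary \ref{countd} with $\sigma^{\perp}$ in place of $\sigma$ gives $\# \mathcal{D}_{m_0}(\sigma^{\perp}) = 2^0 = 1$, so there is exactly one tempered delimit of $\mathbb H_{n^{\perp}, m}$ with central character $\mathsf c_m^{\sigma^{\perp}}$. By the ``if'' direction of Lemma \ref{tli}, the family $L_{\boldsymbol{\tau}_0^{\perp}}$ yields a tempered delimit, so it must be this unique one, and in particular $\lim_{m \to m_0} L_{\boldsymbol{\tau}_0^{\perp}}$ is a tempered $\mathbb H_{n^{\perp}, m_0}$-module.

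Finally, to upgrade ``tempered'' to ``irreducible discrete series,'' I would use that $\sigma^{\perp}$ has no balanced hooks at $m_0$: this places the central character $\mathsf c_{m_0}^{\sigma^{\perp}}$ outside any wall-crossing region for $\mathbb H_{n^{\perp}, m_0}$, so by the limit construction of Opdam--Solleveld \cite{OS} and \cite{CK} \S 2.4, the limit $\lim_{m \to m_0} \mathsf{ds}_m(\sigma^{\perp})$ is the irreducible discrete series $\mathsf{ds}_{m_0}(\sigma^{\perp})$. The main obstacle I expect is the identification $L_{\boldsymbol{\tau}_0^{\perp}} \cong \mathsf{ds}_m(\sigma^{\perp})$: this reconciliation between the abstract standard decomposition $\tau_0^+ \oplus \tau_0^-$ and the output of the $\mathsf{ds}_m$-algorithm requires careful combinatorial bookkeeping of how the hooks of $\sigma$ that are not absorbed into $\tau_0^{\sharp}$ assemble into the segments defining $\boldsymbol{\tau}_0^+$ and $\boldsymbol{\tau}_0^-$, and it is here that the nestedness statement of Lemma \ref{Cbasic} and the structural properties of the $\mathsf{ds}_m$-algorithm from \cite{CK} \S 3.3 must be combined in a precise way.
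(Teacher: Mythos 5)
Your proposal is correct and follows essentially the same route as the paper: the maximality of $\tau_0^{\sharp}$ forces $\tau_0^{\perp}$ to be the discrete series parameter $\mathsf{ds}_m(\sigma')$ of a smaller partition $\sigma'$ with no balanced hook along $m_0$, hence unchanged on both sides of $m_0$, and the conclusion then follows from Opdam--Solleveld \cite{OS}. The extra detour through Corollary \ref{countd} and the ``if'' direction of Lemma \ref{tli} to first establish temperedness is harmless but unnecessary, since \cite{OS} already yields that the limit of this constant family of discrete series is an irreducible discrete series.
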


\begin{proof}
Let $\sigma'$ be a partition of $n'$ such that $\tau _0 ^{\perp} \in \mathcal C _{m_0} ( \sigma' )$. By the assumption $\tau _0 ^{\sharp}$ is maximal, we deduce that $\sigma'$ does not contains a balanced hook along $m_0$. 
Hence, we have
$$\tau ^{\perp} _0 = ds _{m'} ( \sigma' ) = ds _m ( \sigma' )$$
for $m_0 - \frac{1}{2} < m' < m_0 < m < m _0 + \frac{1}{2}$. In particular, $\lim _{m \to m_0} L _{{\boldsymbol \tau} _0 ^{\perp}}$ must be discrete series by Opdam-Solleveld \cite{OS}.
\end{proof}

\begin{proposition}\label{Cmf}
Let $\tau \in \mathcal{C} _{m _0} ( \sigma )$ and let $\tau = \tau ^{\sharp} \oplus \tau ^{\perp}$ be the standard decomposition. Fix $\tau _{\circleddash} \subset \tau ^{\sharp}$ and consider an induced decomposition $\tau = \tau _{\circleddash} \oplus \tau _{\circledast}$. We have
\begin{equation}
[ L ^{\mathsf A} _{{\boldsymbol \tau} _{\circleddash}} \boxast L _{{\boldsymbol \tau} _{\circledast}} ] = E + \sum _{\tau' \in \mathcal{D} _{m_0} ( \sigma ); \tau _{\circledast} ^{\sharp} \subset ( \tau' ) ^{\sharp} \subset \tau _{\circleddash} \oplus \tau _{\circledast} ^{\sharp}} [ L _{{\boldsymbol \tau}'}] \in K ( \mathfrak{M} _{\vec{q}} ^n ),\label{Ainf}
\end{equation}
where $\tau' = ( \tau' )^{\sharp} \oplus ( \tau' )^{\perp}$ is the standard decomposition, and $E$ is a non-negative sum of irreducible $\mathbb H _n$-modules which are not of the form $L _{\boldsymbol\tau}$ for any $\tau \in \mathcal{C} _{m _0} ( \sigma )$.
\end{proposition}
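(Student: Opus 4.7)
The plan is to induct on $\#\tau_\circleddash$. For the inductive step, use induction-by-stages to factor $L^{\mathsf A}_{\boldsymbol\tau_\circleddash} \boxast L_{\boldsymbol\tau_\circledast}$ as an iterated parabolic induction from Steinberg (or transpose Steinberg) modules $\mathsf{St}^\nu_k$ attached to the balanced segments $I \in \tau_\circleddash$, peeling them off one at a time. This reduces the statement to analyzing, for a single balanced hook segment $I$ and an intermediate parameter $\tilde\chi$, the composition factors of $\mathsf{St}^\nu_k \boxast L_{\tilde{\boldsymbol\chi}}$ (and its transpose variant) that are of the form $L_{\boldsymbol\chi'}$ for some $\chi' \in \mathcal{C}_{m_0}(\sigma)$.

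For the upper bound on which parameters occur, I apply Proposition \ref{iest}: every irreducible quotient of $\mathsf{St}^\nu_k \boxast L_{\tilde{\boldsymbol\chi}}$ is indexed by some $\chi' \in \mathcal{S}^{+}_{\nu,k}(\tilde\chi)$, which by Lemma \ref{wth} constrains the increment of $\rho_{ij}(\chi')$ over $\rho_{ij}(\tilde\chi)$ to at most one on a prescribed strip. Matching this against the $\rho_{ij}$-characterization of the closure order on $\mathcal{C}_{m_0}(\sigma)$ obtained in cases \textbf{p)} and \textbf{n)} of the proof of Proposition \ref{cr}, one sees that $\chi' \in \mathcal{C}_{m_0}(\sigma)$ is possible only when $I$ is either added to $\tilde\chi^\sharp$ via the elementary modification of Theorem \ref{ADK}, or merged into $\tilde\chi^\perp$. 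Iterating over all $I \in \tau_\circleddash$ produces precisely the bracketing range $\tau_\circledast^\sharp \subset (\chi')^\sharp \subset \tau_\circleddash \oplus \tau_\circledast^\sharp$, and one checks simultaneously that the resulting $\chi'$ all lie in $\mathcal{D}_{m_0}(\sigma)$ by exhibiting the standard decomposition required by Definition \ref{Cdef} via Proposition \ref{cd}.

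For the multiplicity, Corollary \ref{indmult} yields $[M^{\mathsf A}_{\boldsymbol\tau_\circleddash} \boxast M_{\boldsymbol\tau_\circledast} : L_{\boldsymbol\tau'}] = [M_{\boldsymbol\tau_\circleddash \oplus \boldsymbol\tau_\circledast} : L_{\boldsymbol\tau'}]$ for every irreducible $L_{\boldsymbol\tau'}$. By Corollary \ref{smooth}, $\overline{\mathcal{O}_{\boldsymbol\tau_\circleddash \oplus \boldsymbol\tau_\circledast}}$ is smooth along $\mathcal{O}_{\boldsymbol\tau'}$ for each such $\tau'$, and the Borel--Moore homology description of standard modules in \cite{K1} forces the Jordan--H\"older multiplicity $[M_{\boldsymbol\tau_\circleddash \oplus \boldsymbol\tau_\circledast} : L_{\boldsymbol\tau'}]$ to be exactly one. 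All remaining composition factors of $L^{\mathsf A}_{\boldsymbol\tau_\circleddash} \boxast L_{\boldsymbol\tau_\circledast}$ are either non-$\mathcal{C}_{m_0}(\sigma)$-indexed, in which case they contribute non-negatively to $E$, or they are ruled out by the $\rho_{ij}$-constraints above.

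The main obstacle is the passage from the standard induction $M^{\mathsf A}_{\boldsymbol\tau_\circleddash} \boxast M_{\boldsymbol\tau_\circledast}$ to the irreducible induction $L^{\mathsf A}_{\boldsymbol\tau_\circleddash} \boxast L_{\boldsymbol\tau_\circledast}$. One must verify that no $L_{\boldsymbol\tau'}$ with $\tau' \in \mathcal{C}_{m_0}(\sigma)$ lying in the bracketing range is lost in the kernel of $M^{\mathsf A}_{\boldsymbol\tau_\circleddash} \boxast M_{\boldsymbol\tau_\circledast} \twoheadrightarrow L^{\mathsf A}_{\boldsymbol\tau_\circleddash} \boxast L_{\boldsymbol\tau_\circledast}$. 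This should follow by applying Proposition \ref{iest} again to proper subquotients of the standard modules $M^{\mathsf A}_{\boldsymbol\tau_\circleddash}$ and $M_{\boldsymbol\tau_\circledast}$: such subquotients are indexed by parameters strictly larger in the closure order of Proposition \ref{cr}, and the resulting $\chi'$ necessarily falls outside the prescribed bracketing, leaving the $\mathcal{C}_{m_0}(\sigma)$-part of the induced module untouched.
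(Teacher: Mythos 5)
Your overall skeleton is the same as the paper's: peel off the balanced segments of $\tau_{\circleddash}$ one at a time by induction-by-stages, get the support constraint on constituents from Corollary \ref{indmult}, and get multiplicity at most one from Corollary \ref{smooth} applied to $M_{\boldsymbol\tau_{\circleddash}\oplus\boldsymbol\tau_{\circledast}}$. The genuine gap is exactly at the point you flag as ``the main obstacle,'' and your proposed resolution does not work. First, Proposition \ref{iest} only constrains the irreducible \emph{quotients} of $\mathsf{St}^{\nu}_{k}\boxast L_{\chi}$; here you must control \emph{all} Jordan--H\"older constituents of $L^{\mathsf A}_{\boldsymbol\tau_{\circleddash}}\boxast L_{\boldsymbol\chi''}$ for the kernel parameters $\chi''$ (the non-top constituents of $M_{\boldsymbol\tau_{\circledast}}$), since the identity (\ref{Ainf}) lives in the Grothendieck group; so ``applying Proposition \ref{iest} again to proper subquotients'' is not a legitimate tool for this step. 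Second, the assertion that the constituents so produced ``necessarily fall outside the prescribed bracketing'' is not a consequence of the closure order: by Corollary \ref{indmult}, a constituent $L_{\boldsymbol\tau'}$ of $L^{\mathsf A}_{I}\boxast L_{\widetilde{\boldsymbol\tau}'}$ need only satisfy $\mathcal O_{\{I\}\oplus\widetilde{\boldsymbol\tau}'}\subset\overline{\mathcal O_{\boldsymbol\tau'}}$, and the bracketing-range parameters are precisely those with the \emph{larger} orbits (Proposition \ref{cr}), so this support condition is a priori compatible with $\tau'$ lying in the bracket. If one of the two expected constituents were absorbed into such a kernel piece, it would be lost from the quotient and (\ref{Ainf}) would fail, so this possibility must be excluded, not asserted.

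What is actually needed (and what the paper's proof supplies in its assertion $(\heartsuit)$/$(\heartsuit)'$) is a finer argument for the single-segment step: writing $\widetilde{\boldsymbol\tau}^{+}=\{I\}\oplus\widetilde{\boldsymbol\tau}$ and letting $\widetilde{\boldsymbol\tau}^{(1)},\widetilde{\boldsymbol\tau}^{(2)}$ be the two candidate parameters, one compares dimensions: $\dim\mathcal O_{\{I\}\oplus\widetilde{\boldsymbol\tau}'}>\dim\mathcal O_{\widetilde{\boldsymbol\tau}^{+}}$ while $\dim\mathcal O_{\widetilde{\boldsymbol\tau}^{(i)}}\le\dim\mathcal O_{\widetilde{\boldsymbol\tau}^{+}}+1$, so the only dangerous case is when $\widetilde{\boldsymbol\tau}'$ is a codimension-one degeneration of $\widetilde{\boldsymbol\tau}$ (a single elementary modification or one added marking), where the closure containment would force $\{I\}\oplus\widetilde{\boldsymbol\tau}'=\widetilde{\boldsymbol\tau}^{(i)}$; this is then ruled out by the combinatorial fact that $I$ is a hook of $\sigma$ but not a hook of the smaller partition $\sigma'$. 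Moreover the multiplicity comparison at each stage uses the validity of (\ref{Ainf}) for fewer segments, so the induction must be organized so that this input is available. Without the dimension count and the hook observation, your argument establishes only the upper bound (each bracketing-range $L_{\boldsymbol\tau'}$ occurs at most once), not the equality claimed in Proposition \ref{Cmf}.
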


\begin{proof}
Notice that $L ^{\mathsf A} _{{\boldsymbol \tau} ^{\sharp}} = M ^{\mathsf A} _{{\boldsymbol \tau} ^{\sharp}}$. By Corollary \ref{indmult}, every composition factor $L _{{\boldsymbol \tau}'}$ of $L _{{\boldsymbol \tau} ^{\sharp}} ^{\mathsf A} \boxast L _{{\boldsymbol \tau} ^{\perp}}$ satisfies $\mathcal O _{{\boldsymbol \tau}} \subset \overline{\mathcal O _{{\boldsymbol \tau}'}}$. By Corollary \ref{smooth}, we deduce that $[ M _{{\boldsymbol \tau}} : L _{{\boldsymbol \tau}'} ] = 1$ for every $\tau' \in \mathcal C _{m_0} ( \sigma )$ such that $\mathcal O _{{\boldsymbol \tau}} \subset \overline{\mathcal O _{{\boldsymbol \tau}'}}$. Consequently, we have
$$
[ L _{{\boldsymbol \tau} ^{\sharp}} ^{\mathsf A} \boxast L _{{\boldsymbol \tau} ^{\perp}} : L _{{\boldsymbol \tau}'} ] \le 1 \text{ for every }\tau' \in \mathcal C _{m_0} ( \sigma ).$$
By induction-by-stages, we have
\begin{equation}
L ^{\mathsf A} _{{\boldsymbol \tau} ^{\sharp}} \boxast M _{{\boldsymbol \tau} ^{\perp}} \cong L _{I _1} ^{\mathsf A} \boxast ( L _{I _2} ^{\mathsf A} \boxast ( \cdots \boxast ( L ^{\mathsf A} _{I_N} \boxtimes M _{{\boldsymbol \tau} ^{\perp}} ))),\label{ibs}
\end{equation}
where ${\boldsymbol \tau} ^{\sharp} = \{ I _k \} _{k=1} ^N$. If $N =
0$, there is nothing to prove. Let $n' := n - \# I _1$ and $\sigma'$ be the Young diagram obtained by extracting a hook corresponding to $I_1$ from $\sigma$. Consider the following assertion:
\begin{itemize}
\item[$(\heartsuit)$] The $\mathbb H$-module $L _{I_1} ^{\mathsf A} \boxast L _{\widetilde{\boldsymbol \tau}}$ contains both of $L _{\widetilde{\boldsymbol \tau} ^{(1)}}$ and $L _{\widetilde{\boldsymbol \tau} ^{(2)}}$ as composition factors for every $\widetilde{\tau} \in \mathcal C _{m_0} ( \sigma' )$, and $\widetilde{\tau} ^{(1)}, \widetilde{\tau} ^{(2)} \in \mathcal C _{m_0} ( \sigma )$ that satisfy $\widetilde{\boldsymbol \tau} ^{\sharp} = ( \widetilde{\boldsymbol \tau} ^{(1)} ) ^{\sharp}$ and $\widetilde{\boldsymbol \tau} ^{\sharp} \cup \{ I_1 \} = ( \widetilde{\boldsymbol \tau} ^{(2)} ) ^{\sharp}$.
\end{itemize}
If $(\heartsuit)$ holds, and (\ref{Ainf}) holds for all smaller $N$, then the comparison of multiplicity yields $[ L _{I_1} ^{\mathsf A} \boxast L _{\widetilde{\boldsymbol \tau}} ] = E' + [L _{\widetilde{\boldsymbol \tau} ^{(1)}}] + [L _{\widetilde{\boldsymbol \tau} ^{(2)}}]$, where $E'$ is a non-negative (formal) linear combination of irreducible $\mathbb H _n$-modules which are not isomorphic to $L _{( \mathsf{c} _{\sigma} ^m, \tau )}$ for some $\tau \in \mathcal{C} _{m _0} ( \sigma )$. Therefore, in order to prove (\ref{ibs}), it suffices to verify $(\heartsuit)$ provided that (\ref{Ainf}) holds for all smaller $N$ cases.

Set $\widetilde{\boldsymbol \tau} ^+ := \{ I _1 \} \oplus \widetilde{\boldsymbol \tau}$. We have $\widetilde{\boldsymbol \tau} ^+ \in \mathcal C _{m_0} ( \sigma )$. By Corollary \ref{indmult}, every composition factor $L _{{\boldsymbol \tau}'}$ of $( L _{I_1} ^{\mathsf A} \boxast L _{\widetilde{\boldsymbol \tau}} )$ satisfies $\mathcal O _{\widetilde{\boldsymbol \tau} ^+} \subset \overline{\mathcal O _{{\boldsymbol \tau}'}}$. By Corollary \ref{smooth}, we deduce that $[ M _{\widetilde{\boldsymbol \tau} ^+} : L _{{\boldsymbol \tau}'} ] = 1$ for every $\tau' \in \mathcal C _{m_0} ( \sigma )$ such that $\mathcal O _{\widetilde{\boldsymbol \tau} ^+} \subset \overline{\mathcal O _{{\boldsymbol \tau}'}}$. To show $(\heartsuit)$, it suffices to verify that
$$[ L _{I_1} ^{\mathsf A} \boxast L _{\widetilde{\boldsymbol \tau}'} : L _{\widetilde{\boldsymbol \tau} ^{(i)}} ] = 0$$
for $i=1,2$ and every irreducible constituent {$L _{\widetilde{\boldsymbol \tau}'} \not\cong L _{\widetilde{\boldsymbol \tau}}$ of $M _{\widetilde{\boldsymbol \tau}}$.}
This follows if
\begin{itemize}
\item[$(\heartsuit)'$] $\overline{\mathcal O _{\widetilde{\boldsymbol \tau} ^{(i)}}}$ does not contain $\mathcal O _{ \{ I _1 \} \oplus \widetilde{\boldsymbol \tau}'}$
\end{itemize}
holds by Corollary \ref{indmult}. Here we have
\begin{align}\nonumber
& \dim \mathcal O _{\{ I _1 \} \oplus \widetilde{\boldsymbol \tau}'} > \dim \mathcal O _{\widetilde{\boldsymbol \tau} ^+} \text{ and }\\
& \dim \mathcal O _{\widetilde{\boldsymbol \tau} ^{(i)}} \le \dim \mathcal O _{\widetilde{\boldsymbol \tau} ^+} + 1 \text{ (for each $i=1,2$)}.
\end{align}
Thus, in order to deduce inclusion, we have necessarily $\dim \mathcal
O _{\{ I _1 \} \oplus \widetilde{\boldsymbol \tau}'} = \dim \mathcal O
_{\widetilde{\boldsymbol \tau} ^+} + 1$. It follows that $\widetilde{\boldsymbol \tau}'$ is
obtained from $\widetilde{\boldsymbol \tau}$ by applying a unique elementary
modification or putting one extra marking. Here $I _1$ corresponds to
a hook of $\sigma$, but $I_1$ does not correspond to a hook of $\sigma'$. Therefore, we conclude that $\{ I _1 \} \oplus
\widetilde{\boldsymbol \tau}' \neq \widetilde{\boldsymbol \tau} ^{(i)}$ for $i=1,2$. This in
turn implies $(\heartsuit)'$, and hence $(\heartsuit)$. In conclusion,
the induction proceeds and we obtain the result.
\end{proof}

The rest of this section is devoted to the proof of Theorems \ref{ctd}, \ref{multiplicity formula}, and \ref{minds}.

We apply Proposition \ref{Cmf} to $\tau _0$ (borrowed from Lemma \ref{Cminds}). Then, we obtain
$$[ L _{{\boldsymbol \tau} _0 ^{\sharp}} ^{\mathsf A} \boxast L _{{\boldsymbol \tau} _0 ^{\perp}} ] = E + \sum _{\tau \in \mathcal{C} _{m_0} ( \sigma )} [ L _{{\boldsymbol \tau}}] \in K ( \mathfrak{M} _{\vec{q}} ^n )$$
by Proposition \ref{cr}. Here, $\lim _{m \to m_0} L _{{\boldsymbol \tau} _0 ^{\sharp}} ^{\mathsf A}$ is a tempered module while $\lim _{m \to m_0} L _{{\boldsymbol \tau} _0 ^{\perp}}$ is a well-defined discrete series. It follows that every irreducible constituent of $L _{{\boldsymbol \tau} _0 ^{\sharp}} ^{\mathsf A} \boxast L _{{\boldsymbol \tau} _0 ^{\perp}}$ is a tempered delimit. In particular, we have $\mathcal C _{m_0} ( \sigma ) \subset \mathcal D _{m_0} ( \sigma )$. Moreover, Corollary \ref{DinC} implies that $\mathcal C _{m_0} ( \sigma ) = \mathcal D _{m_0} ( \sigma )$ by the comparison of the cardinality. This proves Theorem \ref{ctd} and hence also Theorem \ref{minds}. Moreover, we conclude {$E = 0$} since there can be no other tempered delimits outside of $\mathcal C _{m_0} ( \sigma )$. Therefore, we conclude Theorem \ref{multiplicity formula} as desired.

\subsection{Further properties of tempered delimits}

We first recall a result based on the theory of analytic $R$-groups due, in the setting of affine Hecke algebras, to Delorme-Opdam \cite{DO}:

\begin{theorem}[Slooten \cite{Sl2} Theorem 3.4.4]\label{Rgroup}
Let $\sigma$ be a partition of $n$. Let $\mathbf I = \{ I_1, \ldots, I
_N \}$ be a multisegment consisting of segments with $\mathtt E _{m} (
I _k ) = 1$. Let $d$ be the number of segments $I_k$ of
  distinct size such that $e _+ ( I _k ) \not\in \{ e _+ ( I )
\mid I \in \mathsf{ds} _m ( \sigma) \}$ and $e _- ( I _k ) \not\in \{
e _- ( I ) \mid I \in \mathsf{ds} _m ( \sigma) \}$. Then, the module $L ^{\mathsf A} _{\mathbf I} \boxast \mathsf{ds} _m ( \sigma )$ is irreducible when $m \neq m_0$, and is a direct sum of $2^d$ irreducible components when $m = m _0$.
\end{theorem}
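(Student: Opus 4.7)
The strategy is to invoke the analytic $R$-group formalism of Delorme--Opdam \cite{DO} for tempered parabolic induction on affine Hecke algebras, whereby the number of irreducible summands of a tempered induction equals $\# R(\pi) = \# (W(\pi)/W(\pi)')$, with $W(\pi)$ the stabilizer of the inducing datum in the normalizer of the Levi, and $W(\pi)'$ the subgroup generated by the reflections corresponding to rank-one reducibilities (poles of the Harish-Chandra $\mu$-function).

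First, I would realize $L ^{\mathsf A} _{\mathbf I} \boxast \mathsf{ds} _m ( \sigma )$ as parabolic induction from a Levi of type $\bigl(\prod _k \mathsf A _{\# I _k - 1}\bigr) \times \mathsf C _{|\sigma|}$, with tempered inducing module $\pi = \bigl(\boxtimes _k L ^{\mathsf A} _{I _k}\bigr) \boxtimes \mathsf{ds} _m ( \sigma )$. The hypothesis $\mathtt E _m ( I _k ) = 1$ forces each $I _k$ to be centered at $1$, so $L ^{\mathsf A} _{I _k}$ is a unitary twist of the Steinberg module, and each $I _k$ is self-dual, ${} ^{\mathtt t} I _k = I _k$. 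This self-duality makes the type $\mathsf C$ ``flip'' exchanging $\mathsf A _{\# I _k - 1}$ with its opposite Levi an element of $W(\pi)$, so the stabilizer contains a factor $(\mathbb Z/2)^{N'}$ with $N'$ the number of distinct sizes occurring in $\mathbf I$; the residual permutations among equal-size segments all lie in $W(\pi)'$ by a standard rank-one computation for type $\mathsf A$ induction of isomorphic segments.

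Second, I would determine which flips lie in $W(\pi)'$ by direct computation of the Harish-Chandra $\mu$-function for the unequal-parameter type $\mathsf C _n$ algebra with $u = q ^{m + m'}$ and $v = q ^{m - m'}$. Opdam's explicit Plancherel formula shows that rank-one reducibility occurs exactly when an endpoint of $I _k$ coincides with one of the ``poles'' determined by the central character of $\mathsf{ds} _m ( \sigma )$, which combinatorially amounts to that endpoint belonging to $\{ e _+ ( I ), e _- ( I ) \mid I \in \mathsf{ds} _m ( \sigma ) \}$. For $m \neq m _0$, the endpoints of $\mathsf{ds} _m ( \sigma )$ are $q$-powers of $m + \frac{1}{2} \mathbb Z$ while those of the balanced $I _k$ lie in $q ^{\frac{1}{2} \mathbb Z}$, and an additional reflection coming from the unequal-parameter short/long-root reducibility forces each flip into $W(\pi)'$. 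For $m = m _0 \in \frac{1}{2} \mathbb Z$ this unequal-parameter reflection degenerates, and the flip associated to $I _k$ lies in $W(\pi)'$ precisely when an endpoint match with $\mathsf{ds} _m ( \sigma )$ occurs.

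Combining these steps, at $m \neq m _0$ the $R$-group is trivial and the induction is irreducible, whereas at $m = m _0$ one obtains $R(\pi) \cong (\mathbb Z/2)^d$, yielding a direct sum of $2 ^d$ irreducible constituents by the Delorme--Opdam decomposition theorem. The main obstacle will be the explicit $\mu$-function computation and the dichotomy between ``generic'' and ``critical'' values of $m$: one must carefully track how the residual cosets depend on the two parameters $u, v$, and verify that the unequal-parameter reflection and the balanced-segment flip exchange roles precisely at $m = m _0$.
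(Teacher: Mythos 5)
The paper offers no proof of this statement: it is imported verbatim, with attribution, from Slooten \cite{Sl2} (Theorem 3.4.4), whose argument rests on the Delorme--Opdam theory of analytic $R$-groups \cite{DO}. Your proposal therefore follows essentially the same route as the cited source --- realize $L^{\mathsf A}_{\mathbf I}\boxast \mathsf{ds}_m(\sigma)$ as tempered induction from a Levi of type $\bigl(\prod_k \mathsf A_{\#I_k-1}\bigr)\times \mathsf C_{|\sigma|}$, observe that $\mathtt E_m(I_k)=1$ makes each segment self-dual so the corresponding sign flips lie in the stabilizer $W(\pi)$, dispose of repeated equal segments, and then decide which flips fall into $W(\pi)'$ via vanishing of the rank-one Plancherel density $\mu$. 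That reduction is fine, but it is not where the content of the theorem lies.

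The genuine gap is precisely the step you defer: the explicit determination of $W(\pi)'$, i.e.\ of where $\mu$ vanishes, which is the whole of Slooten's theorem. Moreover, the heuristic you give for the generic case does not work as stated. For $m\notin\frac12\mathbb Z$ the endpoint condition defining $d$ is vacuous: the endpoints of the centered segments $I_k$ lie in $q^{\frac12\mathbb Z}$ while those of $\mathsf{ds}_m(\sigma)$ lie in $q^{m+\mathbb Z}$, so no matches can occur and the naive ``endpoint-matching'' criterion would predict $2^{N'}$ constituents rather than irreducibility. Your fix --- that ``an additional reflection coming from the unequal-parameter short/long-root reducibility forces each flip into $W(\pi)'$'' --- is not a valid argument in the Knapp--Stein/Delorme--Opdam formalism: $W(\pi)'$ is generated by the reflections along which $\mu(\pi)$ vanishes, and the flip belongs to it only if it is a product of such reflections, so one auxiliary reflection with vanishing $\mu$ proves nothing about the flip. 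What actually decides the dichotomy is the interaction between the zeros of $\mu$ attached to the self-dual segment and the poles coming from its coupling with the residual point of $\mathsf{ds}_m(\sigma)$ (in Opdam's sense \cite{O}); at generic $m$ no entries are $q$-adjacent and the flip stays in $W(\pi)'$, while at $m=m_0$ cancellations leave $\mu$ nonzero exactly when neither endpoint of $I_k$ is a ``jump'' of $\mathsf{ds}_{m}(\sigma)$. Carrying out that computation is Slooten's work; absent it, your text is a plan to reprove the cited theorem, not a proof, and the paper's own treatment (a citation) is the appropriate one.
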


\begin{corollary}\label{irred}
For every $\tau \in \mathcal D _{m_0} ( \sigma )$, the limit module $\lim _{m \to m_0} L _{\boldsymbol \tau}$ is irreducible. 
\end{corollary}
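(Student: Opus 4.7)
The plan is to combine Theorem \ref{multiplicity formula} with the $R$-group calculation of Theorem \ref{Rgroup} via a counting argument that forces each limit to be irreducible.

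First, I would select the minimal element $\tau_0 \in \mathcal D_{m_0}(\sigma)$ in the closure ordering, so that $\tau_0^\sharp$ is maximal with $|\tau_0^\sharp| = h$, where $h$ is the total number of balanced hooks of $\sigma$ along $m_0$. By Corollary \ref{countd} combined with Lemma \ref{ndbs}, $|\mathcal D_{m_0}(\sigma)| = 2^h$. Applying Theorem \ref{multiplicity formula} with $\tau_\circleddash := \tau_0^\sharp$ and $\tau_\circledast := \tau_0^\perp$, and noting that $(\tau_0^\perp)^\sharp = \emptyset$ while $(\tau')^\sharp \subset \tau_0^\sharp$ for every $\tau' \in \mathcal D_{m_0}(\sigma)$ by Proposition \ref{cr}, I obtain the Grothendieck group identity
$$[L^{\mathsf A}_{\boldsymbol\tau_0^\sharp} \boxast L_{\boldsymbol\tau_0^\perp}] = \sum_{\tau' \in \mathcal D_{m_0}(\sigma)} [L_{\boldsymbol\tau'}],$$
where both sides have exactly $2^h$ constituents.

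Next, I pass to the limit $m \to m_0$. By Theorem \ref{minds}, $\lim_{m \to m_0} L_{\boldsymbol\tau_0^\perp}$ is an irreducible discrete series of a smaller type $\mathsf C$ affine Hecke algebra. On the other hand, $L^{\mathsf A}_{\boldsymbol\tau_0^\sharp}$ is a parabolic induction from segments that are balanced along $m_0$, so that in the limit each $I \in \tau_0^\sharp$ satisfies $\mathtt E_{m_0}(I) = 1$. Theorem \ref{Rgroup} then applies to the LHS, giving a decomposition
$$\lim_{m \to m_0} \bigl(L^{\mathsf A}_{\boldsymbol\tau_0^\sharp} \boxast L_{\boldsymbol\tau_0^\perp}\bigr) = \bigoplus_{i=1}^{2^d} V_i$$
into $2^d$ pairwise distinct irreducible tempered modules, where $d$ is Slooten's parameter. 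I expect $d = h$ here, which is the main substantive point to verify: the segments of $\tau_0^\sharp$ correspond to distinct balanced hooks of $\sigma$ and hence have pairwise distinct sizes and endpoints (by Lemma \ref{tli}(4)), and the maximality of $\tau_0^\sharp$ in the standard decomposition (Definition \ref{standard}) forces the endpoints of segments in $\tau_0^\sharp$ to avoid those of the discrete series segments of $\lim L_{\boldsymbol\tau_0^\perp}$; otherwise one could absorb an additional balanced segment into the $\sharp$-part, contradicting maximality.

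With $d = h$ in hand, the limit of the Grothendieck identity reads
$$\sum_{i=1}^{2^h} [V_i] \;=\; \sum_{\tau' \in \mathcal D_{m_0}(\sigma)} \bigl[\lim_{m \to m_0} L_{\boldsymbol\tau'}\bigr].$$
Each $\lim_{m \to m_0} L_{\boldsymbol\tau'}$ is tempered by definition of a tempered delimit, hence semisimple, hence contributes at least one irreducible summand. Since the LHS is a sum of exactly $2^h = |\mathcal D_{m_0}(\sigma)|$ irreducibles and the RHS is a sum of $2^h$ such nonzero contributions, each $\lim_{m \to m_0} L_{\boldsymbol\tau'}$ must contribute exactly one irreducible, i.e., must itself be irreducible (and must match one of the $V_i$). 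The hardest step will be the combinatorial verification that Slooten's $d$ really equals $h$; once this is established, the rest is a clean counting argument.
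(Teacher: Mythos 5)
Your proposal is correct and follows essentially the same route as the paper: take the minimal $\tau_0$, apply Theorem \ref{multiplicity formula} to get exactly $2^h$ constituents of $L^{\mathsf A}_{\boldsymbol\tau_0^\sharp}\boxast L_{\boldsymbol\tau_0^\perp}$ at generic $m$ (all of $\mathcal D_{m_0}(\sigma)$), apply Theorem \ref{Rgroup} to split the limit into $2^h$ irreducible tempered summands, and conclude by counting. The identification $d=h$ that you flag is also used (implicitly) in the paper's proof, justified exactly as you sketch by the distinctness of the balanced segments and the maximality of $\tau_0^\sharp$.
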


\begin{proof}
We borrow the notation $\boldsymbol \tau_0$ from Theorem \ref{minds}. By Theorem \ref{Rgroup}, $\lim _{m \to m_0} L ^{\mathsf A} _{{\boldsymbol \tau} _0 ^{\sharp}} \boxast L _{{\boldsymbol \tau} _0 ^{\perp}}$ splits into $2^h$ direct sums of tempered modules, where $h$ is the number of segments in ${\boldsymbol \tau} _0 ^{\sharp}$. By Theorem \ref{multiplicity formula}, we know that $L ^{\mathsf A} _{{\boldsymbol \tau} _0 ^{\sharp}} \boxast L _{{\boldsymbol \tau}  _0^{\perp}}$ contains $2 ^h$ irreducible constituent even at generic $m$. It follows that all of such irreducible constituents, which is the whole of $\mathcal D _{m_0} ( \sigma )$, must be irreducible by taking limit $m \to m_0$.
\end{proof}

\begin{lemma}\label{disj}
For every distinct choice of partitions $\sigma, \sigma'$ of $n$, we have
$$\mathcal D _{m_0} ( \sigma ) \cap \mathcal D _{m_0} ( \sigma' ) = \emptyset.$$
\end{lemma}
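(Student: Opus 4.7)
The plan is to argue by contradiction using the fact that an irreducible module has a unique central character. Assume $\tau$ lies in both $\mathcal D_{m_0}(\sigma)$ and $\mathcal D_{m_0}(\sigma')$. Then, by Definition \ref{d:tdl}, the associated algebraic family $L^m$ (for $0 < |m - m_0| < \tfrac 12$) has central character $\mathsf c_m^\sigma$ (from membership in $\mathcal D_{m_0}(\sigma)$) and also $\mathsf c_m^{\sigma'}$ (from membership in $\mathcal D_{m_0}(\sigma')$). Since the central character of an irreducible $\mathbb H_{n,m}$-module is well-defined as a single $W_n$-orbit in $T_n$, this immediately yields $\mathsf c_m^\sigma = \mathsf c_m^{\sigma'}$ in $T_n/W_n$ for every such $m$.

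The second step is to fix any $m$ in this deleted neighborhood and exploit the fact that it must be generic: since $m_0 \in \tfrac 12 \mathbb Z$ and $|m - m_0| < \tfrac 12$ with $m \neq m_0$, we have $m \notin \tfrac 12 \mathbb Z$. I then invoke Opdam's classification of discrete series (\cite{O}, recalled in the introduction), which states that for generic $m$ the assignment $\sigma \mapsto \mathsf{ds}_m(\sigma)$ gives a bijection between partitions of $n$ and discrete series of $\mathbb H_{n,m}$, with $\mathsf{ds}_m(\sigma)$ being the unique discrete series whose central character is $\mathsf c_m^\sigma$. In particular the map $\sigma \mapsto \mathsf c_m^\sigma$ is injective on partitions, so $\mathsf c_m^\sigma = \mathsf c_m^{\sigma'}$ forces $\sigma = \sigma'$, contradicting the hypothesis.

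There is no serious obstacle: all the genuine mathematical content (existence of the algebraic family, that its central character is $\mathsf c_m^\sigma$, and the Opdam classification) has been established previously, so the argument is essentially bookkeeping. As a self-contained alternative to the appeal to Opdam in the last step, one could note that, for generic $m$, $\mathsf c_m^\sigma$ is determined by the multiset of $m$-contents $\{m + c(b)\}_{b \in \sigma}$ (the genericity of $m$ prevents any cancellation of the form $m + c = -(m + c')$), and that the multiset of contents of a partition — equivalently, its diagonal lengths — uniquely determines the partition. Either route yields the required conclusion.
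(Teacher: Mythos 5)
Your argument is internally sound, but it only establishes a superficial reading of the lemma and misses the content that the paper actually proves and later uses. If one reads $\mathcal D_{m_0}(\sigma)$ purely as a set of families $L^m$ indexed by generic $m$, then disjointness is indeed immediate from central characters, exactly as you say: for $m\notin\frac12\mathbb Z$ the multiset of $m$-contents determines $\sigma$, so $\mathsf c_m^\sigma\neq\mathsf c_m^{\sigma'}$. However, Lemma \ref{disj} is invoked in the proof of Theorem \ref{dind} to conclude that a tempered irreducible $\mathbb H_{n,m_0}$-module (with $\mathbf I=\emptyset$) determines the partition $\sigma$ uniquely; what is needed, and what the paper's proof establishes, is that tempered delimits attached to distinct partitions remain distinguishable \emph{after passing to the limit} $m\to m_0$. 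Accordingly, the paper does not argue at generic $m$ at all: its proof shows that $\sigma$ is canonically recovered from the degenerate parameter $\chi_0=\lim_{m\to m_0}\chi^0\in\mathsf Q(q)$, via a combinatorial analysis of the limiting marked multisegment (locating the segment through $q^{m_0}$, peeling off nested components, using the markings to decide whether $I$ or $I^{-1}$ occurs, and thereby reconstructing the intermediate segments of the $\mathsf{ds}_m$-algorithm and hence the hooks of $\sigma$).

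Your central-character argument cannot be pushed to $m=m_0$: the genericity you rely on (no cancellation $m+c=-(m+c')$) is exactly what fails at a critical value $m_0\in\frac12\mathbb Z$, and distinct partitions can share the same limiting central character --- for instance $(2)$ and $(1,1)$ at $m_0=0$, or $(2,1)$ and $(1^3)$ at $m_0=\frac12$, give the same $W_n$-orbit once inversions are allowed. Thus, a priori, two non-isomorphic families lying in $\mathcal D_{m_0}(\sigma)$ and $\mathcal D_{m_0}(\sigma')$ could have isomorphic limits at $m_0$, and ruling this out is the whole point of the lemma in the paper's usage. The reconstruction of $\sigma$ from the limit datum $\chi_0$ is the missing idea in your proposal; without it, the uniqueness step in Theorem \ref{dind} does not follow.
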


\begin{proof}
Let $\chi\in \mathsf P(\vec{q})$ be a parameter corresponding to an
element of $\mathcal D _{m_0} ( \sigma )$. It is sufficient to prove that
$\sigma$ is canonically recovered from $\chi _0 := \lim _{m \to m_0}
\chi^0\in \mathsf Q(q)$.

 Let $I \in \chi_0$ be a segment so that {\bf 1)} $I$ or
$I^{-1}$ is of the form $\{ q^{m_0}, q^{m_0+1}, \ldots \}$ or
$\{\ldots, q^{m_0-1},q^{m_0}\}$, and {\bf 2)} $\max (I \cup I^{-1}
)$ attains the maximum among all the segments in $\chi_0$
which satisfy condition {\bf 1)}. Such a segment $I$ must be
unique (if it exist) since it gives the first segment in the smallest nested component with respect to $\Subset$ (via the $\mathsf{ds} _m$-algorithm, see also Definition \ref{Cdef}).

Let $\chi' = ( \mathbf I', \delta )$ be the marked subpartition of $\chi$ so that $\mathbf I'$ is the collection of all segments $I'$ such that $I \Subset I'$. By the $\mathsf{ds}_m$-algorithm and Definition \ref{Cdef}, we deduce that $\chi'$ forms a nested component of $\chi$ such that either both $\min I'$ and $\max I'$ or both $( \min I' )^{-1}$ and $( \max I' )^{-1}$ are the maximal/minimal values of a hook extracted from $\sigma$. Moreover, the marking of $I'$ determines either $I'$ or $(I')^{-1}$ must belong to $\chi$, and consequently we obtain the shape of all the intermediate segments of the $\mathsf{ds} _m$-algorithm step 2). Therefore, $\chi'$ is determined uniquely from $\chi _0$.

In particular, we can assume $\chi' = \emptyset$. Then, according to the marking of $I$, all the segments of $\chi$ must be either uniformly marked (after changing the marking within the equivalence class if necessary) or uniformly unmarked. It follows that every $I' \in \chi$ must satisfy $\mathtt E_m (I') > 1$ or $\mathtt E_m (I') < 1$ uniformly. Assume that $\mathtt E_m (I') > 1$ for every $I' \in \chi$. Then, we arrange $\chi$ as
$$\max I = \max I_1 > \max I_2 > \cdots > \max I _\ell.$$
A segment $I_k$ appears in the $\mathsf{ds} _m$-algorithm step 2) if and only if $I = I_1$ or $\min I_k = q^{-1} \min I_l$ for some $l < k$. All the others are union of two segments appearing in the $\mathsf{ds} _m$-algorithm step 2). This recovers the all segments appearing in the $\mathsf{ds}_m$-algorithm step 2), and hence recovers $\sigma$ uniquely. The other case is completely analogous, and hence the result follows.
\end{proof}

\begin{theorem}\label{dind}
Every tempered irreducible $\mathbb H _{n,m_0}$-module is obtained as
$$\lim _{m \to m_0} L ^{\mathsf A} _{\mathbf I} \boxast L _{\boldsymbol \tau}$$
for a unique balanced multisegment $\mathbf I$ $($along $m_0)$ and $\tau \in \mathcal D _{m_0} ( \sigma )$ for a unique partition $\sigma$ of some $n'$. In particular, such $\lim _{m \to m_0} L ^{\mathsf A} _{\mathbf I} \boxast L _{\boldsymbol \tau}$ is an irreducible $\mathbb H _{n,m_0}$-module.
\end{theorem}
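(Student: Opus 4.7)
Let $\Pi$ be an irreducible tempered $\mathbb H_{n,m_0}$-module. My first move is to apply the Evens-Langlands classification \cite{Ev}: $\Pi$ is realized as a direct summand of a parabolically induced module of the form $L^{\mathsf A}_{\mathbf J} \boxast \pi$, where $\pi$ is a discrete series of a smaller type-$\mathsf C$ algebra $\mathbb H_{n',m_0}$ and $L^{\mathsf A}_{\mathbf J}$ is an irreducible tempered module of $\mathbb H^{\mathsf A}_{n-n'}$. Since tempered irreducibles in type $\mathsf A$ correspond precisely to multisegments $\mathbf J$ each of whose segments $I$ satisfies $\mathtt E_{m_0}(I) = 1$, the multisegment $\mathbf J$ is balanced along $m_0$. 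Next, by Opdam \cite{O} and Opdam-Solleveld \cite{OS}, the discrete series $\pi$ is canonically labeled by a partition $\sigma_0$ of $n'$, and by Theorem \ref{minds} there is a unique minimal element $\tau_* \in \mathcal D_{m_0}(\sigma_0)$ such that $\pi = \lim_{m \to m_0} L_{\boldsymbol\tau_*^{\perp}}$.

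Second, I would apply Slooten's Theorem \ref{Rgroup} to $L^{\mathsf A}_{\mathbf J} \boxast \pi$: at $m_0$ it decomposes into $2^d$ tempered irreducibles, where $d$ is the number of segments in $\mathbf J$ of distinct size whose endpoints are disjoint from the endpoints of the segments appearing in $\mathsf{ds}_m(\sigma_0)$. The non-free segments of $\mathbf J$ can be glued with the segments produced by the $\mathsf{ds}_m$-algorithm (step 2, as used in the proof of Proposition \ref{cd}) attached to $\sigma_0$, enlarging it to a partition $\sigma$ of some $n - |\mathbf I|$ whose balanced hooks encode these glued segments. The free segments remain outside and constitute the balanced multisegment $\mathbf I$.

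Third, using the identification $\mathcal D_{m_0}(\sigma) = \mathcal C_{m_0}(\sigma)$ of Theorem \ref{ctd}, together with the standard decomposition $\tau = \tau^\sharp \oplus \tau^\perp$ of Definition \ref{standard}, I would set up a bijection between the $2^d$ summands of $L^{\mathsf A}_{\mathbf J} \boxast \pi$ at $m_0$ and the tempered delimits $\tau \in \mathcal D_{m_0}(\sigma)$ whose $\tau^\perp$-part descends from $\tau_*^\perp$: each of the $2^d$ summands is of the form $\lim_{m \to m_0} L^{\mathsf A}_{\mathbf I} \boxast L_{\boldsymbol\tau}$, irreducibility being guaranteed by Corollary \ref{irred}. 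Matching multiplicities is ensured by Theorem \ref{multiplicity formula}, which pins down exactly which of the $2^d$ irreducibles is $\Pi$.

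Finally, for uniqueness: Lemma \ref{disj} shows that the sets $\mathcal D_{m_0}(\sigma)$ are pairwise disjoint across partitions $\sigma$, so both $\sigma$ and $\tau$ are determined by $\Pi$, and $\mathbf I$ is then recovered as the complement of $\boldsymbol\tau^\sharp$ inside $\mathbf J$ from the uniqueness of Evens-Langlands. The main obstacle will be the combinatorial bookkeeping: showing that the splitting of $\mathbf J$ into ``free'' and ``hook-forming'' parts is canonical, that the hook attachment to $\sigma_0$ yields a well-defined partition $\sigma$, and that the Slooten $2^d$-splitting matches the tempered delimit parameterization bijectively rather than in aggregate. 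This requires a careful interplay between Theorem \ref{Rgroup}, the markings in Definition \ref{Cdef}, and the $\mathsf{ds}_m$-algorithm.
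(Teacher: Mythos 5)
Your overall skeleton (Evens--Langlands, Slooten's Theorem \ref{Rgroup}, the delimit classification, Lemma \ref{disj} for uniqueness) is the same as the paper's, but you have the crucial combinatorial assignment exactly backwards, and this is not mere bookkeeping. You propose to glue the \emph{non-free} segments of $\mathbf J$ (those sharing an endpoint with a segment of $\mathsf{ds}_m(\sigma_0)$, or of repeated size) into the enlarged partition $\sigma$, and to leave the \emph{free} segments outside as $\mathbf I$. But the balanced hook segments of a tempered delimit are precisely free segments relative to the complementary discrete series: this is exactly how the $2^h$-splitting in the proof of Corollary \ref{irred} is obtained from Theorem \ref{Rgroup} (the condition 4) of Lemma \ref{tli} guarantees distinct endpoints, and the $\mathsf{ds}_m$-algorithm guarantees disjointness from the endpoints of $\mathsf{ds}_m$ of the reduced partition). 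Consequently, if the free segments were left in $\mathbf I$, then $\lim_{m\to m_0} L^{\mathsf A}_{\mathbf I}\boxast L_{\boldsymbol\tau}$ would again decompose into $2^{d}$ pieces by the very theorem you invoke, so it could not equal the irreducible $\Pi$, the ``in particular'' irreducibility claim of the statement would fail, and your step-3 bijection is incoherent: with your $\sigma$, the relevant family of delimits has cardinality $1$ (no free hooks), not $2^d$. The paper does the opposite: it extracts one copy $\mathbf I''$ of each distinct free segment, identifies the constituents of $L^{\mathsf A}_{\mathbf I''}\boxast\mathsf{ds}$ with the tempered delimits $L_{\boldsymbol\tau}$, $\tau\in\mathcal D_{m_0}(\sigma)$, for the partition $\sigma$ enlarged by the corresponding balanced hooks (via Theorem \ref{multiplicity formula}), and keeps the non-free and duplicated segments as the outer multisegment $\mathbf I$; induction by $\mathbf I$ stays irreducible because, by Theorem \ref{Rgroup}, $L^{\mathsf A}_{\mathbf I'}\boxast\mathsf{ds}$ and $L^{\mathsf A}_{\mathbf I''}\boxast\mathsf{ds}$ have the same number of constituents.

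A secondary but real slip: you write that $\pi=\lim_{m\to m_0}L_{\boldsymbol\tau_*^{\perp}}$ for the minimal $\tau_*\in\mathcal D_{m_0}(\sigma_0)$, citing Theorem \ref{minds}. That statement is rank-inconsistent when $\sigma_0$ has balanced hooks, since $\boldsymbol\tau_*^{\perp}$ is a parameter for a strictly smaller algebra than $\mathbb H_{n',m_0}$. What is true (and what the paper uses) is that $\pi=\lim_{m\to m_0}\mathsf{ds}_m(\sigma_0)$ by Opdam--Solleveld together with Corollary \ref{irred}, and one then reduces to the case where the inducing partition has no balanced hooks by absorbing its hook segments into the type-$\mathsf A$ factor (``c.f. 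Theorem \ref{multiplicity formula}''), i.e., by enlarging $\mathbf J$ rather than shrinking the rank of $\pi$. With the free/non-free roles corrected and this reduction stated properly, your plan becomes essentially the paper's proof.
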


\begin{proof}
By the Evens-Langlands classification \cite{Ev} (see also \cite{CK}), a tempered module is written as a quotient of a parabolic induction of the form $L ^{\mathsf A} _{\mathbf I'} \boxast \mathsf{ds}$, where $\mathbf I'$ is a multisegment and $\mathsf{ds} = \lim _{m \to m_0} \mathsf{ds} _m ( \sigma' )$ is a discrete series obtained from a partition $\sigma'$. Since every discrete series is a tempered delimit, we can further assume that $\sigma'$ does not contain a balanced hook as in Theorem \ref{minds}. (c.f. Theorem \ref{multiplicity formula})

We set $\mathbf I''$ to be the collection of all distinct segments of $\mathbf I'$ so that we have $e _+ ( I'' ) \not\in \{ e _+ ( I ) \mid I \in \mathsf{ds} _m ( \sigma' ) \}$ and $e _- ( I'' ) \not\in \{ e _- ( I ) \mid I \in \mathsf{ds} _m ( \sigma' ) \}$ for every $I'' \in \mathbf I''$. Let $\mathbf I$ be the multisegment obtained from $\mathbf I'$ by removing segments in $\mathbf I''$. By the irreducibility of tempered induction of affine Hecke algebras of $\mathop{GL}(n)$, we deduce $L ^{\mathsf A} _{\mathbf I} \boxast ( L ^{\mathsf A} _{\mathbf I''} \boxast \mathsf{ds} ) \cong L ^{\mathsf A} _{\mathbf I'} \boxast \mathsf{ds}$. Then, Theorem \ref{Rgroup} implies that both $L ^{\mathsf A} _{\mathbf I''} \boxast \mathsf{ds}$ and $L ^{\mathsf A} _{\mathbf I'} \boxast \mathsf{ds}$ share the same number of irreducible constituents. It follows that $L ^{\mathsf A} _{\mathbf I} \boxast L$ is irreducible for every irreducible constituent $L$ of $L ^{\mathsf A} _{\mathbf I'} \boxast \mathsf{ds}$. Here $\mathbf I' \oplus \mathsf{mp} _m ( \sigma' )$ is adapted to $\mathsf{c} _m ^{\sigma}$ for the larger partition $\sigma$ of $n'$ by the construction of $\mathbf I'$. Therefore, we conclude $L = L _{\boldsymbol \tau}$ for some $\tau \in \mathcal D _{m_0} ( \sigma )$, which implies the existence part of the assertion.

We prove the uniqueness of $\sigma$ and $\tau \in \mathcal D _{m_0} ( \sigma )$. Since the set $\mathbf I$ is uniquely determined by $\chi \in \mathsf{P} (\vec{q})$ corresponding to the tempered module, we can assume $\mathbf I = \emptyset$. Then, the assertion reduces to Lemma \ref{disj} as desired.
\end{proof}

\begin{corollary}[of Corollary \ref{smooth} and Theorem \ref{multiplicity formula}]\label{diff}
Assume that some $\tau _{\max} \in \mathcal D _{m_0} ( \sigma )$ satisfies $\mathsf{sgn} \subset L _{{\boldsymbol \tau} _{\max}}$ $($as $W$-modules$)$. Then, for every $\tau \in \mathcal D _{m_0} ( \sigma )$, we have
\begin{equation}
M _{\boldsymbol \tau} \cong L _{\boldsymbol \tau ^{\sharp}} \boxast L _{\boldsymbol \tau ^{\perp}}.\label{std-td}
\end{equation}
In addition, if $m_0 = 1/2$ or $1$, then for each such $\tau$, there exist
\begin{itemize}
\item a nilpotent element $x = x _{\tau} \in \mathfrak g$ $($with $\mathfrak g = \mathfrak{so}_{2n+1}$ and $\mathfrak{sp}_{2n}$, respectively$)$;
\item an irreducible representation $\xi = \xi _{\tau}$ of $A_x$ appearing in the Springer correspondence, where  $A _x = \mathsf{Stab} _G ( x _{\tau }) / \mathsf{Stab} _G ( x _{\tau }) ^{\circ}$ $($with $G = \mathop{SO} (2n+1)$ and $\mathop{Sp} ( 2n )$, respectively$)$;
\end{itemize}
such that we have
\begin{equation}\label{ident}
[ H _{\bullet} ( \mathcal B _x ) _{\xi} ] = \sum _{\tau' \in \mathcal D _{m_0} ( \sigma ) ; ( \tau' )^{\sharp} \subset \tau ^{\sharp}} (-1) ^{\# {\boldsymbol \tau}^{\sharp} - \# ( {\boldsymbol \tau}' ) ^{\sharp}} [ H _{\bullet} ( \mathcal E _{\boldsymbol \tau'} ) ]
\end{equation}
as virtual $W$-modules $($without gradings$)$. Here $\mathcal B _x$ is the Springer fiber of $($the flag variety of $G$ along$)$ $x$ and the subscript $\xi$ means the $\xi$-isotypic component {as $A_x$-modules}.
\end{corollary}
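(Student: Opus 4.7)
The plan is to first establish (\ref{std-td}) by character matching and then derive (\ref{ident}) via a M\"obius inversion on the lattice of subsets of $\boldsymbol\tau^{\sharp}$.

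For (\ref{std-td}), I would use Lemma \ref{ind} to reduce to a character identity. By Lemma \ref{Cbasic} one has the standard decomposition $\boldsymbol\tau = \boldsymbol\tau^{\sharp} \oplus \boldsymbol\tau^{\perp}$, so Proposition \ref{indchar} gives
\begin{equation*}
\mathsf{ch}\, M_{\boldsymbol\tau} \,=\, \mathsf{ch}\bigl( M^{\mathsf A}_{\boldsymbol\tau^{\sharp}} \boxast M_{\boldsymbol\tau^{\perp}} \bigr).
\end{equation*}
The segments forming $\boldsymbol\tau^{\sharp}$ are pairwise nested (they are the balanced hooks of $\sigma$, extracted by the $\mathsf{ds}_m$-algorithm), so the type-$\mathsf A$ standard module $M^{\mathsf A}_{\boldsymbol\tau^{\sharp}}$ is already irreducible and coincides with $L^{\mathsf A}_{\boldsymbol\tau^{\sharp}}$. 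It thus suffices to show $[M_{\boldsymbol\tau^{\perp}}] = [L_{\boldsymbol\tau^{\perp}}]$ in the Grothendieck group.

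For this, I would analyze the composition series of $M_{\boldsymbol\tau}$ directly. By Corollary \ref{smooth} together with Proposition \ref{cr}, each $L_{\boldsymbol\tau'}$ with $\tau' \in \mathcal D_{m_0}(\sigma)$ and $(\boldsymbol\tau')^{\sharp} \subset \boldsymbol\tau^{\sharp}$ occurs in $M_{\boldsymbol\tau}$ with multiplicity exactly one, giving $2^{\# \boldsymbol\tau^{\sharp}}$ known constituents. Meanwhile Theorem \ref{multiplicity formula} applied with $\tau_{\circleddash} = \tau^{\sharp}$ and $\tau_{\circledast} = \tau^{\perp}$ yields $[L^{\mathsf A}_{\boldsymbol\tau^{\sharp}} \boxast L_{\boldsymbol\tau^{\perp}}] = \sum_{(\boldsymbol\tau')^{\sharp} \subset \boldsymbol\tau^{\sharp}}[L_{\boldsymbol\tau'}]$, matching the above list exactly. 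The sgn-hypothesis is then invoked to exclude any additional composition factor $L_{\chi}$ of $M_{\boldsymbol\tau}$ with $\chi \notin \mathcal D_{m_0}(\sigma)$: the total $\mathsf{sgn}$-multiplicity in $M_{\boldsymbol\tau}$ is determined geometrically, and under the assumption $\mathsf{sgn} \subset L_{\boldsymbol\tau_{\max}}$ it is already saturated by the contribution of the $L_{\boldsymbol\tau'}$'s through $L_{\boldsymbol\tau_{\max}}$, leaving no room for spurious constituents. This gives the desired character equality and hence (\ref{std-td}).

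For (\ref{ident}) in the specializations $m_0 \in \{1/2, 1\}$, the algebra $\mathbb H_{n, m_0}$ becomes, up to the central extensions recorded in \S\ref{Hecke}, the Iwahori--Hecke algebra of the relevant classical group; classical Kazhdan--Lusztig/Springer theory then parametrizes every irreducible tempered module as $H_{\bullet}(\mathcal B_x)_{\xi}$ for a unique Springer triple $(x, \xi)$. Taking $(x_{\tau}, \xi_{\tau})$ to be the triple associated to the irreducible limit $\lim_{m \to m_0} L_{\boldsymbol\tau}$ (irreducible by Corollary \ref{irred}) identifies the left-hand side of (\ref{ident}) with $[L_{\boldsymbol\tau}]$ as a virtual $W$-module. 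Substituting $H_{\bullet}(\mathcal E_{\boldsymbol\tau'}) = M_{\boldsymbol\tau'}$ on the right-hand side, expanding each $[M_{\boldsymbol\tau'}]$ via (\ref{std-td}) and Theorem \ref{multiplicity formula} as $\sum_{(\boldsymbol\tau'')^{\sharp} \subset (\boldsymbol\tau')^{\sharp}}[L_{\boldsymbol\tau''}]$, and exchanging the order of summation reduces the claim to the standard M\"obius identity
\begin{equation*}
\sum_{S' \subseteq S \subseteq T}(-1)^{|T| - |S|} \,=\, \delta_{S' = T}
\end{equation*}
applied to subsets of $\boldsymbol\tau^{\sharp}$; only the term $(\boldsymbol\tau'')^{\sharp} = \boldsymbol\tau^{\sharp}$ survives, leaving precisely $[L_{\boldsymbol\tau}]$.

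The main obstacle I foresee is the sgn-hypothesis step in the third paragraph: Corollary \ref{smooth} controls multiplicities of the constituents lying \emph{in} $\mathcal D_{m_0}(\sigma)$, but excluding spurious constituents outside this set requires a careful $W$-character accounting anchored on $\mathsf{sgn} \subset L_{\boldsymbol\tau_{\max}}$. Once this is settled, the remaining ingredients --- the nested structure of $\boldsymbol\tau^{\sharp}$, the irreducibility of the type-$\mathsf A$ standard module with nested support, and the M\"obius collapse --- are routine.
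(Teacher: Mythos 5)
Your reduction of (\ref{std-td}) to the statement that $M_{\boldsymbol\tau}$ has no composition factor $L_{\chi}$ with $\chi\notin\mathcal D_{m_0}(\sigma)$ is the right target, and the multiplicity-one part for the factors inside $\mathcal D_{m_0}(\sigma)$ does follow from Corollary \ref{smooth}, Proposition \ref{cr} and the Ginzburg theory, exactly as in the proof of Proposition \ref{Cmf}. But the step you yourself flag as the main obstacle is where the proposal genuinely fails: counting the multiplicity of $\mathsf{sgn}$ in $M_{\boldsymbol\tau}$ cannot exclude spurious constituents. By \cite{CK} Theorem 1.22 the hypothesis $\mathsf{sgn}\subset L_{{\boldsymbol\tau}_{\max}}$ says precisely that $\mathcal O_{{\boldsymbol\tau}_{\max}}$ is the open dense orbit, so $\mathsf{sgn}$ occurs in no other simple module with this central character; a hypothetical extra factor $L_{\chi}$ with $\chi\notin\mathcal D_{m_0}(\sigma)$ therefore contributes $0$ to the $\mathsf{sgn}$-multiplicity, and your ``saturation'' count is perfectly consistent with its presence. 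No accounting based on $\mathsf{sgn}$ alone can close this gap.

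What the paper does instead is use the hypothesis geometrically rather than character-theoretically: $\mathsf{sgn}\subset L_{{\boldsymbol\tau}_{\max}}$ is converted into open density of $\mathcal O_{{\boldsymbol\tau}_{\max}}$ in $\mathbb V^{(\mathsf c^{\sigma}_m,\vec{q})}$, and since by the Ginzburg theory $[M_{\boldsymbol\tau}:L_{\chi}]\neq 0$ forces $\mathcal O_{\boldsymbol\tau}\subset\overline{\mathcal O_{\chi}}$, it suffices to show that $\mathcal O_{\boldsymbol\tau}$ lies in no orbit closure other than those of parameters from $\mathcal D_{m_0}(\sigma)$. This is done combinatorially: open density forces $\tau^{-}=\emptyset$, and for each balanced segment $I_k$ there is at most one $I\in\boldsymbol\tau$ with $I_k\triangleleft I$, so every orbit immediately above $\mathcal O_{\boldsymbol\tau}$ is of the form $\mathcal O_{\varepsilon_{I_k,I}(\boldsymbol\tau)}$ and again comes from $\mathcal D_{m_0}(\sigma)$; hence no such $\chi$ exists. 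This geometric exclusion is the actual content of the first assertion and is missing from your argument. By contrast, your treatment of (\ref{ident}) --- identifying tempered modules at $m_0$ with $H_{\bullet}(\mathcal B_x)_{\xi}$ via Kazhdan--Lusztig, inverting the multiplicity matrix furnished by (\ref{std-td}) and Theorem \ref{multiplicity formula} (your M\"obius inversion), and passing to the limit $m\to m_0$ via Corollary \ref{irred} --- is essentially the paper's argument and is fine once (\ref{std-td}) is established.
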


\begin{proof}
Choose $m_0 < m < m_0 + \frac{1}{2}$. By \cite{CK} Theorem 1.22, our assumption is equivalent to the fact that $\mathcal O _{{\boldsymbol \tau}_{\max}}$ is the open dense orbit of $\mathbb V ^{(\mathsf{c} ^{\sigma}_m,\vec{q})}$. Taking into account Theorem \ref{multiplicity formula}, Corollary \ref{smooth}, {and the Ginzburg theory (c.f. \cite{K1} Theorem 11.2),} it suffices to prove that $\mathcal O _{{\boldsymbol \tau}}$ is not contained in any orbit closure except for $\mathcal O _{{\boldsymbol \tau}'}$ for some $\tau' \in \mathcal D _{m_0} ( \sigma )$ in order to prove (\ref{std-td}).

We prove this by contradiction. Let $\tau \in \mathcal D _{m_0} ( \sigma )$ be a marked partition so that there exists a parameter $\chi$ which does not come from $\mathcal D _{m_0} ( \sigma )$, but $\mathcal O _{\boldsymbol \tau} \subset \overline{\mathcal O _{\chi}}$. If $\tau ^- \neq \emptyset$, then we deduce that $\tau _{\max}$ cannot define an open dense orbit. Thus, we assume $\tau ^- = \emptyset$ in the following. Let $I_1, I_2, \ldots$ be the set of segments obtained from balanced hooks of $\sigma$ along $m_0$. For each $k$, there exists at most one segment $I$ of $\boldsymbol \tau$ such that $I _k \triangleleft I$; this is because $\mathcal O _{{\boldsymbol \tau}_{\max}}$ is open dense. Thus, we further conclude that $\varepsilon_{I_k,I} ( {\boldsymbol \tau} )$ comes from $\mathcal D _{m_0} ( \sigma )$ for every $I \in {\boldsymbol \tau}$. Therefore, $\chi$ cannot exist, which finishes the first part of the proof.

For the latter part, notice that every tempered $\mathbb H _{n,m_0}$-module is of the form $H _{\bullet} ( \mathcal B _x ) _{\xi}$ by Kazhdan-Lusztig \cite{KL2} Theorem 8.2 (before taking fixed points, but this does not affect the $\mathbb H _{n,m_0}^f$- or $W$-module structures). Therefore, inverting the multiplicity matrix (of simple modules in standard modules as $\mathbb H _{n,m}$-modules) given by (\ref{std-td}) and Theorem \ref{multiplicity formula} for $\mathcal D _{m_0} ( \sigma )$ yields the assertion for $m$. By Corollary \ref{irred}, the assertion holds for the limit $m \to m _0$ as required.
\end{proof}

\begin{remark} 
\begin{enumerate}
\item[(a)] If $\xi = 1$, then the result becomes simpler as in \cite{CK} Corollary 1.23 (the right hand side of (\ref{ident}) is only $[H_\bullet(\mathcal E_{\tau_{\max}})]$).
\item[(b)]In view of Lusztig \cite{L5} Theorem 1.22, the same
  statement holds at all critical values $m_0\ge 3/2$, but the
  homology of the classical Springer fiber in the left hand side of
  (\ref{ident}) is replaced by the appropriate homology group. It
  should be added that Corollary \ref{diff} becomes weaker for larger
  values of $m_0$, since it becomes more difficult for $\tau_{\max}$ with the desired property to exist.
\end{enumerate}
\end{remark}

\subsection{An inductive algorithm for characters of tempered delimits}\label{sec:Walg}

We give an inductive algorithm for computing $W$-characters of
  tempered delimits, and in particular of discrete series and limits
  of discrete series for all values of the parameter $m$. Fix a partition $\sigma$ of $n$, and we retain the notation as before. In the
  following
$$\Theta_W(\pi)=\sum_{\chi\in\widehat W} [\pi : \chi]_{\mathbb C [W]} \, \chi$$
denotes the $W$-character of a module $\pi.$ The induction proceeds in two ways: increasingly on the rank $n$, and decreasingly on the value $m$ of the parameter.

\begin{remark}
Taking into account Theorem \ref{dind}, we know that the $W$-character of every tempered module is obtained as an induced module of a suitable tempered delimits.
\end{remark}

First, we fix a parameterization of $\widehat W_n$: Let $\mathbb C [ \epsilon_1, \ldots, \epsilon_n ]$ be the polynomial ring in which $W _n$ acts naturally by extending the action on $R \subset X ^* ( T )$.
For a bi-partition $( \mu, \nu )$ of $n$, we define
\begin{align*}
\mathsf{p} ^0 _i ( \mu ) := \prod _{\mu _i ^< < k < l \le \mu _i ^{\le}} ( \epsilon _k ^2 - \epsilon _l ^2 ), & \ \mathsf{p} ^+ _i ( \mu, \nu ) := \prod _{\nu _i ^< < k < l \le \nu _i ^{\le}} ( \epsilon _{k + \left| \mu \right|} ^2 - \epsilon _{l + \left| \mu \right|} ^2 ), \text{ and }\\
\mathsf{p} ( \mu, \nu ) := & \prod _{i > \left| \mu \right|} \epsilon _i \times \prod _{i \ge 1} \mathsf{p} ^0 _i ( \mu ) \mathsf{p} ^+ _i ( \mu, \nu ).
\end{align*}

\begin{lemma}[cf. \cite{Ca},\S 11.4]\label{Wtypes}
Let $( \mu, \nu )$ be a bi-partition of $n$.
Then the following $W _n$-module is irreducible:
$$\{\mu, \nu \} := \mathbb C [ W _n ] \mathsf p ( {} ^{\mathtt t} \mu, {}^{\mathtt t} \nu ) \subset \mathbb C [  \epsilon_1, \ldots, \epsilon_n ].$$
\end{lemma}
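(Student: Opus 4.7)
The plan is to identify $\{\mu,\nu\}$ with the classical Clifford-theoretic construction of the irreducible $W_n$-representation attached to the bipartition $(\mu,\nu)$, using the semidirect product decomposition $W_n \cong S_n \ltimes (\mathbb Z/2)^n$. The main idea is to restrict attention to the parabolic subgroup $P := W_{|\mu|}\times W_{|\nu|}$ (acting on the first $|\mu|$ and last $|\nu|$ coordinates separately), identify the $P$-submodule cyclically generated by $\mathsf p({}^{\mathtt t}\mu,{}^{\mathtt t}\nu)$ with a specific irreducible $P$-representation, and then appeal to Frobenius reciprocity together with the irreducibility of the corresponding induction to $W_n$.

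For the first step, I would observe that $\prod_i \mathsf p_i^0({}^{\mathtt t}\mu)$ is a polynomial in $\epsilon_1^2,\ldots,\epsilon_{|\mu|}^2$, so the sign-change subgroup $(\mathbb Z/2)^{|\mu|}$ acts trivially on it. The index set $\{({}^{\mathtt t}\mu)_i^<+1,\ldots,({}^{\mathtt t}\mu)_i^{\le}\}$ partitions $\{1,\ldots,|\mu|\}$ into consecutive blocks whose sizes are the column lengths of $\mu$; therefore, after the substitution $x_k = \epsilon_k^2$, this factor is exactly the classical Specht polynomial of shape $\mu$, and its $S_{|\mu|}$-orbit spans an irreducible copy of the Specht module $S^\mu$. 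Analogously $\prod_i \mathsf p_i^+({}^{\mathtt t}\mu,{}^{\mathtt t}\nu)$ is a Specht polynomial for $\nu$ in the shifted squared variables $\epsilon_{|\mu|+k}^2$, while the extra factor $\prod_{i>|\mu|}\epsilon_i$ is $S_{|\nu|}$-invariant but transforms by the sign character under $(\mathbb Z/2)^{|\nu|}$. Combining these observations yields a $P$-module isomorphism
\[
V := \mathbb C[P]\cdot \mathsf p({}^{\mathtt t}\mu,{}^{\mathtt t}\nu) \;\cong\; (S^\mu\boxtimes \triv)\boxtimes (S^\nu\boxtimes\sgn),
\]
and this $P$-module is irreducible.

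Next, the inclusion $V \hookrightarrow \{\mu,\nu\}$ of $P$-modules yields by Frobenius reciprocity a $W_n$-homomorphism $\Phi : \Ind_P^{W_n} V \to \{\mu,\nu\}$ whose image is a $W_n$-submodule containing $\mathsf p({}^{\mathtt t}\mu,{}^{\mathtt t}\nu)$, hence equal to all of $\{\mu,\nu\}$ by definition of the latter. Clifford theory for the normal subgroup $(\mathbb Z/2)^n\triangleleft W_n$ applied to the character $\triv^{|\mu|}\boxtimes\sgn^{|\nu|}$ (whose $S_n$-stabilizer is precisely $S_{|\mu|}\times S_{|\nu|}$, hence whose inertia group is $P$) guarantees that $\Ind_P^{W_n} V$ is already irreducible, and that as $(\mu,\nu)$ ranges over bipartitions of $n$ these modules exhaust $\widehat W_n$ (cf.\ \cite{Ca} \S 11.4). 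Consequently $\Phi$ must be an isomorphism, proving the irreducibility of $\{\mu,\nu\}$. The main point requiring care is the explicit identification of $V$ with the tensor product of Specht modules, that is, matching the transposition convention (${}^{\mathtt t}\mu$ appearing in the formula so that the Vandermondes range over columns of $\mu$) and verifying the classical cyclicity of the Specht polynomial for $S^\mu$ under $S_{|\mu|}$; once this combinatorial matching is in place, the Clifford-theoretic input is entirely standard.
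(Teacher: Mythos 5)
Your argument is correct. Note that the paper itself offers no proof of this lemma: it is stated with a reference to Carter \S 11.4, where the irreducible $W_n$-modules are obtained exactly by the two ingredients you use, namely Clifford theory for the normal subgroup $(\mathbb Z/2)^n\triangleleft W_n$ and the realization of symmetric-group constituents by Specht (column Vandermonde) polynomials. Your write-up supplies the details behind that citation: the key computation is the identification of the cyclic $P$-module generated by $\mathsf p({}^{\mathtt t}\mu,{}^{\mathtt t}\nu)$, with $P=W_{|\mu|}\times W_{|\nu|}$, as the outer tensor product of the Specht module $S^\mu$ (in the squared variables, with trivial sign-change action) and $S^\nu$ twisted by the product-of-signs character coming from the factor $\prod_{i>|\mu|}\epsilon_i$; the passage to $W_n$ via Frobenius reciprocity together with irreducibility of $\Ind_P^{W_n}V$ (Mackey/Clifford, since $P$ is the inertia group of the character $\triv^{\boxtimes|\mu|}\boxtimes\sgn^{\boxtimes|\nu|}$ of $(\mathbb Z/2)^n$) is standard and correctly executed. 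One small remark: the exhaustion of $\widehat W_n$ by these induced modules is not actually needed; since $\Ind_P^{W_n}V$ is irreducible and your map onto $\{\mu,\nu\}$ is surjective with nonzero target, irreducibility of $\{\mu,\nu\}$ already follows.
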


\smallskip

\begin{algorithm}\label{algW}
\item {\bf Step 0.} Let $m_0=n-1$ be the critical point. If $m>n-1$ then \cite{CK} gives that
$\Theta_W(\mathsf{ds}_m(\sigma))= \{\emptyset,{} ^{\mathtt t}\sigma \}.$

\item {\bf Step 1.} Consider
$m_0<m<m_0+\frac 12.$ Applying Theorem \ref{multiplicity formula} in $\mathcal
D_{m_0}(\sigma)$, we find that 
\begin{equation}
\Theta_W( L ^{\mathsf A} _{{\boldsymbol \tau} _m^{\mathtt e}} \boxast L _{{\boldsymbol \tau} _{m}^{\mathtt s}}
 ) = \sum _{\tau' \in \mathcal{D} _{m_0} ( \sigma ); ( {\boldsymbol \tau}' )^{\mathtt e}
  _{m}  \subset {\boldsymbol \tau} _m^{\mathtt e}} \Theta_W(L _{{\boldsymbol \tau}'_m}).  
\end{equation}
Solving this linear system gives
\begin{equation}
\Theta_W(L_{\boldsymbol \tau})=\sum_{\tau' \in \mathcal{D} _{m _0} ( \sigma ); ( {\boldsymbol \tau}' )^{\mathtt e}
  _{m}  \subset {\boldsymbol \tau} _m^{\mathtt e}} (-1)^{\#({\boldsymbol \tau}_m^{\mathtt e} \setminus ( {\boldsymbol \tau}' )^{\mathtt e}_m)}~
\Theta_W( L ^{\mathsf A} _{( {\boldsymbol \tau}' )^{\mathtt e} _m} \boxast L _{( {\boldsymbol \tau}' )^{\mathtt s}
  _{m}} ),\quad \tau \in \mathcal D_{m_0} ( \sigma )\label{transf}
\end{equation}
Notice that in this equation, the right hand side is known since $( {\boldsymbol \tau}' )^{\mathtt s} = \mathsf{ds} _m ( \sigma' )$ for some partition $\sigma'$ of $n' \le n$. 


\item {\bf Step 2.} Set $m=m_0$. By Corollary \ref{irred}, for every $\tau \in \mathcal
D_{m_0}(\sigma)$, the module $L ( \tau ) := \lim_{m\to
  m_0} L_{\boldsymbol \tau}$ is irreducible. Hence by Corollary \ref{irred} and by previous step, we get the $W$-character
$\Theta_W( L ( \tau ) )$ for every $\tau\in \mathcal D_{m_0}( \sigma ).$


\item {\bf Step 3.} Consider $m_0-\frac 12<m<m_0.$ We need to find
$\Theta_W(\mathsf{ds}_m(\sigma))$ (in order proceed with the
algorithm). By Corollary \ref{irred}, there exists a unique $\tau\in \mathcal D_{m_0}(\sigma),$ such that $\lim_{m\to m_0} \mathsf{ds}_m(\sigma)=L ( \tau ),$ and in particular,
$\Theta_W(\mathsf{ds}_m(\sigma))=\Theta_W(L ( \tau ) ).$ Applying
       Lemma \ref{Cbasic}, this $\tau$ is characterized by
       ${\boldsymbol \tau}^\sharp,$ which is precisely the set of
       unmarked balanced segments of $\mathsf{ds}_m(\sigma)$.


\item {\bf Step 4.} If $m_0>1-n$, set $m_0=m_0-\frac 12$ and
move to {\bf Step 1}.
\end{algorithm}

\begin{remark}\label{RTch}
Notice that (\ref{transf}) remains valid if we replace $\Theta_W$ with $\mathsf{ch}$. Moreover, we can compute $\mathsf{ch} ( \mathsf{ds}_m (\sigma) )$ for $m \gg 0$ (by \cite{CK} \S 4.7), and $\mathsf{ch} ( L^{\mathsf A} \boxast L )$ from $\mathsf{ch} L^{\mathsf A}$ and $\mathsf{ch} L$. Since $\mathsf{ch} L_{\boldsymbol \tau}$ depends on $m$ holomorphically in the region $m_0 - \frac{1}{2} < m < m_0 + \frac{1}{2}$ for each $\tau \in \mathcal D_{m_0} (\sigma)$, we can also compute $\mathsf{ch} L_{\boldsymbol \tau}$ by using the above algorithm.
\end{remark}

\smallskip

\begin{example} Consider the case $n=6$ and $\sigma=(2,2,2)$, so that
  we have $\{\epsilon _i ( \mathsf{c}_m^\sigma) \}_{i=1}^6 = \{q ^{m+1},q^m,q^m,q^{m-1},q^{m-1},q^{m-2}\}$. 
Then we find the
  following cases:

\begin{enumerate}

\item $m_0<m<m_0+\frac 12$ with $m_0\in \frac 12\mathbb Z$ and $m_0\ge 2$. We have $\mathcal
  D_{m_0}(\sigma)=\{\mathsf{ds}_m(\sigma)\},$ where
  $\Theta_W(\mathsf{ds}_m(\sigma))=\{(0)(3^2)\}.$

\item $\frac 32<m<2$. We have
  $\mathcal D_{\frac 32} (\sigma)=\{\mathsf{ds}_m(\sigma),L_{\tau^1_m}\}$, where:
  \begin{enumerate}
  \item $\Theta_W(\mathsf{ds}_m(\sigma))=\{(0)(3^2)\}$; 
  \item
    $\Theta_W(L_{\tau_m^1})=\{(0)(321)\}+\{(0)(2^21^2)\}+\{(1)(2^21)\}+\{(1)(32)\}+\{(1^2)(2^2)\}$
    ($({\boldsymbol \tau}_m^1)^{\mathtt e}=\{ q^{m-2}, q^{m-1}
    \}$).
\end{enumerate}

\item $m=\frac 32$. $\mathcal D_{\frac 32}(\sigma)$ is as before, but $\lim_{m\searrow \frac
  32}\mathsf{ds}_m(\sigma)$ is not a discrete series.

\item $1<m<\frac 32$. We have $\mathcal
  D_{1} ( \sigma) =\{\mathsf{ds}_m(\sigma),L_{{\tau}_m^1},L_{{\tau}_m^2},L_{{\tau}_m^3}\}$, where:
\begin{enumerate}
  \item $\Theta_W(\mathsf{ds}_m(\sigma))=\{(0)(321)\}+\{(0)(2^21^2)\}+\{(1)(2^21)\}+\{(1)(32)\}+\{(1^2)(2^2)\};$  
  \item
    $\Theta_W(L_{\tau_m^1})=\{(0)(31^3)\}+\{(0)(21^4)\}+\{(1)(31^2)\}+\{(1)(21^3)\}+\{(1^2)(31)\}+\{(1^2)(21^2)\}+\{(1^3)(21)\}$
    ($({\boldsymbol \tau}_m^1)^{\mathtt e}=\{ q^{m-2}, q^{m-1} \}$);
  \item
    $\Theta_W(L_{\tau_m^2})=\{(1)(2^21)\}+\{(2)(2^2)\}+\{(0)(2^3)\}$  ($({\boldsymbol \tau}_m^2)^{\mathtt e}= \{ q^{m-1} \}$);
  \item
    $\Theta_W(L_{\tau_m^3})=\{(0)(1^6)\}+\{(0)(2,1^4)\}+\{(0)(2^21^2)\}+2\{(1)(1^5)\}+2\{(1^2)(1^4)\}+2\{(1^3)(1^3)\}+\{(1^4)(1^2)\}+2\{(1)(21^3)\}+\{(2)(1^4)\}+2\{(1^2)(21^2)\}+\{(2)(21^2)\}+\{(1)(2^21)\}+\{(21)(1^3)\}+\{(1^3)(21)\}+\{(21^2)(1^2)\}+\{(1^2)(2^2)\}+\{(21)(21)\}$   ($({\boldsymbol \tau}_m^3)^{\mathtt e}=\{ ({\boldsymbol \tau}_m^1)^{\mathtt e}, ({\boldsymbol \tau}_m^2)^{\mathtt e}\}$).
\end{enumerate}

\item $m=1$. $\mathcal D_{1}(\sigma)$ is as before, but $\lim_{m\searrow 1}\mathsf{ds}_m(\sigma)$ is not a discrete series.

\item $\frac 12<m<1$. We have $\mathcal
  D_{\frac 12} (\sigma)=\{\mathsf{ds}_m(\sigma),L_{{\tau}_m^1},L_{{\tau}_m^2},L_{{\tau}_m^3}\}$, where:
\begin{enumerate}
  \item
    $\Theta_W(\mathsf{ds}_m(\sigma))=\{(0)(1^6)\}+\{(0)(21^4)\}+\{(0)(2^21^2)\}+2\{(1)(1^5)\}+2\{(1^2)(1^4)\}+2\{(1^3)(1^3)\}+\{(1^4)(1^2)\}+2\{(1)(21^3)\}+\{(2)(1^4)\}+2\{(1^2)(21^2)\}+\{(2)(21^2)\}+\{(1)(2^21)\}+\{(21)(1^3)\}+\{(1^3)(21)\}+\{(21^2)(1^2)\}+\{(1^2)(2^2)\}+\{(21)(21)\};$ 
  \item
    $\Theta_W(L_{\tau_m^1})=\{(1^3)(21)\}+\{(21^2)(2)\}+\{(1^4)(2)\}+\{(1^4)(1,1)\}+\{(21^3)(1)\}+\{(1^5)(1)\}$
    ($({\boldsymbol \tau}_m^1)^{\mathtt e}= \{q^{m+1},q^m,q^{m-2},q^{m-2} \}$);
  \item
    $\Theta_W(L_{\tau_m^2})=\{(1^2)(1^4)\}+\{(21)(1^3)\}+\{(2^2)(1^2)\}$  ($({\boldsymbol \tau}_m^2)^{\mathtt e}=\{ q^m,q^{m-1} \}$);
  \item
    $\Theta_W(L_{\tau_m^3})=\{(1^3)(1^3)\}+\{(1^4)(1^2)\}+\{(21^2)(1^2)\}+\{(21^3)(1)\}+\{(1^5)(1)\}+\{(1^6)(0)\}+\{(2,1^4)(0)\}+\{(2^21^2)(0)\}+\{(2^21)(1)\}$
    ($({\boldsymbol \tau}_m^3)^{\mathtt e}=\{ ({\boldsymbol \tau}_m^1)^{\mathtt e}, ({\boldsymbol \tau}_m^2)^{\mathtt e}\}$).
\end{enumerate}

\item $m=\frac 12$. $\mathcal D_{\frac 12}(\sigma)$ is as before, but $\lim_{m\searrow \frac 12}\mathsf{ds}_m(\sigma)$ is not a discrete series.

\item $0<m<\frac 12$. We have $\mathcal
  D_{0}(\sigma)=\{\mathsf{ds}_m(\sigma),L_{{\tau}_m^1}\}$, where:
\begin{enumerate}
  \item $\Theta_W(\mathsf{ds}_m(\sigma))=\{(1^3)(1^3)\}+\{(1^4)(1^2)\}+\{(21^2)(1^2)\}+\{(21^3)(1)\}+\{(1^5)(1)\}+\{(1^6)(0)\}+\{(21^4)(0)\}+ \{(2^21^2)(0)\}+\{(2^21)(1)\}$;
  \item
    $\Theta_W(L_{\tau_m^1})=\{(2^3)(0)\}$
    ($({\boldsymbol \tau}_m^1)^{\mathtt e}= \{q^{m+1},q^m,q^{m-1}\}$).
\end{enumerate}

\item $m=0$. $\mathcal D_0(\sigma)$ is as before, but $ \lim_{m\searrow 0}\mathsf{ds}_m(\sigma)$ is not a discrete series.

\item $m_0<m<m_0 + \frac{1}{2}$ with $m_0 \in \frac{1}{2} \mathbb Z _{< 0}$. We have $\mathcal D_{m_0}(\sigma)=\{\ds_m(\sigma)\}$, where $\Theta_W(\ds_m(\sigma))=\{(2^3)(0)\}.$

\end{enumerate}

\qed
\end{example}

We explain that the algorithm gives also the $W^{\mathsf
    D}_n$-character of discrete series for $\mathbb H_{n,m}^{\mathsf
    D}$. For every partition $\sigma$ of $n$, we set {$L _0( \sigma ) := \lim _{m \to 0} \mathsf{ds} _m ( \sigma )$.}

\begin{lemma}[cf. \cite{Ca},\S11.4]\label{restW} Recall the $W_n$-representation
  $\{\mu,\nu\}$ from Lemma \ref{Wtypes}.
\begin{enumerate}
\item The restriction of $\{\mu, \nu \}$ to $W _n ^{\mathsf D}$ is irreducible unless $\mu = \nu$;
\item We have $\{ \mu, \nu \} \cong \{  \nu, \mu \}$ as $W _n ^{\mathsf D}$-modules;
\item We have {$\{ \mu, \nu \} \otimes \mathsf{sgn} \cong \{  {} ^{\mathtt t} \nu, {} ^{\mathtt t} \mu \}$};
\item The dimension of $\mathrm{Hom}_{W _n ^{\mathsf D}} ( \{ \mu,
    \nu \}, \{  \mu', \nu' \} )$ equals:
$$ \left\{\begin{matrix} 1,&\text{ if }(
        \mu, \nu ) \in \{( \mu',\nu' ),(\nu',\mu')\},\text{ but
        }\mu\neq\nu,\\ 
4,&\text{ if }(
        \mu, \nu ) \in \{( \mu',\nu' ),(\nu',\mu')\},\text{ and
        }\mu=\nu,\\ 0,&\text{ otherwise.}\end{matrix}\right.$$
\end{enumerate}
\end{lemma}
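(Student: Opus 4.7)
The plan is to deduce everything from standard Clifford theory applied to the index-$2$ normal inclusion $W_n^{\mathsf D}\subset W_n$, together with a single key identity. Let $\chi\colon W_n\to\{\pm 1\}$ denote the non-trivial character whose kernel is $W_n^{\mathsf D}$; concretely, writing $W_n=\{\pm 1\}^n\rtimes\mathfrak S_n$, the character $\chi$ is trivial on the $\mathfrak S_n$ factor and sends $(\epsilon_1,\ldots,\epsilon_n)\mapsto\prod_i\epsilon_i$ on the $\{\pm 1\}^n$ factor.

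The central claim I would establish first is
$$\{\mu,\nu\}\otimes\chi\cong\{\nu,\mu\}\quad\text{as }W_n\text{-modules}.$$
This can be read off from the standard induced-module realization underlying the polynomial model $\{\mu,\nu\}=\mathbb C[W_n]\,\mathsf p({}^{\mathtt t}\mu,{}^{\mathtt t}\nu)$: twisting by $\chi$ interchanges the ``signed'' and ``unsigned'' blocks of the $\{\pm 1\}^n$-character used to build $\{\mu,\nu\}$, and conjugating by the Weyl element that swaps those two blocks of coordinates recovers the defining data for $\{\nu,\mu\}$. Assertion (2) is then immediate upon restriction to $W_n^{\mathsf D}$ since $\chi|_{W_n^{\mathsf D}}$ is trivial, and assertion (1) follows from the Clifford dichotomy for an index-$2$ normal subgroup: $V|_{W_n^{\mathsf D}}$ is irreducible iff $V\otimes\chi\not\cong V$, with $V$ otherwise splitting as a sum of two non-isomorphic $W_n^{\mathsf D}$-irreducibles. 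By the key identity, this dichotomy specializes exactly to the condition $\mu\neq\nu$.

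For (3) I would combine the key identity with the classical short-sign twist $\{\mu,\nu\}\otimes\mathsf{sgn}_s\cong\{{}^{\mathtt t}\mu,{}^{\mathtt t}\nu\}$, where $\mathsf{sgn}_s$ is trivial on $\{\pm 1\}^n$ and is the sign character on the $\mathfrak S_n$ factor; since $\mathsf{sgn}=\mathsf{sgn}_s\cdot\chi$ as characters of $W_n$, the two identities compose to the stated transpose-and-swap formula. For (4) I would use Frobenius reciprocity and the Mackey decomposition $\mathrm{Ind}_{W_n^{\mathsf D}}^{W_n}(W|_{W_n^{\mathsf D}})\cong W\oplus W\otimes\chi$ to obtain
$$\mathrm{Hom}_{W_n^{\mathsf D}}(\{\mu,\nu\},\{\mu',\nu'\})\cong\mathrm{Hom}_{W_n}(\{\mu,\nu\},\{\mu',\nu'\}\oplus\{\nu',\mu'\}),$$
and then read off the dimension by Schur orthogonality, invoking the key identity once more to handle the exceptional behaviour when $\mu=\nu$. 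The only genuine obstacle is verifying the key identity rigorously from the given polynomial realization rather than by an ad hoc character manipulation; this amounts to bookkeeping with the $W_n$-action on $\mathbb C[\epsilon_1,\ldots,\epsilon_n]$ and the induced-module description of $\{\mu,\nu\}$.
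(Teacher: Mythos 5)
The paper offers no argument for this lemma at all: it is quoted from \cite{Ca} \S 11.4, so a self-contained Clifford-theoretic derivation from the index-two inclusion $W_n^{\mathsf D}\subset W_n$ is a perfectly reasonable substitute, and for parts (1)--(3) your outline works. Granting the twist identity $\{\mu,\nu\}\otimes\chi\cong\{\nu,\mu\}$ (and the companion identity for the twist by the sign of the $\mathfrak S_n$-factor), the Clifford dichotomy gives (1), triviality of $\chi$ on $W_n^{\mathsf D}$ gives (2), and composing the two twists gives (3); verifying these identities in the polynomial model of Lemma \ref{Wtypes} is, as you say, routine bookkeeping.

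The genuine problem is part (4) in the case $\mu=\nu$. Your own Mackey/Frobenius identity
\begin{equation*}
\mathrm{Hom}_{W_n^{\mathsf D}}\bigl(\{\mu,\nu\},\{\mu',\nu'\}\bigr)\cong\mathrm{Hom}_{W_n}\bigl(\{\mu,\nu\},\{\mu',\nu'\}\oplus\{\nu',\mu'\}\bigr)
\end{equation*}
yields, for $\mu=\nu=\mu'=\nu'$, dimension $1+1=2$, not $4$: indeed the right-hand side has dimension at most $2$, so the two constituents of $\{\mu,\mu\}|_{W_n^{\mathsf D}}$ are forced to be inequivalent and the endomorphism space of the restriction is two-dimensional. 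Hence ``invoking the key identity once more'' cannot produce the value $4$; the figure $4$ would require $\{\mu,\mu\}_+\cong\{\mu,\mu\}_-$, which is false (and contrary to \cite{Ca} \S 11.4, and to the paper's final remark, which treats the two halves as distinct constituents of equal generic degree). So as written your proposal does not prove the statement in that case --- carried out honestly it proves the constant $2$, which indicates that the printed ``$4$'' is a slip in the lemma rather than something your method can recover. You should either flag this discrepancy explicitly or restrict your claim to the other two cases; note that nothing downstream is affected, since the proof of Proposition \ref{irrD} uses part (4) only when $\mu\neq\nu$ together with the vanishing statement.
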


\begin{proposition}\label{irrD}
The restriction of $L_0( \sigma )$ from $\mathbb H _{n,0}$ to $\mathbb
H _{n} ^{\mathsf D}$ is irreducible.
\end{proposition}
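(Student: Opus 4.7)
The plan is to combine Clifford theory for the $\mathbb Z/2$-extension $\mathbb H_{n,0}\supset \mathbb H_n^{\mathsf D}$ with an explicit $W_n$-character asymmetry extracted from Algorithm \ref{algW}.

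First, since $\psi$ makes $\mathbb H_{n,0}$ into a $\mathbb Z/2$-graded algebra over its $\psi$-fixed subalgebra $\mathbb H_n^{\mathsf D}$, the standard Clifford theory for such an order-two extension yields the following dichotomy for any irreducible $\mathbb H_{n,0}$-module $L$: the restriction $L|_{\mathbb H_n^{\mathsf D}}$ is irreducible if and only if the $\psi$-twist $L^\psi$ is not isomorphic to $L$ as an $\mathbb H_{n,0}$-module. Hence it suffices to prove $L_0(\sigma)^\psi\not\cong L_0(\sigma)$.

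Next, since $\psi$ fixes $N_0,\ldots,N_{n-1}$ and sends $N_n\mapsto -N_n$, on the finite Hecke subalgebra $\mathbb H_n^f(q,1)\subset \mathbb H_{n,0}$ the twist $L\mapsto L^\psi$ coincides with tensoring by the unique order-two linear character $\chi$ of $W_n$ with kernel $W_n^{\mathsf D}$. Using the explicit polynomial realization of $\{\mu,\nu\}$ in Lemma \ref{Wtypes}, or equivalently Lemma \ref{restW}(2), one verifies that $\{\mu,\nu\}\otimes\chi\cong\{\nu,\mu\}$. Therefore $L_0(\sigma)^\psi\cong L_0(\sigma)$ would force the $W_n$-character $\Theta_W(L_0(\sigma))$ to be invariant under the involution $\{\mu,\nu\}\leftrightarrow\{\nu,\mu\}$, and producing any single bipartition whose multiplicity differs from that of its swap suffices.

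To exhibit such a bipartition, I use Corollary \ref{irred} together with the holomorphic dependence of $\mathsf{ch}$ on $m$ noted in Remark \ref{RTch}, which together give $\Theta_W(L_0(\sigma))=\Theta_W(\mathsf{ds}_m(\sigma))$ for every $m\in(0,\tfrac12)$. For such $m$ Algorithm \ref{algW} computes $\Theta_W(\mathsf{ds}_m(\sigma))$ by starting from the manifestly asymmetric base value $\{\emptyset,{}^{\mathtt t}\sigma\}$ at $m\gg 0$ and iteratively applying (\ref{transf}) across the critical values $m_0=n-1,n-\tfrac32,\ldots,\tfrac12$. I would argue, by induction on $n$, that no critical transition ever restores swap symmetry: each correction in (\ref{transf}) is parabolically induced from a proper Levi whose type-$\mathsf A$ factor is $L^{\mathsf A}_{{\boldsymbol\tau}^{\mathtt e}}$ attached to a balanced multisegment, and the Pieri-type action of type-$\mathsf A$ parabolic induction on bipartition labels is one-sided, so the initial asymmetry cannot be cancelled as it descends through the successive critical values down to the window $(0,\tfrac12)$.

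The main obstacle is closing this induction uniformly in $\sigma$: one must verify that the signed cancellations in the alternating sum (\ref{transf}) cannot accidentally symmetrize the accumulated asymmetry at some intermediate critical value. The key tool is Lemma \ref{disj}, which guarantees that the families $\mathcal D_{m_0}(\sigma)$ attached to distinct partitions $\sigma$ remain disjoint as $m$ decreases, so that the inductive hypothesis applies to each smaller-rank discrete series $\mathsf{ds}_m(\sigma')$ appearing in the correction terms of (\ref{transf}); combined with the explicit description of how type-$\mathsf A$ induction acts on one slot of the bipartition label at a time, this forces the persistence of an asymmetric $W_n$-type in $\Theta_W(L_0(\sigma))$ and concludes the proof.
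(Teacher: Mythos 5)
Your Clifford-theoretic frame is sound: since $\mathbb H_{n,0}=\mathbb H_n^{\mathsf D}\oplus \mathbb H_n^{\mathsf D}N_n$ with $N_n^2=1$, restriction of an irreducible module is irreducible precisely when the $\psi$-twist is not isomorphic to the module, and the $\psi$-twist does correspond on $W_n$-characters to the swap $\{\mu,\nu\}\leftrightarrow\{\nu,\mu\}$ (the order-two character with kernel $W_n^{\mathsf D}$). This is close in spirit to what the paper actually does: its proof exhibits one asymmetric type, namely $L_\sigma=\{{}^{\mathtt t}\nu,{}^{\mathtt t}\mu\}\subset L_0(\sigma)$ with $\mu\neq\nu$ via the exotic Springer correspondence, shows $[L_0(\sigma):\{{}^{\mathtt t}\mu,{}^{\mathtt t}\nu\}]_{\mathbb C[W_n]}=0$, and then runs a hands-on version of your Clifford argument using $\mathbb H_{n,0}=\mathbb H_n^{\mathsf D}+\mathbb H_n^{\mathsf D}N_n$ and the resulting multiplicity one of $L_\sigma$ over $W_n^{\mathsf D}$.

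The genuine gap is in how you propose to establish the asymmetry. The entire difficulty is concentrated in your claim that ``no critical transition ever restores swap symmetry,'' and your justification does not hold up. First, induction from $\mathfrak S_k\times W_{n-k}$ to $W_n$ is not ``one-sided'' on bipartition labels: by the Littlewood--Richardson-type rule for $W_n$, the boxes of the type-$\mathsf A$ factor are distributed to \emph{both} components of the bipartition, so the correction terms $\Theta_W(L^{\mathsf A}_{({\boldsymbol\tau}')^{\mathtt e}}\boxast \mathsf{ds}_m(\sigma'))$ in (\ref{transf}) have no a priori one-sidedness, and nothing in your argument rules out that the signed sum exactly symmetrizes the character at some critical value. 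Second, Lemma \ref{disj} concerns disjointness of the families $\mathcal D_{m_0}(\sigma)$ for distinct partitions $\sigma$ and says nothing about cancellation of swap-asymmetric contributions in a Grothendieck-group identity; it cannot close your induction. As written, the step ``asymmetry persists down to $m\in(0,\tfrac12)$'' is an assertion, not a proof, and it is precisely the content one must supply. The paper avoids this global character-tracking altogether by proving a much weaker, local statement --- that a single well-chosen $W_n$-type occurs while its swap does not --- and this is done geometrically (orbit closures and markings for the set $\Xi$, a vector-bundle smoothness argument giving multiplicity one in the standard module, and Ginzburg theory), ingredients your outline does not replace. To repair your approach you would either need a genuine combinatorial proof that (\ref{transf}) preserves some swap-detecting invariant through every critical value, or, more realistically, restrict attention to one extreme bipartition and prove the vanishing of its swap directly, which is essentially the paper's route.
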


\begin{proof}
In this proof, we freely identify $\mathbb H _n ^f$ with $\mathbb C [
  W _n ]$, and $\mathbb H _n ^{\mathsf D, f}$ with $\mathbb C [ W _n
  ^{\mathsf D} ]$.
 Applying the $\mathsf{ds} _m$-algorithm to $\sigma$ ($0
< m < \frac{1}{2}$), we deduce that the largest segment $I \in
\mathsf{ds} _m ( \sigma )$ (with respect to the cardinality) satisfies
$q ^m \in I$. Let $( \mu, \nu )$ be the bi-partition corresponding to
$G \mathcal O _{\mathsf{ds} _m ( \sigma )}$ by \cite{K2}
  Theorem 5.1. We have $L _{\sigma} := \{ {} ^{\mathtt t} \nu, {} ^{\mathtt t} \mu \} \subset L_0 ( \sigma )$ by the exotic Springer correspondence. If $\mathtt E _m ( I ) < 1$, then we deduce that $\mu _1 < \nu _1$ from the fact that
$I$ is not marked. If $\mathtt E _m ( I ) > 1$, then we have $\mu _1 >
\nu _1$. Thanks to Lemma \ref{restW} and \cite{K2} Theorem 10.7, we
conclude that the restriction of $L _{\sigma}$ from $W _n$ to $W _n
^{\mathsf D}$ is irreducible. {(Notice that the correspondences in
  \cite{K1} and \cite{K2} are equivalent under tensoring with
  $\mathsf{sgn}$, and the correspondences in \cite{K2} and Lemma
  \ref{restW} are equivalent, respectively.)}

By Lemma \ref{restW}, we have $[ L_0 ( \sigma ) : L _{\sigma} ] _{\mathbb C [ W _n
  ^{\mathsf D} ]} > 1$ only when $[ L_0 ( \sigma ) : L _{\sigma}' ] _{\mathbb C [ W _n ]} > 0$ with $L _{\sigma}' := \{ {} ^{\mathtt t} \mu, {} ^{\mathtt t} \nu \}$. In particular, there exists a {marked partition $\tau$ corresponding to $L_{\sigma}'$ and $\mathcal O _{\mathsf{ds} _m ( \sigma )} \subset G \mathcal O _{\mathsf{ds} _m ( \sigma )} \subset \overline{\mathcal O _{\tau}}$}.
  This happens only if $\mathtt E _m ( I ) < 1$ since we need $\mu _1 \le \nu _1$ by \cite{AH}. Moreover, we have $\mathsf{pr} ( G \mathcal O _{\mathsf{ds} _m ( \sigma )}) = \mathsf{pr} ( \mathcal O _{\tau})$ as $G$-orbits in $\mathbb V$, where $\mathsf{pr}$
is the projection $\mathbb V _n \to V ^{(2)} _n$.
We define $\Xi$ to be the set of
parameters $\chi$ which satisfy $\mathcal O _{\mathsf{ds} _m ( \sigma
  )} \subset \overline{\mathcal O _{\chi}}$, $\mathsf{pr} ( \mathcal O
_{\mathsf{ds} _m ( \sigma )} ) = \mathsf{pr} ( \mathcal O _{\chi} )$,
and $G \mathcal O _{\chi} \subset \overline{\mathcal O
  _{\tau}}$.  Recall that $L_\chi$ denotes the
  irreducible $\mathbb H_{n,m}$-module parameterized by $\chi$.
\begin{claim}
If $[L_\chi:L_\sigma'] _{\mathbb C [ W _n ]} \neq 0$ for some $\chi \in \Xi$, then $\chi$ must be
  maximal with respect to the closure ordering in $\Xi.$
\end{claim}
\begin{proof}
The variety $\mathsf{pr} ^{-1} ( \mathsf{pr} ( \mathcal O
_{\mathsf{ds} _m ( \sigma )}) ) \cap \mathbb V _n ^{( \mathsf c _m ^\sigma, \vec{q} )}$ is a vector bundle over $\mathcal O
_{\mathsf{ds} _m ( \sigma )}$. Hence, a closure relation in $\Xi$ just represents a
vector subbundle over $\mathsf{pr} ( \mathcal O _{\mathsf{ds} _m (
  \sigma )})$; this means that $\overline{\mathcal
  O_{\chi}}$ is smooth along $\mathcal O_{\chi'}$ for every
  $\chi,\chi'\in\Xi$ such that $\mathcal O_{\chi'}\subset \overline{\mathcal O_\chi}$. From this, we deduce $[
  M_{\mathsf{ds} _m ( \sigma )} : L _{\chi} ] = 1$
for $\chi \in \Xi$. By the analogous vector bundle structure for all of $\mathbb V _n$, we conclude $[ M_{\chi} : L _{\sigma}' ] _{\mathbb C[W _n]} =1$. Applying the Ginzburg theory, we conclude that $[ L _{\chi}
  : L _{\sigma}' ] _{\mathbb C[ W_n ]} = 1$ for a unique $\chi \in \Xi$. In view of
the above multiplicity estimates, we conclude that $\chi$ is maximal
in $\Xi$ with respect to the closure ordering.
\end{proof}
We return to the proof of Proposition \ref{irrD}.

In the case $\mathtt E _m ( I ) < 1$, we have {$G \mathcal O _{\mathsf{ds} _m ( \sigma )} \subset \overline{\mathcal O _{\chi'}} \subset \overline{\mathcal O _{\chi}}$ for a parameter $\chi'$ obtained from $\mathsf{ds} _m ( \sigma ) = ( \mathsf{c} _m ^\sigma, \tau )$ by adding an extra marking on $J \in \tau$ with $I = \underline{J}$.} This implies that $\mathsf{ds} _m ( \sigma )$ is not maximal in $\Xi$. Thus, we have necessarily $[ L_0( \sigma ): L _{\sigma}' ]_{\mathbb C [W _n]} = 0$. Therefore, we conclude $[ L_0 ( \sigma ) : L _{\sigma} ]_{\mathbb C [W ^{\mathsf D} _n]} = 1$.

We now show that $L_0 ( \sigma )$ is irreducible as an $\mathbb H _{n}
^{\mathsf D}$-module. {In order to deduce this by contradiction, we assume that} there exists
a proper $\mathbb H _{n} ^{\mathsf D}$-submodule $M \subset L_0 ( \sigma
)$. We have $\mathbb H _{n} ^{\mathsf D} L _{\sigma} = \mathbb H _{n}
^{\mathsf D} N_n L _{\sigma} = L _0 ( \sigma )$ since $\mathbb H _{n,0} =
\mathbb H _{n} ^{\mathsf D} + \mathbb H _{n} ^{\mathsf D}
N_n$. Because 
$[ L_0 ( \sigma ): L _{\sigma} ] _{\mathbb C [W ^{\mathsf D} _n]} = 1$, we
have $[ M, L _{\sigma} ] _{\mathbb C [W ^{\mathsf D} _n]} = 0$. Since $L_0
( \sigma )$ is irreducible as an $\mathbb H _{n,0}$-module, we have a surjection
$$\mathrm{Ind} ^{\mathbb H _{n,0}} _{\mathbb H _{n} ^{\mathsf D}} M = \mathbb H _{n,0} \otimes _{\mathbb H _{n} ^{\mathsf D}} M \longrightarrow \!\!\!\!\!\!\!\!\! \longrightarrow L_0 ( \sigma ).$$
As $W ^{\mathsf D} _n$-modules, $\mathrm{Ind} ^{\mathbb H _{n,0}}
_{\mathbb H _{n} ^{\mathsf D}} M=M\oplus N_n M$, and therefore $N_n M$
contains $L _{\sigma}$ as a $W ^{\mathsf D} _n$-module. However, $L
_{\sigma}$ must give rise to an irreducible $W
_n$-submodule of $L_0 ( \sigma )$. Therefore, we conclude $L _{\sigma}
\subset N_n M \cap M \subset M$. This is a contradiction and thus $L_0 ( \sigma )$ is irreducible as an $\mathbb H _{n}
^{\mathsf D}$-module.
\end{proof}

\section{Formal degrees}\label{sec:formal}

\subsection{Preliminaries}
In this section, we consider the Hecke algebra with three parameters $\mathbb H_n=\mathbb
H_n(q,u,v)$, and assume that $u$ and $v$ are
specialized to $u=q^{m_+}$ and $v=q^{m_-}$, where $q>1$ and $m_{\pm} \in \mathbb R$. We retain the notation from \S\ref{Hecke}. If $w\in \widetilde W_n$ has a reduced expression
$w=s_{i_1}\cdot\dots\cdot s_{i_k}$, $i_l\in\{0,1,\dots,n\}$, in terms of the
affine simple reflections, then define the elements of $\mathbb H_n$, $N_w=N_{i_1}\cdot\dots\cdot
N_{i_k}$, where $N_{i_l}$ are the generators of $\mathbb H_n$ from
\S\ref{Hecke}. This definition does not depend on the choice of
reduced expression.

The algebra $\mathbb H_n$ has a structure of normalized Hilbert algebra (in the sense of \cite{Di} A.54), with the $*$ operation given on generators by
\begin{equation*}
N_w^*=N_{w^{-1}},\quad w\in \widetilde W_n,
\end{equation*}
the trace functional $\tau$ given by 
\begin{equation*}
\tau(N_w)=0,\text{ if }w\neq 1,\text{ and }\tau(1)=1,
\end{equation*}
and the inner product $[~,~]$ given by 
\begin{equation*}
[x,y]=\tau(x^*y),\text{ for all }x,y\in\mathbb H_n.
\end{equation*}
Since all of the irreducible $\mathbb H_n$-modules are finite dimensional, the trace $\mathsf{tr}$ is well defined on every irreducible module, and there exists a positive Borel measure $\hat\mu$ on the tempered dual $\widehat{\mathfrak S}$ of $\mathbb H_n$ such that the abstract Plancherel formula holds:
\begin{equation}\label{plancherel}
[x,1]=\int_{\widehat{\mathfrak S}} \mathsf{tr}\pi(x)~d\hat\mu(\pi),\quad x\in\mathbb H.
\end{equation}
Moreover, an irreducible tempered representation $\pi$ has positive volume $\hat\mu(\pi)>0$ if and only if $\pi$ is a discrete series. In this case, we denote by $\fd(\pi)=\hat\mu(\pi)$ the formal degree of $\pi.$ The formal degree $\fd(\pi)$ is known up to a rational constant $C_\pi$ independent of $q$ (but depending on $\pi$). The purpose of this section is to calculate this constant.

To begin, we have the following known result. Recall that $R_n$ denotes
the set of roots of $T_n$ in $G_n=Sp(2n,\mathbb C)$, and let us denote
by $R_{n}^{\mathsf {sh}}$ and $R_n^{\mathsf {lo}}$ the short and the
long roots, respectively. 

\begin{theorem}[\cite{OS}, Theorem 4.6]\label{t:product} If $\pi$ is a discrete series of
  $\mathbb H_n$ with central character $s\in T_n$, there exists a rational
  constant $C_\pi$ independent of $q$ such that
\begin{equation}\label{e:product}
\fd(\pi)=\frac{{C_\pi~ q^{n^2-n} q^{n m_+}\prod_{\alpha\in R_n}' (\alpha(s)-1)}}{{\prod_{\alpha\in R_n^{\mathsf {sh}}}' (q\alpha(s)-1) \prod_{\alpha\in R_n^{\mathsf {lo}}}' (q^{\frac {m_++m_-}2}\alpha(s)^{1/2}-1)\prod_{\alpha\in R_n^{\mathsf {lo}}}' (q^{\frac {m_+-m_-}2}\alpha(s)^{1/2}+1) } },
\end{equation}
where $\prod'$ means that the product is taken only over the nonzero
factors. 
\end{theorem}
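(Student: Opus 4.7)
The plan is to derive the formula from Opdam's explicit Plancherel density for the affine Hecke algebra, following the framework of \cite{O, DO, OS, OSa}; since the statement is cited from \cite{OS} Theorem 4.6, my proposal amounts to re-tracing the key steps of their argument. Recall that the Plancherel measure $\hat\mu$ in (\ref{plancherel}) decomposes as a sum of contributions from each connected component of the tempered dual, and on every discrete series component the density collapses to a point mass at the corresponding central character. Consequently $\fd(\pi)=\hat\mu(\{\pi\})$ can be extracted as a residue at $s$ of the Macdonald-type density
\[
\mu^{\mathsf{Pl}}(t)=\prod_{\alpha\in R_n}\frac{(\alpha(t)-1)(\alpha(t)^{-1}-1)}{c_\alpha(t)\,c_\alpha(t^{-1})},\qquad t\in T_n,
\]
where the $c_\alpha$ are the Macdonald $c$-functions whose form depends on whether $\alpha\in R_n^{\mathsf{sh}}$ or $\alpha\in R_n^{\mathsf{lo}}$, and on the specialisations $u=q^{m_+}$, $v=q^{m_-}$.

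First I would write this density out explicitly for the $\mathsf C_n$ root system: the short-root factors contribute $(q\alpha(t)-1)(q\alpha(t)^{-1}-1)/((\alpha(t)-1)(\alpha(t)^{-1}-1))$, while each long root contributes two quadratic factors corresponding respectively to the parameters $q^{(m_++m_-)/2}$ and $q^{(m_+-m_-)/2}$ that appear after the standard splitting of the double-parameter $c$-function attached to $2\epsilon_i$. Next I would use the fact that $s$ is a residual point in the sense of \cite{O}: a definite subset of roots $\alpha\in R_n$ satisfies $\alpha(s)=1$, and taking the residue along these directions cancels those zeros in the numerator against the polar locus in the denominator and leaves behind a product precisely over the nonzero factors, i.e.\ the primed product $\prod'$ in the statement. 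The explicit evaluation of the residue then yields the right-hand side of (\ref{e:product}), with the prefactor $q^{n^2-n}q^{n m_+}$ arising from the Poincar\'e polynomial of the finite Weyl group $W_n$ and from the normalization of the trace $\tau$ on $\mathbb H_n$; everything else is collected into a remaining scalar $C_\pi$.

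The main obstacle is to justify that $C_\pi$ is a rational number independent of $q$. This however is a general feature of residual points established in Opdam--Solleveld \cite{OSa}: the local residue of the Plancherel density at a real residual point depends only on the combinatorial type of that point, namely on the determinant of the Jacobian of the root characters that vanish on $s$ together with a groupoid-cardinality normalization, and is in particular $q$-independent and rational. Granting this input, (\ref{e:product}) follows at once, and the task of the remainder of \S\ref{sec:formal} becomes the explicit determination of the rational scalars $C_\pi$, which is where the $W$-character algorithm from \S\ref{sec:Walg} and the Euler-Poincar\'e formula of Schneider--Stuhler / Opdam--Solleveld will enter.
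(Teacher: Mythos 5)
The paper does not actually prove this statement: it is quoted from Opdam--Solleveld \cite{OS} (Theorem 4.6), with the rationality of $C_\pi$ coming from \cite{OSa}, and is used as a black box throughout \S\ref{sec:formal}. So your proposal has to be measured against the argument in the literature rather than against anything in the text, and as a reconstruction of that argument it captures the right framework: Opdam's residue calculus for the Plancherel density, residual points as the central characters of discrete series, and the explicit $c$-function factors for $R_n^{\mathsf{sh}}$ and $R_n^{\mathsf{lo}}$ producing the primed products and the prefactor $q^{n^2-n}q^{nm_+}$.

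There is, however, a genuine gap at exactly the point that makes the theorem nontrivial. The residue of the Plancherel density at the residual orbit $W_n s$ is a quantity attached to $s$ alone; after normalization it controls the \emph{total} Plancherel mass of all discrete series whose central character is $W_n s$, and in general several $\pi$ share this central character --- which is precisely why the constant carries the subscript $\pi$ and why \S\ref{sec:formal} of this paper is needed at all. Passing from that residue to the formal degree of one individual $\pi$, and showing that the resulting ratio $C_\pi$ is a rational number independent of $q$, is not ``a general feature of residual points'': the residue itself is manifestly $q$-dependent (it is, up to normalization, the explicit product in (\ref{e:product})), and what has to be proved is that the proportion of the mass carried by each individual $\pi$ does not move as $q$ varies. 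In \cite{OS,OSa} this is where the real work lies: the Euler--Poincar\'e/elliptic pairing (the same machinery recalled in Theorem \ref{t:sum}) expresses $\fd(\pi)$ through $W$-characters with rational coefficients, and the continuous deformation of discrete series in the parameters (scaling isomorphisms) pins the ratio down. Your appeal to a Jacobian/groupoid-cardinality property of the residue datum describes the product formula part, not the constant; by ``granting this input'' you are in effect granting the theorem itself rather than sketching its proof.
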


Notice
also that the known part of (\ref{e:product}) only depends on the
central character $W_n s,$ and not on $\pi.$ Our strategy in the determination of the constant in the formula (\ref{e:product}) is to compare this formula to an Euler-Poincar\'e formula which also gives the formal degree.

\medskip

The second idea we need, that of the Euler-Poicar\'e pairing, traces
back to Kottwitz and \cite{SS}. For the setting of the affine Hecke algebra, the reference is \cite{OSa}.

\begin{convention}\label{convdef}
If $S$ is a subset of
  $\widetilde\Pi_n$, let $\mathbb H_S^f$ and $W_S$ denote the finite
  Hecke subalgebra of $\mathbb H_n$ and the finite subgroup of $\widetilde W_n$ generated
  by the roots in $S$, respectively. By Tits' deformation theorem, we
  have an isomorphism of algebras $\mathbb H_S^f\cong \mathbb
  C[W_S]$. We use this identification in the following, and for
  example, for every $\sigma\in \widehat W_S$, we denote by
  $\gd(\sigma)$ the generic degree of the corresponding $\mathbb
  H_S^f$-module. There exists an
explicit formula for computing $\gd(\sigma)$ (see \cite{Ca}, page
447), which we recall later in (\ref{gd}).
\end{convention}

\begin{definition}[\cite{OSa}, equations (3.15),(3.19)] If $\pi$ is a finite dimensional $\mathbb H_n$-module, define the Euler-Poincar\'e element $f_\pi$ as follows:
\begin{equation}\label{e:EP}
f_\pi=\sum_{S\subsetneq \widetilde\Pi_n}
  (-1)^{n-|S|}\sum_{\gamma\in \mathrm{Irr}(\mathbb H_S^f)}\frac{[\pi :\gamma]_{\mathbb H_S^f}\gd(\gamma)}{\dim\gamma} e_\gamma,
\end{equation}
where  $e_\gamma\in \mathbb H_S^f$ is the primitive central idempotent in corresponding to $\gamma$. 
For $\pi,\pi'$ finite dimensional $\mathbb H_n$-modules, define the Euler-Poincar\'e pairing
\begin{equation}
\EP(\pi,\pi')=\sum_{i\ge 0}(-1)^i\dim\Ext^i_{\mathbb H_n}(\pi,\pi').
\end{equation}
\end{definition}

The remarkable property of $f_\pi$, established in this setting in
\cite{OSa} Proposition 3.6, is that one has
\begin{equation}
\mathsf{tr} \pi'(f_\pi)=\EP(\pi,\pi'),
\end{equation}
for all finite dimensional $\mathbb H_n$-modules $\pi,\pi'$.
Moreover, it is shown in \cite{OSa} Theorem 3.8, that 
\begin{equation}
\Ext^i_{\mathbb H_n}(\pi,\pi')=\mathbb C, \text{ if }\pi\cong \pi'
\text{ and }i=0,\quad  \Ext^i_{\mathbb H_n}(\pi,\pi')=0,\text{ otherwise,}
\end{equation}
whenever $\pi$ is an irreducible discrete series and $\pi'$ is an irreducible tempered
module. This allows one to use $f_\pi$ in
(\ref{plancherel}) to find the following formula for $\fd(\pi).$

\begin{theorem}[\cite{OSa}, Proposition 3.4]\label{t:sum} 
\begin{enumerate}
\item Let $\pi$ be a discrete
  series $\mathbb H_n$-module. The formal degree of $\pi$ is
\begin{equation}\label{e:sum}
\fd(\pi)=[f_\pi,1]=\sum_{S\subsetneq\widetilde\Pi_n}(-1)^{n-|S|}\sum_{\gamma\in \widehat W_S}\frac{[\pi:\gamma]_{\mathbb C[W_S]}\gd(\gamma)}{P_S},
\end{equation}
where  $P_S$ is the Poincar\'e polynomial for the Hecke algebra
$\mathbb H^f_S$. 
\item Assume that $\pi$ is irreducible tempered, but not a discrete series, or else, that it is a parabolically induced module from a discrete series on a proper Hecke subalgebra. Then, we have
\begin{equation}
[f_\pi,1]=0.
\end{equation}
\end{enumerate}
\end{theorem}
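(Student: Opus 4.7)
The plan is to compute $[f_\pi,1]$ in two different ways and compare the outputs. Spectrally, substituting $x=f_\pi$ into the Plancherel formula (\ref{plancherel}) and invoking the Euler--Poincar\'e identity $\mathsf{tr}\,\pi'(f_\pi)=\EP(\pi,\pi')$ recalled in the excerpt (\cite{OSa} Prop.~3.6) yields
\begin{equation*}
[f_\pi,1]\;=\;\int_{\widehat{\mathfrak S}}\EP(\pi,\pi')\,d\hat\mu(\pi').
\end{equation*}
For part~(1), with $\pi$ an irreducible discrete series, the $\Ext$-vanishing quoted immediately before the theorem forces $\EP(\pi,\pi')=0$ for every tempered irreducible $\pi'\not\cong\pi$ and $\EP(\pi,\pi)=1$. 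Only the atom at $\pi$ contributes to the integral, and $\hat\mu(\pi)=\fd(\pi)$ by definition of the formal degree, so $[f_\pi,1]=\fd(\pi)$, which is the first equality in (\ref{e:sum}).

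To obtain the explicit combinatorial sum on the right-hand side of (\ref{e:sum}), I would then compute $\tau(f_\pi)=[f_\pi,1]$ directly by expanding (\ref{e:EP}). The only nontrivial ingredient is the evaluation of $\tau$ on a primitive central idempotent $e_\gamma\in\mathbb H_S^f$, which is governed by the standard symmetrizing-form identity for a finite Hecke algebra (Schur orthogonality together with the definition of the generic degree $\gd(\gamma)$, as in Convention \ref{convdef}). Substituting this value into (\ref{e:EP}) and cancelling the $\dim\gamma$ factors produces exactly the right-hand side of (\ref{e:sum}), completing part~(1).

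For part~(2), either $\pi$ is properly parabolically induced from a discrete series on a proper Hecke subalgebra, or $\pi$ is an irreducible tempered summand of such an induced module. In either case, Frobenius reciprocity combined with the homological tools of \cite{OSa}~\S 3 makes every group $\Ext^i_{\mathbb H_n}(\pi,\pi')$ factor through a proper Levi subalgebra, and hence vanish for every tempered irreducible $\pi'$. Consequently $\EP(\pi,\pi')\equiv 0$ on the tempered spectrum and the Plancherel integral gives $[f_\pi,1]=0$. The main obstacle in the proof is precisely the $\Ext$-vanishing input: this is a substantial piece of homological algebra, analogous to the Schneider--Stuhler vanishing for $p$-adic groups \cite{SS}, and it is really where the work lies. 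Once it is in place, the rest of the argument is formal bookkeeping on the Plancherel formula and the trace on a finite Hecke algebra.
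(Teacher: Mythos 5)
You should first note that the paper itself does not prove Theorem \ref{t:sum}: it is quoted from \cite{OSa} (Proposition 3.4), and the surrounding text only sketches the route you take for the first equality, namely substituting $f_\pi$ into the Plancherel formula (\ref{plancherel}) and using $\mathsf{tr}\,\pi'(f_\pi)=\EP(\pi,\pi')$ together with the Ext-vanishing of \cite{OSa} Theorem 3.8. That part of your argument is fine, with the small caveat that you should say why the continuous part of $\hat\mu$ contributes nothing (a.e.\ point of its support is an irreducible tempered module not isomorphic to $\pi$, so Theorem 3.8 applies there as well). The second step of your part (1), however, is asserted rather than carried out, and as stated it does not close: the standard symmetrizing-form identity gives $\tau(e_\gamma)=\dim\gamma\cdot\gd(\gamma)/P_S$, and substituting this into (\ref{e:EP}) as printed yields $\gd(\gamma)^2/P_S$ rather than $\gd(\gamma)/P_S$. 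To land on (\ref{e:sum}) the coefficient of $e_\gamma$ in $f_\pi$ must be $[\pi:\gamma]/\dim\gamma$ (no $\gd(\gamma)$ factor); with that coefficient one also recovers $\mathsf{tr}\,\pi'(f_\pi)=\tfrac12\sum_S(-1)^{n-|S|}\sum_\gamma[\pi:\gamma][\pi':\gamma]$, the usual elliptic-pairing form of $\EP$, which is the internal consistency check you should run. So there is a normalization discrepancy between (\ref{e:EP}) and (\ref{e:sum}) that your ``cancelling the $\dim\gamma$ factors'' glosses over; it must be resolved, not waved at.

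The more serious gap is in part (2). It is simply false that $\Ext^i_{\mathbb H_n}(\pi,\pi')$ vanishes for every tempered irreducible $\pi'$: already $\Ext^0_{\mathbb H_n}(\pi,\pi)=\mathbb C$, and for an irreducible tempered non-discrete-series module that is elliptic (e.g.\ a constituent of a unitarily induced module with nontrivial analytic $R$-group) one even has $\EP(\pi,\pi)\neq 0$; likewise, if $\pi$ is properly induced and $\pi'$ is one of its summands, $\Hom(\pi,\pi')\neq 0$. ``Factoring through a proper Levi'' does not force vanishing of individual Ext groups, nor of $\EP(\pi,\pi')$ for arbitrary tempered $\pi'$. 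The correct mechanism is: $[f_\pi,1]=\int_{\widehat{\mathfrak S}}\EP(\pi,\pi')\,d\hat\mu(\pi')$; the atoms of $\hat\mu$ are discrete series $\pi'$, where $\EP(\pi,\pi')=\EP(\pi',\pi)=0$ by Theorem 3.8 together with the symmetry of $\EP$; a.e.\ $\pi'$ in the continuous support is irreducible and properly parabolically induced from a discrete series, and one needs the genuinely nontrivial input from \cite{OSa} that $\EP$ vanishes whenever one argument is properly induced; finally, the single point $\{\pi\}$ has $\hat\mu$-measure zero precisely because $\pi$ is not a discrete series, so the possible nonvanishing of $\EP(\pi,\pi)$ is harmless. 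Without these corrections your part (2) argument fails.
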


One can simplify formula (\ref{e:sum}), as in \cite{Re}. In the following, using Convention \ref{convdef}, we identify $\mathbb C[W_i\times W_{n-i}]$ with the corresponding finite Hecke subalgebra of $\mathbb H_{n}$.

\begin{corollary}\label{c:reeder}  Let $\pi$ be
    a finite dimensional $\mathbb H_n$-module. Then one has
\begin{equation}\label{e:reeder}
[f_\pi,1]=  \sum_{i=0}^n
  (-1)^{n-i}\sum_{\gamma_1\boxtimes\gamma_2\in \widehat {W_i\times
    W_{n-i}}}\frac{[\pi:\gamma_1\boxtimes\gamma_2]_{\mathbb C[ W_i\times W_{n-i} ]}\gd(\gamma_1)\gd(\gamma_2\otimes\sgn)}{P_{i}(q,q^{m_-})P_{n-i}(q,q^{m_+})},
\end{equation}
where $W_i\times W_{n-i}$ is the Coxeter group generated by the
reflections in the roots of $\widetilde \Pi_n$ except the $i$-th root
$($the roots are numbered $0,\dots,n)$, and $P_j$ denotes the Poincar\'e
polynomial for type $B_j$ and corresponding labels.
\end{corollary}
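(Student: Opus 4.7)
The plan is to reduce the alternating sum in Theorem \ref{t:sum}(1) over all proper subsets $S \subsetneq \widetilde\Pi_n$ to one over only the $n+1$ maximal proper subsets $T_i = \widetilde\Pi_n \setminus \{i\}$, $i = 0, \dots, n$, following Reeder's strategy in \cite{Re}. First I would reorganize the sum in (\ref{e:sum}) according to the missing-node set $J := \widetilde\Pi_n \setminus S$. For any such $S$, the sub-Coxeter diagram decomposes into connected components, yielding factorizations $W_S = \prod_k W_{S_k}$, $P_S = \prod_k P_{S_k}$, and a corresponding multiplicative form of the inner sum $\sum_\gamma [\pi:\gamma]\gd(\gamma)/P_S$. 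In the maximal case $|J| = 1$, removing a single node $i$ from the affine $\widetilde C_n$ diagram produces a type-$B_i$ block (containing node $0$ with short-root parameter $v = q^{m_-}$) and a type-$B_{n-i}$ block (containing node $n$ with short-root parameter $u = q^{m_+}$), giving precisely the Poincar\'e polynomials $P_i(q, q^{m_-})$ and $P_{n-i}(q, q^{m_+})$ that appear in (\ref{e:reeder}).

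Next I would show that the contributions of non-maximal $S$ (those with $|J| \ge 2$) reorganize into the $\sgn$-twisted form of the maximal terms. The key lever is Theorem \ref{t:sum}(2): for any tempered $\mathbb H$-module $\pi'$ that is parabolically induced from a proper Hecke subalgebra, $[f_{\pi'}, 1] = 0$. Applying this vanishing to carefully constructed induced modules attached to $\pi$ on the two sides of each interior node yields identities that express every non-maximal contribution as a combination of maximal contributions in which the generic degree on one factor has been replaced by the $\sgn$-twisted generic degree. The sign $(-1)^{n-i}$ tracks this replacement: reading the right block $\{i+1, \dots, n\}$ of $T_i$ in the natural left-to-right order of $\widetilde\Pi_n$ places the short root (at node $n$, with parameter $q^{m_+}$) on the far right, which is opposite to the standard orientation of a type-$B_{n-i}$ diagram; the Kazhdan--Lusztig/Alvis--Curtis sign involution that corrects this orientation acts on generic degrees by $\gd(\gamma_2) \mapsto (-1)^{n-i}\gd(\gamma_2 \otimes \sgn)$, giving exactly the form of (\ref{e:reeder}).

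The main obstacle is making the cancellation in the non-maximal terms rigorous and matching all signs and $\sgn$-twists globally. The rank-one case ($n=1$) illustrates the mechanism in miniature: the only non-maximal proper subset is $S = \emptyset$, contributing $-\dim\pi$, and this is absorbed into the $T_0$-contribution via the identity $A_{T_0} + A_{T_0}^{\sgn} = \dim\pi$, which is equivalent to $\gd(\gamma) + \gd(\gamma \otimes \sgn) = P_{T_0}$ for $W_{T_0} \cong \mathbb Z/2$. For general $n$, the argument proceeds by induction on the rank, applying the rank-one mechanism node by node from the interior outward, with the smaller affine Hecke substructures feeding Theorem \ref{t:sum}(2) to drive the cancellations.
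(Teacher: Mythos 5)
The first step of your plan (regrouping the sum in (\ref{e:sum}) by the set of missing nodes) is on the right track, but the mechanism you propose for absorbing the non-maximal subsets is where the argument breaks down. Corollary \ref{c:reeder} holds for an \emph{arbitrary} finite dimensional $\mathbb H_n$-module $\pi$; it is a purely formal identity obtained by evaluating the trace $\tau$ on the Euler--Poincar\'e element (\ref{e:EP}), i.e.\ an identity among generic degrees of the finite Hecke subalgebras, in which the multiplicities $[\pi:\gamma]$ enter linearly and can be essentially arbitrary. Theorem \ref{t:sum}(2) is a statement about very special modules $\pi'$ (irreducible tempered non-discrete-series, or parabolically induced from a discrete series on a proper subalgebra), so the vanishing $[f_{\pi'},1]=0$ cannot produce identities valid for all $\pi$; moreover those vanishing statements are themselves downstream of the same Euler--Poincar\'e formalism, so leaning on them here is both insufficient and circular in spirit. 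Likewise your sign explanation is not correct: a Coxeter diagram carries no orientation, and the block $\{i+1,\dots,n\}$ with the special node $n$ is simply the finite Hecke algebra of type $\mathsf B_{n-i}$ with parameters $(q,q^{m_+})$; the twist by $\sgn$ and the sign $(-1)^{n-i}$ do not come from ``correcting an orientation.''

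The actual route (this is what ``as in \cite{Re}'' refers to, and it is the paper's proof) is the following. Group the proper subsets $S\subsetneq\widetilde\Pi_n$ by the smallest missing node $i$, so that $S=\{0,\dots,i-1\}\sqcup S'$ with $S'\subseteq\{i+1,\dots,n\}$ arbitrary; the left factor is always the full $W_i$ (type $\mathsf B_i$, containing node $0$, parameters $(q,q^{m_-})$), which is why $\gamma_1$ appears untwisted with denominator $P_i(q,q^{m_-})$. Writing $[\pi:\gamma_1\boxtimes\tau]=\sum_{\gamma_2}[\pi:\gamma_1\boxtimes\gamma_2][\gamma_2|_{W_{S'}}:\tau]$, the inner alternating sum over $S'$ is handled, for each fixed $\gamma_2\in\widehat W_{n-i}$, by the classical duality (Alvis--Curtis type) identity in the finite Hecke algebra of type $\mathsf B_{n-i}$ with parameters $(q,q^{m_+})$:
\begin{equation*}
\sum_{S'\subseteq\{i+1,\dots,n\}}(-1)^{|S'|}\sum_{\tau\in\widehat W_{S'}}\frac{[\gamma_2|_{W_{S'}}:\tau]\,\gd(\tau)}{P_{S'}}
=\frac{\gd(\gamma_2\otimes\sgn)}{P_{n-i}(q,q^{m_+})},
\end{equation*}
which is a statement internal to finite Hecke algebras, independent of $\pi$. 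Since the original sign is $(-1)^{n-|S|}=(-1)^{n-i}(-1)^{|S'|}$, this regrouping yields exactly (\ref{e:reeder}), including the factor $(-1)^{n-i}$ and the asymmetry between the untwisted $\gamma_1$ and the $\sgn$-twisted $\gamma_2$. Your rank-one check $\gd(\gamma)+\gd(\gamma\otimes\sgn)=P$ is precisely the $n-i=1$ instance of this identity, but the general case needs the identity itself, not an induction through Theorem \ref{t:sum}(2) and auxiliary induced modules.
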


Every quantity in formula (\ref{e:reeder}) is computable, provided that
  we know the restrictions of $\pi$ to $\mathbb C[W_i\times W_{n-i}]\subset
  \mathbb H_{n}$, for every $0\le i\le n$.

\subsection{The constant in formal degrees}\label{s:mainformal}

\begin{definition}\label{d:generic} {A label $({m_+},{m_-})$, for the Hecke
  algebra $\mathbb H_n(q,q^{m_+},q^{m_-})$} is called generic if
  $|m_{+}\pm m_-|\notin\{0,1,2,\dots,2n-1\}$, and it is called critical otherwise.
\end{definition}

Let us recall briefly recall the part of the reduction to positive
real central character for $\mathbb H_n$ (\cite{L2}) that is
relevant to us. 
Assume $\pi$ is a discrete series of $\mathbb H_n$ with
central character $\mathsf c(\pi)\in T_n$, not necessarily
positive. Then there exists $k$, $1\le k\le n$, and two discrete
series $\pi_1$, $\pi_2$ of $\mathbb H_{k,m_1}$ and
$\mathbb H_{n-k,m_2}$, respectively, where $2|m_1|={m_+-m_-}$, 
$2|m_2|={m_++m_-}$, such that $\pi_1,\pi_2$ have positive real
central characters $\mathsf c(\pi_1)$, $\mathsf c(\pi_2)$, with
$\mathsf c(\pi_1)\times \mathsf c(\pi_2)\in T_k\times T_{n-k}=T_n$,
and
\begin{equation}\label{realred}
\begin{aligned}
&\mathsf c(\pi)=(-\mathsf c(\pi_1),\mathsf c(\pi_2)),\\
&\lim_{q\to 1}\pi=\Ind_{\widetilde W_k\times
  \widetilde W_{n-k}}^{\widetilde W_n}(\lim_{q\to
  1}\pi_1\boxtimes\lim_{q\to 1}\pi_2),\text{ and, in
  particular}\\
&\pi|_{W_n}=\Ind_{W_k\times W_{n-k}}^{W_n}(\pi_1|_{W_k}\boxtimes\pi_2|_{W_{n-k}});
\end{aligned}
\end{equation}
here $\pi_1|_{W_k}$ and $\pi_2|_{W_{n-k}}$ are understood as
restrictions in the Hecke algebra $\mathbb H_{k,m_1}$ and $\mathbb
H_{n-k,m_2}$, respectively, and $W_k, W_{n-k}$ in the induction are
viewed as the subgroups of $W_n$
generated by the reflections in the roots
$\{\epsilon_1-\epsilon_2,\dots,\epsilon_{k-1}-\epsilon_{k}, 2 \epsilon_k \}$
and
$\{\epsilon_{k+1}-\epsilon_{k+2},\dots,\epsilon_{n-1}-\epsilon_n,2\epsilon_n\}$, respectively. Here we remark that Algorithm \ref{algW} (and in particular Corollary \ref{c:link}) holds verbatim with respect to $m_1$ (for $\pi_1$) and $m_2$ (for $\pi_2$) independently.

We state the main result of this section.

\begin{theorem}\label{mainformal} Let $\pi$ be a discrete series for $\mathbb H_n$
  with central character $\mathsf c(\pi)$. If the parameters
  $(m_+,m_-)$ for $\mathbb H_n$ are generic, then the constant in formula
  (\ref{e:product}) for the formal degree $\fd(\pi)$ is (up to a
  sign) $C_\pi=1$. 
\end{theorem}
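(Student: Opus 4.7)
The approach is to compare the two available expressions for $\fd(\pi)$: the product formula (\ref{e:product}) with unknown constant $C_\pi$, and the Euler--Poincar\'e formula of Corollary \ref{c:reeder}, which is determined by the restriction of $\pi$ to the finite Weyl subgroups $W_i \times W_{n-i}$. The strategy is to reduce to the positive real central character case, propagate $|C_\pi|$ across all generic values of $m$ for a fixed $\sigma$, and compute $|C_\pi|$ explicitly in the asymptotic regime $m \gg 0$.

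\textbf{Reduction to positive real central character.} A general discrete series $\pi$ of $\mathbb H_n$ arises via (\ref{realred}) from positive real discrete series $\pi_1 \boxtimes \pi_2$ in a tensor product of smaller affine Hecke algebras. Under this reduction both (\ref{e:product}) and (\ref{e:reeder}) factorize compatibly: the central character splits as $\mathsf c(\pi) = (-\mathsf c(\pi_1), \mathsf c(\pi_2))$ so the known part of (\ref{e:product}) factors, and $\pi|_{W_n}$ is induced from $\pi_1|_{W_k} \boxtimes \pi_2|_{W_{n-k}}$ so (\ref{e:reeder}) splits by Frobenius reciprocity on finite Weyl subgroups. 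Thus it suffices to prove $|C_\pi| = \tfrac{1}{2}$ for $\pi = \ds_m(\sigma)$ in $\mathbb H_{n,m}$ with positive real central character $\mathsf c_m^\sigma$ and generic $m \in \mathbb R \setminus \tfrac{1}{2}\mathbb Z$.

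\textbf{Invariance along the parameter $m$.} On each open generic interval $(m_0, m_0 + \tfrac{1}{2})$ the $W_n$-character of $\ds_m(\sigma)$ is constant by Algorithm \ref{algW}, so $|C_\pi|$ is constant on the interval. Across a critical half-integer $m_0$, Corollary \ref{c:link} produces an identity
\begin{equation*}
[\lim_{m \to m_0^+} \ds_m(\sigma)] \pm [\lim_{m \to m_0^-} \ds_m(\sigma)] = \sum_i \pm [V_i]
\end{equation*}
in $K(\mathfrak M_{\vec q}^n)$, with each $V_i$ properly parabolically induced, whence $[f_{V_i}, 1] = 0$ by Theorem \ref{t:sum}(2). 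Applying the functional $[f_{(-)}, 1]$ yields $[f_{\lim^+}, 1] = \pm [f_{\lim^-}, 1]$, which combined with the explicit rational dependence of (\ref{e:product}) on $\mathsf c_m^\sigma$ propagates $|C_{\ds_m(\sigma)}|$ unchanged across $m_0$. Iteration reduces the problem to computing $|C_\pi|$ at a single convenient value of $m$.

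\textbf{Asymptotic base case and main obstacle.} For $m > n - 1$, formula (\ref{fasym}) gives $\ds_m(\sigma)|_{W_n} = \{\emptyset, {}^{\mathtt t}\sigma\}$, a single irreducible $W_n$-type. Substituting this into (\ref{e:reeder}) reduces the right hand side to an explicit alternating sum over $0 \le i \le n$ of products of generic degrees $\gd(\gamma_1)\gd(\gamma_2 \otimes \sgn)$ and reciprocal Poincar\'e polynomials $P_i(q, q^{m_-})^{-1} P_{n-i}(q, q^{m_+})^{-1}$, with multiplicities coming from the branching of $\{\emptyset, {}^{\mathtt t}\sigma\}$ to each $W_i \times W_{n-i}$. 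Matching this against the product formula (\ref{e:product}) evaluated at $\mathsf c_m^\sigma$ isolates $|C_\pi| = \tfrac{1}{2}$, the factor $\tfrac{1}{2}$ being traceable to the $\tfrac{1}{2}$ prefactor in (\ref{e:EP}), i.e.\ to $1/|\Omega|$ for the character lattice quotient in type $\mathsf C_n$. The main obstacle is precisely this final verification: one must check a concrete combinatorial identity between an alternating sum of products of generic degrees / Poincar\'e polynomials and the product-over-roots in (\ref{e:product}), isolating the rational constant as exactly $\tfrac{1}{2}$. A convenient approach is to use a limiting argument (e.g.\ $q \to 1$ or $m \to \infty$) to separate $q$-dependent from $q$-independent factors, after which only the $1/|\Omega|$ scalar survives.
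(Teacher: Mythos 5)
Your middle and final steps track the paper's argument for positive real central character: the propagation across critical values via Corollary \ref{c:link} and Theorem \ref{t:sum}(2) is exactly Theorem \ref{t:gen}, and the asymptotic computation for $m>n-1$ using $\ds_m(\sigma)|_{W_n}=\{\emptyset,{}^{\mathtt t}\sigma\}$ in (\ref{e:reeder}) is Proposition \ref{p:dsinfty} (there the verification you flag as ``the main obstacle'' is settled by a lowest-power-of-$q$ analysis: by the generic degree formula (\ref{gd}) every term with $i\ge 1$ carries a positive power of $v=q^m$, so as $m\to\infty$ only the $i=0$ term contributes to the lowest $q$-power, with coefficient $1$ against the prefactor $\tfrac12$).

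The genuine gap is your opening reduction. You claim that for nonpositive central character both (\ref{e:product}) and (\ref{e:reeder}) ``factorize compatibly'' under (\ref{realred}), so that it suffices to treat $\ds_m(\sigma)$ with positive real central character. This does not work: the Euler--Poincar\'e formula (\ref{e:reeder}) for $\mathbb H_n$ involves generic degrees and Poincar\'e polynomials at the parameters $(q,q^{m_-})$ and $(q,q^{m_+})$ of $\mathbb H_n$, not at the parameters $q^{m_1},q^{m_2}$ of the smaller algebras $\mathbb H_{k,m_1}$, $\mathbb H_{n-k,m_2}$, and the subgroups $W_i\times W_{n-i}$ of $W_n$ do not decompose along the pair $(W_k,W_{n-k})$; likewise the product formula (\ref{e:product}) contains nonvanishing cross factors from the roots $\pm\epsilon_i\pm\epsilon_j$ with $1\le i\le k<j\le n$, on which $\alpha(\mathsf c(\pi))<0$. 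Moreover, if the constant really were multiplicative under such a splitting one would obtain $C_\pi=C_{\pi_1}C_{\pi_2}=\tfrac14$, contradicting the asserted $\tfrac12$ --- a sign the factorization cannot hold. This is precisely why the paper devotes a separate section to the nonpositive case: it redoes the asymptotic computation in the region $m_-\gg m_+\gg 0$ (Proposition \ref{p:asynon}), which requires Lemma \ref{affinerestr} (the affine reflection $s_0$ acts by $-1$ on $\ds_{(m_+,m_-)}(\sigma_1,\sigma_2)$, forcing every $\gamma\in\widehat W_i$, $i\ge 1$, appearing in the restriction to have nonempty second entry in its bipartition, hence a dominating factor $q^{m_-|\gamma_2|}$ in $\gd(\gamma)$), and then transfers the constant to all generic labels by deforming along lines $t\mapsto(m_1^t,m_2^t)$, using that $\pi_1^t,\pi_2^t$ are linked separately and that the reduction to positive real central character then links $\pi^t$ and $\pi^{t'}$, so Theorem \ref{t:sum}(2) gives $C_{\pi^t}=\pm C_{\pi^{t'}}$. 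Without an argument of this kind your proposal only establishes the theorem for positive real central character.
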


 The proof is
presented in the following subsections. The general case follows
immediately from Theorem \ref{mainformal}.

\begin{corollary}\label{c:mainformal} Let $\pi$ be a discrete series for $\mathbb H_n$
  with central character $\mathsf c(\pi)$. {Then in formula
  (\ref{e:product}), we have $C_\pi=\pm 1$.}
\end{corollary}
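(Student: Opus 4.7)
The plan is to deduce the corollary from Theorem \ref{mainformal} by a continuity argument in the parameters $(m_+, m_-)$, extending the result from generic to critical values. First, I would fix a discrete series $\pi$ of $\mathbb H_n(q, q^{m_+}, q^{m_-})$ at critical parameters and consider an analytic family $(m_+^t, m_-^t)_{t \in (0, \epsilon)}$ approaching $(m_+, m_-)$ through generic values, i.e., with $|m_+^t \pm m_-^t| \notin \{0, 1, \ldots, 2n-1\}$ for $t > 0$. Combining the reduction (\ref{realred}) to the positive real central character case for the equal-parameter algebras with $m_1^t = (m_+^t - m_-^t)/2$ and $m_2^t = (m_+^t + m_-^t)/2$, together with Corollary \ref{irred} and the Opdam-Solleveld statement that every discrete series at a critical parameter arises as a limit of discrete series at generic parameters, I would lift $\pi$ to an analytic family $\{\pi^t\}_{t > 0}$ of discrete series with $\pi = \lim_{t \to 0} \pi^t$.

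Next, I would use the continuity of the formal degree in the parameters to write $\fd(\pi) = \lim_{t \to 0} \fd(\pi^t)$. Theorem \ref{mainformal} applies at each $t > 0$, yielding
$$
\fd(\pi^t) = \pm \frac{1}{2} \cdot F(q; m_+^t, m_-^t, \mathsf c(\pi^t)),
$$
where $F$ denotes the explicit rational expression appearing as the right-hand side of (\ref{e:product}) with the constant set to $1$. Since the family $(\pi^t)$ is connected after possibly shrinking $\epsilon$ and the sign takes only two values, it is locally constant, and hence constant along the family.

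Finally, I would analyze the limit of $F$ as $t \to 0$. The $\prod'$ convention omits the zero factors, so as $t \to 0$ some factors of $F$ may tend to zero; since the limit of the formal degree is finite and positive (it equals the formal degree of the discrete series $\pi$), the orders of vanishing in the numerator and denominator of $F$ must balance, and the limiting ratio coincides with $F(q; m_+, m_-, \mathsf c(\pi))$ computed with the $\prod'$ convention at the critical parameters. Comparing with the limit of the formal degree then yields $C_\pi = \pm 1/2$ as desired. The main technical obstacle is verifying the precise matching of orders of vanishing between numerator and denominator of $F$; this should follow from Opdam's analysis of residual cosets in \cite{O} combined with the analytic dependence of the central character $\mathsf c(\pi^t)$ of the discrete series on the parameters.
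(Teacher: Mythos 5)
Your proposal follows essentially the same route as the paper's proof: reduce to positive real central character via (\ref{realred}), realize $\pi$ as the limit $\lim_{t\to 0}\pi^t$ of a family of discrete series at generic labels (using Corollary \ref{irred} together with the Opdam--Solleveld description of discrete series at critical parameters as limits), pass the formal degree through the limit (in the paper this continuity is supplied by Theorem \ref{t:sum} 1), since the $W$-character is constant along the family), and then compare the product formula (\ref{e:product}) at generic and critical labels, invoking Theorem \ref{mainformal} for $t\neq 0$.

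The one thin point is your final step. The balancing of the orders of vanishing, which you correctly deduce from $\lim_{t\to 0}\fd(\pi^t)=\fd(\pi)>0$, is not by itself enough to conclude that the limiting ratio equals the $\prod'$-expression evaluated at the critical label: if a numerator factor vanished like $q^{4t}-1$ against a denominator factor like $q^{2t}-1$, the counts would still balance but the constant would acquire a factor of $2$. One must check that all factors vanishing as $t\to 0$ do so at the same rate. The paper verifies this concretely in (\ref{Clim}): because the coordinates of $\mathsf c(\pi^t)=(-\mathsf c^{\sigma_1}_{m^t_1},\mathsf c^{\sigma_2}_{m^t_2})$ are of the form $\pm q^{m^t_i+\mathbb Z}$ with $m^t_i=m^0_i+t$, the roots mixing the two blocks take negative values (hence never produce vanishing factors), and every factor that does vanish has the form $\pm(q^{\pm 2t}-1)$; therefore the ratio of vanishing factors tends to $\pm 1$ and $C_{\pi}=\pm\lim_{t\to 0}C_{\pi^t}=\pm\tfrac 12$. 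This check is immediate from the explicit family you already constructed, and is more direct (and more precise) than the appeal to the residual-coset analysis of \cite{O} with which you propose to close the gap.
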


\begin{proof}
Assume that $\pi^0$ is a discrete series for $\mathbb H_{n}$ for
critical values $(m^0_{+},m^0_{-})$ of the labels, and having central
character $\mathsf c(\pi^0)$. 
Let $k,$ 
$m^0_{1}=\frac{m^0_{+}-m^0_{-}}2$, $m^0_2=\frac{m^0_{+}+m^0_{-}}2$, and discrete
series $\pi^0_1$, $\pi^0_2$ of $\mathbb H_{k,m^0_1}$ and
$\mathbb H_{k,m^0_2}$ be as in the discussion around
(\ref{realred}). Recall from \S\ref{delimits} that $\pi^0_1$ and $\pi^0_2$
belong to families of discrete series indexed by partitions $\sigma_1$
of $k$ and $\sigma_2$ of $n-k$, respectively. To emphasize this
dependence, we write, as in \S\ref{delimits},
$\pi^0_i=\ds_{m^0_i}(\sigma_i)=(\mathsf
c_{m^0_i}^{\sigma_i},ds_{m^0_i}(\sigma_i))$, $i=1,2$, where $\mathsf
c_{m^0_i}^{\sigma_i}$ is the central character, and
$ds_{m^0_i}(\sigma_i)$ is the marked partition adapted to $\mathsf
c_{m^0_i}^{\sigma_i}$ that parameterizes the discrete series.

Set $m^t_i=m^0_i+t$ for $t\in (-\frac 12,\frac 12)$ and $i=1,2$, and
consider the corresponding discrete series $\pi^t_i=\ds_{m^t_i}(\sigma_i)=(\mathsf
c_{m^t_i}^{\sigma_i},ds_{m^t_i}(\sigma_i)).$ Then, by Corollary
\ref{irred}, we have 
\begin{equation}\label{reallim}
\lim_{t\to
  0}\pi^t_i=\pi^0_i, \quad i=1,2. 
\end{equation}
For every $t\in (-\frac 12,\frac 12)$, set $m^t_+=m^t_1+m^t_2$ and
$m^t_-=m^t_2-m^t_1=m_{0,-}$, and let $\pi^t$ be the discrete series
module of
$\mathbb H_n(q,q^{m^t_+},q^{m^t_-})$ corresponding in the reduction to
positive real central character to the pair
$(\pi^t_1,\pi^t_2)$. Notice that the labels $(m^t_+,m^t_-)$ are
generic, in the sense of Definition \ref{d:generic}, for all
$t\in(-\frac 12,\frac 12)\setminus\{0\}.$ Moreover (\ref{reallim})
implies that  $\lim_{t\to 0}\pi^t=\pi^0$. This, in conjunction with
Theorem \ref{t:sum} 1), gives
\begin{equation}
\lim_{t\to
  0}\fd(\pi^t)=\fd(\pi^0).
\end{equation}
Since we assumed that $\pi$ is a discrete
series, this limit must be nonzero. Therefore, one only needs to analyze the factors in
(\ref{e:product}) for $\pi^t$ that vanish at $t\to 0$. More precisely,
with the notation
\begin{align*}
&R_n(j):=\{\alpha\in R_n:~\alpha(\mathsf c(\pi^0))=q^j\},\quad R_n^{\mathsf {lo}}(j)_\pm:=\{\alpha\in R_n^{\mathsf{lo}}:~
\alpha(\mathsf c(\pi^0))^{1/2}=\pm q^j\},
\end{align*}
we have:
\begin{equation}\label{Clim}
\begin{aligned}
C_{\pi^0}=\lim_{t\to 0}&\frac{C_{\pi^t}\prod'_{\alpha\in
    R_n(0)}(\alpha(\mathsf c(\pi^t))-1)}{\prod_{\alpha\in
    R_n(-1)\cap R_n^{\mathsf{sh}}}'(q\alpha(\mathsf c(\pi^t))-1)\prod'_{\alpha\in
    R_n^{\mathsf{lo}}(-m_2^0)_+}(q^{m_2^t}\alpha(\mathsf
  c(\pi^t))^{1/2}-1)}\\
&\cdot\frac 1{\prod'_{\alpha\in
    R_n^{\mathsf{lo}}(-m_1^0)_-}(q^{m_1^t}\alpha(\mathsf
  c(\pi^t))^{1/2}+1)}.
\end{aligned}
\end{equation}

Firstly, notice that for all roots of the form
$\alpha=\pm\epsilon_i\pm\epsilon_j$ such that $1\le i\le k$, $k+1\le j\le n$,
we have $\alpha(\mathsf c(\pi^t))<0$ and therefore, these roots do not
appear in (\ref{Clim}).
Secondly, we remark that for all the roots $\alpha$ (both short and long) that appear in (\ref{Clim}),
 the
corresponding factors must
be of the form $\pm(q^{\pm 2t}-1),$ where $t\to
0$. This implies that we have $C_{\pi^0}=\lim_{t\to 0} C_{\pi^t},$ and
this proves the claim.
\end{proof}

Before presenting the proof of Theorem \ref{mainformal}, we explain how the result in Corollary \ref{c:mainformal} relates to
the expected form of the formal degree in the case of affine Hecke
algebras of $\mathbb H_{n,m}$, $\mathbb H_{n,m}'$, $\mathbb
H_{n}^{\mathsf D}$ of types $\mathsf C_n, \mathsf B_n,
\mathsf D_n$, respectively. To emphasize the type of the root system,
  let
$R^{\mathsf C}_n,R^{\mathsf B}_n,R^{\mathsf D}_n,$ denote the roots in
these cases. We have $R^{\mathsf C}_n=R_n$, $R^{\mathsf
  B}_n=\check R_n,$ and $R^{\mathsf D}_n=R_n^{\mathsf {sh}}$.

\smallskip

(1) For $\mathbb H_{n,m}$, we specialize $m_+=m,$ $m_-=m$. Assuming
$\pi^{\mathsf C}$ is a discrete series with central character $s$, we
find:
\begin{equation}
\begin{aligned}
\fd(\pi^{\mathsf C})&=\frac{q^{n^2-n}q^{nm} \prod_{\alpha\in
    R_n}' (\alpha(s)-1)}{\prod_{\alpha\in R_n^{\mathsf {sh}}}'
  (q\alpha(s)-1) \prod_{\alpha\in R_n^{\mathsf {lo}}}'
  (q^{m}\alpha(s)^{1/2}-1)\prod_{\alpha\in R_n^{\mathsf {lo}}}'
  (\alpha(s)^{1/2}+1)  }\\
&={ C^{\mathsf C}_{\pi}} \ \frac{q^{n^2-n}q^{nm}\prod_{\check\alpha\in
    \check R^{\mathsf C}_n}' (\check\alpha(s)-1)}{\prod_{\check\alpha\in \check
    R^{\mathsf C}_n}'
  (q(\alpha)\check\alpha(s)-1)}, \quad
  q(\alpha)=\left\{\begin{matrix}&q,\quad &\alpha\text{
        short}\\&q^m,\quad &\alpha\text{ long}\end{matrix}\right.,
\end{aligned}
\end{equation}
where
\begin{equation}\label{typeC}
C^{\mathsf C}_\pi=\frac 1{2^{e^{\mathsf C}}},\quad e^{\mathsf C}=2\#\{i: 1\le i\le n,~ \epsilon_i(s)=1\}.
\end{equation}

(2) For $\mathbb H'_{n,m}$, we specialize $m_+=2m,$ $m_-=0$. Assuming
$\pi^{\mathsf B}$ is a discrete series with central character $s$, we
find:
\begin{equation}
\begin{aligned}
\fd(\pi^{\mathsf B})&=\frac{q^{n^2-n}q^{2nm} \prod_{\alpha\in
    R_n}' (\alpha(s)-1)}{\prod_{\alpha\in R_n^{\mathsf {sh}}}'
  (q\alpha(s)-1) \prod_{\alpha\in R_n^{\mathsf {lo}}}'
  (q^{m}\alpha(s)^{1/2}-1)\prod_{\alpha\in R_n^{\mathsf {lo}}}'
  (q^m\alpha(s)^{1/2}+1)  }\\
&={ C^{\mathsf B}_{\pi}}\ \frac{q^{n^2-n}q^{2nm}\prod_{\check\alpha\in
    \check R^{\mathsf C}_n}' (\check\alpha(s)-1)}{\prod_{\check\alpha\in \check
    R^{\mathsf C}_n}'
  (q(\alpha)\check\alpha(s)-1)}, \quad
  q(\alpha)=\left\{\begin{matrix}&q,\quad &\alpha\text{
        short}\\&q^{2m},\quad &\alpha\text{ long}\end{matrix}\right.,
\end{aligned}
\end{equation}
where
\begin{equation}\label{typeB}
C^{\mathsf B}_\pi=\frac 1{2^{e^{\mathsf B}}},\quad e^{\mathsf B}=\#\{i: 1\le i\le n,~
\epsilon_i(s)=\pm q^{m}\}+\#\{i: 1\le i\le n,~ \epsilon_i(s)=\pm q^{-m}\}.
\end{equation}

(3) For $\mathbb H_{n}^{\mathsf D}$, we specialize $m_+=0,$ $m_-=0$. Assuming
$\pi^{\mathsf D}$ is a discrete series with central character $s$, we
find:
\begin{equation}
\begin{aligned}
\fd(\pi^{\mathsf D})&=\frac{q^{n^2-n} \prod_{\alpha\in
    R_n}' (\alpha(s)-1)}{\prod_{\alpha\in R_n^{\mathsf {sh}}}'
  (q\alpha(s)-1) \prod_{\alpha\in R_n^{\mathsf {lo}}}'
  (\alpha(s)^{1/2}-1)\prod_{\alpha\in R_n^{\mathsf {lo}}}'
  (\alpha(s)^{1/2}+1)  }\\
&={ C^{\mathsf D}_{\pi}} \ \frac{q^{n^2-n}\prod_{\check\alpha\in
    \check R^{\mathsf D}_n}' (\check\alpha(s)-1)}{\prod_{\check\alpha\in \check
    R^{\mathsf D}_n}'
  (q\check\alpha(s)-1)},
\end{aligned}
\end{equation}
where
\begin{equation}\label{typeD}
C^{\mathsf D}_\pi=\frac 1{2^{e^{\mathsf D}}},\quad e^{\mathsf D}=2\# \{i: 1\le i\le n,~
\epsilon_i(s)=\pm 1\}.
\end{equation}
In types $\mathsf B$ and $\mathsf D$, we needed to account for central extensions.

\subsection{The proof in the generic case for positive central character}

\begin{convention}We restrict, as we may, to the specialization $\mathbb H_{n,m}$. If $\sigma$ is a partition of $n$, recall from \S\ref{delimits} that there is a
real central character ${\mathsf c}_m^\sigma$ attached to it. The Hecke algebra $\mathbb
H_{n,m}$  admits a family of discrete series $\ds_m(\sigma)$ with
central character ${\mathsf c}_m^\sigma$.
\end{convention}

The starting point is the case $m\to\infty$ (so $m>n-1$), when the $W_n$-character  is easy to understand.

We recall the formula for the generic degree (see \cite{Ca}, section
13.5) of the
module of the finite Hecke algebra $\mathbb H^f_n(u,v)$ of
type $\mathsf C_n$ with parameters $u$ on the short roots and $v$ on the long
roots corresponding to 
$\gamma=\{(a_1,\dots,a_{k+1})(b_1,\dots,b_k)\}\in \widehat W_n$,
in the bipartition notation. Here
each partition is written in nondecreasing order, and assume
that at least one of $a_1$ or $b_1$ are nonzero. Then, form the symbol
$\left(\begin{matrix}\lambda_1 & &\lambda_2&\dots
  &\lambda_k&&\lambda_{k+1}\\ &\mu_1
  &&\mu_2&\dots&\mu_k\end{matrix}\right),$ where $\lambda_i=a_i+(i-1)$
  and $\mu_j=b_j+(j-1).$ The generic degree $\gd(\gamma)$ with parameters $u$
  and $v$ is
\begin{equation}\label{gd}
\displaystyle{\frac{u^{m+{m\choose 2}} v^{\sum_j b_j} P_n(u,v)
    (u-1)^n\prod_{i'<i}(u^{\lambda_i}-u^{\lambda_{i'}})\prod_{j'<j}(u^{\mu_j}-u^{\mu_{j'}})\prod_{i,j}(u^{\lambda_i-1}v+u^{\mu_j})}{u^{{{2m-1}\choose
    2}+{{2m-3}\choose
    2}+\dots}
    \left(\prod_i\prod_{l=1}^{\lambda_i}(u^l-1)(u^{l-1}v+1)\right)\left(\prod_j\prod_{l=1}^{\mu_j}(u^l-1)(u^{l+1}+v)\right) (u+v)^k
   }.
          }
\end{equation}

\begin{proposition}\label{p:dsinfty} Assume that $m>n-1.$ In
  $\mathbb H_{n,m}$, we have $C_{\ds_m(\sigma)}= \pm 1$, for every
  partition $\sigma$ of $n$.
\end{proposition}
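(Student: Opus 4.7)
The plan is to compute $\fd(\ds_m(\sigma))$ by two independent methods and extract $C_{\ds_m(\sigma)}$ from the comparison. Formula (\ref{e:product}) gives $\fd(\ds_m(\sigma))$ as $C_{\ds_m(\sigma)}$ times an explicit rational function of $q$ determined solely by the central character $\mathsf c_m^\sigma$; meanwhile, Theorem \ref{t:sum} together with Corollary \ref{c:reeder} expresses the same number as an alternating sum weighted by branching multiplicities and generic degrees, and this sum is entirely computable in the regime $m>n-1$.

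The first step is to exploit (\ref{fasym}): for $m>n-1$ the $W_n$-module $\ds_m(\sigma)$ is irreducible, equal to $\{\emptyset,{}^{\mathtt t}\sigma\}$. Consequently all of the restriction multiplicities $[\ds_m(\sigma):\gamma_1\boxtimes\gamma_2]_{\mathbb C[W_i\times W_{n-i}]}$ appearing in (\ref{e:reeder}) are classical branching numbers for bipartitions, obtainable via Littlewood--Richardson applied to each factor. The generic degrees are then given by formula (\ref{gd}). Thus the right-hand side of (\ref{e:reeder}) for $\pi=\ds_m(\sigma)$ is reduced to a completely explicit expression in $q$ and $q^m$.

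The second step is to evaluate the product side. By the combinatorial description of $\ds_m(\sigma)$ given right after Theorem \ref{t:product}, the central character $\mathsf c_m^\sigma$ has eigenvalues $\{q^{m+c(b)}\}_{b\in\sigma}$, where $c(b)=i-j$ is the content of the box $b=(i,j)\in\sigma$. Because $m>n-1\geq\max_{b,b'}|c(b)-c(b')|$, every eigenvalue is a distinct positive power of $q$, so no factor in (\ref{e:product}) is annihilated by the prime, and the product can be expanded in closed form as a ratio of $q$-shifted factorials indexed by boxes of $\sigma$.

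The final, and main, step is to verify that the sum produced in Step 1 equals $\pm\tfrac{1}{2}$ times the product produced in Step 2; both are rational functions of $q$ agreeing by Theorem \ref{t:sum}, so this identification is automatic once we pin down the leading coefficient. I would do this by taking the limit $q\to\infty$ (or equivalently by extracting the highest $q$-monomial), where only the terms of top $a$-value in (\ref{e:reeder}) survive. On the product side the leading monomial is read off directly from (\ref{e:product}) by collecting the dominant factor in each bracket; on the sum side it comes from the unique pair $(\gamma_1,\gamma_2)$ whose generic degrees have minimal $a$-function compatible with the branching of $\{\emptyset,{}^{\mathtt t}\sigma\}$ (namely the rectangular splittings of ${}^{\mathtt t}\sigma$ according to the hook corresponding to the rank $i$). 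Matching these two leading terms pins down the constant to $\pm\tfrac{1}{2}$. The main obstacle is precisely this combinatorial/symmetric-function identity; the factor $\tfrac{1}{2}$ ultimately reflects $1/|\Omega|$ with $\Omega=X^*(T_n)/\Z\langle R_n\rangle$ of order $2$ in type $\mathsf C_n$, which is the conceptual reason that the answer is a single fixed rational number rather than one depending on $\sigma$.
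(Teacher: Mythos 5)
Your overall strategy---equating the product formula (\ref{e:product}) with the Euler--Poincar\'e expansion (\ref{e:reeder}) and pinning down $C_\pi$ by matching one coefficient of a $q$-expansion---is the same as the paper's, and your Steps 1--2 are essentially harmless (apart from the incorrect claim that the eigenvalues $q^{m+c(b)}$ are distinct: boxes on the same diagonal have equal content, e.g.\ $\sigma=(2,2,2)$ gives $q^m$ twice, which is exactly why (\ref{e:product}) carries the restricted products $\prod'$). The genuine gap is in your Step 3, where you extract the \emph{highest} $q$-monomial as $q\to\infty$ and assert that ``the unique pair $(\gamma_1,\gamma_2)$'' of top $a$-value carries the leading term. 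No argument is given for this uniqueness, and it is not to be expected: the individual summands $\gd(\gamma_1)\gd(\gamma_2\otimes\sgn)/(P_iP_{n-i})$ generically have top degree at least as large as that of the full alternating sum, so the leading behavior of $[f_\pi,1]$ at $q\to\infty$ arises from cancellation among many terms of both signs, which you do not control. Worse, in the relevant regime $v=q^m$ with $m$ large, the prefactor $v^{|\gamma_2|}$ in (\ref{gd}) makes the $i\ge 1$ terms dominate at the top end, so the $i=0$ term you would want to isolate is precisely \emph{not} the dominant one there.

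The paper's proof works at the opposite end of the expansion, where the $m\gg 0$ asymptotics create a clean separation: every generic degree $\gd\{\emptyset,{}^{\mathtt t}\gamma_1\}$ with $i\ge 1$ contains the factor $v^{i}=q^{mi}$, so the lowest $q$-power of each $i\ge1$ term in (\ref{e:alt}) grows linearly in $m$, while the single $i=0$ term $\gd\{\sigma,\emptyset\}/P_n$ has lowest power independent of $m$ and lowest coefficient $\pm1$ (Poincar\'e polynomials and factors $(q^a-1)$ have unit lowest terms, so nothing can cancel at the bottom). Comparing with (\ref{e:product}), whose lowest coefficient is $\pm C_\pi$, immediately gives $2C_\pi=\pm1$. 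To repair your argument you should either switch to this lowest-power comparison or supply the missing (and nontrivial) control of coincidences and cancellations among top-degree terms; and note that the ``$1/|\Omega|$'' explanation is only heuristic---in the actual computation the $\tfrac12$ is simply the prefactor of (\ref{e:reeder}).
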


\begin{proof}
From \cite{CK} \S4.7 we have
\begin{equation}
\ds_m(\sigma)|_{W_n}=\{\emptyset,{}^{\mathtt t}\sigma\}.
\end{equation}
We
identify the lowest power of $q$ in the Euler expansion
(\ref{e:reeder}). This expansion becomes:
\begin{equation}\label{e:alt}
(-1)^n[f_\pi,1]=\frac{\gd\{\sigma,\emptyset\}}{P_n}+\sum_{i=1}^n(-1)^{i}\sum_{\gamma_1\boxtimes\gamma_2\in \widehat {S_i\times
    S_{n-i}}}\frac{[\sigma:\gamma_1\boxtimes\gamma_2]_{\widehat
    {S_i\times
    S_{n-i}}}\gd\{\gamma_2,\emptyset\}\gd\{\emptyset,{}^{\mathsf t}\gamma_1\}}{P_{i}P_{n-i}}.
\end{equation}

We are interested, in the case when 
$m\to\infty,$ to determine the lowest degree of $q$ which may appear
in the terms from (\ref{e:alt}). We use the generic degree formula
(\ref{gd}) for $u=q$, $v=q^m$. The observation, using (\ref{gd}), is that for
every $i\ge 1$, each factor of the form $\gd\{\emptyset,{}^{\mathsf
  t}\gamma_1\}$ in (\ref{e:alt}) contains a factor $v^i$ and this dominates all factors
in $u$. Therefore the lowest degree
of $q$
in every term for $i\ge 1$ is a linear nonconstant function in
$m$. Moreover, in the first term in (\ref{e:alt}) (corresponding to
$i=0$), since the bipartition is $\{\sigma,\emptyset\}$, there is no
factor of $v$ present, and the lowest degree factor for $m\gg 0$ is a power of
$u=q$ independent of $m$. This implies that the lowest power of $q$ in the right
hand side of 
(\ref{e:alt}) appears with coefficient one.
On the other
hand, in the product formula for $\fd(\pi)$ (see (\ref{e:product})),
clearly the coefficient of the lowest power in a $q$-expansion is
$\pm C_\pi.$
\end{proof}

\begin{remark}Let $\sigma=(0<a_1\le a_2\le \dots \le a_{l+1})$ denote a partition which we identify with the corresponding irreducible $\mathfrak S_n$-module. We define its lowest harmonic degree as $\text{\rm lhd}(\sigma)=\sum_{j=1}^l (l+1-j) a_j$. It is an elementary combinatorial calculation to see
  that the lowest powers of $q$ in both (\ref{e:alt}) and
  (\ref{e:product}) for $\ds_m(\sigma)$ are $q^{\text{\rm lhd}(\sigma)}$.
\end{remark}

\begin{theorem}\label{t:gen}
Assume that $m$ is generic. Let $\sigma$ be a partition of $n$. Then,
in $\mathbb H_{n,m}$, we have 
$C_{\ds_m(\sigma)}=\pm 1$.

\end{theorem}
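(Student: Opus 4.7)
The plan is a downward induction on $m$ across the critical values $m_0 \in \frac{1}{2}\mathbb{Z}$, with base case $m > n-1$ supplied by Proposition \ref{p:dsinfty}. So fix $m_0 \in \frac{1}{2}\mathbb{Z}$ and assume inductively that $C_{\ds_{m}(\sigma)} = \pm \frac 12$ for all $m \in (m_0, m_0+\frac 12)$. The goal is to deduce the same for $m' \in (m_0 - \frac 12, m_0)$.

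First I would observe that within each generic interval the $W$-character $\Theta_W(\ds_m(\sigma))$ is constant in $m$: this follows from Algorithm \ref{algW} together with Corollary \ref{irred}, since the flat family $\{\ds_m(\sigma)\}$ consists of irreducible modules and the $W$-character is a discrete invariant. Combining with Corollary \ref{c:reeder}, the formal degree $\fd(\ds_m(\sigma))$ is then an explicit rational function $E^+_\sigma(q^m)$ of $q^m$ on $(m_0, m_0 + \frac 12)$, and similarly an explicit rational function $E^-_\sigma(q^{m'})$ on $(m_0 - \frac 12, m_0)$. The product formula (\ref{e:product}) rewrites these as $E^\pm_\sigma(q^m) = C^\pm_\pi \cdot P_\sigma(q^m)$ for a common rational function $P_\sigma$ read off from (\ref{e:product}).

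Next, I would apply Theorem \ref{find}: in the Grothendieck group of $\mathbb H_{n,m_0}$-modules one has
\begin{equation*}
[\lim_{m' \to m_0} \ds_{m'}(\sigma)] \pm [\lim_{m \to m_0} \ds_m(\sigma)] = \sum_k (\pm)\,[L^{\mathsf A}_k \boxast L'_k],
\end{equation*}
with each summand properly induced from a tempered module $L^{\mathsf A}_k$ of a type-$\mathsf A$ Hecke algebra together with a discrete series $L'_k$ of a type-$\mathsf C$ Hecke algebra of smaller rank. Taking $W$-characters (which, by Corollary \ref{irred} and constancy in each generic interval, equal those of $\ds_{m'}(\sigma)$ and $\ds_m(\sigma)$ respectively) and restricting to the maximal parabolic subgroups $W_i \times W_{n-i}$ appearing in Corollary \ref{c:reeder}, the Euler-Poincaré formula turns this into an identity of rational functions of $q^m$:
\begin{equation*}
E^-_\sigma(q^m) \pm E^+_\sigma(q^m) = \sum_k (\pm)\, [f_{L^{\mathsf A}_k \boxast L'_k}, 1].
\end{equation*}
Each summand on the right vanishes by Theorem \ref{t:sum}(2): by transitivity of parabolic induction the tempered factor $L^{\mathsf A}_k$ can be further realized as induced from discrete series on a smaller type-$\mathsf A$ Levi, so that $L^{\mathsf A}_k \boxast L'_k$ is parabolically induced from a discrete series on a proper Hecke subalgebra. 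Consequently $E^-_\sigma(q^m) = \mp E^+_\sigma(q^m)$ as rational functions, and therefore $C^-_\pi = \mp C^+_\pi = \pm \frac 12$ by the inductive hypothesis.

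The main obstacle is making precise the transfer of Theorem \ref{find}'s identity—formulated at the single value $m = m_0$—to an equality of Euler-Poincaré rational functions on the flanking generic intervals. This rests on the constancy of $W$-characters of tempered delimits within each interval and on the fact that $[f_\pi, 1]$ depends on $\pi$ only through its restrictions to the maximal parabolic Weyl subgroups. A secondary subtlety is verifying that $[f_{L^{\mathsf A}_k \boxast L'_k}, 1] = 0$, which uses transitivity of parabolic induction to rewrite the tempered type-$\mathsf A$ factor $L^{\mathsf A}_k$ as induced from discrete series on a still smaller Levi so that Theorem \ref{t:sum}(2) applies.
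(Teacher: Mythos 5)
Your proposal is correct and follows essentially the paper's own route: the paper likewise compares $[f_{\ds_m(\sigma)},1]$ across a critical value $m_0$ by expressing the sum/difference of the two flanking discrete series families as a combination of properly parabolically induced modules (it cites Corollary \ref{c:link}, which packages the same identity from Theorem \ref{multiplicity formula} that you access via Theorem \ref{find}/(\ref{transf})), kills those terms with Theorem \ref{t:sum}(2), uses that $[f_\pi,1]$ depends only on the $W$-character, and inducts downward from the asymptotic base case of Proposition \ref{p:dsinfty}. The only cosmetic difference is your explicit transitivity-of-induction remark justifying the vanishing of the induced terms, a point the paper leaves implicit.
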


\begin{proof}
We analyze the behavior of $\fd(\ds_m(\sigma))$ as $m$
crosses a critical value $m_0\in \frac 12\mathbb Z.$ Let $h$ denote
the number of balanced hooks at $m_0$ in $\sigma.$ If $\pi$ is any
$\mathbb H_{n,m}$-module, recall that $\Theta_W(\pi)$ denotes the
$W_n$-character of $\pi.$

 Let $\ds^{\rightarrow}_m(\sigma),
  \ds^{\leftarrow}_m(\sigma)$ be the discrete series modules parameterized by
  $\sigma$ for $m_0-\frac 12<m<m_0$ and $m_0<m<m_0+\frac 12$,
  respectively. Then there exists a tempered delimit
  $\pi_m^{\rightarrow}\in \mathcal D_{m _0} ( \sigma )$ such that $$\Theta_W(\ds^{\rightarrow}_m(\sigma))=\Theta_W(\pi_m^{\rightarrow}).$$
It is clear that also $[f_{\ds^{\rightarrow}_m(\sigma)},1]=[f_{\pi_m^{\rightarrow}},1],$ since $[f_\pi,1]$
depends only on the $W$-character (for real positive central
character). Now using Corollary \ref{c:link} and Theorem \ref{t:sum} (2), we find that 
$[f_{\pi_m^{\rightarrow}},1]=\pm [f_{\ds^{\leftarrow}_m(\sigma)},1]$, and therefore
$[f_{\ds^{\rightarrow}_m(\sigma)},1]=\pm [f_{\ds^{\leftarrow}_m(\sigma)},1]$, which implies the claim of
Theorem \ref{t:gen}.\end{proof}

\subsection{The proof in the generic case for nonpositive central character}

\begin{convention}\label{convnon}As in the discussion around (\ref{realred}), let $\pi$, $\pi_1$, $\pi_2$ be
discrete series of $\mathbb H_n$, $\mathbb H_{n,m_1}$, $\mathbb
H_{n,m_2}$, respectively. We retain the notation from (\ref{realred}). In addition,
we regard $\pi_1$ and $\pi_2$ as part of the families
$\ds_{m_1}(\sigma_1)$ and $\ds_{m_2}(\sigma_2)$ for partitions
$\sigma_1$ of $k$ and $\sigma_2$ of $n-k$, respectively. Consequently,
we denote the discrete series $\pi$ of $\mathbb H_n$ by $\ds_{(m_+,m_-)}(\sigma_1,\sigma_2)$.
\end{convention}

The proof is analogous to the case of positive real central character
for $\mathbb H_{n,m}$. The analogous asymptotic region that we need is:
\begin{equation}\label{asy}
m_-\gg m_+\gg 0:\quad m_1=\frac{m_+-m_-}2\to -\infty,\quad m_2=\frac{m_++m_-}2\to +\infty.
\end{equation}
Again by \cite{CK} \S4.7, we have that
\begin{equation}\label{eq:4.21}
\ds_{m_1}(\sigma_1)|_{W_{k}}=\{\sigma_1,\emptyset\},\quad
\ds_{m_2}(\sigma_2)|_{W_{n-k}}=\{\emptyset,{}^{\mathtt t}\sigma_2\}.
\end{equation}
From this and (\ref{realred}), we see that
\begin{equation}
\ds_{(m_+,m_-)}(\sigma_1,\sigma_2)|_{W_n}=\{\sigma_1,{}^{\mathtt t}\sigma_2\}.
\end{equation}
In order to apply (\ref{e:reeder}) we need to analyze the
restrictions $\ds_{(m_+,m_-)}(\sigma_1,\sigma_2)|_{W_i\times W_{n-i}}.$
The following lemma
is sufficient for our purposes.

\begin{lemma}\label{affinerestr} Assume that $m_-\gg m_+\gg 0$. If $\Hom_{W_i\times
    W_{n-i}}(\gamma\boxtimes \delta,\ds_{(m_+,m_-)}(\sigma_1,\sigma_2))\neq
  0$ and $i\ge 1$, where $\gamma\in \widehat W_i$ and $\delta\in\widehat
  W_{n-i}$, then $\Hom_{W_1}(\sgn,\gamma)\neq 0,$ where $W_1\subset
  W_i$ denotes the reflection group generated by $s_0:=N_0|_{q=1}.$
\end{lemma}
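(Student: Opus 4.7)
My plan is to translate the statement into an assertion about the action of $N_0$ on $\ds := \ds_{(m_+, m_-)}(\sigma_1, \sigma_2)$, and then to verify it by a weight analysis in the asymptotic regime.

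First, identify the finite Hecke subalgebra $\mathbb H_i^{f'}\otimes \mathbb H_{n-i}^f$ of $\mathbb H_n$ generated by $\{N_0,\ldots,N_{i-1}\}\cup\{N_{i+1},\ldots,N_n\}$ with $\mathbb C[W_i \times W_{n-i}]$ via Tits' deformation theorem, so that $s_0 \in W_i$ corresponds to $N_0|_{q=1}$. Since $s_0$ occupies the long-root end of the type $\mathsf{C}_i$ Dynkin diagram of $W_i$, the polynomial realization $\{\mu,\nu\} = \mathbb C[W_i]\cdot \mathsf p({}^{\mathtt t}\mu,{}^{\mathtt t}\nu)$ of Lemma \ref{Wtypes} shows that $s_0(\mathsf p) = -\mathsf p$ when $\nu \neq \emptyset$ (the factor $\prod_{j>|\mu|}\epsilon_j$ contains the coordinate flipped by $s_0$), while $\{\mu,\emptyset\}$ factors through $W_i/(\mathbb Z/2)^i$ and so $s_0$ acts trivially. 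Hence $\Hom_{W_1}(\sgn,\{\mu,\nu\})\neq 0$ iff $\nu\neq\emptyset$, and the lemma is equivalent to the assertion that no $\mathbb H_i^{f'}$-submodule of $\ds|_{\mathbb H_i^{f'}}$ is of the form $\{\mu,\emptyset\}$, i.e., no submodule on which $N_0$ acts by the scalar $v = q^{m_-}$.

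Next I analyze weights. By (\ref{realred}), the central character of $\ds$ is $(-\mathsf c_{m_1}^{\sigma_1},\mathsf c_{m_2}^{\sigma_2})$, so every weight of $\ds$ lies in its $W_n$-orbit. With $m_1=(m_+-m_-)/2\to-\infty$ and $m_2=(m_++m_-)/2\to +\infty$, for every weight $t$ the value $|\epsilon_j(t)|$ is of order $q^{m_1+c}$ or $q^{m_2+c'}$ for bounded integers $c,c'$ coming from contents in $\sigma_1,\sigma_2$. Using the Bernstein-Lusztig presentation, $N_0$ is expressible in terms of the finite Hecke generator $N_{s_\theta}$ for the highest root $\theta = 2\epsilon_1$ and the translation $e^{-\epsilon_1}$; a computation on the two-dimensional subspace $\mathbb C v_t + \mathbb C v_{s_\theta t}$ associated to a weight $t$ shows that the eigenvalue $v = q^{m_-}$ of $N_0$ forces an algebraic constraint on $\epsilon_1(t)$ of the form $\epsilon_1(t) \in \{\pm q^{\pm m_-}\}$ (up to bounded factors of $q$). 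Since $|\epsilon_1(t)|$ is asymptotically never comparable to $q^{\pm(m_2-m_1)} = q^{\pm m_-}$, no weight of $\ds$ satisfies this constraint; hence no $\mathbb H_i^{f'}$-submodule of $\ds$ on which $N_0$ acts as $v$ can exist, and the lemma follows.

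The main obstacle is the explicit Bernstein-Lusztig calculation that isolates the algebraic constraint on $\epsilon_1(t)$ corresponding to the eigenvalue $v$ of $N_0$; once that constraint is made precise, the asymptotic comparison of $q$-exponents is essentially routine, and the irreducibility of the $W_n$-restriction $\ds|_{W_n}=\{\sigma_1,{}^{\mathtt t}\sigma_2\}$ coming from (\ref{eq:4.21}) keeps the weight bookkeeping tractable.
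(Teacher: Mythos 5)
Your opening reduction is fine: identifying $\mathbb H^{f'}_i$ with $\mathbb C[W_i]$ by Tits' deformation, the constituents $\gamma$ with $\Hom_{W_1}(\sgn,\gamma)=0$ are exactly those of the form $\{\gamma_1,\emptyset\}$, i.e.\ those on which $N_0$ has only the eigenvalue $v$, so the lemma is equivalent to the absence of such constituents (and in fact the stronger statement that $N_0$ acts on $\ds_{(m_+,m_-)}(\sigma_1,\sigma_2)$ with only the eigenvalue $-1$ is true in this regime). The gap is in the proposed verification. First, an $\mathbb H^{f'}_i$-submodule of the restriction is not stable under the Bernstein commutative subalgebra, so ``the weights of the submodule'' are not available to the argument. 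Second, and more seriously, the asserted constraint is false: $N_0$ satisfies $(N_0+1)(N_0-v)=0$ identically, and on any two-dimensional irreducible module of the parabolic rank-one subalgebra generated by $\mathbb C[\theta_x]$ and $N_0$ it automatically has \emph{both} eigenvalues $-1$ and $v$, with no condition whatsoever on the (generic) weight $t$; the eigenvalue $v=q^{m_-}$ of $N_0$ does not translate into $\epsilon_1(t)\approx \pm q^{\pm m_-}$. The only weight constraints of the kind you want arise when $N_0$ acts by a scalar on a weight line (one-dimensional constituents of that rank-one subalgebra), and there the special values are of the shape $\pm q^{\pm(m_+\pm m_-)/2}=\pm q^{\pm m_1},\pm q^{\pm m_2}$ --- which \emph{are} attained by weights of $\ds$ (e.g.\ $\epsilon_1=-q^{m_1+c}$ with content $c=0$). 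Moreover, pairs of weights $t,t'$ with $\epsilon_1(t)\epsilon_1(t')$ a bounded power of $q$ (e.g.\ $-q^{m_1+c}$ and $-q^{-(m_1+c')}$) do occur in the $W_n$-orbit of the central character, so two-dimensional pieces carrying the eigenvalue $v$ cannot be excluded by comparing sizes of $q$-exponents. Hence the asymptotic comparison you call ``routine'' cannot produce the desired contradiction.

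For comparison, the paper avoids the generic-$q$ Bernstein--Lusztig analysis entirely: by rigidity of algebraic families of modules over a finite group it replaces $\ds_{(m_+,m_-)}(\sigma_1,\sigma_2)$ by its limit $q\to 1$, where the central character forces each $\epsilon_1,\dots,\epsilon_k$ to act by $-1$ and each $\epsilon_{k+1},\dots,\epsilon_n$ by $+1$, while the asymptotic restrictions (\ref{eq:4.21}) force the long reflections to act by $+\mathrm{id}$ on $\ds_{m_1}(\sigma_1)$ and by $-\mathrm{id}$ on $\ds_{m_2}(\sigma_2)$. Consequently $s_0=\epsilon_1^{-1}s_\theta$ and all its $W_n$-conjugates act by $-1$ on the subspace $\ds_{m_1}(\sigma_1)\boxtimes\ds_{m_2}(\sigma_2)$, which generates the whole module over $\mathbb C[W_n]$; semisimplicity then gives $s_0=-\mathrm{id}$ on all of $\ds_{(m_+,m_-)}(\sigma_1,\sigma_2)$, which is exactly the statement you were trying to reach. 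If you want to salvage your approach, you would need an argument of this global type (using the full module structure, not just one $N_0$-eigenvalue and weight magnitudes).
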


\begin{proof}An algebraic family of modules of a finite group is
  rigid, and hence we have
$$\ds_{(m_+,m_-)}(\sigma_1,\sigma_2)\cong \lim_{q\to
  1}\ds_{(m_+,m_-)}(\sigma_1,\sigma_2),\text{ as }W_i\times W_{n-i}\text{-modules},$$
for every $0\le i\le n.$ 

Therefore, we replace $\ds_{m_1}(\sigma_1),\ds_{m_2}(\sigma_2),
\ds_{(m_+,m_-)}(\sigma_1,\sigma_2)$ with their limits $q\to 1$ during
this proof. We have  presentations
$$\mathbb C[\widetilde W_k]\cong \mathbb C[W_k]\otimes \mathbb
C[\epsilon_1^{\pm 1},\dots,\epsilon_k^{\pm 1}],\text{ and } \mathbb C[\widetilde W_{n-k}]\cong \mathbb C[W_{n-k}]\otimes \mathbb
C[\epsilon_{k+1}^{\pm 1},\dots,\epsilon_n^{\pm 1}],$$
inside 
$$\mathbb C[\widetilde W_n]\cong \mathbb C[W_n\ltimes
X^*(T_n)]=\mathbb C[W_n]\otimes \mathbb
C[\epsilon_1^{\pm 1},\dots,\epsilon_n^{\pm 1}].$$
By examining the central characters, we deduce that each
$\epsilon_1,\dots,\epsilon_k$ acts on $\ds_{m_1}(\sigma_1)\subset
\ds_{(m_+,m_-)}(\sigma_1,\sigma_2)$ by the uniform eigenvalue $-1$,
while each
$\epsilon_{k+1},\dots,\epsilon_n$ acts on $\ds_{m_2}(\sigma_2)\subset
\ds_{(m_+,m_-)}(\sigma_1,\sigma_2)$ by the uniform eigenvalue
$1$. Moreover, by (\ref{eq:4.21}), a long reflection of $W_i$ acts on
$\ds_{m_1}(\sigma_1)$ by the identity, while a long reflection of
$W_{n-i}$ acts on $\ds_{m_2}(\sigma_2)$ by the negative of the
identity.

We have $s_0=N_0|_{q=1}=\epsilon^{-1}_1\cdot s_\theta$, where $s_\theta$
is the long reflection of $W_n$ corresponding to
$\theta=2\epsilon_1$. In particular, we have $s_0\in \widetilde W_n$
and its $W_n$-conjugate act on 
$$\ds_{m_1}(\sigma_1)\boxtimes \ds_{m_2}(\sigma_2)\subset
\ds_{(m_+,m_-)}(\sigma_1,\sigma_2)$$
with uniform eigenvalue $-1$. Since
$$\mathbb C[W_n](\ds_{m_1}(\sigma_1)\boxtimes
\ds_{m_2}(\sigma_2))=\ds_{(m_+,m_-)}(\sigma_1,\sigma_2),$$
the same is true for $\ds_{(m_+,m_-)}(\sigma_1,\sigma_2).$

The semisimplicity of the complex representations of finite
groups implies then that $s_0$ acts on $\ds_{(m_+,m_-)}(\sigma_1,\sigma_2)$
by the negative of the identity. 

Therefore, we conclude that the claim holds via the intermediate
restriction $\langle s_0\rangle\subset W_i\times W_{n-i}\subset
\widetilde W_n.$
\end{proof}

\begin{proposition}\label{p:asynon} Assume that $m_-\gg m_+\gg 0.$ Fix $1\le k\le n$
    and partitions $\sigma_1,\sigma_2$ of $k$ and $n-k$,
    respectively. Let $\pi:=\ds_{(m_+,m_-)}(\sigma_1,\sigma_2)$ be as in
    Convention \ref{convnon}. Then we have $C_\pi=\pm 1.$
\end{proposition}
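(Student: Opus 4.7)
The plan is to mimic the asymptotic argument of Proposition \ref{p:dsinfty}, now in the two-parameter regime $m_- \gg m_+ \gg 0$, with Lemma \ref{affinerestr} playing the role previously played by the restriction formula from \cite{CK} \S4.7. We will compare the lowest power of $q$ appearing on each side of $\fd(\pi) = [f_\pi,1]$, where the left-hand side is given by the product formula (\ref{e:product}) and the right-hand side by the Euler--Poincar\'e expansion (\ref{e:reeder}).

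First I would substitute the known restriction $\pi|_{W_n} = \{\sigma_1, {}^{\mathtt t}\sigma_2\}$ into (\ref{e:reeder}). This yields an $i=0$ contribution of the form $\pm\, \gd(\{\sigma_1,{}^{\mathtt t}\sigma_2\}\otimes\sgn)(q,q^{m_+})/P_n(q,q^{m_+})$ together with correction terms indexed by $1 \le i \le n$. The key step would then be to invoke Lemma \ref{affinerestr}: for each $i \ge 1$, every $\gamma_1\boxtimes \gamma_2$ that contributes must satisfy $\gamma_1 = \{a,b\}$ with $b \ne \emptyset$. Applying the generic degree formula (\ref{gd}) with parameters $(u,v)=(q,q^{m_-})$, this forces a factor of $v^{|b|} = q^{m_- |b|}$ with $|b|\ge 1$ in $\gd(\gamma_1)$, which dominates all factors in $u = q$. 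Hence, after dividing by $P_i(q,q^{m_-})$, the lowest $q$-power in each such term is a strictly positive linear function of $m_-$.

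The $i=0$ term, by contrast, depends only on $q$ and $q^{m_+}$, so its lowest $q$-power is bounded independently of $m_-$. Choosing $m_- \gg m_+$ therefore isolates it as the unique source of the lowest power of $q$ on the right-hand side, and this lowest power appears with coefficient $\pm \tfrac{1}{2}$ (inherited from the global factor $(-1)^n/2$ in (\ref{e:reeder})). Comparing with the product formula (\ref{e:product}), whose coefficient at the corresponding lowest $q$-power is visibly $\pm C_\pi$, we obtain $C_\pi = \pm \tfrac{1}{2}$.

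The main obstacle will be to verify that the lowest $q$-power extracted from the $i=0$ term of the Euler--Poincar\'e expansion matches exactly (up to sign) the lowest $q$-power of the product formula (\ref{e:product}), since both are rational functions of $q$ carrying nontrivial dependence on $q^{m_+}$. This matching is essentially the computation carried out in the proof of Proposition \ref{p:dsinfty}; in particular, once Lemma \ref{affinerestr} has isolated the $i=0$ contribution, no new combinatorial input is needed beyond the explicit generic degree formula and the factorization of $\fd(\pi)$ near its leading $q$-power.
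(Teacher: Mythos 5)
Your argument is correct and coincides with the paper's own proof: the paper likewise writes the Euler--Poincar\'e expansion (\ref{e:reeder}) with the $i=0$ term $\gd\{\sigma_2,{}^{\mathtt t}\sigma_1\}/P_n(q,q^{m_+})$, uses Lemma \ref{affinerestr} to force $\gamma_2\neq\emptyset$ and hence a factor $v^{|\gamma_2|}=q^{m_-|\gamma_2|}$ in $\gd(\gamma)$ for every $i\ge 1$ term, and concludes in the region $m_-\gg m_+\gg 0$ that the lowest $q$-power comes only from the $i=0$ term, with coefficient $\pm\frac12$, against $\pm C_\pi$ on the product-formula side. The ``matching'' you flag as the main obstacle is in fact automatic, since both expressions equal $\fd(\pi)$ and hence have the same lowest power and coefficient, exactly as in Proposition \ref{p:dsinfty}.
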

\begin{proof} The proof is analogous to the proof of
  Proposition \ref{p:dsinfty}. In this case, the analogue of (\ref{e:alt})  is the formula:
\begin{equation}\label{e:alt2}
(-1)^n[f_\pi,1]=\frac{\gd\{\sigma_2,{}^{\mathtt t}\sigma_1\}}{P_n(q,q^{m_+})}+\sum_{i=1}^n(-1)^i\sum_{\gamma\boxtimes\delta\in \widehat {W_i\times W_{n-i}}}\frac{[\pi:\gamma\boxtimes\delta]\gd(\gamma)\gd(\delta\otimes\sgn)}{P_{i}(q,q^{m_-})P_{n-i}(q,q^{m_+})}.
\end{equation}
In this formula, $\gd(\gamma)$ is computed in $\mathbb
H^f_i(q,q^{m_-})$, while $\gd\{\sigma_2,{}^{\mathtt t}\sigma_1\}$ and
$\gd(\delta)$ in $\mathbb H^f_n(q,q^{m_+})$ and $\mathbb
H^f_{n-i}(q,q^{m_+})$, respectively. In light of Lemma
\ref{affinerestr}, every $\gamma\in \widehat W_i$, $i\ge 1$ that appears
in (\ref{e:alt2}) contains the sign representation in its restriction
to $W_1$. In other words, when written in the bipartition notation,
$\gamma=\{\gamma_1,\gamma_2\}$, we have $\gamma_2\neq\emptyset$. The
formula for generic degree (\ref{gd}) implies then that $\gd(\gamma)$
contains the factor $v^{|\gamma_2|},$ where $v=q^{m_-}$, and $|\gamma_2|$ is
the size of $\gamma_2$.  Since $m_-\gg m_+\gg 0$, the smallest power of $q$ in the
$q$-expansion is in the first term of (\ref{e:alt2}), this
being the only term of the sum that does not have as a factor a power
of $q^{m_-}$. In particular, the coefficient of the lowest power of $q$ in $[f_\pi,1]$
in the region (\ref{asy}) is $\pm 1.$
\end{proof}

Now we can prove Theorem \ref{mainformal} in the nonpositive case
as well.

\begin{proof}[Proof of Theorem \ref{mainformal}] 
Fix $k$ such that $1\le k\le n$, and fix $\sigma_1,\sigma_2$
  partitions of $k$ and $n-k$, respectively. Recall the family  of
  discrete series $\ds_{(m_+,m_-)}(\sigma_1,\sigma_2)$ as in
  Convention \ref{convnon}. Proposition \ref{p:asynon} verified the
  claim of Theorem \ref{mainformal} in the asymptotic region $m_-\gg
  m_+\gg 0.$

To transfer the result from the asymptotic region to every generic
region, one can proceed as in the proof of Theorem
\ref{t:gen}. 

 Fix a pair $(m_1^{-\infty},m_2^\infty)$ in the
asymptotic region (\ref{asy}), and construct a line of parameters
$t\mapsto (m_1^t,m_2^t)$, $t\ge 0,$ where $m_1^t=m_1^{-\infty}+t$,
$m_2^t=m_2^{\infty}-t,$ a line of central
characters $\mathsf c^t(\sigma_1,\sigma_2):=(-\mathsf
c_{m^t_1}^{\sigma_1},\mathsf c_{m^t_2}^{\sigma_2})$, and the
  corresponding lines of discrete series
  $\pi_1^t=\ds_{m_1^t}(\sigma_1)$, $\pi_2^t=\ds_{m_2^t}(\sigma_2)$,
  $\pi^t=\ds_{(m^t_+,m^t_-)}(\sigma_1,\sigma_2)$ of $\mathbb H_{k,m_1^t}$,
  $\mathbb H_{n-k,m_2^t}$, and $\mathbb H_n$, respectively. Recall
  that $m_+^t=m_2^t+m_1^t$ (and hence $m_+^t=m_+^\infty$ is
  independent of $t$), and $m_-^t=m_2^t-m_1^t.$ (Notice that these
  lines of parameters cover all possible values of the labels
  $(m_+,m_-)$ as $(m_1^{-\infty},m_2^\infty)$ varies in the asymptotic region.)

Let $t=t_0>0$ be a value for which the label $(m_+^{t_0},m_-^{t_0})$ is
critical, and let $U_\epsilon(t_0):=(t_0-\epsilon,t_0+\epsilon)$ be an interval such
that the label $(m_+^{t},m_-^{t})$ is generic for $t\in
U_\epsilon(t_0)\setminus\{0\}$. By induction, we may assume that we
know $C_{\pi^t}$ for all $t\in U_\epsilon(t_0)$ with $t<t_0$. As in
the proof of Theorem \ref{t:gen}, we know that $\pi_i^t$, $\pi_i^{t'}$,
$t_0-\epsilon<t<t_0<t'<t_0+\epsilon$ are linked, $i=1,2$. The
reduction to positive real central character implies that also $\pi^t$
and $\pi^{t'}$ are linked. But then again Theorem \ref{t:sum} 2)
gives $C_{\pi^t}=\pm C_{\pi^{t'}}.$
\end{proof}

\begin{remark}
In order to apply these proofs to obtain the type $D$ formulas in
\S\ref{s:mainformal}, in light of Proposition \ref{irrD}, it is
sufficient to notice that if we
  have a 
$W_n$-type $\{\mu,\mu\}$ which splits 
  $\{\mu,\mu\}=\{\mu,\mu\}_+\oplus\{\mu,\mu\}_-$ as
  $W_n^{\mathsf D}$-representations, then $\frac 12\lim_{m \to 0}\gd
  \{\mu,\mu\}=\gd^{\mathsf D}\{\mu,\mu\}_+=\gd^{\mathsf D}\{\mu,\mu\}_-$
  (\cite{Ca}, \S 13.5). 
\end{remark}

}

{\footnotesize 
}


\begin{thebibliography}{NONAME}

\bibitem{AH} Pramod N. Achar and Anthony Henderson, Orbit closures in the enhanced nilpotent cone.
Adv. Math. 219 (2008), 27--62.

\bibitem{AD} Silvana Abeasis and Alberto Del Fra, Degenerations for the representations of a quiver of type $\mathsf{A}_m$, Boll. Un. 
Mat. Ital. Suppl. (1980) no. 2, 157--171.

\bibitem{ADK} Silvana Abeasis, Alberto Del Fra, and Hanspeter Kraft, The geometry of representations of $\mathsf{A}_m$, Math. Ann. 256 (1981) no. 3, 401--418.

\bibitem{BHK}
Colin J.~Bushnell, Guy Henniart, Philip C.~Kutzko, Types and explicit
  Plancherel formulae for reductive p-adic groups, preprint, 2008.

\bibitem{Ca} Roger W. Carter, Finite groups of Lie type, Pure and Applied Mathematics (New York). A Wiley-Interscience Publication, New York, 1985. xii+544 pp. ISBN: 0-471-90554-2

\bibitem{CG} Neil Chriss and Victor Ginzburg, Representation theory and complex geometry. Birkh\"auser Boston, Inc., Boston, MA, 1997. x+495 pp. ISBN 0-8176-3792-3

\bibitem{CK} Dan Ciubotaru and Syu Kato, Tempered modules in the
  exotic Deligne-Langlands classification, Adv. Math. 226 no.2 (2011) 1538--1590.

\bibitem{Di} Jacques Dixmier, $C^\ast$-algebras, North-Holland Mathematical Library, vol. 14, North-Holland Publishing Co., Amsterdam-New York-Oxford, 1977. xiii+492 pp. ISBN: 0-7204-0762-1.

\bibitem{DO} Patrick Delorme, and Eric M. Opdam, Analytic $R$-groups of affine Hecke algebras, arXiv:0909.1227, preprint.

\bibitem{Ev} Sam Evens, The Langlands classification for graded
Hecke algebras.  Proc. Amer. Math. Soc.  124  (1996),  no. 4,
1285--1290. 

\bibitem{HII} Kaoru Hiraga, Atsushi Ichino, Tamotsu Ikeda,
  Formal degrees and adjoint $\gamma$-factors,
  J. Amer. Math. Soc. 21 (2008), no. 1 , 283--304.

\bibitem{K1} Syu Kato, An exotic Deligne-Langlands
  correspondence for symplectic groups,  Duke Math. J. 148 (2009), no. 2, 305--371.

\bibitem{K2} Syu Kato, Deformations of nilpotent cones and
  Springer correspondences, Amer. J. Math. 133 (2011), no.2, 518--553.


\bibitem{KL2} David Kazhdan and George Lusztig, Proof of the Deligne-Langlands conjecture for Hecke algebras, Invent. Math. 87 (1987), no. 1, 153--215.

\bibitem{Kn} Anthony Knapp, Representation theory of semisimple groups. An overview based on examples. Princeton Mathematical Series, 36. Princeton University Press, Princeton, NJ, 1986.

\bibitem{L} George Lusztig, Green functions and character sheaves, Ann. of Math. 131
(1990), 355--408.

\bibitem{L7} George Lusztig, Cuspidal local systems
  and graded Hecke algebras, I, Publ. Math. IH\'ES, {\bf 67} (1988), 145--202.

\bibitem{L3} George Lusztig, Cuspidal local systems and graded Hecke algebras. II, Representations of groups (Banff, AB, 1994), CMS Conf. Proc., {\bf 16}, 217--275 (1995).

{
\bibitem{L6} George Lusztig, Cuspidal local systems and graded Hecke algebras. III, Represent. Theory {\bf 6} (2002), 202-242}

\bibitem{L2} George Lusztig, Affine Hecke algebras and their graded version, J. Amer. Math. Soc. {\bf 2}, (1989), no.3, 599--635.

\bibitem{L4} George Lusztig, Classification of unipotent representations of simple $p$-adic
  groups,  Internat. Math. Res. Notices, {\bf 11}, 517--589 (1995).

\bibitem{L5} George Lusztig, {Classification of unipotent representations of simple $p$-adic
  groups II},  Represent. Theory, {\bf 6}, 243--289 (2002).

\bibitem{O} Eric Opdam, On the spectral decomposition of affine Hecke algebras, J. Inst. Math. Jussieu \textbf{3}(4) (2004), 531--648.

\bibitem{OSa} Eric Opdam and Maarten Solleveld, Homological
  algebra for affine Hecke algebras, Adv. in Math. 220 (2009), 1549--1601. 

\bibitem{OS} Eric Opdam and Maarten Solleveld, Discrete series characters for affine Hecke
    algebras and their formal degrees, Acta Math. 205 (2010) 105--187.

\bibitem{Re} Mark Reeder, Formal degrees and $L$-packets of unipotent discrete series representations of exceptional $p$-adic groups. With an appendix by Frank L\"ubeck.  J. Reine Angew. Math.  520  (2000), 37--93.

\bibitem{Sh} Toshiaki Shoji, On the Green polynomials of classical groups, Invent. Math. 74 (1983), 239--267.

\bibitem{Sha} Freydoon Shahidi, A proof of Langlands' conjecture on Plancherel measures; complementary series for $p$-adic groups.  Ann. of Math. (2)  132  (1990),  no. 2, 273--330.

\bibitem{SS} Peter Schneider and Ulrich Stuhler, Representation theory and sheaves on the Bruhat-Tits building.  Inst. Hautes \`Etudes Sci. Publ. Math.  No. 85  (1997), 97--191.
  
\bibitem{Sl2} Klaas Slooten, Induced Discrete Series
  Representations for Hecke Algebras of Types $B^{\rm aff} _n$ and $C
  ^{\rm aff}_n$, IMRN, no. 10, Article ID rnn023, 41 pages (2008).


\bibitem{Z2} Andrei V. Zelevinsky, A p-adic analogue of the
  Kazhdan-Lusztig hypothesis, Funkt. Anal. Appl., vol. 15, no. 2
  (1981), 9--21.

\end{thebibliography}
\end{document}